\numberwithin{equation}{section}
\numberwithin{figure}{section}
\theoremstyle{plain}
\newtheorem{thm}{\protect\theoremname}[section]
  \theoremstyle{definition}
  \newtheorem{defn}[thm]{\protect\definitionname}
  \theoremstyle{plain}
  \newtheorem{fact}[thm]{\protect\factname}
  \theoremstyle{plain}
  \newtheorem{cor}[thm]{\protect\corollaryname}
  \theoremstyle{remark}
  \newtheorem{rem}[thm]{\protect\remarkname}
  \theoremstyle{plain}
  \newtheorem{lem}[thm]{\protect\lemmaname}
  \theoremstyle{definition}
  \newtheorem{problem}[thm]{\protect\problemname}
  \providecommand{\corollaryname}{Corollary}
  \providecommand{\definitionname}{Definition}
  \providecommand{\factname}{Fact}
  \providecommand{\lemmaname}{Lemma}
  \providecommand{\problemname}{Problem}
  \providecommand{\remarkname}{Remark}
\providecommand{\theoremname}{Theorem}
\begin{document}

\title{Well-Ordered Model Universes}
\author{Alon Navon}
\affil{Tel Aviv University}
\maketitle
\begin{abstract}
In this paper we show how to build a model of $\mathsf{ZFC}$ such
that all its inner models satisfying the Axiom of Choice are well-ordered
with respect to inclusion, and that said ordering is of arbitrary
height (including possibly $\mathbf{Ord}$ high). We do this by iterating
$\kappa$-Sacks forcing for ever-increasing $\kappa$, while showing
that such forcings do not add any unexpected intermediate inner models.
\end{abstract}

\section{Introduction}

In this section we aim to present and formalize the concept of a \emph{well-ordered
model universe}, and establish the proper axiomatic framework to work
with it. We start with an informal presentation to explain the motivation
behind this idea.
\begin{defn}
\label{def:Inner model}Let $M$ be a model of $\mathsf{ZFC}$ ($\mathsf{ZF}$).
We call a substructure $N$ of $M$ an \emph{inner model of $\mathsf{ZFC}$
}($\mathsf{ZF}$) if:
\begin{enumerate}
\item \label{enu:cond_1}$N$ is a model of $\mathsf{ZFC}$ ($\mathsf{ZF}$);
\item \label{enu:cond_2}The interpretation of $\in_{N}$ is $\in_{M}\cap N^{2}$;
\item The domain of $N$ is a transitive class of $M$;
\item $N$ has the same ordinals as $M$.
\end{enumerate}
\end{defn}
A model that satisfies only conditions \ref{enu:cond_1} and \ref{enu:cond_2}
is called a \emph{standard model}. Hence an inner model is a standard
transitive model that has the same ordinals as the base model.

Unless otherwise stated, we reserve the term \emph{inner model }to
refer exclusively to inner models of $\mathsf{ZFC}$. If we want to
discuss an inner model of $\mathsf{ZF}$ we shall refer to it explicitly
as such.

It was Kurt G�del that proved in \citep{key-7} that any model of
$\mathsf{ZF}$ has a least inner model $L$, called the constructible
universe, which is also a model of $\mathsf{ZFC+GCH}$. Thus, if $V\neq L$,
there are non-trivial inner models of $V$, and we can then partially
order them with respect to inclusion. This partial order has a unique
least element $L$ and a unique greatest element $V$, and we may
naturally enquire about its other order-theoretic properties. The
aim of this paper is to explore the consistency and implications of
the well-ordering property for this partial order, our main theorem
being the construction of a model of $\mathsf{ZFC}$ where the inner
models are not only well-ordered with respect to inclusion, but said
ordering is in fact order-isomorphic to all ordinals.

In practice, this means we construct a model $M$ of $\mathsf{ZFC}$
and a ``sequence'' of inner models $\left\langle M_{\alpha}\mid\alpha\in\mathbf{Ord}\right\rangle $
such that
\begin{enumerate}
\item $\alpha<\beta$ if and only if $M_{\alpha}\subsetneq M_{\beta}$;
\item For every inner model $N$ of $M$ there is an $\alpha\in\mathrm{\mathbf{Ord}}$
such that $N=M_{\alpha}.$
\end{enumerate}
This is a sort of tower of inner models, and to construct it we will
need to gradually extend the tower from its base, passing through
the successor case, singular limits, regular limits and finally the
class case. That will be the general path we follow, but first we
need to address a tricky part of our definition - in $\mathsf{ZF}$
we cannot formally talk about a sequence of classes, and each inner
model of $V$ is by definition a proper class. Normally, it is enough
to use the class notation as a shorthand for formulas and ignore the
particulars, but in this paper we'll also be interested in the interplay
between these classes. So in order to deal with these explicitly,
we turn to Bernays-G�del set theory, in short $\mathsf{BG}$ (or $\mathsf{BGC}$
if we add Global Choice), to serve as our axiomatic framework.

Bernays-G�del set theory, sometimes known as Von Neumann\textendash Bernays\textendash G�del
set theory ($\mathsf{NBG}$ for short), has its origins in a 1925
paper by John von Neumann \citep{key-9}, which formally introduced
classes into set theory for the first time. Von Neumann's theory employed
functions and arguments as its primitive notions, and used them to
define sets and classes. However, in the 1930s Paul Bernays reformulated
the theory by taking classes and sets as the primitive notions \citep{key-12}.
Later, while working on his proof for the relative consistency of
the Axiom of Choice \citep{key-7}, G�del significantly simplified
Bernay's theory, leading to what is now known as Bernays-G�del set
theory.

Unlike Zermelo-Fraenkel, Bernays-G�del set theory allows for two types
of objects: classes and sets. Every set is also considered a class,
and if a class is a member of another class then it is also a set.
Thus, a model $\left\langle V,\mathcal{V},\in\right\rangle $ of $\mathsf{BG}$
consists of a collection $\mathcal{V}$ of classes together with a
subcollection $V\subsetneq\mathcal{V}$ of sets and a relation $\in\subsetneq V\times\mathcal{V}$.
The axioms of this system are mostly very similar to those of $\mathsf{ZF}$,
and a formal exposition of them and their application in class forcing
can be found in \citep{key-2}. To avoid any ambiguity, we always
denote a model of $\mathsf{BG}$ in the form of a $\left\langle V,\mathcal{V},\in\right\rangle $
triplet, and a model of $\mathsf{ZF}$ as plain $V$.

In order to justify our use of $\mathsf{BG}$, we quote the following
facts about the link between it and $\mathsf{ZF}$.
\begin{fact}
$\mathsf{BG}$ ($\mathsf{BGC}$) is a conservative extension of $\mathsf{ZF}$
($\mathsf{ZFC}$).
\end{fact}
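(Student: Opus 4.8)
The plan is to establish conservativity semantically, via the G\"odel completeness theorem. Recall that ``$\mathsf{BG}$ ($\mathsf{BGC}$) is a conservative extension of $\mathsf{ZF}$ ($\mathsf{ZFC}$)'' asserts two things: (i) every theorem of $\mathsf{ZF}$ ($\mathsf{ZFC}$), with its quantifiers relativized to sets, is a theorem of $\mathsf{BG}$ ($\mathsf{BGC}$); and (ii) every sentence $\varphi$ in the language of set theory --- all of whose quantifiers range over sets --- with $\mathsf{BG}\vdash\varphi$ ($\mathsf{BGC}\vdash\varphi$) already satisfies $\mathsf{ZF}\vdash\varphi$ ($\mathsf{ZFC}\vdash\varphi$). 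Direction (i) is routine: the set-axioms of $\mathsf{BG}$ literally contain those of $\mathsf{ZF}$, and instances of $\mathsf{ZF}$-Separation and $\mathsf{ZF}$-Replacement follow by applying Class Comprehension and Class Replacement to classes defined by $\mathsf{ZF}$-formulas (relativization preserves derivability). For (ii), by completeness it suffices to show that every model $V\models\mathsf{ZF}$ ($\mathsf{ZFC}$) expands to a model $\left\langle V,\mathcal{V},\in\right\rangle\models\mathsf{BG}$ ($\mathsf{BGC}$) \emph{with the same sets}; then $\mathsf{BG}\vdash\varphi$ forces $\varphi$ to hold in $\left\langle V,\mathcal{V},\in\right\rangle$, hence in $V$ (set-quantifiers being absolute between the two), and since $V$ was arbitrary, $\mathsf{ZF}\vdash\varphi$ by completeness.

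For the $\mathsf{ZF}$/$\mathsf{BG}$ case I would take $\mathcal{V}=\mathrm{Def}(V)$, the class of all subclasses of $\left\langle V,\in\right\rangle$ definable there with set parameters, and check the $\mathsf{BG}$ axioms. Extensionality, Foundation, Pairing, Union, Power Set and Infinity for sets are inherited from $V$; only the two schemata require attention. Predicative Class Comprehension: given $\psi(x,\vec A)$ with only set quantifiers and parameters $\vec A\in\mathrm{Def}(V)$, write $A_i=\{y\mid\psi_i(y,p_i)\}$ and substitute $\psi_i$ for the atomic subformulas $t\in A_i$ throughout $\psi$; the resulting formula defines $\{x\mid\psi(x,\vec A)\}$ over $V$ from the set parameters $\vec p$, so it lies in $\mathrm{Def}(V)$. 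Class Replacement: if $F\in\mathrm{Def}(V)$ is a function and $a\in V$, then $F$ is a definable class function, so $F[a]\in V$ by $\mathsf{ZF}$-Replacement in $V$. As $\left\langle V,\mathrm{Def}(V),\in\right\rangle$ has exactly the sets of $V$, this handles the $\mathsf{ZF}$/$\mathsf{BG}$ case.

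The $\mathsf{ZFC}$/$\mathsf{BGC}$ case is the delicate one, since $\mathrm{Def}(V)$ need not contain a global well-ordering of $V$ and so may fail Global Choice. The fix is to force one in without touching the sets. Given $V\models\mathsf{ZFC}$, which by L\"owenheim--Skolem we may assume countable, let $\mathbb{P}$ be the class forcing whose conditions are set-sized injections $p\colon\alpha\to V$, $\alpha\in\mathbf{Ord}$, ordered by reverse inclusion. For each $x\in V$ the conditions whose range contains $x$ are dense, so $\bigcup G$ is a bijection $\mathbf{Ord}\to V$ and hence codes a global well-ordering; and since $\mathbb{P}$ is $\kappa$-closed for every cardinal $\kappa$ --- any set-indexed decreasing chain of conditions has its union as a lower bound --- it is tame and adds no new sets. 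Taking $\mathcal{V}$ to be the subclasses of $V$ definable over $\left\langle V,\in,G\right\rangle$ from set parameters, one checks $\mathsf{BGC}$ exactly as before ($G$ is amenable, each $G\cap a$ being a subset of the set $a$, so it may serve as an extra predicate in the comprehension and replacement arguments), with Global Choice holding by construction. Since $V[G]$ has the same sets as $V$, we are done.

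The main obstacle is the behaviour of $\mathbb{P}$ as a \emph{class} forcing: one must verify that its forcing relation is definable, that $V[G]\models\mathsf{ZFC}$, and that no sets are added --- i.e., that $\mathbb{P}$ is pretame (here, $\mathbf{Ord}$-closed). This is precisely the class-forcing apparatus set up in \citep{key-2}, which the paper relies on in any case; alternatively one can bypass forcing and argue syntactically, in the tradition of Nov\'ak, Mostowski and Shoenfield for $\mathsf{BG}$ and Felgner for $\mathsf{BGC}$, but the semantic route above is the cleaner one.
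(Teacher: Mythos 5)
The paper offers no proof of this Fact at all: it is quoted as a known result, with the conservativity credited to Cohen \citep{key-1} and Mostowski \citep{key-8}, and the content of your construction is exactly what the surrounding discussion (Facts \ref{fact:BG->ZF} and \ref{fact:ZF->BG}, plus the remark that Global Choice can be forced by a $\kappa$-closed class forcing adding no sets, citing A.1 of \citep{key-2}) takes for granted. Your semantic argument is the standard proof and is essentially correct: the extension direction is routine, and for conservativity it suffices, by completeness, to expand an arbitrary countable model of $\mathsf{ZF}$ by $\mathrm{Def}(V)$, respectively a countable model of $\mathsf{ZFC}$ by the classes definable over $\left\langle V,\in,G\right\rangle$ where $G$ is generic for the $<\mathbf{Ord}$-closed forcing of set-sized injections. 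The one place where real work is concealed is the one you flag yourself: before Class Replacement can be verified for $\mathcal{V}=\mathrm{Def}(V,G)$ one must know that $\left\langle V,\in,G\right\rangle$ satisfies Replacement for formulas mentioning the predicate $G$, i.e.\ that the forcing is tame and that a filter meeting the (countably many) definable dense subclasses can actually be constructed; deferring this to the class-forcing apparatus of \citep{key-2} is legitimate, since the paper leans on that same appendix for the same purpose. Two minor points worth tightening: the substitution argument for Predicative Comprehension should also treat atomic subformulas in which a class parameter occurs on the left of $\in$ or in an equality (unfold $A_{i}\in t$ as $\exists z\left(\forall w\left(w\in z\leftrightarrow\psi_{i}\left(w,p_{i}\right)\right)\wedge z\in t\right)$, and similarly for $A_{i}=A_{j}$); and your model-theoretic route establishes conservativity in a set-theoretic metatheory, whereas the syntactic proofs in the Nov\'ak--Mostowski--Shoenfield tradition (and Felgner for $\mathsf{BGC}$) give the finitistic version --- a distinction that is immaterial for this paper's purposes but worth a sentence if the proof were to be included.
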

This fact was proven by Paul Cohen in \citep{key-1}. Furthermore,
Mostowski \citep{key-8} showed that every set-theoretical statement
provable in $\mathsf{ZF}$ ($\mathsf{ZFC}$) is provable in $\mathsf{BG}$
($\mathsf{BGC}$), and that if a sentence involving only set variables
is provable in $\mathsf{BG}$ ($\mathsf{BGC}$) then it is provable
in $\mathsf{ZF}$ ($\mathsf{ZFC}$) as well.
\begin{fact}
\label{fact:BG->ZF}Let $\left\langle V,\mathcal{V},\in\right\rangle $
be a model of $\mathsf{BG}$ ($\mathsf{BGC}$), where $\mathcal{V}$
is the collection of classes and $V\subseteq\mathcal{V}$ is the collection
of sets. Then by taking $V$ and $\in\cap\left(V\times V\right)$
we get a model of $\mathsf{ZF}$ ($\mathsf{ZFC}$).
\end{fact}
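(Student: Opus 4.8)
The plan is to verify, axiom by axiom, that the reduct $\left\langle V,\in\cap\left(V\times V\right)\right\rangle $ satisfies $\mathsf{ZF}$, using in each case the corresponding axiom or schema of $\mathsf{BG}$. The underlying observation is a translation principle: every first-order $\mathsf{ZF}$-formula $\varphi\left(\bar{x}\right)$ has all of its quantifiers ranging over sets, so for tuples $\bar{a}$ from $V$ its interpretation in $\left\langle V,\in\cap\left(V\times V\right)\right\rangle $ coincides with the interpretation in $\left\langle V,\mathcal{V},\in\right\rangle $ of the same syntactic formula read with every quantifier as a set-quantifier (the atomic formula $x\in y$ is evaluated via $\in\cap\left(V\times V\right)$ in both cases). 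Thus checking a $\mathsf{ZF}$-axiom in the reduct reduces to proving the corresponding statement about sets in $\mathsf{BG}$.

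With this principle in place, Extensionality, Pairing, Union, Power Set, and Infinity are immediate, since in each case $\mathsf{BG}$ includes, among its set axioms, a statement that is literally (or trivially equivalent to) the $\mathsf{ZF}$ axiom restricted to sets. Foundation is handled the same way: the Foundation axiom of $\mathsf{BG}$ — every nonempty set (indeed every nonempty class) has an $\in$-minimal element — restricts verbatim to $\mathsf{ZF}$-Foundation.

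The only instances demanding a genuine argument are the Separation and Replacement schemas, and the key device here is the class comprehension schema of $\mathsf{BG}$. Since a $\mathsf{ZF}$-formula $\varphi\left(x,\bar{p}\right)$ contains only set quantifiers, for any sets $\bar{p}\in V$ the collection $A=\left\{ x:\varphi\left(x,\bar{p}\right)\right\} $ exists as a class of $\left\langle V,\mathcal{V},\in\right\rangle $ (the parameters $\bar{p}$ are admissible, being themselves classes). Then $\mathsf{BG}$-Separation, which asserts that $A\cap a$ is a set for any class $A$ and set $a$, gives $\left\{ x\in a:\varphi\left(x,\bar{p}\right)\right\} \in V$, i.e. the required instance of $\mathsf{ZF}$-Separation. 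For Replacement, if $\varphi\left(x,y,\bar{p}\right)$ is functional on a set $a$, then $F=\left\{ \langle x,y\rangle:x\in a\wedge\varphi\left(x,y,\bar{p}\right)\right\} $ is a class function by comprehension, and $\mathsf{BG}$-Replacement yields that its image $F\left[a\right]$ is a set. I expect this comprehension step to be the main (and essentially only) obstacle: one must observe that $\mathsf{BG}$'s comprehension, although restricted to formulas whose quantifiers range over sets, is nevertheless strong enough to produce the class defined by an arbitrary $\mathsf{ZF}$-formula — which it is, precisely because $\mathsf{ZF}$-formulas never quantify over classes.

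Finally, under the strengthened hypothesis $\mathsf{BGC}$, Global Choice supplies a class choice function (equivalently, a class well-ordering) $C$ of $V$; given a set $x$ of nonempty sets, the restriction of $C$ to $x$ is a set function by the Separation and Replacement already established, and it is a choice function for $x$. Hence the reduct also satisfies the Axiom of Choice, so we obtain a model of $\mathsf{ZFC}$, completing the argument.
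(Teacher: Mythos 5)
Your proof is correct. The paper itself offers no argument for this fact---it is quoted as a known result from the literature on $\mathsf{BG}$/$\mathsf{NBG}$---and your axiom-by-axiom verification is precisely the standard argument underlying it: the translation principle for set-quantified formulas, the direct restriction of the set-existence axioms, and the key observation that predicative class comprehension suffices for $\mathsf{ZF}$-Separation and Replacement because $\mathsf{ZF}$-formulas never quantify over classes. Nothing is missing.
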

So if we have a model of $\mathsf{BG}$, by ``throwing away'' the
classes, we are left with a model of $\mathsf{ZF}$. What about the
other way round?
\begin{fact}
\label{fact:ZF->BG}Let $V$ be a model of $\mathsf{ZF}$. Take $\mathcal{V}$
to be the collection of all classes definable in $V$ with set parameters,
and take $\in$ to be the obvious extension of the membership relation
to $\mathcal{V}$. Then $\left\langle V,\mathcal{V},\in\right\rangle $
is a model of $\mathsf{BG}$.
\end{fact}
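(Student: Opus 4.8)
The plan is to verify, one family of axioms at a time, that $\left\langle V,\mathcal{V},\in\right\rangle $ satisfies $\mathsf{BG}$. First I would identify each set $a\in V$ with the class $\{x:x\in a\}$, which is definable with the single parameter $a$; this gives $V\subseteq\mathcal{V}$ and shows that the extended membership relation restricts correctly on sets, so the structural requirements linking sets and classes are immediate. The purely set-theoretic axioms — Extensionality, Pairing, Union, Power Set, Infinity and Foundation for sets — are literally the corresponding $\mathsf{ZF}$ axioms and hold because $V\models\mathsf{ZF}$. The $\mathsf{BG}$ forms of Separation and Replacement, which quantify over a class parameter rather than being schemes, reduce to $\mathsf{ZF}$-Separation and $\mathsf{ZF}$-Replacement: if $A\in\mathcal{V}$ is defined by $\psi(x,\bar p)$ then $a\cap A=\{x\in a:\psi(x,\bar p)\}$ is a set, and if $F\in\mathcal{V}$ is (the graph of) a class function then $F[a]$ is a definable collection that is a set by Replacement in $V$.

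The heart of the proof is Class Extensionality together with the Class Comprehension scheme. Class Extensionality is trivial here, since members of $\mathcal{V}$ are genuine sub-collections of $V$, so two with the same elements are the same object. For Class Comprehension I would use the fact that in $\mathsf{BG}$ — unlike Morse--Kelley — the scheme is restricted to formulas $\varphi$ all of whose quantifiers range over sets, although $\varphi$ may carry set parameters $\bar p$ and class parameters $\bar A=(A_1,\dots,A_n)$. After the usual normalization one may assume each class variable $A_i$ occurs only in atomic subformulas of the form $t\in A_i$ with $t$ a set term. Writing $A_i=\{y:\psi_i(y,\bar q_i)\}$ with $\psi_i$ a first-order formula and $\bar q_i$ sets, I would replace every such $t\in A_i$ by $\psi_i(t,\bar q_i)$; since $\varphi$ binds no class variable, this substitution is legitimate and produces a genuine first-order formula $\varphi^{*}(x,\bar p,\bar q_1,\dots,\bar q_n)$ in the language $\{\in\}$ with set parameters only, so that $\{x:\varphi(x,\bar p,\bar A)\}=\{x:\varphi^{*}(x,\bar p,\bar q_1,\dots,\bar q_n)\}\in\mathcal{V}$. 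If one prefers the finitely axiomatized presentation of $\mathsf{BG}$ obtained from G\"odel's class-existence theorem (as in \citep{key-2}), each of its finitely many class-existence axioms is a particular instance of what has just been shown, or can be checked directly — for example $\{(x,y):x\in y\}$ is defined by the formula $x\in y$ with no parameters.

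The one step that genuinely needs care, and the place where the restriction to $\mathsf{BG}$ rather than $\mathsf{MK}$ is essential, is this unfolding: one must confirm that substituting the defining formulas $\psi_i$ for the class parameters really yields a first-order formula, which works precisely because no quantifier of $\varphi$ binds a class variable, so each class parameter occurs only in a context $t\in A_i$ with $t$ a set term. Had $\varphi$ been permitted to quantify over all classes the substitution could not be carried out, and the argument would (correctly) fail. I would also note that no form of choice enters: $\mathsf{BG}$, as opposed to $\mathsf{BGC}$, does not require a global well-ordering, so the construction yields a model of $\mathsf{BG}$ from an arbitrary model of $\mathsf{ZF}$, exactly as stated. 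The remaining verifications are routine bookkeeping against the $\mathsf{ZF}$ axioms available in $V$.
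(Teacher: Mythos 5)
Your proof is correct. The paper itself offers no argument for this statement: it is presented as a known Fact, with the surrounding discussion citing Cohen and Mostowski for the conservativity results but leaving the verification that the definable classes of a $\mathsf{ZF}$-model form a $\mathsf{BG}$-model to the reader. What you have written is the standard such verification, and you correctly isolate the one step that is not pure bookkeeping: predicative Class Comprehension holds because every class parameter $A_i$ can be unfolded into its defining formula $\psi_i(\cdot,\bar q_i)$, and this substitution yields a genuine first-order formula precisely because no quantifier of $\varphi$ ranges over classes — which is also exactly why the same construction fails for $\mathsf{MK}$. Your observation that no choice principle is used, so that one gets only $\mathsf{BG}$ and not $\mathsf{BGC}$, matches the remark the paper makes immediately after the Fact. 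Two small points you wave at with "routine bookkeeping" that deserve a sentence each if written out in full: atomic subformulas of the shape $A_i\in t$ or $A_i=A_j$ must also be eliminated in the normalization (the former by asserting the existence of a set with the same members as $A_i$ lying in $t$, the latter via extensionality as a set-quantified equivalence), and if one takes the $\mathsf{BG}$ Foundation axiom in its class form, it follows for a definable nonempty class $A$ by intersecting $A$ with the transitive closure of a singleton of one of its elements and applying set Foundation in $V$. Neither affects the correctness of your argument.
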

Note that even if $V$ is a model of $\mathsf{ZFC}$ then $\left\langle V,\mathcal{V},\in\right\rangle $
as defined above might only satisfy $\mathsf{BG}$, not $\mathsf{BGC}$.
But using class forcing one can add a uniform choice function that
is $\kappa$-closed for each $\kappa$, and so adds no new sets to
the universe. The resulting model $\left\langle V,\mathcal{V}',\in\right\rangle $
extends $\left\langle V,\mathcal{V},\in\right\rangle $, satisfies
$\mathsf{BGC}$ and has the same sets and the same restriction of
$\in$ to sets as $\left\langle V,\mathcal{V},\in\right\rangle $
(see A.1 in \citep{key-2}).

Facts \ref{fact:BG->ZF} and \ref{fact:ZF->BG} establish a useful
correspondance between models of $\mathsf{ZF}$ and $\mathsf{BG}$.
\begin{cor}
\label{cor:-is-a}$M$ is a definable proper inner model of $V$ if
and only if $M\neq V$ and $\left\langle M,\mathcal{M},\in\right\rangle \subsetneq\left\langle V,\mathcal{V},\in\right\rangle $
(as defined in fact \ref{fact:ZF->BG}).
\end{cor}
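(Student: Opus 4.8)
The plan is to prove the two implications separately; the only step with genuine content is a relativisation argument, the rest being bookkeeping about substructures. Throughout I read the relation ``$\langle M,\mathcal M,\in\rangle\subsetneq\langle V,\mathcal V,\in\rangle$'' as the evident $\mathsf{BG}$-analogue of Definition~\ref{def:Inner model}: $M\subseteq V$ is a transitive subclass with the same ordinals, $\mathcal M\subseteq\mathcal V$, the membership relation of the first triple is the restriction of that of the second, and the inclusion is proper. I treat the $\mathsf{ZF}$/$\mathsf{BG}$ version; the $\mathsf{ZFC}$/$\mathsf{BGC}$ one is handled identically once one works throughout with the $\mathsf{BGC}$ expansions furnished by the discussion following Fact~\ref{fact:ZF->BG}.

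For the forward direction, assume $M$ is a definable proper inner model of $V$. By condition~\ref{enu:cond_1} of Definition~\ref{def:Inner model}, $M$ is a model of $\mathsf{ZF}$, so Fact~\ref{fact:ZF->BG} applied to $M$ produces a model $\langle M,\mathcal M,\in\rangle$ of $\mathsf{BG}$ whose classes are exactly those definable over $M$ with set parameters. Transitivity of $M$ in $V$, sameness of ordinals, and condition~\ref{enu:cond_2} (extended to the members of $\mathcal M$, which are all subclasses of $M$) immediately make this a transitive submodel of $\langle V,\mathcal V,\in\rangle$ with the same ordinals, so the only thing to verify is $\mathcal M\subseteq\mathcal V$. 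Here one uses that $M$ is \emph{definable} over $V$, say by a formula $\theta$ with set parameters: if $C\in\mathcal M$ is defined over $M$ by $\varphi$ with parameters $\bar p\in M$, then relativising $\varphi$ to the $V$-class defined by $\theta$ and conjoining $\theta$ itself presents $C$ as a class definable over $V$ with the set parameters $\bar p$ together with those defining $M$; hence $C\in\mathcal V$. Properness of the inclusion follows already from $M\neq V$, and in fact $\mathcal M\subsetneq\mathcal V$ as well: a set $a\in V\setminus M$ of least rank satisfies $a\subseteq M$, so $a$ lies below some $V_\alpha^{M}\in M$, and therefore $a\in\mathcal M$ would make $a$ a set of $M$ by Separation, a contradiction.

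For the converse, assume $M\neq V$ and $\langle M,\mathcal M,\in\rangle\subsetneq\langle V,\mathcal V,\in\rangle$ in the above sense, so in particular $\langle M,\mathcal M,\in\rangle\models\mathsf{BG}$. Then $M\models\mathsf{ZF}$ by Fact~\ref{fact:BG->ZF}, which is condition~\ref{enu:cond_1}; transitivity of $M$ in $V$, sameness of ordinals, and condition~\ref{enu:cond_2} are precisely the structural data of the inclusion; and properness is the hypothesis $M\neq V$. It remains only to note that $M$ is \emph{definable} over $V$, which is immediate: the whole universe is always one of the classes of a $\mathsf{BG}$-model, so $M\in\mathcal M\subseteq\mathcal V$, i.e.\ $M$ is definable in $V$ with set parameters.

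The main obstacle is the relativisation step in the forward direction. It is exactly the hypothesis that $M$ is a \emph{definable} inner model of $V$, as opposed to an arbitrary one, that lets a class defined over $M$ be rewritten as a class defined over $V$; the rest of the argument rests only on the standard observation that the ambient universe is one of the classes of every $\mathsf{BG}$-model.
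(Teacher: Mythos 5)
Your proof is correct and follows essentially the same route as the paper's: in the forward direction you use definability of $M$ over $V$ to relativise the definitions of classes in $\mathcal{M}$ and conclude $\mathcal{M}\subseteq\mathcal{V}$, and in the converse you observe $M\in\mathcal{M}\subseteq\mathcal{V}$, hence $M$ is definable, and apply Fact \ref{fact:BG->ZF}. You merely supply more detail than the paper, notably the least-rank argument showing the inclusion $\mathcal{M}\subsetneq\mathcal{V}$ is proper, which the paper asserts without justification.
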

\begin{proof}
$\left(\Rightarrow\right)$ $M$ is proper and so $M\neq V$. $M$
is a definable class of $V$, therefore $M\in\mathcal{V}$, and also
every class definable in $M$ with set parameters is similarly definable
in $V$. Therefore $\mathcal{M}\subsetneq\mathcal{V}$ and $\left\langle M,\mathcal{M},\in\right\rangle \subsetneq\left\langle V,\mathcal{V},\in\right\rangle $.

$\left(\Leftarrow\right)$ $M\in\mathcal{\mathcal{M}}$ and so $M\in\mathcal{V}$,
so $M$ is a definable class in $V$. $M\subsetneq V$ and according
to fact \ref{fact:BG->ZF} is a model of $\mathsf{ZF}$.
\end{proof}
But moving to $\mathsf{BG}$ doesn't quite work out all the kinks.
In particular, $\mathsf{BG}$ doesn't allow for class membership within
another class. Therefore in order to speak of a sequence of classes
we need to abandon our standard definition of a sequence, and instead
use an alternative definition that is also suitable for classes.
\begin{defn}
\label{def:Seq of classes}Let $I$ be a class. Then we call $S\subseteq I\times V$
an $I$-indexed family of classes, and for each $i\in I$ we denote
$S_{i}=\left\{ x\mid\left(i,x\right)\in S\right\} $. If $I$ is well-ordered,
then we say $S$ is a \emph{sequence}, and $I$ is its \emph{underlying
order}.
\end{defn}
Thus, instead of having a 'family of classes' which each class is
a member of, each enumerated class is generated using straightforward
class comprehension. This perspective allows us to speak of sequences
of classes within $\mathsf{BG}$.

Now that we've set up this notion, we finally come to the primary
definition of this article:
\begin{defn}
\label{def:Well-Ordered-Model}Let $\left\langle V,\mathcal{V},\in\right\rangle $
be a model of $\mathsf{BG}$. We call a model $N\subseteq V$ of $\mathsf{ZFC}$
where all its inner models are well-ordered with respect to inclusion
a \emph{well-ordered model universe}. Formally, we postulate the existence
of a class $\mathbb{M}$ in $\left\langle V,\mathcal{V},\in\right\rangle $,
which is the sequence of all proper inner models of $N$ ordered by
inclusion. This means:
\begin{enumerate}
\item $\mathbb{M}\subseteq I\times N$;
\item \label{enu:Every-Inner}$M$ is a proper inner model of $N$ if and
only if there exists a unique $a\in I$ such that $M=M_{a}=\left\{ x\mid\left(a,x\right)\in\mathbb{M}\right\} $;
\item $I$ is a well-ordered class;
\item If $a<_{I}b$ then $\left(a,x\right)\in\mathbb{M}\rightarrow\left(b,x\right)\in\mathbb{M}$.
\end{enumerate}
\end{defn}
In summary, applying the convention that lower-case letters indicate
sets and upper-case letters indicates classes, we demand the following
be true: 
\begin{multline*}
\exists\mathbb{M}\exists I\text{ (}\mathbb{M}\subseteq I\times N\wedge\left(M\subsetneq N\text{ is an inner model }\leftrightarrow\exists!a\in I\left(M=\left\{ x\mid\left(a,x\right)\in\mathbb{M}\right\} \right)\right)\wedge\\
I\text{ is well-ordered }\wedge a<_{I}b\rightarrow\left(\left(a,x\right)\in\mathbb{M}\rightarrow\left(b,x\right)\in\mathbb{M}\right)\text{)}
\end{multline*}

Where there's any ambiguity about the base model we denote class $\mathbb{M}$
as $\mathbb{M}\left(N\right)$.

A few important remarks are in order.
\begin{rem}
\label{rem:Class--belongs}Class $\mathbb{M}\left(N\right)$ belongs
by definition to $\left\langle V,\mathcal{V},\in\right\rangle $.
It need not be definable in $N$, nor even in $V$. However in the
example that we build later in the article, $\mathbb{M}\left(V\right)$
actually \emph{will} be definable in $V$, and then instead of working
with some background model of $\mathsf{BG}$ we will take $\mathcal{V}$
to be the collection of all classes definable in $V$ with set parameters,
exactly as we did in fact \ref{fact:ZF->BG}.
\end{rem}
\begin{rem}
Our demand that $M$ be a \emph{proper} inner model is superfluous,
and only used to simplify discussion of the case $I=\mathbf{Ord}$,
when all the proper inner models are in a bijection with the ordinals.
This convention allows us to prove general theorems about all models
$M_{\alpha}$ $\alpha\in\mathbf{Ord}$, without having to constantly
special-case ``$M_{\mathbf{Ord}}$''.
\end{rem}
\begin{rem}
It is natural to ask why when defining the sequence we only demand
$I$ be well-ordered, instead of being equal to some ordinal or $\mathbf{Ord}$.
The reason for this is that we also can also consider sequences that
are \emph{longer} than the ordinals, and we don't want to unnecessarily
exclude them from the definition. We revisit this issue in the last
section of this article, but in the meanwhile we define what it means
for a well-ordered model universe to be \emph{nice}.
\end{rem}
\begin{defn}
\label{def:niceness}We call a well-ordered model universe \emph{nice}
if the underlying order of $\mathbb{M}$ is equal to some ordinal
or to $\mathbf{Ord}$.
\end{defn}
In essence, a well-ordered model universe is nice if its model tower
isn't 'too tall'. This restriction is not superficial. We shall later
see an interesting property that fails if the well-ordered model universe
isn't nice.

\begin{defn}
Let $N$ be a nice well-ordered model universe. We define its \emph{height}
to be the order-type of the underlying order of $\mathbb{M}$, so
$ht\left(N\right)=otp\left(I\right)$. In case $I=\mathbf{Ord}$,
we instead define $ht\left(N\right)=\infty$. Note that for convenience,
we designate $M_{ht\left(N\right)}=N$, even though it is \emph{not}
formally part of the sequence of proper inner models.
\end{defn}
To summarize our notational conventions, throughout this article:
\begin{enumerate}
\item Models of $\mathsf{BG}$ are always denoted as a triplet $\left\langle V,\mathcal{V},\in\right\rangle $,
whereas models of $\mathsf{ZF}$ are denoted using plain letters $V$.
\item $\mathbb{M}$ refers exclusively to the sequence of proper inner models
as defined in \ref{def:Well-Ordered-Model}.
\item $M_{\alpha}$ shall refer to the $\alpha$th inner model of sequence
$\mathbb{M}$.
\item The height of a well-ordered model universe, denoted $ht\left(V\right)$,
is the order-type of the underlying order of $\mathbb{M}$.
\item A well-ordered model universe is considered nice if the underlying
order of $\mathbb{M}$ isn't longer than $\mathbf{Ord}$.
\end{enumerate}

\section{Implications}

Now, it is time to explore some of the implications of the inner model
well-ordering property. For the rest of this section we assume $\left\langle V,\mathcal{V},\in\right\rangle \vDash\mathsf{BG}$,
$V$ is a well-ordered model universe, and $\mathbb{M}\subseteq I\times V$
is its sequence of proper inner models ordered by inclusion.
\begin{lem}
\label{lem:M0=00003DL}$M_{0}=L$.
\end{lem}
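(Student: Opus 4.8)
The plan is to show that the least proper inner model of $V$ in the sequence $\mathbb{M}$ must coincide with G\"odel's constructible universe $L$. The key facts in play are that $L$ is an inner model of $V$ (by \citep{key-7}), that $L$ satisfies $\mathsf{ZFC}$ (again \citep{key-7}), and that $L\subseteq N$ for \emph{every} inner model $N$ of $\mathsf{ZF}$ — in particular for every inner model of $\mathsf{ZFC}$, since every model of $\mathsf{ZFC}$ is a model of $\mathsf{ZF}$. So $L$ is a lower bound, with respect to $\subseteq$, for the whole collection of proper inner models of $V$ (assuming $V\neq L$, so that this collection is nonempty; if $V=L$ there are no proper inner models and the statement is vacuous, though strictly our standing assumption that $V$ is a well-ordered model universe with sequence $\mathbb{M}\subseteq I\times V$ presupposes $I\neq\emptyset$, hence $V\neq L$).

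First I would invoke condition \ref{enu:Every-Inner} of Definition \ref{def:Well-Ordered-Model}: since $L$ is a proper inner model of $V$, there is a (unique) $a\in I$ with $L=M_a$. Second, I would use that $I$ is well-ordered with least element $0$, so $M_0$ is the $\subseteq$-least proper inner model; by condition (4) of the definition, $M_0\subseteq M_b$ for every $b\in I$, so in particular $M_0\subseteq M_a=L$. Third, for the reverse inclusion, $M_0$ is itself a proper inner model of $V$, hence (by Fact \ref{fact:BG->ZF} applied in the relevant sense, or simply by Definition \ref{def:Inner model}) a transitive model of $\mathsf{ZFC}\supseteq\mathsf{ZF}$ with the same ordinals as $V$; by G\"odel's minimality of $L$ among inner models of $\mathsf{ZF}$, we get $L\subseteq M_0$. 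Combining, $M_0=L$.

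The only subtlety — and really the only place one must be slightly careful — is making sure the well-ordering $I$ actually \emph{has} a least element and that ``$M_0$'' is well-defined; this is immediate since $I$ is a well-ordered class and, as noted, is nonempty because $L\subsetneq V$ is a genuine proper inner model witnessing that the indexed family is not empty. There is no real obstacle here: the lemma is essentially a direct consequence of the universal minimality of $L$ together with the definition of a well-ordered model universe. I would present it in three or four lines.
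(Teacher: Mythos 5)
Your proof is correct and follows essentially the same route as the paper's: both arguments rest entirely on G\"odel's theorem that $L$ is the least inner model, so the least element of the well-ordered sequence must be $L$. The extra care you take about the nonemptiness of $I$ and the two inclusions is fine but not needed beyond what the paper's one-line argument already conveys.
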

\begin{proof}
We know from G�del \citep{key-7} that $L$ is the least inner model
of $V$. Therefore, in order to be included in our hierarchy, we must
have $M_{0}=L$.
\end{proof}
This begs the question of how 'close' are $V$ and $L$, assuming
our well-ordered model universe exists. One quick observation, resulting
directly from the well-ordering of the inner models, is that $V$
cannot contain a measurable cardinal.
\begin{thm}
\label{thm:Measurable}$V\models\textrm{There is no measurable cardinal}$.
\end{thm}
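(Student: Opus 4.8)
The idea is to exploit the fact that a measurable cardinal gives us a nontrivial elementary embedding $j\colon V\to \mathrm{Ult}(V,U)=N$ for some normal ultrafilter $U$ on $\kappa$, and that $N$ is an inner model of $V$ (indeed $N={}^\kappa N\subseteq V$ and $N$ has the same ordinals as $V$, so $N$ is a transitive class inner model satisfying $\mathsf{ZFC}$ by the usual ultrapower construction). Since $V$ is a well-ordered model universe, $N$ sits somewhere in the tower, say $N = M_\alpha$ for some $\alpha \in I$ (or $N=V$ itself, which we rule out separately by Kunen's inconsistency: a nontrivial $j\colon V\to V$ is impossible, so $N\subsetneq V$). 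The strategy is then to derive a contradiction from the interaction between $j$, the well-orderedness of the tower, and $L\subseteq N$ given by Lemma~\ref{lem:M0=00003DL}.

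First I would recall the standard facts about the ultrapower $N=\mathrm{Ult}(V,U)$: it is well-founded, hence transitized to a transitive class; $\mathrm{Ord}^N=\mathrm{Ord}$; $N\models\mathsf{ZFC}$; and crucially $V_{\kappa+1}\not\subseteq N$, in fact $U\notin N$ and $\mathcal P(\kappa)^N\subsetneq\mathcal P(\kappa)^V$ (so $N\neq V$, consistent with Kunen). Thus $N$ is a genuine proper inner model and $N=M_\alpha$ for some $\alpha$ with $0\le\alpha< ht(V)$. Now apply the elementary embedding to the whole picture: $j$ restricted to $N$ (or rather, since $j$ is a class of $V$, we consider $j\restriction M_\beta$ for various $\beta$) moves inner models to inner models, and by elementarity $j(M_0^N)$ should again be the least inner model, etc. The cleanest route, though, is the classical one: iterate the ultrapower. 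Form the linear iteration $\langle M^{(n)} : n<\omega\rangle$ with $M^{(0)}=V$, $M^{(n+1)}=\mathrm{Ult}(M^{(n)},U^{(n)})$, giving a strictly decreasing chain of inner models $V=M^{(0)}\supsetneq M^{(1)}\supsetneq M^{(2)}\supsetneq\cdots$ — each $M^{(n+1)}$ is a proper inner model of $M^{(n)}$ because it omits the relevant ultrafilter. Continuing transfinitely through all ordinals produces a chain of inner models of $V$ of order type $\mathrm{Ord}$ that is \emph{strictly decreasing}.

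This is where the contradiction bites: the inner models of $V$ are well-ordered by $\subsetneq$, so in particular there is no strictly descending sequence of inner models of any length — and certainly not one of length $\omega$, let alone $\mathrm{Ord}$. An infinite strictly $\subsetneq$-descending sequence $M^{(0)}\supsetneq M^{(1)}\supsetneq\cdots$ of proper inner models corresponds (for $n\ge 1$) to an infinite strictly descending sequence $\alpha_1>\alpha_2>\cdots$ in the well-order $I$ of $\mathbb M$, which is impossible. Hence no normal ultrafilter $U$ on any $\kappa$ can exist in $V$, i.e.\ $V\models$ ``there is no measurable cardinal.''

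The main obstacle is the bookkeeping needed to justify that the iterated ultrapowers $M^{(n)}$ are genuinely \emph{proper} inner models of their predecessors inside the $\mathsf{BG}$ framework — one must check that $M^{(n+1)}\neq M^{(n)}$, which is the standard observation that the iteration ultrafilter is not an element of the next model, but phrasing this carefully with $j$ as a class (and noting that $j$, $U$, the iteration, and each $M^{(n)}$ all live in $\mathcal V$) requires a little care. A leaner alternative that sidesteps the iteration entirely: just use the single ultrapower $N=M_\alpha$ and the elementary embedding $j\colon V\to N$; then $j$ maps the tower $\mathbb M$ of $V$ into the tower $\mathbb M^N$ of $N$, and by elementarity and the uniqueness clause of Definition~\ref{def:Well-Ordered-Model}, $N$ must think $j(\alpha)$ indexes $N$'s copy of $M_\alpha$; chasing $\mathrm{crit}(j)=\kappa$ through the indexing of $L=M_0\subsetneq\cdots\subsetneq N$ and using that $j$ fixes $L$ pointwise but moves $\kappa$ yields that $\alpha$ must satisfy $\alpha<j(\alpha)$ while simultaneously the segment of $\mathbb M$ below $\alpha$ is sent cofinally below $j(\alpha)$, contradicting that $I$ is well-ordered with $\alpha$ of fixed order type. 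I would present the iteration argument as the main proof since it is the most transparent, and remark on the embedding argument as an alternative.
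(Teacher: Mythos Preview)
Your proposal is correct and takes essentially the same approach as the paper: use the measurable to get an elementary embedding into a proper inner model, iterate to produce an infinite strictly $\supsetneq$-descending chain of inner models, and contradict the well-ordering of $I$. The paper phrases the iteration step as ``$j(\kappa)$ is measurable in $M_\alpha$, so there is $j(j)\colon M_\alpha\to N$ with $N\subsetneq M_\alpha$,'' which is exactly your iterated ultrapower $M^{(n+1)}=\mathrm{Ult}(M^{(n)},U^{(n)})$; your additional care in verifying $M^{(n+1)}\subsetneq M^{(n)}$ via $U^{(n)}\notin M^{(n+1)}$ is appropriate and makes the argument more self-contained than the paper's version. (Your sketched ``leaner alternative'' using $j$ acting on the tower $\mathbb M$ is shakier, since $\mathbb M$ lives in $\mathcal V$ and need not be definable in $V$, so elementarity of $j$ does not obviously apply to it --- but you correctly flag the iteration argument as the main proof, and that one is sound.)
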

\begin{proof}
Suppose to the contrary, that there is a measurable cardinal $\kappa\in V$.
Then there exists an elementary embedding $j\colon V\to M$, where
$M$ is an inner model of $V$ \citep{key-16}. Therefore $M=M_{\alpha}$
for some $\alpha\in I$. But because the embedding is elementary,
$j\left(\kappa\right)$ is measurable in $M_{\alpha}$, so there exists
an additional elementary embedding $j\left(j\right)\colon M_{\alpha}\to N$.
But this means there is an elementary embedding $V\rightarrow N$,
and so $N$ is itself an inner model of $V$ as well, so $N=M_{\beta}$
for some $\beta\in I$. However, $N\subsetneq M_{\alpha}$, and therefore
$\beta<_{I}\alpha$.

By induction, we can repeat this process and construct an infinite
descending chain of inner models. But $V$ is a well-ordered model
universe, so the inner models are well-ordered and this is impossible.
Therefore, there is no measurable cardinal in $V$.
\end{proof}
However, not only are there no measurable cardinals in a well-ordered
model universe, but we can further show it has no $0^{\sharp}$ as
well, although this is a bit less straightforward.
\begin{thm}
\label{thm:.0=000023}$V\vDash0^{\sharp}\text{ does not exist}$.
\end{thm}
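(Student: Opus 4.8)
The plan is to argue by contradiction. Since a well-ordered model universe has, in particular, \emph{linearly} ordered proper inner models (if $L[c]=M_{a}$ and $L[d]=M_{b}$ then $a<_{I}b$ or $b<_{I}a$, forcing $L[c]\subseteq L[d]$ or the reverse), it suffices to exhibit, under the assumption that $V\models$``$0^{\sharp}$ exists'', two $\subseteq$-incomparable proper inner models of $V$. The natural candidates are $L[c]$ and $L[d]$ for a pair of mutually $L$-generic Cohen reals $c,d\in V$, and the whole point of the hypothesis ``$0^{\sharp}$ exists'' is to guarantee that $V$ is rich enough to contain such reals.

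First I would record the classical consequences of $0^{\sharp}$: every uncountable cardinal of $V$ is inaccessible in $L$, so $\aleph_{1}^{L}$ is a countable ordinal of $V$, and hence $\mathbb{R}^{L}$ — which is contained in $L_{\omega_{1}^{L}}$, a set of hereditary size $\aleph_{1}^{L}$ — is countable in $V$. Consequently there are only countably many Borel codes in $L$, the meager Borel sets coded in $L$ have meager union, and by the Baire category theorem $V$ contains a real $c$ that is Cohen-generic over $L$; note $c\notin L$. Since Cohen forcing $\mathbb{C}$ is countable and $L\models\mathsf{GCH}$, we have $L[c]\models\mathsf{CH}$ and $\aleph_{1}^{L[c]}=\aleph_{1}^{L}$, so $\mathbb{R}^{L[c]}$ is again countable in $V$; repeating the argument with $L$ replaced by $L[c]$ yields a real $d\in V$ that is Cohen-generic over $L[c]$. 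Then $\langle c,d\rangle$ is generic over $L$ for the product $\mathbb{C}\times\mathbb{C}$.

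Now I would invoke the product (mutual genericity) lemma: $L[c]\cap L[d]=L$. Since $c\notin L$ this gives $L[c]\not\subseteq L[d]$, and by the symmetry of the product $L[d]\not\subseteq L[c]$. It remains to check that $L[c]$ and $L[d]$ are genuine \emph{proper} inner models of $V$: each is transitive, contains all ordinals of $V$, satisfies $\mathsf{ZFC}$ (it is of the form $L[r]$ for a real $r$, which carries a definable well-order), and is definable in $V$ from the set parameter $c$ (resp. $d$), hence lies in $\mathcal{V}$; and each is \emph{proper} because set forcing over $L$ cannot add a sharp for $L$, so $0^{\sharp}\notin L[c],L[d]$ while $0^{\sharp}\in V$. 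Therefore $L[c]$ and $L[d]$ are two members of the sequence $\mathbb{M}$ that are incomparable with respect to inclusion, contradicting that $I$ is (well-, hence linearly) ordered. Thus $0^{\sharp}$ does not exist in $V$.

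The argument is short, and the only genuinely imported ingredients are (i) that $0^{\sharp}$ makes $\aleph_{1}^{L}$ countable in $V$, (ii) the mutual-genericity lemma $L[c]\cap L[d]=L$, and (iii) the fact that set forcing over $L$ never adds $0^{\sharp}$ (used only to see that $L[c],L[d]\neq V$). None of these is deep; (ii) and (iii) are the places where one must be a little careful. The conceptual heart — and in that sense the ``main obstacle'' to be aware of — is recognizing that $0^{\sharp}$ is used \emph{solely} to force $\mathbb{R}^{L}$ (and then $\mathbb{R}^{L[c]}$) to be countable in $V$, which is exactly what lets $V$ house mutually $L$-generic reals and hence an $\subseteq$-antichain of inner models, something a well-ordered model universe cannot tolerate. (Iterating, one in fact gets continuum-many pairwise incomparable inner models, but a single incomparable pair already suffices.)
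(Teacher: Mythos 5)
Your proposal is correct and follows essentially the same route as the paper: use the consequence of $0^{\sharp}$ that uncountable $V$-cardinals are inaccessible in $L$ to make the relevant $L$-data countable in $V$, obtain a pair of mutually $L$-generic Cohen reals inside $V$ (the paper gets one generic via Rasiowa--Sikorski and splits it into evens and odds, you get $c$ and then $d$ generic over $L[c]$, which is the same thing up to $\mathbb{C}\cong\mathbb{C}\times\mathbb{C}$), and conclude that $L[c]$ and $L[d]$ are $\subseteq$-incomparable inner models, contradicting the (linear) ordering. Your extra remarks on mutual genericity via $L[c]\cap L[d]=L$ and on properness via non-addition of sharps are valid refinements of steps the paper handles more tersely.
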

\begin{proof}
Suppose to the contary, that $0^{\sharp}$ does exist. Then every
uncountable cardinal in $V$ is inaccessible in $L$ (see corollary
18.3 in \citep{key-17}).

Now remember Cohen forcing \citep{key-1}, where we use finite partial
functions, and define $\mathbb{P=}\mathsf{Fin}\left(\omega,2\right)^{L}$.
In $L$ we have $\left|\mathbb{P}\right|=\aleph_{0}$, and therefore
$\left|\mathcal{P}\left(\mathbb{P}\right)\right|=2^{\aleph_{0}}=\aleph_{1}$,
which is obviously smaller than the first inaccessible cardinal. Hence
in $V$ we have $\left|\mathcal{P}\left(\mathbb{P}\right)\right|=\aleph_{0}$,
meaning there are at most countable many dense subsets of $\mathbb{P}$,
and therefore by the Rasiowa-Sikorski lemma \citep{key-18} there
exists a generic set $G\in V\setminus L$ that intersects them all.
Hence $V\vDash\exists G\left(L\left[G\right]\supsetneq L\wedge L\left[G\right]\vDash\mathsf{ZFC}\right)$.
Therefore $L\left[G\right]$ is an inner model of $V$.

However each Cohen forcing is isomorphic to the product of two separate
Cohen forcings. Namely, for each $I_{0}\subsetneq I$ we have $\mathsf{Fin}\left(I,2\right)\cong\mathsf{Fin}\left(I_{0},2\right)\times\mathsf{Fin}\left(I\setminus I_{0},2\right)$
(see Kunen \citep{key-19} ch. VIII 2.1). So take $I=\omega$ and
$I_{0}$ the set of even natural numbers. According to the theorem
$G_{0}=G\cap\mathsf{Fin}\left(I_{0},2\right)$ is $\mathsf{Fin}\left(I_{0},2\right)$-generic
over $L$, $G_{1}=G\cap\mathsf{Fin}\left(\omega\setminus I_{0},2\right)$
is $\mathsf{Fin}\left(\omega\setminus I_{0},2\right)$-generic over
$L\left[G_{0}\right]$, and $L\left[G\right]=L\left[G_{0}\right]\left[G_{1}\right]$.
So $G_{1}\notin L\left[G_{0}\right]$, and for the same reasoning
we have $G_{0}\notin L\left[G_{1}\right]$. But as both $L\left[G_{0}\right]$
and $L\left[G_{1}\right]$ are inner models of $L\left[G\right]$
and so of $V$, either $L\left[G_{0}\right]\subsetneq L\left[G_{1}\right]$
or $L\left[G_{1}\right]\subsetneq L\left[G_{0}\right]$. Either way
we arrive at a contradiction. Therefore $0^{\sharp}$ does not exist.
\end{proof}
Now that we know $V$ cannot be too far off $L$, we may wonder if
there is perhaps a deeper connection between the two. For this we
turn to the notion of relative constructibility.

Note that there is a lot of confusion regarding its notation, so we
shall now present the notation used by Jech in \citep{key-17} and
which we adhere to.

Constructibility can be generalized in two different ways. One way
is to consider sets constructive relative to a given set $A$, resulting
in the inner model $L\left[A\right]$.

This is done by defining $def_{A}\left(M\right)=\left\{ X\subseteq M\mid X\text{ is definable over }\left(M,\in,A\cap M\right)\right\} $,
where $A\cap M$ is a unary predicate, and then defining a cumulative
hierarchy:
\begin{align*}
L_{0}\left[A\right] & =\emptyset\\
L_{\alpha+1}\left[A\right] & =def_{A}\left(L_{\alpha}\left[A\right]\right)\\
L_{\delta}\left[A\right] & =\bigcup\limits _{\alpha<\delta}L_{\alpha}\left[A\right]\text{ for limit ordinals}\\
L\left[A\right] & =\bigcup\limits _{\alpha\in\mathbf{Ord}}L_{\alpha}\left[A\right]
\end{align*}

The resulting model $L\left[A\right]$ is a model of $\mathsf{ZFC}$
(see ch. 13 in \citep{key-17}).

Another way, yields for every set $A$ the smallest inner model of
$\mathsf{ZF}$ that contains it. However, in general, this model \emph{need
not} satisfy the Axiom of Choice.

Let $T=TC\left(\left\{ A\right\} \right)$ be the transitive closure
of $A$, and define the following cumulative hierarchy: 
\begin{align*}
L_{0}\left(A\right) & =T\\
L_{\alpha+1}\left(A\right) & =def\left(L_{\alpha}\left(A\right)\right)\\
L_{\delta}\left(A\right) & =\bigcup\limits _{\alpha<\delta}L_{\alpha}\left(A\right)\text{ for limit ordinals}\\
L\left(A\right) & =\bigcup\limits _{\alpha\in\mathbf{Ord}}L_{\alpha}\left(A\right)
\end{align*}

The resulting model $L\left(A\right)$ is an inner model of $\mathsf{ZF}$,
contains $A$, and is the smallest such model.
\begin{thm}
\label{thm:L=00005BA=00005D}Let $V$ be a nice well-ordered model
universe. For all $\alpha\in I$ $M_{\alpha}=L\left[A\right]$ for
some $A\in V$.
\end{thm}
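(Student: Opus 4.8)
The plan is to prove the statement by cases on whether $\alpha$ is $0$, a successor, or a limit ordinal, and the same argument works in each case without any inductive hypothesis. The basic tool is a ``jump lemma'': if $b$ is a set of ordinals and $L[b]$ is a proper inner model of $V$, then $L[b]=M_{\mu(b)}$, where $\mu(b)=\min\{\gamma\in I:b\in M_{\gamma}\}$. Indeed $L[b]$ is an inner model of $\mathsf{ZFC}$, hence equals $M_{\gamma_{0}}$ for some $\gamma_{0}\in I$; since $b\in L[b]=M_{\gamma_{0}}$ we get $\mu(b)\le\gamma_{0}$, while if $b\in M_{\gamma}$ then $M_{\gamma}$ is an inner model of $\mathsf{ZF}$ containing $b$, so $L[b]\subseteq M_{\gamma}$ and hence $\gamma_{0}\le\gamma$; thus $\gamma_{0}=\mu(b)$. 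The other standard ingredient is the coding fact: inside any inner model $W$ of $\mathsf{ZFC}$, every $y\in W$ is captured by a set of ordinals $e_{y}\in W$ (well-order $\mathrm{TC}(\{y\})$ and transfer $\in$), so $y\in L[e_{y}]\subseteq W$; in particular two inner models of $\mathsf{ZFC}$ with the same sets of ordinals coincide, so a proper inclusion $M\subsetneq M'$ of inner models is always witnessed by a set of ordinals in $M'\setminus M$.

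The case $\alpha=0$ is Lemma~\ref{lem:M0=00003DL}: $M_{0}=L=L[\emptyset]$. For $\alpha=\beta+1$ we have $M_{\beta}\subsetneq M_{\beta+1}$, so by the coding fact there is a set of ordinals $b\in M_{\beta+1}\setminus M_{\beta}$; since $M_{\beta+1}$ is a proper inner model, so is $L[b]\subseteq M_{\beta+1}$. Now $\mu(b)\le\beta+1$, while $\mu(b)\not\le\beta$ because $M_{\gamma}\subseteq M_{\beta}$ for $\gamma\le\beta$; as $I$ is (by niceness) isomorphic to an ordinal or to $\mathbf{Ord}$, this forces $\mu(b)=\beta+1$, and the jump lemma gives $M_{\beta+1}=L[b]$.

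The limit case $\alpha=\delta$ is the crux, and the only place where niceness is genuinely needed. Write $N=\bigcup_{\beta<\delta}M_{\beta}$, so $N\subseteq M_{\delta}$; it suffices to show $N\subsetneq M_{\delta}$, for then the coding fact produces a set of ordinals $b\in M_{\delta}\setminus N$, whence $b\notin M_{\gamma}$ for all $\gamma<\delta$ but $b\in M_{\delta}$, so $\mu(b)=\delta$ and $M_{\delta}=L[b]$. Suppose toward a contradiction that $N=M_{\delta}$. For each ordinal $\gamma$ the set $\mathcal{P}(\gamma)^{M_{\delta}}$ belongs to $M_{\delta}=\bigcup_{\beta<\delta}M_{\beta}$, hence to some $M_{\beta}$ with $\beta<\delta$, so every $b\in\mathcal{P}(\gamma)^{M_{\delta}}$ lies in that $M_{\beta}$ and therefore $\mu(b)\le\beta<\delta$; consequently the class function $s(\gamma):=\sup\{\mu(b):b\in\mathcal{P}(\gamma)^{M_{\delta}}\}$ satisfies $s(\gamma)<\delta$ for every $\gamma$, and $s$ is non-decreasing since $\mathcal{P}(\gamma)^{M_{\delta}}\subseteq\mathcal{P}(\gamma')^{M_{\delta}}$ for $\gamma\le\gamma'$. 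On the other hand, by the coding fact applied in $M_{\delta}$ every element of $M_{\delta}$ lies in $L[b]=M_{\mu(b)}$ for some set of ordinals $b$; so if $\sup_{\gamma}s(\gamma)$ were some $\beta^{*}<\delta$ then all of $M_{\delta}$ would sit inside $M_{\beta^{*}}\subsetneq M_{\delta}$, which is absurd, and hence $\sup_{\gamma}s(\gamma)=\delta$. Now use niceness: $\delta$ is a set ordinal, so $\mu:=\mathrm{cf}(\delta)$ is a set cardinal, and we may choose ordinals $\gamma_{\xi}$ ($\xi<\mu$) with $\langle s(\gamma_{\xi}):\xi<\mu\rangle$ cofinal in $\delta$. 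Then $\langle\gamma_{\xi}:\xi<\mu\rangle$ is a set, $\gamma_{\infty}:=\sup_{\xi<\mu}\gamma_{\xi}$ is an ordinal, and monotonicity gives $s(\gamma_{\infty})\ge\sup_{\xi<\mu}s(\gamma_{\xi})=\delta$, contradicting $s(\gamma_{\infty})<\delta$. Hence $N\subsetneq M_{\delta}$, and the argument is complete.

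The substantive point, and the step I expect to be the main obstacle, is exactly this last one: at a limit stage $\bigcup_{\beta<\delta}M_{\beta}$ is never a model of $\mathsf{ZFC}$, i.e.\ $M_{\delta}$ always adds a fresh set of ordinals. The cofinality computation collapses precisely when $\delta=\mathbf{Ord}$, since then $\mathrm{cf}(\delta)$ is not a set and no $\gamma_{\infty}$ is available — which is why the hypothesis of niceness cannot be dropped.
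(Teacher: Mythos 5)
Your proof is correct, but at the limit stage it takes a genuinely different route from the paper's. The base and successor cases essentially coincide (the paper quotes Vop\v{e}nka to produce a set of ordinals in $M_{\alpha+1}\setminus M_{\alpha}$ where you code an arbitrary element inside $M_{\alpha+1}$; both work, and both then squeeze $L[b]$ between $M_{\alpha}$ and $M_{\alpha+1}$). At a limit $\delta$ the paper argues inductively: it uses the hypothesis $M_{\beta}=L[A_{\beta}]$ for $\beta<\delta$, chooses by $\mathsf{AC}$ representatives from the set-many equivalence classes of codes --- this is where it invokes niceness --- disjointifies them into a single set $B_{\delta}$, and shows $B_{\delta}\in M_{\delta}$ while $L[B_{\delta}]\supseteq M_{\beta}$ for all $\beta<\delta$, forcing $L[B_{\delta}]=M_{\delta}$. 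You instead prove, with no inductive hypothesis at all, that $\bigcup_{\beta<\delta}M_{\beta}\subsetneq M_{\delta}$ by a cofinality argument on $s(\gamma)=\sup\{\mu(b):b\in\mathcal{P}(\gamma)^{M_{\delta}}\}$, and then any set of ordinals in the difference generates $M_{\delta}$ via your jump lemma; niceness enters only to guarantee $\mathrm{cf}(\delta)$ is a set so that $\gamma_{\infty}$ exists. Your version is shorter and avoids both the induction and the representative-choosing machinery; what the paper's explicit amalgamated code buys is that it recycles (with Global Choice) to prove Corollary \ref{cor:ord class} for $I=\mathbf{Ord}$, and it shows along the way that the earlier models are enumerable inside $M_{\delta}$ --- neither of which your argument directly yields, since the cofinality step is precisely what breaks at $\delta=\mathbf{Ord}$. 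One phrasing to tighten: $N=\bigcup_{\beta<\delta}M_{\beta}$ need not be an inner model of $\mathsf{ZFC}$ (or even of $\mathsf{ZF}$), so you cannot literally quote ``a proper inclusion of inner models is witnessed by a set of ordinals''; instead, code $y\in M_{\delta}\setminus N$ as a set of ordinals $e_{y}\in M_{\delta}$ and observe that $e_{y}\in M_{\beta}$ would give $y\in L[e_{y}]\subseteq M_{\beta}$ --- the same one-line argument, applied to each $M_{\beta}$ separately rather than to $N$.
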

\begin{proof}
We prove this theorem by induction on $I$. It is trivially true for
$M_{0}=L=L\left[\emptyset\right]$.

For the successor stage, note that $M_{\alpha}\vDash\mathsf{AC}$.
By a theorem of Vop\v{e}nka \citep{key-20}, this means there exists
a set of ordinals $A\in M_{\alpha+1}\setminus M_{\alpha}$. Thus $M_{\alpha+1}\supseteq L\left(A\right)\supsetneq M_{\alpha}$.
However, note that for sets of ordinals $L\left(A\right)=L\left[A\right]$,
because $A\subsetneq L$. Therefore $M_{\alpha+1}\supseteq L\left[A\right]\supsetneq M_{\alpha}$.
However we know there is no model of $\mathsf{ZFC}$ strictly between
$M_{\alpha}$ and $M_{\alpha+1}$. Therefore $M_{\alpha+1}=L\left[A\right]$.

We turn to the limit stage. Let $\delta$ be a limit ordinal, and
assume the theorem was proven for all $\beta<\delta$. We want to
show that even working in $M_{\delta}$ we can enumerate all the inner
models preceding it on the model tower. Still working in $V$, for
every $\beta<\delta$ there exists a set of ordinals $C_{\beta}$
such that $M_{\beta}=L\left[C_{\beta}\right]$ (see ex. 13.27 in \citep{key-17}).
$C_{\beta}\in M_{\beta}$ and so $C_{\beta}\in M_{\delta}$, meaning
$L\left[C_{\beta}\right]^{M_{\delta}}=L\left[C_{\beta}\right]=M_{\beta}$
and therefore $M_{\beta}$ is a definable with set parameters in $M_{\delta}$.
So, working in $\left\langle M_{\delta},\mathcal{M}_{\delta},\in\right\rangle $
as defined using fact \ref{fact:ZF->BG}, for each $\beta<\delta$
$M_{\beta}\in\mathcal{M}_{\delta}$.

Also, note that any inner model of the form $L\left[A\right]$ where
$A\in M_{\delta}$ is definable using set parameters in $V$ as well,
and therefore is equal to $M_{\beta}$ for some $\beta<\delta$.

Despite having each individual model definable with set parameters
in $M_{\delta}$, we still can't be sure we can actually enumerate
all the inner models preceding $M_{\delta}$ within $\left\langle M_{\delta},\mathcal{M}_{\delta},\in\right\rangle $.
So next we define an eqivalence relation on sets of ordinals: $A\sim B\Leftrightarrow L\left[A\right]=L\left[B\right]$.
For each equivalence class $\left[A\right]$ let $r\left[A\right]$
be the sets of minimal rank in $\left[A\right]$. Obviously for each
$A$ $r\left[A\right]$ is a set, and by the induction hypothesis
and the note above there are at most $\delta$ different models of
the form $L\left[A\right]$. So $\left\{ r\left[A\right]\mid A\in V\right\} $
is a set of sets, and using the Axiom of Choice we can choose a representative
from each $r\left[A\right]$.

Next, because all of said models are equal to some $M_{\beta}$ on
the chain, we can sort the representatives according to the binary
relation $A\leq B\Leftrightarrow A\in L\left[B\right]$. The models
are well-ordered because as noted above they all belong on the tower.
We also already established that each one is definable with set parameters
in $M_{\delta}$, so what we get is a sequence of representatives
$\left\langle A_{\beta}\mid\beta<\delta\right\rangle $, which completely
enumerates the $M_{\beta}$'s for all $\beta<\delta$, and which is
defined using set parameters within $M_{\delta}$.

We define inductively two sequences $\left\langle B_{\beta}\mid\beta<\delta\right\rangle $
and $\left\langle \gamma_{\beta}\mid\beta<\delta\right\rangle $.
Let $B_{0}=\emptyset$, $\gamma_{0}=0$. For each $\beta<\delta$
define $\gamma_{\beta}=\sup\left(\bigcup\limits _{\alpha<\beta}B_{\alpha}\right)$
and $B_{\beta}=\left\{ \gamma_{\beta}+\epsilon\mid\epsilon\in A_{\beta}\right\} $.
It is clear by the definitions that $\gamma$ is strictly monotonously
increasing, and that all the $B$'s are mutually pairwise disjoint.

Let $B_{\delta}=\bigcup\limits _{\beta<\delta}B_{\beta}$. Clearly
$B_{\delta}$ is a set of ordinals. We claim $L\left[B_{\delta}\right]=M_{\delta}$.

First note that for each $\beta<\delta$ $B_{\beta}=\left(B_{\delta}\cap\gamma_{\beta+1}\right)\setminus\gamma_{\beta}$
and $A_{\beta}=\left\{ \epsilon\mid\gamma_{\beta}+\epsilon\in B_{\beta}\right\} $.
Therefore $A_{\beta}\in L\left[B_{\delta}\right]$ and so $M_{\beta}=L\left[A_{\beta}\right]\subseteq L\left[B_{\delta}\right]$.
Therefore $L\left[B_{\delta}\right]\supseteq M_{\delta}$.

On the other hand, we've already shown that $\left\langle A_{\beta}\mid\beta<\delta\right\rangle \in M_{\delta}$.
So $\left\langle B_{\beta}\mid\beta<\delta\right\rangle \in M_{\delta}$
and therefore $B_{\delta}\in M_{\delta}$, implying $L\left[B_{\delta}\right]=L\left(B_{\delta}\right)\subseteq M_{\delta}$.

We conclude that $L\left[B_{\delta}\right]=M_{\delta}$, and so the
induction is complete.
\end{proof}
It is instructive to note that we used the niceness property exactly
once, to justify how we could simultaneously choose a representative
from each $r\left[A\right]$. To do this for class-many sets would
have required the Axiom of Global Choice (see \citep{key-21}), which
as noted could be false in $\left\langle M_{\delta},\mathcal{M}_{\delta},\in\right\rangle $.
Moreover, if the underlying order was longer than $\mathbf{Ord}$,
this proof would fail because ``$\gamma_{\mathbf{Ord}}$'' would
be undefinable, as $\gamma$ is a strictly increasing sequence of
ordinals.
\begin{cor}
If $ht\left(V\right)<\infty$ then $V=L\left[A\right]$ for some $A\in V$.
\end{cor}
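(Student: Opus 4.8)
The plan is to regard $V=M_{ht(V)}$ as one additional stage of the induction in Theorem~\ref{thm:L=00005BA=00005D} and to split according to whether the ordinal $ht(V)$ (a genuine ordinal, since $ht(V)<\infty$) is $0$, a successor, or a limit. If $ht(V)=0$ there are no proper inner models at all, so $L$ is not proper, forcing $L=V=L[\emptyset]$.

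If $ht(V)=\alpha+1$, I would re-run the successor step of the proof of Theorem~\ref{thm:L=00005BA=00005D} with ``$M_{\alpha+1}$'' now taken to be $V$ itself. Exactly as there, from $M_\alpha\vDash\mathsf{AC}$ and $M_\alpha\subsetneq V$ one obtains, via Vop\v{e}nka's theorem, a set of ordinals $A\in V\setminus M_\alpha$; then $L[A]=L(A)$ is an inner model of $V$, and since the inner models of $V$ are linearly ordered by inclusion while $A\in L[A]\setminus M_\alpha$, necessarily $M_\alpha\subsetneq L[A]$. As no model of $\mathsf{ZFC}$ lies strictly between $M_\alpha$ and $M_{\alpha+1}=V$ (such a model would be a proper inner model $M_\eta$ with $M_\alpha\subsetneq M_\eta\subsetneq V$, which is incompatible with $otp(I)=\alpha+1$), we conclude $L[A]=V$.

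The limit case $ht(V)=\gamma$ is the only one calling for a new observation, and I would dispatch it with a short dichotomy. If some $x\in V$ belongs to none of the models $M_\beta$, $\beta<\gamma$, then, coding $x$ by a set of ordinals $A$ (using $\mathsf{AC}$ in $V$), the inner model $L[A]$ of $V$ contains $x$ and so cannot equal any proper inner model $M_\beta$; hence $L[A]=V$ and we are done. Otherwise $V=M_{ht(V)}=\bigcup_{\beta<\gamma}M_\beta$, and I would transcribe the limit construction of Theorem~\ref{thm:L=00005BA=00005D} with $\delta=\gamma$ and $M_\delta=V$: write $M_\beta=L[A_\beta]$ for a suitably chosen representative set of ordinals $A_\beta$, form the auxiliary ordinals $\gamma_\beta=\sup\bigl(\bigcup_{\alpha<\beta}B_\alpha\bigr)$, the pairwise disjoint blocks $B_\beta=\{\gamma_\beta+\epsilon\mid\epsilon\in A_\beta\}$, and $B_\gamma=\bigcup_{\beta<\gamma}B_\beta$. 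Since every $A_\beta$ is recoverable from $B_\gamma$, we get $M_\beta=L[A_\beta]\subseteq L[B_\gamma]$ for all $\beta<\gamma$, hence $V=\bigcup_{\beta<\gamma}M_\beta\subseteq L[B_\gamma]\subseteq V$, i.e.\ $V=L[B_\gamma]$.

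The step to watch — and the reason the hypothesis is $ht(V)<\infty$ and not merely niceness — is the choice of a representative from each class $r[A]$ in that limit construction: it goes through with ordinary $\mathsf{AC}$ precisely because, when $ht(V)<\infty$, there are only set-many inner models of the form $L[A]$ below $V$, so one is not forced to invoke Global Choice; the recursion defining $\langle\gamma_\beta\mid\beta<\gamma\rangle$ is likewise harmless since $\gamma$ is a set. I do not anticipate any further obstacle, because here the ambient model throughout the limit step \emph{is} $V$, so the ``definable with set parameters in $M_\delta$'' verifications of the original argument are automatic.
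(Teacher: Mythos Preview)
Your proposal is correct and follows the paper's approach, which is simply to re-run the induction of Theorem~\ref{thm:L=00005BA=00005D} with $M_{ht(V)}$ replaced by $V$. Your case split and the careful checks (Vop\v{e}nka for the successor step, the use of ordinary $\mathsf{AC}$ rather than Global Choice at the limit) are exactly the content implicit in the paper's one-line proof.

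One remark on the limit case: your dichotomy is sound but not needed. Even without knowing $V=\bigcup_{\beta<\gamma}M_\beta$, once you build $B_\gamma$ you have that $L[B_\gamma]$ is an inner model of $V$ containing every $M_\beta$; if it were a \emph{proper} inner model it would equal some $M_\eta$ with $\eta<\gamma$, yet $M_{\eta+1}\subseteq L[B_\gamma]=M_\eta$ is absurd. So $L[B_\gamma]=V$ directly, and your first subcase (coding an $x\notin\bigcup M_\beta$) is a pleasant shortcut rather than a necessary branch.
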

\begin{proof}
Use the proof above, only substitute $M_{ht\left(V\right)}$ for $V$.
\end{proof}
\begin{cor}
\label{cor:ord class}If $\left\langle V,\mathcal{V},\in\right\rangle \vDash\mathsf{BGC}$
and $I=\mathbf{Ord}$ then $V=L\left[A\right]$ for some class $A\subseteq\mathbf{Ord}$.
\end{cor}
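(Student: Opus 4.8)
The plan is to re-run the limit stage of the proof of Theorem~\ref{thm:L=00005BA=00005D} ``one level higher'', with the Axiom of Global Choice replacing the appeal to $\mathsf{AC}$ made there and $\mathbf{Ord}$ replacing the limit ordinal $\delta$. First, since $I=\mathbf{Ord}$ the universe $V$ is nice (Definition~\ref{def:niceness}), so Theorem~\ref{thm:L=00005BA=00005D} gives, for each $\alpha\in\mathbf{Ord}$, a set $A$ with $M_{\alpha}=L\left[A\right]$; replacing $A$ by a canonical code (ex.~13.27 in \citep{key-17}) we may take each such $A$ to be a set of ordinals. Working in $\left\langle V,\mathcal{V},\in\right\rangle\vDash\mathsf{BGC}$, fix a global well-ordering $<_{V}$ of $V$ and, for each $\alpha$, let $A_{\alpha}$ be the $<_{V}$-least set of ordinals with $L\left[A_{\alpha}\right]=M_{\alpha}$. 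Since the defining property of the pair $\left(\alpha,A_{\alpha}\right)$ uses only set quantifiers together with the class parameters $\mathbb{M}$ and $<_{V}$, class comprehension produces the class sequence $\left\langle A_{\alpha}\mid\alpha\in\mathbf{Ord}\right\rangle$ of sets of ordinals. This simultaneous choice over a proper class of inner models is the one and only place the hypothesis $\mathsf{BGC}$ (rather than merely $\mathsf{BG}$) is used, exactly as anticipated in the discussion following Theorem~\ref{thm:L=00005BA=00005D}.

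Next I would splice the $A_{\alpha}$ into a single class of ordinals by copying the construction from the limit stage. By transfinite recursion along $\mathbf{Ord}$, which is available in $\mathsf{BGC}$, define class sequences $\left\langle\gamma_{\alpha}\mid\alpha\in\mathbf{Ord}\right\rangle$ and $\left\langle B_{\alpha}\mid\alpha\in\mathbf{Ord}\right\rangle$ by $\gamma_{0}=0$, $B_{0}=\emptyset$, $\gamma_{\alpha}=\sup\bigl(\bigcup_{\beta<\alpha}B_{\beta}\bigr)$, and $B_{\alpha}=\left\{\gamma_{\alpha}+\epsilon\mid\epsilon\in A_{\alpha}\right\}$, so that the $B_{\alpha}$ are pairwise disjoint sets of ordinals lying in consecutive initial segments, and set $A=\bigcup_{\alpha\in\mathbf{Ord}}B_{\alpha}\subseteq\mathbf{Ord}$, which is a legitimate class of the model. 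The feature that makes length exactly $\mathbf{Ord}$ (and no more) work is that here we only ever need this union, never an object ``$\gamma_{\mathbf{Ord}}$'', which would be undefinable.

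Finally I would check $L\left[A\right]=V$. For ``$\supseteq$'': fix $\alpha\in\mathbf{Ord}$; the ordinals $\gamma_{\alpha}$ and $\gamma_{\alpha+1}$ belong to $L\subseteq L\left[A\right]$, so $B_{\alpha}=\left(A\cap\gamma_{\alpha+1}\right)\setminus\gamma_{\alpha}\in L\left[A\right]$, hence $A_{\alpha}=\left\{\epsilon\mid\gamma_{\alpha}+\epsilon\in B_{\alpha}\right\}\in L\left[A\right]$, hence $M_{\alpha}=L\left[A_{\alpha}\right]\subseteq L\left[A\right]$. Since $A$ is a class of $\left\langle V,\mathcal{V},\in\right\rangle$, the class $L\left[A\right]$ built by the cumulative hierarchy is a class of the model, transitive, with the same ordinals as $V$, and it satisfies $\mathsf{ZFC}$ because it carries a definable global well-ordering; thus $L\left[A\right]$ is an inner model of $V$. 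By condition~\ref{enu:Every-Inner} of Definition~\ref{def:Well-Ordered-Model}, $L\left[A\right]$ is either $V$ itself or equal to $M_{\alpha}$ for some $\alpha\in\mathbf{Ord}$; but the latter is impossible, since $M_{\alpha+1}\subseteq L\left[A\right]$ while $M_{\alpha}\subsetneq M_{\alpha+1}$. Hence $L\left[A\right]=V$. The one genuinely delicate step is the construction of the class $\left\langle A_{\alpha}\mid\alpha\in\mathbf{Ord}\right\rangle$ in the first paragraph --- i.e., turning a class-sized family of individual choices into an honest class --- which is precisely why $\mathsf{BGC}$ appears in the hypothesis; the rest is a transcription of the limit stage of Theorem~\ref{thm:L=00005BA=00005D} together with the standard fact that $L\left[A\right]\vDash\mathsf{ZFC}$ for a class $A\subseteq\mathbf{Ord}$.
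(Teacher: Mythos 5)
Your proposal is correct and follows essentially the same route as the paper: invoke Global Choice to make the class-many simultaneous choices of ordinal codes $A_{\alpha}$ for the $M_{\alpha}$, splice them into a single class $A\subseteq\mathbf{Ord}$ via the disjoint shifted blocks $B_{\alpha}$, and conclude $L\left[A\right]=V$ because $L\left[A\right]$ is an inner model containing every $M_{\alpha}$ and hence cannot be any proper member of the tower. The only (harmless) difference is that you select codes as $<_{V}$-least elements rather than via the minimal-rank sets $r\left[A\right]$, and you spell out the final identification step that the paper leaves implicit.
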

\begin{proof}
Using Global Choice, we can choose in the limit stage class-many representatives
from all the $r\left[A\right]$'s simultaneously. Then we take \textbf{$B_{\delta}=\bigcup\limits _{\alpha\in\mathbf{Ord}}B_{\alpha}$}.
By the same arguments as in the theorem, for all $\alpha\in\mathbf{Ord}$
$L\left[B_{\delta}\right]\supseteq L\left[B_{\alpha}\right]$. But
this means $L\left[B_{\delta}\right]$ contains all the proper inner
models, hence $L\left[B_{\delta}\right]=V$.
\end{proof}
It now emerges that the models in our tower are not arbitrary at all.
They are in fact the very familiar models of the form $L\left[A\right]$.
We thus conclude that a nice well-ordered model universe $V$ is inherently
quite 'small' and 'close' to $L$, especially if $ht\left(V\right)<\infty$.

Before proceeding to the next section, it is worth noting what would
happen if instead of basing our model tower on $L$, we would base
it on some arbitrary inner model $M$. Obviously $\mathbb{M}$ wouldn't
be well-ordered anymore, so we would have to relax our definition.
We will only require that all inner models containing $M$ be on a
well-ordered chain, and that all other inner models be contained in
$M$. So below $M$ everything could be completely chaotic, but above
$M$ we would have a well-ordered tower. Now let's consider the implications.

First of all, as for theorem \ref{thm:Measurable}, this alteration
potentially allows for an infinite descending chain of models. So
let's assume that $V$ does have a measurable cardinal and $j:V\rightarrow N$
is the corresponding elementary embedding. Then $N$ must also contain
a measurable cardinal, and repeating this process, due to the well-ordering
we arrive at a model $N_{0}\subsetneq M$ after a finite number of
steps. Thus there is an elementary embedding $k:V\rightarrow N_{0}$,
and so $k\upharpoonright M$ is an elementary embedding of $M$ into
some smaller inner model. Therefore $M$ must also include a measurable
cardinal.

Theorem \ref{thm:L=00005BA=00005D} would still work as well, using
$M$ as the base for the induction. Accordingly, corollary \ref{cor:ord class}
would still hold up as well.

After analyzing the structure of well-ordered model universes, we
turn to the problem of constructing one of arbitrary height.

\section{Perfect set forcing}

In lemma \ref{lem:M0=00003DL} we proved the base of our model tower
is $L$. In this section we show how to build the first step in our
tower. Unlike the previous section, from here on we only assume that
we're working within a model of $\mathsf{ZFC}$, not $\mathsf{BG}$.
Also note that throughout this paper we follow the Israeli convention
for forcing, i.e if $p>q$ are forcing conditions, then $p$ is the
\emph{stronger} condition.

To construct the first floor in the tower, we call upon the notion
of \emph{Sacks forcing} \citep{key-22}, first invented by Gerald
Sacks, which is useful for creating minimal generic extensions. In
this section we present the original Sacks forcing and some of its
most important properties.

We assume the reader has a basic understanding of forcing. For a general
introduction to the technique of forcing, the reader may refer to
ch. VII of Kunen's book \citep{key-19}. For a more thorough exposition
and analysis of Sacks forcing, the reader may consult Geschke and
Quickert \citep{key-23}. 
\begin{defn}
Let $Seq$ denote the set of all finite binary sequences.
\begin{enumerate}
\item A \emph{tree} is a set $p\subseteq Seq$, such that for each $s\in p$
if $s\upharpoonright n\in p$ then for all $m<n$ $s\upharpoonright m\in p$.
\item If $p\subseteq Seq$ and $s\in p$, we say that $s$ \emph{splits}
in $p$ if $s^{\smallfrown}0\in p$ and $s^{\smallfrown}1\in p$.
\item If $p\subseteq Seq$ and $s$ splits in $p$ then we say $s$ is an
order $n$ splitting node if $\left|\left\{ t\subsetneq s\mid t\text{ splits in }p\right\} \right|=n$.
\item If $p\subseteq Seq$, we say $s$ is a \emph{stem} of $p$ if $s$
is a splitting node and for all $t\subsetneq s$ $t$ is not a splitting
node.
\end{enumerate}
\end{defn}
\begin{defn}
We say $p\subseteq Seq$ is a \emph{perfect} tree if:
\begin{enumerate}
\item $p$ is a tree;
\item And for every $s\in p$ there exists a splitting node $t\in p$ such
that $t\supseteq s$.
\end{enumerate}
\end{defn}
\begin{defn}
If $p$ is a perfect tree and $s\in p$ we denote $p\upharpoonright s=\left\{ t\in p\mid s\subseteq t\lor t\subseteq s\right\} $.
Plainly $p\upharpoonright s$ is perfect as well.
\end{defn}
\begin{defn}
We call $\mathbb{P}=\left\{ p\subseteq Seq\mid p\text{ is a perfect tree}\right\} $,
where $\mathbb{P}$ is ordered by reverse inclusion: $p\leq q\iff p\supseteq q$,
\emph{Sacks forcing}. Later, after we present the generalized form,
we shall refer to it as $\aleph_{0}$-Sacks forcing.
\end{defn}
We can identify the generic set $G$ with a function $f\colon\omega\to2$.
First, note that the set of perfect trees with a stem of height at
least $n$ is a dense set in $\mathbb{P}.$ Thus there are trees of
arbitrarily long finite stems in $G$. Also, if two trees $p,q$ both
have stems of height greater or equal than $n$, but the restrictions
of the stems on $n$ differ, then $ht(p\cap q)<n$, and so there is
no $r\in\mathbb{P}$ such that $r>p,q$. Hence all trees belonging
to the generic set $G$ must agree on their stems. Thus we can define
$f(n)=s(n)$, where $s$ is part of the stem of any $p\in G$. Due
to their agreement, the function $s$ is well defined, and due to
the arbitrary finite length of the stems $f$ is defined on $\omega$.
For the other direction, we may define $G=\left\{ p\in\mathbb{P}\mid\forall n\in\omega\left(f\upharpoonright n\in p\right)\right\} $
\citep{key-23}. So in essence $G$ is equivalent to a new real number,
called a \emph{Sacks real}.
\begin{lem}
\label{lem:antichain-1}$\mathsf{CH}$ implies that $\left|\mathbb{P}\right|=\aleph_{1}$
and so $\mathbb{P}$ satisfies the $\aleph_{2}$-antichain condition.
\end{lem}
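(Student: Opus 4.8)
The plan is to bound the number of perfect trees and then invoke the fact that an antichain is in particular a set of pairwise incomparable (indeed pairwise disjoint, in the forcing sense) conditions, so its cardinality cannot exceed $|\mathbb{P}|$. First I would observe that a perfect tree $p$ is by definition a subset of $Seq$, and $|Seq| = \aleph_0$ since $Seq$ is the set of finite binary sequences. Hence $\mathbb{P} \subseteq \mathcal{P}(Seq)$, which gives $|\mathbb{P}| \le 2^{\aleph_0}$. Assuming $\mathsf{CH}$, i.e. $2^{\aleph_0} = \aleph_1$, this yields $|\mathbb{P}| \le \aleph_1$. For the reverse inequality, I would exhibit $\aleph_1$-many distinct perfect trees --- in fact there are already $2^{\aleph_0}$ of them, e.g. for each $x \in 2^\omega$ the tree $p_x = \{ s \in Seq \mid s \text{ is compatible with some appropriately coded branch through } x\}$, or more simply just note that the full binary tree $2^{<\omega}$ has $2^{\aleph_0}$-many perfect subtrees (for instance, for each infinite co-infinite $A \subseteq \omega$ the tree that splits exactly at levels in $A$). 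Either way $|\mathbb{P}| \ge 2^{\aleph_0} = \aleph_1$, so $|\mathbb{P}| = \aleph_1$.

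For the antichain condition, the point is simply that any antichain $\mathcal{A} \subseteq \mathbb{P}$ is a subset of $\mathbb{P}$, so $|\mathcal{A}| \le |\mathbb{P}| = \aleph_1 < \aleph_2$. Therefore there is no antichain of size $\aleph_2$, which is exactly the statement that $\mathbb{P}$ satisfies the $\aleph_2$-antichain condition (the $\aleph_2$-c.c.). This direction requires no real work once the cardinality computation is in hand.

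I do not expect a genuine obstacle here; the only mild subtlety is making sure the lower bound $|\mathbb{P}| \ge \aleph_1$ is recorded so that the conclusion is an equality rather than just $\le$, and being careful that "$\aleph_2$-antichain condition" is being used in the sense "every antichain has size $<\aleph_2$" (equivalently, size $\le \aleph_1$), which is how the paper will use it when verifying that the iteration of Sacks forcings preserves cardinals. The whole argument is a one-line cardinal arithmetic observation: $|\mathbb{P}| = |\mathcal{P}(Seq)| = 2^{\aleph_0} \overset{\mathsf{CH}}{=} \aleph_1$, and any antichain embeds into $\mathbb{P}$.
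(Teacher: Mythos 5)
Your proposal is correct and follows essentially the same argument as the paper: count $|Seq|=\aleph_0$, bound $|\mathbb{P}|\leq 2^{\aleph_0}=\aleph_1$ under $\mathsf{CH}$, and conclude that no antichain can have size $\aleph_2$. The only difference is that you also record the lower bound $|\mathbb{P}|\geq\aleph_1$ to get a genuine equality, which the paper skips since only the upper bound is ever used.
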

\begin{proof}
We simply count the number of possible conditions. There are at most
$\aleph_{0}$ finite binary sequences, and therefore at most $2^{\aleph_{0}}$
possible trees. Assuming $\mathsf{CH}$ $2^{\aleph_{0}}=\aleph_{1}$,
and so there are at most $\aleph_{1}$ conditions and no antichains
of cardinality $\aleph_{2}$.
\end{proof}
We note that $\mathbb{P}$ does not offer much in way of closure.
It is plainly not $\aleph_{1}$-closed, as one may take any perfect
tree $p$ and build the following sequence: $\left\langle p_{n}\mid n\in\omega\right\rangle $
where $p_{0}=p$ and $p_{n+1}=p_{n}\upharpoonright s^{\frown}0$ where
$s$ is the single order $0$ splitting node of $p_{n}$. This is
obviously a sequence of perfect trees such that for all $n$ $p_{n+1}>p_{n}$,
however $\bigcap\limits _{n\in\omega}p_{n}$ has no splitting nodes
at all, and therfore is not a perfect tree.

Luckily, perfect trees offer a slightly weaker form of closure, using
the technique of \emph{fusion}.
\begin{defn}
Suppose $p,q\in\mathbb{P}$. We say $p\geq_{n}q$ if:
\begin{enumerate}
\item $p\geq q$;
\item And $s\in p$ is an order $n$ splitting in $p$ node if and only
if $s\in q$ is an order $n$ splitting node in $q$.
\end{enumerate}
\end{defn}
\begin{lem}
\label{lem:Fusion1}Fusion: Let $\left\langle p_{n}\in\mathbb{P}\mid n\in\omega\right\rangle $
be a sequence of conditions such that for all $n$ $p_{n+1}\geq_{n}p_{n}$.
Then $\bigcap\limits _{n\in\omega}p_{n}\in\mathbb{P}$.
\end{lem}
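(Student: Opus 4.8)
The plan is to show that $p_\infty := \bigcap_{n\in\omega} p_n$ is a perfect tree by verifying the two defining clauses: that it is a tree, and that every node in it has a splitting node above it. The first clause is immediate, since an intersection of trees is a tree (the tree property, closure under initial segments, is preserved by arbitrary intersections). The content is entirely in the second clause, and the key observation is that the relation $p_{n+1}\geq_n p_n$ freezes the splitting structure up to level $n$: the order-$n$ splitting nodes of $p_{n+1}$ are exactly those of $p_n$, and crucially this forces agreement on \emph{all} nodes of splitting-order $\leq n$ and on which of those nodes split.

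First I would record the following stabilization fact, proved by induction on $n$: for every $k\leq n$, a sequence $s$ is an order-$k$ splitting node of $p_m$ for \emph{some} $m\geq n$ if and only if it is an order-$k$ splitting node of $p_n$, and in that case it is an order-$k$ splitting node of $p_m$ for \emph{every} $m\geq n$; moreover all the $p_m$ with $m\geq k$ agree below and including the order-$k$ splitting nodes. This follows because $p_{m+1}\geq_m p_m$ (hence $p_{m+1}\geq_k p_m$ for $k\leq m$) pins down the order-$k$ splitting nodes, and once the order-$(k{-}1)$ and order-$k$ splitting nodes are fixed, the finite portion of the tree between them is fixed too. Consequently, each order-$k$ splitting node $s$ of $p_k$ lies in $p_m$ for all $m\geq k$, and both $s^\frown 0$ and $s^\frown 1$ do as well (they sit below the order-$(k{+}1)$ splitting nodes), so $s$ together with $s^\frown 0,\, s^\frown 1$ all belong to $p_\infty$, i.e. $s$ genuinely splits in $p_\infty$.

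Now take any $s\in p_\infty$. Then $s\in p_n$ for every $n$; fix $n$ large enough that $s$ has splitting-order $\leq n$ in $p_n$ — more carefully, since $p_n$ is perfect there is a splitting node $t\supseteq s$ in $p_n$, say of order $k$ in $p_n$; pick $n'=\max(n,k)$, so $t$ is an order-$\leq n'$ splitting node of $p_{n'}$. By the stabilization fact, $t$ is then an order-$k$ splitting node of $p_m$ for all $m\geq n'$, and $t, t^\frown 0, t^\frown 1$ all belong to every $p_m$ with $m\geq n'$. For $m<n'$ they belong to $p_m$ as well, because the sequence is $\geq$-increasing so $p_{n'}\subseteq p_m$. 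Hence $t, t^\frown 0, t^\frown 1\in p_\infty$, so $t\supseteq s$ is a splitting node of $p_\infty$. This establishes the second clause, and therefore $p_\infty\in\mathbb{P}$.

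The main obstacle is the bookkeeping in the stabilization step: one must be careful that $p_{n+1}\geq_n p_n$ does \emph{not} directly say the two trees agree up to height $n$, but only that their order-$n$ splitting nodes coincide — these can occur at varying heights. The right way to handle this is to argue by induction on splitting-order rather than on height, using that the collection of splitting nodes of order $\leq n$, together with the non-splitting nodes lying on the paths between them, forms a finite set that is determined once the order-$n$ splitting nodes are, and then to push the $\geq_n$ agreement down through the chain $\langle p_m\mid m\geq n\rangle$ rather than just between consecutive terms. Everything else is routine.
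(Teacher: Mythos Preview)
Your approach is essentially the paper's: both arguments rest on the stabilization of order-$k$ splitting nodes across the tail $\langle p_m\mid m\geq k\rangle$, and then locate a surviving splitting node above an arbitrary $s\in p_\infty$. The paper is terser --- it sets $m=\left|\{t\subsetneq s: t\text{ splits in }p_\omega\}\right|$ and passes directly to an order-$(m{+}1)$ splitting node of $p_{m+1}$ above $s$, leaving the stabilization implicit --- while you make it explicit.

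One genuine wrinkle: the step ``pick $n'=\max(n,k)$, so $t$ is an order-$\leq n'$ splitting node of $p_{n'}$'' does not follow when $k>n$. The relation $p_{n+1}\geq_n p_n$ says nothing about order-$k$ nodes with $k>n$, so the $t$ you found splitting in $p_n$ may fail to split in $p_{n'}$. The repair is exactly what you gesture at \emph{before} the ``more carefully'': take $n$ at least the number of splitting nodes of $p_0$ strictly below $s$, and let $t$ be the \emph{first} splitting node of $p_n$ above $s$; its order $k$ then equals the number of $p_n$-splits strictly below $s$, which is $\leq n$, and your stabilization fact applies directly. With that adjustment the argument is complete and matches the paper's.
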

\begin{proof}
Define $\bigcap\limits _{n\in\omega}p_{n}=p_{\omega}$. We claim $p_{\omega}\in\mathbb{P}$,
meaning it's a perfect tree. Take $s\in p_{\omega}$. Let $\left|\left\{ t\subsetneq s\mid t\text{ splits in }p_{\omega}\right\} \right|=m$.
Take $p_{m+1}$. By definition $s\in p_{m+1}$. However $p_{m+1}$
is a perfect tree, and so has a splitting node of order $m+1$ above
$s$, which we denote $t\supseteq s$. But because it is an order
$m+1$ splitting node and $p_{m+2}\geq_{m+1}p_{m+1}$ we have $t\in p_{m+2}$.
By induction we get $t\in p_{\omega}$, but $s\subseteq t$ so we
found a splitting node in $p_{\omega}$ above our arbitrary $s$.
Hence $p_{\omega}$ is indeed perfect.
\end{proof}
Note that it is obvious from the chain condition that all cardinals
greater than or equal to $\aleph_{2}$ are preserved, as is of course
$\aleph_{0}$. We now complete the picture with showing $\aleph_{1}$
is preserved.
\begin{lem}
\label{lem:aleph_1-is-preserved}$\aleph_{1}$ is preserved under
$\aleph_{0}$-Sacks forcing.
\end{lem}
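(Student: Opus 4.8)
The plan is to show that $\aleph_0$-Sacks forcing is proper, or more directly that it preserves $\omega_1$ by a fusion argument showing that for every name $\dot{f}$ for a function from $\omega$ to the ordinals and every condition $p$, there is a stronger condition $q \le p$ and a ground-model set $A$ of size $\aleph_0$ such that $q \Vdash \dot{f}(n) \in \check{A}$ for a given $n$ — and then iterate this to ``capture'' a countable ordinal inside a ground-model countable set. Concretely, suppose toward a contradiction that some $p \in \mathbb{P}$ forces $\dot{f}\colon \omega \to \omega_1$ to be a surjection (equivalently, forces $\check{\omega_1}$ to be countable). I would build a fusion sequence $\langle p_n \mid n \in \omega\rangle$ with $p_0 = p$ and $p_{n+1} \ge_n p_n$ together with a bookkeeping of the finitely many order-$n$ splitting nodes of $p_n$.

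The key device is the following: given $p_n$, list its order-$n$ splitting nodes (there are finitely many of them, say $s_0,\dots,s_{k-1}$, corresponding to the $2^n$-ish relevant ``$n$-th level'' subtrees $p_n \restriction s_i^\frown i'$). For each such node $s_i$ and each immediate successor direction, the condition $p_n \restriction (s_i^\frown \varepsilon)$ decides more and more; I would pass to a stronger subtree below each of these finitely many nodes that decides the value $\dot{f}(n)$, getting finitely many ordinals $\alpha^n_0,\dots,\alpha^n_{m_n-1}$, and then amalgamate these finitely many subtrees back together (keeping the order-$n$ splitting structure intact, so the result is $\ge_n p_n$) to obtain $p_{n+1}$. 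By Lemma~\ref{lem:Fusion1} the intersection $q = \bigcap_n p_n$ is a perfect tree, hence a condition, and $q \le p$. Let $A = \{\alpha^n_j \mid n \in \omega,\ j < m_n\}$; this is a countable set in the ground model. The crucial verification is that $q \Vdash \dot{f}(n) \in \check{A}$ for every $n$: any $q' \le q$ extends one of the finitely many $n$-subtrees used at stage $n+1$ (because below $q$ the branching through level-$n$ splitting nodes is already fixed), and that subtree forced $\dot{f}(n)$ to equal one of the $\alpha^n_j$. Hence $q \Vdash \mathrm{ran}(\dot{f}) \subseteq \check{A}$, so $q$ forces $\dot{f}$ is not onto $\check{\omega_1}$, contradicting that $p$ (hence $q$) forced it was a surjection. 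Therefore $\omega_1^V$ remains a cardinal in the extension.

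The main obstacle — and the step deserving genuine care rather than hand-waving — is the amalgamation step: after thinning each of the finitely many subtrees $p_n \restriction (s_i^\frown\varepsilon)$ to decide $\dot{f}(n)$, one must glue them back into a single perfect tree $p_{n+1}$ that still satisfies $p_{n+1} \ge_n p_n$, i.e. that has exactly the same order-$\le n$ splitting nodes as $p_n$. This works because below each order-$n$ splitting node the subtrees are disjoint and can be replaced independently; one must check that replacing $p_n\restriction(s_i^\frown\varepsilon)$ by a perfect subtree of itself does not create or destroy any splitting node of order $\le n$ (it cannot, since those all lie at or above the $s_i$'s and are untouched), and that the glued object is still a tree closed under initial segments and still perfect (each replaced piece is perfect, so every node has a splitting node above it). I would also remark that the same fusion argument, run against an arbitrary name for a function $\omega \to \mathrm{Ord}$, shows $\mathbb{P}$ has the Sacks property / is $\omega^\omega$-bounding, but for the lemma as stated only the $\omega_1$-preservation conclusion is needed.
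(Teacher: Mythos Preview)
Your proposal is correct and follows essentially the same fusion argument as the paper: at stage $n$, extend below each immediate successor of each order-$n$ splitting node to decide $\dot{f}(n)$, amalgamate to get $p_{n+1}\geq_n p_n$, fuse, and observe the resulting countable set $A$ of decided values covers $\mathrm{ran}(\dot{f})$. The only cosmetic differences are that the paper phrases it as a direct covering statement (every countable set of ordinals in $V[G]$ is contained in a ground-model countable set) rather than by contradiction, and that you have mixed forcing conventions---the paper uses the Israeli convention (stronger $=$ larger), so your ``$q\le p$'' should read $q\ge p$ to match.
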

\begin{proof}
Assume $X$ is a countable set of ordinals in $V\left[G\right]$.
We show the existence of a set $A\in V$ countable in $V$ such that
$X\subseteq\dot{A}$. Let $\dot{F}$ be a name and let $p$ be a condition
such that $p\Vdash\dot{F}\text{ witnesses that }\dot{X}\text{ is countable}$,
that is $p\Vdash\dot{F}:\omega\rightarrow\dot{X}\text{ is surjective}$.

We now build a fusion sequence $\left\langle p_{\alpha}\mid\alpha\in\omega\right\rangle $
starting with $p_{0}=p$. Assume we defined $p_{n}$. Let $S_{n}$
be the set of all order $n$ splitting nodes of $p_{n}$. For each
$s\in S_{n}$ let $q_{s^{\frown}0},q_{s^{\frown}1}$ and $a_{s^{\frown}0},a_{s^{\frown}1}$
be such that $q_{s^{\frown}i}\geq p_{n}\upharpoonright s^{\frown}i$
and $q_{s^{\frown}i}\Vdash\dot{F}(n)=a_{s^{\frown}i}$. Let $p_{n+1}=\bigcup\limits _{s\in S_{n},i=0,1}q_{s^{\frown}i}$.

Note that the union of perfect trees is a perfect tree, and that all
splitting nodes of order $\leq n$ are preserved: if $t$ is an order
$m<n$ splitting node in $p_{n}$ then it is also a splitting node
in $q_{s^{\frown}0}$ for the $s\in S_{n}$ that is $s\supsetneq t$,
and so is in $p_{n+1}$; whereas if $t$ is an order $n$ splitting
node in $p_{n}$ then $t$ is a splitting node in $q_{t^{\frown}0}\cup q_{t^{\frown}1}$
and so is in $p_{n+1}$. Thus all splitting nodes of order $n$ are
preserved in $p_{n+1}$, and so $p_{n+1}\geq_{n}p_{n}$. Using lemma
\ref{lem:Fusion1} we get $q=\bigcap\limits _{n\in\omega}p_{n}\in\mathbb{P}$.

Now define $A=\bigcup\limits _{n\in\omega}\left\{ a_{s^{\frown}i}\mid s\in S_{n}\wedge i=0,1\right\} $.
Note that $A$ is a countable union of finite sets, hence $A$ is
countable in $V$. Now observe that $q\Vdash ran\left(\dot{F}\right)\subseteq A$.
As $\dot{F}$ is the name of the function that witnesses the countability
of $X$ this means $q\Vdash\dot{X}\subseteq A$.

In this process we built a specific $q\geq p$, so $q$ is not guaranteed
to be in the generic set $G$. However, as we found a $q\Vdash\dot{X}\subseteq A$
above any condition stronger or equal to $p$, due to density, there
is \emph{some} $r\geq p$ in $G$ such that $r\Vdash\dot{X}\subseteq A$.
Therefore $V\left[G\right]\vDash X\subseteq A$, where $A$ is countable
in $V$, which implies that $\aleph_{1}$ is preserved.
\end{proof}
\begin{thm}
\label{thm:OrgSacksMinimal}Sacks forcing produces a minimal extension
of $V$, meaning that for every model $W$ of $\mathsf{ZFC}$ if $V\subseteq W\subseteq V\left[G\right],$
then either $W=V$ or $W=V\left[G\right]$.
\end{thm}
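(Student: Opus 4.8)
The plan is to show that any intermediate model $W$ with $V \subseteq W \subseteq V[G]$ is determined by a single real, and then use a fusion argument to show that real is either in $V$ (giving $W = V$) or codes the Sacks real (giving $W = V[G]$). Since $\mathbb{P}$ adds a real, by the theory of forcing it suffices to track where a new real can live. Concretely, if $W \neq V$, pick $x \in W \setminus V$ a set of ordinals; since every set in $V[G]$ is definable from the Sacks real $f$ and parameters in $V$, we may assume $x \in 2^\omega$ (by coding). So $W$ contains a new real $x$, and it is enough to prove that \emph{every} new real $x \in V[G] \cap 2^\omega$ satisfies $V[x] = V[G]$ — that is, $f \in V[x]$ — because then $V \subsetneq W$ forces $f \in W$ and hence $W = V[G]$.

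The core is therefore the following claim: if $\dot{x}$ is a $\mathbb{P}$-name for a real and $p \Vdash \dot{x} \notin V$, then there is $q \leq p$ with $q \Vdash \dot f \in V[\dot x]$. I would prove this by building a fusion sequence $\langle p_n \mid n \in \omega\rangle$ below $p$ together with a bookkeeping of the values forced on $\dot x$. At stage $n$, for each order-$n$ splitting node $s$ of $p_n$, I refine $p_n \upharpoonright s^\frown 0$ and $p_n \upharpoonright s^\frown 1$ to conditions $q_{s^\frown 0}, q_{s^\frown 1}$ that decide $\dot x$ on a long enough initial segment so that the two decisions \emph{differ} — this is possible exactly because $p \Vdash \dot x \notin V$, so no condition can decide all of $\dot x$, and in particular above any condition one can find two incompatible decisions of some bit of $\dot x$ (the standard "continuous reading of names / there is no condition deciding $\dot x$ everywhere" fact). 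Set $p_{n+1} = \bigcup_{s \in S_n,\, i=0,1} q_{s^\frown i}$, which is a perfect tree with $p_{n+1} \geq_n p_n$ as in Lemma~\ref{lem:aleph_1-is-preserved}, and let $q = \bigcap_n p_n$.

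Now the point is that in $V[G]$ with $q \in G$, the branch $f$ through $q$ is recovered from $\dot x^G$: at each splitting level $n$ of $q$, the current initial segment of $f$ passes through a unique order-$n$ splitting node $s$, and the value $\dot x^G$ restricted to the relevant coordinates tells us which of $s^\frown 0$, $s^\frown 1$ we took (since $q_{s^\frown 0}$ and $q_{s^\frown 1}$ forced \emph{different} values there). Formally, $q$ forces a recursive-in-$\dot x$ procedure computing $\dot f$, so $q \Vdash \dot f \in V[\dot x]$, hence $q \Vdash V[\dot x] = V[\dot G]$. Back in $V[G]$: given $x \in W \setminus V$ a new real, some $q \in G$ forces the above, so $f \in V[x] \subseteq W$, whence $W \supseteq V[G]$ and so $W = V[G]$. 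Together with the reduction of the first paragraph, this proves minimality.

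I expect the main obstacle to be the bookkeeping in the fusion: I must arrange that the \emph{locations} where $q_{s^\frown 0}$ and $q_{s^\frown 1}$ disagree about $\dot x$ are uniformly recoverable in $V[G]$ — i.e. the decoding function must be a single formula applied to $\dot x^G$, not something depending on $G$. The clean way is to have each $q_{s^\frown i}$ decide $\dot x \upharpoonright \ell_n$ for a fixed length $\ell_n \in V$ chosen large enough (by a density/compactness argument) that for \emph{every} pair $s \in S_n$ the two extensions disagree somewhere below $\ell_n$; then the decoding reads off $x \upharpoonright \ell_n$ at level $n$ and compares against the $V$-coded table of the $q_{s^\frown i}$'s. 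Care is also needed to confirm that the union of the $q_{s^\frown i}$ over all $s \in S_n$ really preserves all splitting nodes of order $\le n$ — but this is exactly the verification already carried out in the proof of Lemma~\ref{lem:aleph_1-is-preserved}, so it can be cited rather than repeated.
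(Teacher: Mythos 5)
Your overall strategy is the same as the paper's: reduce to an intermediate model of the form $V\left[x\right]$ for a single new object, build a fusion sequence in which the two immediate extensions $s^{\frown}0,s^{\frown}1$ of each order-$n$ splitting node are refined to conditions deciding the name in conflicting ways, and then recover the generic branch of the fused condition $q$ from the realized value of the name. Your variant of the bookkeeping (deciding $\dot{x}\upharpoonright\ell_{n}$ for a fixed $\ell_{n}\in V$ large enough that all finitely many pairs at level $n$ disagree below $\ell_{n}$) is a legitimate alternative to the paper's device of tracking the first undecided ordinal $\gamma_{q}$, and your stated worry about uniform decodability is exactly the right one; the paper's $f=\left\{ s\in q\mid\dot{z}_{q\upharpoonright s}\subseteq\dot{z}_{G}\right\} $ is the same decoding you describe. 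Note also that the possibility of forcing the two sides to disagree is not quite automatic from ``no condition decides all of $\dot{x}$'': one must rule out that $p_{n}\upharpoonright s^{\frown}0$ and $p_{n}\upharpoonright s^{\frown}1$ both already force the same value at the first undecided spot, which the paper does by observing that a condition above $p_{n}\upharpoonright s$ forcing the opposite value would have to be compatible with one of the two sides. Your sketch waves at this but the argument does go through.

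The one step that does not survive scrutiny as written is the opening reduction. By Jech 15.43 an intermediate $W\neq V$ contains a new \emph{set of ordinals} $A$, but your claim that one ``may assume $x\in2^{\omega}$ by coding, since every set in $V\left[G\right]$ is definable from the Sacks real'' does not follow: that fact shows $A\in V\left[f\right]$, not that $W$ contains a new real. In general a new set of ordinals need not yield a new real (compare adding a subset of $\omega_{1}$ by countably closed forcing), and for Sacks forcing the implication is true only \emph{because} of the minimality you are trying to prove, so invoking it here is circular. The repair is immediate and is what the paper does: run your fusion argument directly on the name $\dot{z}$ for the characteristic function $z\colon\alpha\rightarrow2$ of the set of ordinals $A$, with the lengths $\ell_{n}$ now ordinals below $\alpha$; nothing else in your argument changes.
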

\begin{proof}
According to theorem 15.43 of \citep{key-17}, every intermediate
model of $\mathsf{ZFC}$ is equal to $V\left[A\right]$, where $A$
is a set of ordinals. Hence it is sufficient to show that for any
set of ordinals $A$ in $V\left[G\right]$, either $V\left[A\right]=V$
or $V\left[A\right]=V\left[G\right]$.

Let $\dot{A}$ be the name of a set of ordinals in $V\left[G\right]$.
There is an ordinal $\alpha$ such that $0\Vdash\dot{A}\subseteq\alpha$,
and let $\dot{z}$ be the name of the characteristic function of $A$,
$z\colon\alpha\rightarrow2$. If $A\in V$ then obviously $V\left[A\right]=V$.
Assume then $p\in G$ is a condition that forces $\dot{A}\notin V$.

For a condition $q\in\mathbb{P}$ let $\dot{z}_{q}$ be the longest
initial segment of $\dot{z}$ that is decided by $q$, and $\gamma_{q}$
be the first ordinal for which $\dot{z}$ is undecided. For $q\geq p$
they must be well-defined, because if $q$ decides all of $\dot{z}$,
it decides all of $\dot{A}$, and then $A\in V$, in contradiction
to $p\Vdash\dot{A}\notin V$. Plainly $\gamma_{q}<\alpha$.

Mark $p_{0}=p$. Assume we've already chosen $p_{n}$. For every splitting
node $s\in S_{n}$, where $S_{n}$ is defined as in lemma \ref{lem:aleph_1-is-preserved},
let's look at $\gamma_{p_{n}\upharpoonright s}$ and conditions $p_{n}\upharpoonright s^{\frown}i$.
Suppose that for both $i=0,1$ we have $p_{n}\upharpoonright s^{\frown}i\Vdash\dot{z}\left(\gamma_{p_{n}\upharpoonright s}\right)=j$.
$\gamma_{p_{n}\upharpoonright s}$ is undecided, so take $q\geq p_{n}\upharpoonright s$
such that $q\Vdash\dot{z}\left(\gamma_{p_{n}\upharpoonright s}\right)=1-j$.
Either $q\cap p_{n}\upharpoonright s^{\frown}0\in\mathbb{P}$, or
$q\cap p_{n}\upharpoonright s^{\frown}1\in\mathbb{P}$. But $q$ and
$p_{n}\upharpoonright s^{\frown}i$ are incompatible for $i=0,1$,
hence our supposition is impossible.

Thus, if for a certain $i$ there is a $j$ such that $p_{n}\upharpoonright s^{\frown}i\Vdash\dot{z}\left(\gamma_{p_{n}\upharpoonright s}\right)=j$,
then there is some $q_{s^{\frown}\left(1-i\right)}\geq p_{n}\upharpoonright s^{\frown}\left(1-i\right)$
such that $q_{s^{\frown}\left(1-i\right)}\Vdash\dot{z}\left(\gamma_{p_{n}\upharpoonright s}\right)=1-j$,
and we take $q_{s^{\frown}i}=p_{n}\upharpoonright s^{\frown}i$ so
that $q_{s^{\frown}i}\Vdash\dot{z}\left(\gamma_{p_{n}\upharpoonright s}\right)=j$.
If there is no such $i$, then we are free to take for both $i=0,1$
$q_{s^{\frown}i}\geq p_{n}\upharpoonright s^{\frown}i$ such that
$q_{s^{\frown}i}\Vdash\dot{z}\left(\gamma_{p_{n}\upharpoonright s}\right)=i$.
The point is that in both cases we found $q_{s^{\frown}i}$ that decide
$\dot{z}\left(\gamma_{p_{n}\upharpoonright s}\right)$ in conflicting
ways for $i=0,1$.

We now take $p_{n+1}=\bigcup\limits _{s\in S_{n},i=0,1}q_{s^{\frown}i}$.
Again, exactly as in lemma \ref{lem:aleph_1-is-preserved}, we recognize
$\left\langle p_{n}\mid n\in\omega\right\rangle $ is a fusion sequence.
Thus we can take condition $q=\bigcap\limits _{n\in\omega}p_{n}$.

Let $f=\left\{ s\in q\mid\dot{z}_{q\upharpoonright s}\subseteq\dot{z}_{G}\right\} $.
This is a branch of $q$, because if $s$ is a splitting node of $q$,
then either for $i=0$ or $i=1$, but not both, $\dot{z}_{q\upharpoonright s^{\frown}i}\left(\gamma_{q\upharpoonright s}\right)=\dot{z}_{q}\left(\gamma_{q\upharpoonright s}\right)$,
hence for only one $i$ we have $s^{\frown}i\in f$. Thus $f$ is
a completely definable branch in $V\left[A\right]$, and so $f\in V\left[A\right].$

We now note that given $p$ we created a stronger condition $q$ and
so from density we can assume $q\in G$. We claim that $f$ is our
Sacks real $f$. Mark the Sacks real as $g$. If $f$ disagrees with
$g$, then because both are branches in $q$, there must be a splitting
node $s$ of $q$ where they diverge. But that would imply $\dot{z}_{f}\left(\gamma_{q\upharpoonright s}\right)\neq\dot{z}_{G}\left(\gamma_{q\upharpoonright s}\right)$
in contradiction to the definition of $f$. Thus $f=g$, $f$ is our
Sacks real, and we get $G\in V\left[A\right]$.

Therefore $V\left[G\right]\subseteq V\left[A\right]\subseteq V\left[G\right]$
and we conclude $V\left[A\right]=V\left[G\right]$.
\end{proof}
\begin{cor}
\label{cor:CH}$V\vDash\mathsf{CH}\Rightarrow V\left[G\right]\vDash\mathsf{CH}$.
\end{cor}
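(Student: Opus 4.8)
The plan is to verify that $V[G]\vDash 2^{\aleph_{0}}=\aleph_{1}$. Since $\mathsf{CH}$ gives $\vert\mathbb{P}\vert=\aleph_{1}$ and hence the $\aleph_{2}$-antichain condition (Lemma~\ref{lem:antichain-1}), every cardinal $\geq\aleph_{2}$ is preserved; together with Lemma~\ref{lem:aleph_1-is-preserved} this means all cardinals --- in particular $\aleph_{1}$ --- are the same in $V$ and $V[G]$. So it suffices to show $V[G]\vDash 2^{\aleph_{0}}\leq\aleph_{1}$, and for this I would exhibit, inside $V$, a family of size $\aleph_{1}$ through which every real of the extension is captured. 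The natural such family is the collection of continuous functions coded in $V$: the idea is that every real of $V[G]$ is read off the Sacks real by a continuous function lying in the ground model.

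The key step is the following reading lemma, to be proved by a fusion argument of exactly the shape already used in Lemma~\ref{lem:aleph_1-is-preserved} and Theorem~\ref{thm:OrgSacksMinimal}: for every $\mathbb{P}$-name $\dot{x}$ with $0\Vdash\dot{x}\in2^{\omega}$ and every $p\in\mathbb{P}$ there are $q\leq p$ and a continuous $\Phi\in V$, $\Phi\colon[q]\to2^{\omega}$, such that $q\Vdash\dot{x}=\Phi(\dot{g})$, where $\dot{g}$ names the generic branch. To build $q$, construct a fusion sequence $p_{0}=p\geq_{0}p_{1}\geq_{1}p_{2}\geq_{2}\cdots$ where, in passing from $p_{n}$ to $p_{n+1}$, for each order-$n$ splitting node $s$ of $p_{n}$ and each $i\in\{0,1\}$ one refines $p_{n}\upharpoonright s^{\frown}i$ to some $q_{s^{\frown}i}$ deciding $\dot{x}(n)$, and sets $p_{n+1}=\bigcup_{s,i}q_{s^{\frown}i}$; as in the earlier proofs this is a legitimate fusion sequence, so $q=\bigcap_{n}p_{n}\in\mathbb{P}$. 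Each branch $b\in[q]$ passes, for every $n$, through a unique order-$n$ splitting node of $p_{n}$ and then through the corresponding $q_{s^{\frown}i}$, which decides $\dot{x}(n)$ in a way depending only on a finite initial segment of $b$; collecting these decisions --- all of them elements of $V$ --- defines the required continuous $\Phi\in V$.

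The corollary then follows quickly. Given any real $x\in2^{\omega}\cap V[G]$, pick a name $\dot{x}$ and $p\in G$ with $\dot{x}^{G}=x$; applying the lemma and using genericity, there is $q\in G$ and a continuous $\Phi\in V$ with $x=\Phi(g)$, where $g$ is the Sacks real. Hence every real of $V[G]$ equals $\Phi(g)$ for some continuous partial function $\Phi\in V$ on a closed subset of $2^{\omega}$. But such a $\Phi$ is coded by a countable object (a monotone map on finite binary strings), so there are at most $(2^{\aleph_{0}})^{V}=\aleph_{1}$ of them by $\mathsf{CH}$ in $V$. Therefore $V[G]\vDash 2^{\aleph_{0}}\leq\aleph_{1}$, and since $\aleph_{1}$ is preserved we conclude $V[G]\vDash\mathsf{CH}$.

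The main obstacle is the reading lemma, i.e.\ organizing the forced values of $\dot{x}$ into a single ground-model continuous function of the generic real. This is essentially a repackaging of machinery already present: the fusion construction mirrors the one in Lemma~\ref{lem:aleph_1-is-preserved}, and the observation that a branch of the fusion limit decides longer and longer initial segments of a name is precisely what drives the proof of Theorem~\ref{thm:OrgSacksMinimal}. Everything after that is cardinal arithmetic under $\mathsf{CH}$.
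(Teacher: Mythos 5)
Your proposal is correct and follows essentially the same route as the paper: the paper likewise runs the fusion construction of Theorem \ref{thm:OrgSacksMinimal} to obtain a ground-model perfect tree that acts as a continuous map $\left[q\right]\rightarrow\mathcal{P}\left(\aleph_{0}\right)$ sending the Sacks real to the new real, and then counts these ground-model codes (via Lemma \ref{lem:antichain-1}) to get at most $\aleph_{1}$ new reals. Your version is, if anything, slightly more careful in counting the continuous functions themselves rather than just the conditions; the only cosmetic discrepancy is that you write $q\leq p$ for a strengthening, whereas the paper's Israeli convention would write $q\geq p$.
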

\begin{proof}
In the proof of theorem \ref{thm:OrgSacksMinimal} let's assume $A$
is a 'new' subset of $\aleph_{0}$, meaning we have $V\left[G\right]\vDash A\subsetneq\aleph_{0}\wedge A\notin V$.
Using fusion, we generate a perfect tree $q\in\mathbb{P}$ in the
ground model that is used to interpret $A$ according to the Sacks
real $G$.

Viewed another way, and taking $\left[q\right]$ to signify the branches
of $q$, $q$ is in fact a continuous map $q:\left[q\right]\rightarrow\mathcal{P}\left(\aleph_{0}\right)$
such that $V\left[G\right]\vDash q\left(G\right)=A$. Therefore given
$G$, there can be no two subsets of $\aleph_{0}$ $A_{1}\neq A_{2}$
that produce the same $q$.

But according to lemma \ref{lem:antichain-1} there are at most $\aleph_{1}$
conditions in $\mathbb{P}$, so there are at most $\aleph_{1}$ new
subsets of $\aleph_{0}$ in $V\left[G\right]$. Hence, $\left(2^{\aleph_{0}}=\aleph_{1}\right)^{V\left[G\right]}$.
\end{proof}

\section{$\kappa$-Sacks forcing}

In this section we show how we can extend our model tower through
the successor steps.

Na�vely we could try and repeat the Sacks forcing, hoping that no
unexpected models 'pop up' along the way. However, ultimately we desire
to iterate our forcing class-many times, so we need to be wary of
preserving the Power Set Axiom. Because each application of classical
Sacks forcing adds a real number, were we simply to iterate the forcing
class-many times, $2^{\aleph_{0}}$ would 'explode', and the resultant
model would fail to satisfy $\mathsf{ZF}$. Instead, what we need
to do is find a way to build minimal models where the subsets of each
cardinal eventually stabilize.

For this, we turn to perfect trees of height $\kappa$, via Kanamori's
extension of Sacks forcing to uncountable cardinals \citep{key-24}.
The following definitions are an almost perfect analogue to the definitions
of the previous section, except where noted otherwise.
\begin{defn}
Let $Seq=\bigcup\limits _{\alpha<\kappa}2^{\alpha}$.
\begin{enumerate}
\item A \emph{tree} is a set $p\subseteq Seq$, such that for each $s\in p$
if $s\upharpoonright\alpha\in p$ then for all $\beta<\alpha$ $s\upharpoonright\beta\in p$.
\item If $p\subseteq Seq$ and $s\in p$, we say that $s$ \emph{splits}
in $p$ if $s^{\smallfrown}0\in p$ and $s^{\smallfrown}1\in p$.
\item If $p\subseteq Seq$ and $s$ splits in $p$ then we say $s$ is an
order $\alpha$ splitting node if when we order $\left\{ t\subseteq s\mid t\text{ splits in }p\right\} $
by inclusion it is the $\alpha$th node.
\item If $p\subseteq Seq$, we say $s$ is a \emph{stem} of $p$ if $s$
is a splitting node and for all $t\subsetneq s$ $t$ is not a splitting
node.
\end{enumerate}
\end{defn}
\begin{defn}
\label{def:We-say-}We say $p\subseteq Seq$ is a \emph{perfect} tree
if:
\begin{enumerate}
\item $p$ is a tree.
\item For every $s\in p$ there exists a splitting node $t\in p$ such that
$t\supseteq s$.
\item \label{enu:height}If $\delta<\kappa$ is a limit ordinal, $s\in^{\delta}2$
and $s\upharpoonright\beta\in p$ for every $\beta<\delta$, then
$s\in p$. Intuitively '$p$ is closed'.
\item \label{enu:club}If $\delta<\kappa$ is a limit ordinal, $s\in^{\delta}2$
and for arbitrarily large $\beta<\alpha$ $s\upharpoonright\beta$
splits in $p$, then $s$ splits in $p$. Intuitively 'the splitting
nodes of $p$ are closed'.
\end{enumerate}
\end{defn}
The last two conditions are new, though it is easy to see that the
original $\aleph_{0}$-Sacks forcing satisfies them by default. Conditions
\ref{enu:height} and \ref{enu:club} are necessary to ensure the
closure property in lemma \ref{lem:-is--closed.}. Without condition
\ref{enu:height} the limit of $\omega$ trees might be empty, and
without condition \ref{enu:club} the limit might consist of just
a branch without any splitting nodes.
\begin{defn}
If $p$ is a perfect tree and $s\in p$ we denote $p\upharpoonright s=\left\{ t\in p\mid s\subseteq t\lor t\subseteq s\right\} $.
Plainly $p\upharpoonright s$ is perfect as well.
\end{defn}
\begin{defn}
We call $\mathbb{P}=\left\{ p\subseteq Seq\mid p\text{ is a perfect tree}\right\} $,
where $\mathbb{P}$ is ordered by reverse inclusion $p\leq q\iff p\supseteq q$,
$\kappa$-Sacks forcing.
\end{defn}
As before, we can identify the generic set $G$ with a function $f\colon\kappa\to2$.
There are trees with arbitrarily long stems in $G$, and these stems
must coincide on their mutual domain. Thus we can define $f(\alpha)=s(\alpha)$,
where $s$ is part of a stem for some $p\in G$. This function is
well-defined on $\kappa$. For the other direction, we may define
$G=\left\{ p\in\mathbb{P}\mid\forall\alpha<\kappa\left(f\upharpoonright\alpha\in p\right)\right\} $.
So in essence $G$ defines a new subset of $\kappa$.

Next, to achieve maximal closure in $\mathbb{P}$, we require $\kappa$
to be regular. So from here on it is assumed $\kappa$ is a regular
cardinal.
\begin{lem}
\label{lem:-is--closed.}$\mathbb{P}$ is $\kappa$-closed.
\end{lem}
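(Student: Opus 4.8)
The plan is to prove $\kappa$-closure directly: every sequence $\langle p_{\xi}\mid\xi<\lambda\rangle$ with $\lambda<\kappa$ and $p_{\xi}\supseteq p_{\eta}$ for $\xi\le\eta$ (each condition refining the previous) has a common extension, and that extension is simply $q=\bigcap_{\xi<\lambda}p_{\xi}$. Since $q\subseteq p_{\xi}$ for every $\xi$, everything reduces to checking that $q$ is a perfect tree in the sense of Definition~\ref{def:We-say-}. That $q$ is a nonempty tree is immediate, and conditions \ref{enu:height} and \ref{enu:club} pass to intersections with no work: if every proper initial segment of some $\tau\in{}^{\delta}2$ lies in $q$ then each lies in every $p_{\xi}$, whence $\tau\in p_{\xi}$ by \ref{enu:height} and so $\tau\in q$; and if $\tau\upharpoonright\beta$ splits in $q$ for arbitrarily large $\beta<\delta$ then the same is true in each $p_{\xi}$, so $\tau$ splits in $p_{\xi}$ by \ref{enu:club} and therefore $\tau^{\frown}0,\tau^{\frown}1\in q$.

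The one real step is the remaining requirement: above every $s\in q$ there is a splitting node of $q$. Given such an $s$, I would pass to the conditions $r_{\xi}=p_{\xi}\upharpoonright s$, which still satisfy $r_{\eta}\subseteq r_{\xi}$ for $\xi\le\eta$, with $\bigcap_{\xi<\lambda}r_{\xi}=q\upharpoonright s$, and whose stems $\sigma_{\xi}=\mathrm{stem}(r_{\xi})$ all extend $s$. The crucial observation is that $\langle\sigma_{\xi}\mid\xi<\lambda\rangle$ is $\subseteq$-increasing: for $\xi\le\eta$ the node $\sigma_{\eta}$ lies in $r_{\eta}\subseteq r_{\xi}$, so it is comparable with $\sigma_{\xi}$, and it cannot be a proper initial segment of $\sigma_{\xi}$ since $\sigma_{\eta}$ splits in $r_{\eta}$ and hence in $r_{\xi}$. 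Let $\sigma=\bigcup_{\xi<\lambda}\sigma_{\xi}$; since $\kappa$ is regular and $\lambda<\kappa$, $|\sigma|=\sup_{\xi<\lambda}|\sigma_{\xi}|<\kappa$. I claim $\sigma$ is a splitting node of $q$ extending $s$. If the stems eventually stabilize, say $\sigma_{\xi}=\sigma$ for all $\xi\ge\xi_{0}$, then $\sigma$ splits in $r_{\xi_{0}}$, hence in every $r_{\xi}$ with $\xi\le\xi_{0}$ (as then $r_{\xi_{0}}\subseteq r_{\xi}$), and $\sigma$ is the stem of $r_{\xi}$ for $\xi\ge\xi_{0}$; so $\sigma^{\frown}0,\sigma^{\frown}1\in r_{\xi}$ for every $\xi$. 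Otherwise $|\sigma|$ is a limit ordinal, and for each fixed $\eta<\lambda$ and each $\xi$ with $\eta\le\xi<\lambda$ the node $\sigma_{\xi}=\sigma\upharpoonright|\sigma_{\xi}|$ splits in $r_{\xi}\subseteq r_{\eta}$, hence in $r_{\eta}$; these lengths $|\sigma_{\xi}|$ are cofinal in $|\sigma|$, so condition \ref{enu:club} applied to the perfect tree $r_{\eta}$ yields that $\sigma$ splits in $r_{\eta}$. Either way $\sigma^{\frown}0,\sigma^{\frown}1\in\bigcap_{\eta<\lambda}r_{\eta}=q\upharpoonright s\subseteq q$, so $\sigma$ is the desired splitting node and $q\in\mathbb{P}$.

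The step I expect to be the main obstacle is exactly this density of splitting nodes, which is where the corresponding statement fails for $\aleph_{0}$-Sacks forcing. A naive recursion that at stage $\xi$ extends the node built so far to a splitting node of $p_{\xi}$ collapses at limit stages, because such a node need not lie in the later, smaller trees. Routing instead through the stems of the restrictions $p_{\xi}\upharpoonright s$ and invoking the closure-of-splitting-nodes axiom \ref{enu:club} is what sidesteps this; and conditions \ref{enu:height}, \ref{enu:club} together with the regularity of $\kappa$ — the latter guaranteeing $|\sigma|<\kappa$ — are all genuinely used, so that dropping any of them would break the argument.
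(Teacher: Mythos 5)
Your proof is correct and follows essentially the same route as the paper's: intersect the conditions, observe that the closure conditions of Definition~\ref{def:We-say-} pass to intersections, and find a splitting node above a given $s$ by tracking the stems of the restrictions $p_{\xi}\upharpoonright s$, splitting into the cases where they stabilize or where their union is a splitting node by condition~\ref{enu:club}. Your write-up is in fact a bit more careful than the paper's (explicitly verifying that the stems form a $\subseteq$-chain and that regularity of $\kappa$ keeps the union in $Seq$), but the underlying argument is the same.
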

\begin{proof}
Let $\delta<\kappa$, $\left\langle p_{\alpha}\mid\alpha<\delta\right\rangle $
be a sequence of increasing conditions. We claim $p=\bigcap\limits _{\alpha<\delta}p_{\alpha}\in\mathbb{P}$.
Conditions \ref{enu:height} and \ref{enu:club} of definition \ref{def:We-say-}
are trivially true in $p$. It is left to show that each node in $p$
has a splitting node above it.

Let $S$ be the splitting nodes of $p$ and for each $\alpha<\delta$
let $S_{\alpha}$ be the set of splitting nodes of $p_{\alpha}$.
$p$ is not empty because $\emptyset\in p_{\alpha}$ for all $\alpha$.

Assume $s\in p$. Then for each $\alpha<\delta$ $s\in p_{\alpha}$,
and denote $t_{\alpha}$ as the order $0$ splitting node of $p_{\alpha}\upharpoonright s$.
If $\alpha<\beta<\delta$ then $S_{\alpha}\supseteq S_{\beta}\supseteq S$.
Therefore either $\left\langle t_{\alpha}\mid\alpha<\delta\right\rangle $
stabilizes, in which case for some $\gamma$ $t_{\gamma}\in S_{\alpha}$
for all $\alpha<\delta$, and therefore $s\subseteq t_{\gamma}\in S$.
Or for each $\beta<\delta$ $\left\langle t_{\alpha}\mid\beta\leq\alpha<\delta\right\rangle \in p_{\beta}$
is an unbounded sequence of splitting nodes under $\bigcup\limits _{\alpha<\delta}t_{\alpha}$,
and therefore due to definition \ref{def:We-say-} condition \ref{enu:club}
$\bigcup\limits _{\alpha<\delta}t_{\alpha}\in S_{\beta}$. But that
means $s\subsetneq\bigcup\limits _{\alpha<\delta}t_{\alpha}\in S$.

Either way, we found a splitting node above an arbitrary $s\in p$,
and therefore $p$ is perfect.
\end{proof}
\begin{lem}
\label{lem:antichain}If $2^{<\kappa}=\kappa$ and $2^{\kappa}=\kappa^{+}$
then $\left|\mathbb{P}\right|=\kappa^{+}$ and so $\mathbb{P}$ satisfies
the $\kappa^{++}$-antichain condition.
\end{lem}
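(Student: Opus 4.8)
The plan is to reprise, in the uncountable setting, the counting argument of Lemma~\ref{lem:antichain-1}: first pin down $|\mathbb{P}|$ exactly, and then read off the chain condition as an immediate consequence. So the proof splits into an upper bound $|\mathbb{P}| \le \kappa^+$, a matching lower bound $|\mathbb{P}| \ge \kappa^+$, and a one-line deduction of the $\kappa^{++}$-antichain condition.

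For the upper bound I would argue as follows. Since $Seq = \bigcup_{\alpha<\kappa} 2^\alpha$, the hypothesis $2^{<\kappa} = \kappa$ gives $|Seq| = \kappa$. Every condition $p \in \mathbb{P}$ is in particular a subset of $Seq$, so the number of conditions is at most $|\mathcal{P}(Seq)| = 2^{\kappa}$, which by the second hypothesis equals $\kappa^+$. Hence $|\mathbb{P}| \le \kappa^+$, and this alone already suffices for the chain condition.

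For the lower bound I would exhibit $2^\kappa$ pairwise distinct perfect trees. For each club $C \subseteq \kappa$ with $0 \in C$, let $p_C$ consist of all $s \in Seq$ that are constant on every interval between consecutive members of $C$; equivalently, $s(\alpha) = s\bigl(\max(C \cap (\alpha+1))\bigr)$ for all $\alpha$ in the domain of $s$. The splitting nodes of $p_C$ are then exactly the nodes whose length lies in $C$, so the map $C \mapsto p_C$ is injective; and there are $2^\kappa$ clubs of $\kappa$ (e.g. the clubs $\{0\}\cup\mathrm{Lim}(\kappa)\cup\{\alpha+1 : \alpha \in X\}$ for $X \subseteq \kappa$ are pairwise distinct clubs). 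Thus $|\mathbb{P}| \ge 2^\kappa = \kappa^+$, and combining with the upper bound, $|\mathbb{P}| = \kappa^+$. The only step demanding genuine care — as opposed to routine cardinal arithmetic — is verifying that each $p_C$ actually meets the two new closedness clauses \ref{enu:height} and \ref{enu:club} of Definition~\ref{def:We-say-}: clause \ref{enu:height} holds because membership in $p_C$ is decided levelwise, while clause \ref{enu:club} holds precisely because $C$ is \emph{closed}, so that an accumulation of splitting levels below a limit $\delta$ forces $\delta \in C$ and hence a splitting node at level $\delta$. This is exactly the feature these clauses were introduced to guarantee, so I expect the verification to go through smoothly; it is the main (mild) obstacle in an otherwise pure counting argument. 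Finally, every antichain of $\mathbb{P}$ is a subset of $\mathbb{P}$ and so has cardinality at most $\kappa^+ < \kappa^{++}$; therefore $\mathbb{P}$ has no antichain of size $\kappa^{++}$, i.e. it satisfies the $\kappa^{++}$-antichain condition.
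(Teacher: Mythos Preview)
Your proof is correct; the upper-bound half is identical to the paper's argument. The paper's own proof, however, stops there: it establishes only $|\mathbb{P}| \le \kappa^+$ and immediately reads off the $\kappa^{++}$-chain condition, never verifying the equality $|\mathbb{P}| = \kappa^+$ that the lemma actually states. Your club-indexed construction $C \mapsto p_C$ supplies this missing lower bound, and the verification of clauses~\ref{enu:height} and~\ref{enu:club} of Definition~\ref{def:We-say-} goes through exactly as you outline (closure of $C$ is precisely what makes clause~\ref{enu:club} work). So your argument is strictly more complete than the paper's, though for every subsequent application in the paper only the upper bound is used.
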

\begin{proof}
We simply count the number of possible conditions. There are at most
$\kappa$ binary sequences of length $<\kappa$, and therefore at
most $2^{\kappa}=\kappa^{+}$ possible trees. So there are at most
$\kappa^{+}$ conditions and no antichains of cardinality $\kappa^{++}$.
\end{proof}
We now extend the technique of fusion to this forcing.
\begin{defn}
\label{def:fusion}Suppose $p,q\in\mathbb{P}$. We say $p\geq_{\alpha}q$
if:
\begin{enumerate}
\item $p\geq q$;
\item And for all $\beta\leq\alpha$, $s\in p$ is an order $\beta$ splitting
in $p$ node if and only if $s\in q$ is an order $\beta$ splitting
node in $q$.
\end{enumerate}
\end{defn}
Note that because of the closure of the splitting nodes (definition
\ref{def:We-say-} condition \ref{enu:club}), if $\delta$ is a limit
ordinal $p\geq_{\delta}q\Leftrightarrow\forall\alpha<\delta\left(p\geq_{\alpha}q\right)$.
\begin{lem}
\label{lem:Fusion:-Let-}Fusion: Let $\left\langle p_{\alpha}\in\mathbb{P}\mid\alpha<\kappa\right\rangle $
be a sequence of conditions such that for all $\alpha$ $p_{\alpha+1}\geq_{\alpha}p_{\alpha}$,
and for $\delta$ a limit ordinal $p_{\delta}=\bigcap\limits _{\alpha<\delta}p_{\alpha}$.
Then $\bigcap\limits _{\alpha<\kappa}p_{\alpha}\in\mathbb{P}$.
\end{lem}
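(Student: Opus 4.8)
The plan is to mimic the proof of Lemma~\ref{lem:Fusion1}, but with careful attention to the limit stages, since now the fusion sequence has length $\kappa$ rather than $\omega$. Write $p_\kappa = \bigcap_{\alpha<\kappa} p_\alpha$. Conditions \ref{enu:height} and \ref{enu:club} of Definition~\ref{def:We-say-} are inherited by an arbitrary intersection of perfect trees, so as in Lemma~\ref{lem:-is--closed.} the only thing requiring work is that $p_\kappa$ is nonempty and that every node of $p_\kappa$ has a splitting node above it. Nonemptiness is immediate since $\emptyset \in p_\alpha$ for all $\alpha$.

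First I would record the key monotonicity fact: for $\alpha \le \beta < \kappa$ we have $p_\beta \ge_\alpha p_\alpha$. For successor steps this is by hypothesis together with the observation that $\ge_\alpha$ is transitive and that $p_{\beta+1} \ge_\beta p_\beta$ implies $p_{\beta+1} \ge_\alpha p_\beta$ for $\alpha \le \beta$; for limit steps $\delta$, since $p_\delta = \bigcap_{\gamma<\delta} p_\gamma$, the remark following Definition~\ref{def:fusion} (that $p \ge_\delta q \Leftrightarrow \forall \alpha<\delta\,(p\ge_\alpha q)$ for limit $\delta$, using closure of splitting nodes) gives $p_\delta \ge_\alpha p_\alpha$ for each $\alpha<\delta$, and hence for all $\alpha<\delta$ at once. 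This means the order-$\alpha$ splitting nodes are \emph{frozen} from stage $\alpha$ onward: if $s$ is an order-$\alpha$ splitting node of $p_\alpha$, it remains one in every $p_\beta$ with $\beta \ge \alpha$, and therefore $s \in p_\kappa$ and $s$ splits in $p_\kappa$.

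Now take an arbitrary $s \in p_\kappa$; I want a splitting node of $p_\kappa$ above $s$. Since $s \in p_\alpha$ for every $\alpha$, each $p_\alpha \upharpoonright s$ is a perfect tree, so it has splitting nodes of every order $<\kappa$ above $s$. For each $\alpha<\kappa$ let $t_\alpha$ be the order-$\alpha$ splitting node of $p_0 \upharpoonright s$ lying on the leftmost branch through $s$ in $p_0$ — actually, more robustly, argue as follows. Build a branch through $p_\kappa$ above $s$ by recursion on $\beta < \kappa$, at stage $\beta$ choosing $t_\beta$ to be an order-$\beta$ splitting node of $p_\beta$ extending $s$ and extending all previously chosen $t_\gamma$; at limit stages $\lambda$ take $t_\lambda = \bigcup_{\beta<\lambda} t_\beta$, which lies in $p_\kappa$ by condition~\ref{enu:height} and splits in $p_\kappa$ by condition~\ref{enu:club} since it has splitting nodes of $p_\lambda$ (hence eventually of $p_\kappa$) cofinally below it. By the freezing fact, once $t_\beta$ is chosen as an order-$\beta$ splitting node of $p_\beta$ it stays a splitting node in $p_\kappa$; so any $t_\beta$ with $\beta > $ the splitting-order of $s$ in $p_\kappa$ serves as a splitting node of $p_\kappa$ above $s$. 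Hence $p_\kappa$ is perfect.

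The main obstacle is the bookkeeping at limit stages $\lambda < \kappa$: one must check that the union $\bigcup_{\beta<\lambda} t_\beta$ is genuinely in $p_\kappa$ and genuinely splits there, which is exactly where conditions~\ref{enu:height} and~\ref{enu:club} of Definition~\ref{def:We-say-} enter, and one must verify that these two closure conditions survive the length-$\kappa$ intersection — both are $\forall$-statements about branches of length $<\kappa$, so they pass to intersections without difficulty, but it is worth stating explicitly. Regularity of $\kappa$ is used implicitly to know that a branch of length $<\kappa$ built as an increasing union of length $<\kappa$ stays of length $<\kappa$, and that the recursion of length $\kappa$ never escapes $Seq = \bigcup_{\alpha<\kappa} 2^\alpha$.
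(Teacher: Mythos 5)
Your proposal is correct and follows essentially the same route as the paper: locate a splitting node of a fixed order in some $p_{\beta}$ above the given node, and then use the fact that order-$\le\beta$ splitting nodes are ``frozen'' from stage $\beta$ onward (successor steps by $p_{\beta+1}\geq_{\beta}p_{\beta}$, limit steps by the closure conditions of Definition \ref{def:We-say-}) to conclude it survives into $p_{\kappa}$. The explicit branch construction you add is harmless but redundant, since a single frozen splitting node above $s$ already suffices, which is exactly what the paper's transfinite induction produces.
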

\begin{proof}
Define $\bigcap\limits _{\alpha<\kappa}p_{\alpha}=p_{\kappa}$. We
claim $p_{\kappa}\in\mathbb{P}$, meaning it is a perfect tree. Conditions
\ref{enu:height} and \ref{enu:club} of definition \ref{def:We-say-}
are trivially true in $p_{\kappa}$. It is left to show that each
node in $p_{\kappa}$ has a splitting node above it.

Take $s\in p_{\kappa}$. Let $\beta$ be the order type of $\left\{ t\subsetneq s\mid t\text{ splits in }p_{\kappa}\right\} $
ordered by inclusion. Take $p_{\beta+1}$. By definition $s\in p_{\beta+1}$.
However $p_{\beta+1}$ is a perfect tree, and so has a splitting node
of order $\beta+1$ above $s$, which we denote $t\supseteq s$.

Now we proceed by transfinite induction. Assume $t$ is an order $\beta+1$
splitting node in $p_{\epsilon}$, where $\epsilon>\beta$. Then $p_{\epsilon+1}\geq_{\epsilon}p_{\epsilon}$,
so $t$ is also an order $\beta+1$ splitting node in $p_{\epsilon+1}$.
Let $\delta<\kappa$ be a limit ordinal, such that for all $\epsilon$
with $\beta<\epsilon<\delta$ $t$ is an order $\beta+1$ splitting
node in $p_{\epsilon}$. Then by definition \ref{def:We-say-} condition
\ref{enu:club} $t$ is an order $\beta+1$ splitting node in $p_{\delta}$.

Therefore by induction $t\supseteq s$ is a splitting node in $p_{\kappa}$.
We found a splitting node in $p_{\kappa}$ above our arbitrary $s$.
Hence $p_{\kappa}$ is indeed perfect.
\end{proof}
Note that it is obvious from the chain condition that all cardinals
greater than or equal to $\kappa^{++}$ are preserved. On the other
hand, due to closure all cardinals less than or equal to $\kappa$
are preserved. So to complete the picture we must show $\kappa^{+}$
is preserved.
\begin{lem}
\label{lem:pres}If $2^{<\kappa}=\kappa$ then $\kappa^{+}$ is preserved
under $\kappa$-Sacks forcing.
\end{lem}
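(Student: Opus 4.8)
The plan is to imitate the proof of Lemma~\ref{lem:aleph_1-is-preserved}, replacing the $\omega$-length fusion by a fusion of length $\kappa$ and inserting one cardinality estimate where the hypothesis $2^{<\kappa}=\kappa$ enters. I claim it suffices to prove: for every condition $p$ and every name $\dot F$ with $p\Vdash\dot F\colon\kappa\to\mathbf{Ord}$, there are $q\leq p$ and a set $A\in V$ with $|A|^{V}\leq\kappa$ and $q\Vdash\mathrm{ran}(\dot F)\subseteq\check A$. Granting this, suppose toward a contradiction that $(\kappa^{+})^{V}$ is collapsed in some extension $V[G]$; then in $V[G]$ there is a surjection $\kappa\to(\kappa^{+})^{V}$, so we may fix a name $\dot F$ and a condition $p\in G$ forcing $\dot F$ to map $\kappa$ onto $(\kappa^{+})^{V}$. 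Applying the claim, the resulting $q\leq p$ forces both $\mathrm{ran}(\dot F)=(\kappa^{+})^{V}$ and $\mathrm{ran}(\dot F)\subseteq\check A$, hence $q\Vdash(\kappa^{+})^{V}\subseteq\check A$, which forces a $\check{}$-statement and therefore already holds in $V$; so $(\kappa^{+})^{V}\subseteq A$, contradicting $|A|^{V}\leq\kappa$.

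To produce $q$ I would build a fusion sequence $\langle p_{\alpha}\mid\alpha<\kappa\rangle$ with $p_{0}=p$, taking $p_{\delta}=\bigcap_{\alpha<\delta}p_{\alpha}$ at limit stages (which lies in $\mathbb{P}$ by $\kappa$-closure, Lemma~\ref{lem:-is--closed.}). At stage $\alpha+1$, let $S_{\alpha}$ be the set of order-$\alpha$ splitting nodes of $p_{\alpha}$; for each $s\in S_{\alpha}$ and $i\in\{0,1\}$ choose $q_{s^{\frown}i}\leq p_{\alpha}\upharpoonright s^{\frown}i$ and an ordinal $a_{s^{\frown}i}$ with $q_{s^{\frown}i}\Vdash\dot F(\alpha)=a_{s^{\frown}i}$, and set $p_{\alpha+1}=\bigcup_{s\in S_{\alpha},\,i=0,1}q_{s^{\frown}i}$. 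Exactly as in Lemma~\ref{lem:aleph_1-is-preserved} --- using that distinct order-$\alpha$ splitting nodes are pairwise incomparable and that closure conditions \ref{enu:height} and \ref{enu:club} are inherited by unions of this shape --- $p_{\alpha+1}$ is a perfect tree with $p_{\alpha+1}\geq_{\alpha}p_{\alpha}$. Then $q=\bigcap_{\alpha<\kappa}p_{\alpha}\in\mathbb{P}$ by Lemma~\ref{lem:Fusion:-Let-}, and with $A=\bigcup_{\alpha<\kappa}\{a_{s^{\frown}i}\mid s\in S_{\alpha},\,i=0,1\}$ the density argument of Lemma~\ref{lem:aleph_1-is-preserved} carries over verbatim: for each $\alpha$ the generic branch through $q$ passes through some $s^{\frown}i$ with $s\in S_{\alpha}$, and $q\upharpoonright(s^{\frown}i)\leq q_{s^{\frown}i}$ decides $\dot F(\alpha)$ into $A$, so $q\Vdash\mathrm{ran}(\dot F)\subseteq\check A$.

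The only genuinely new ingredient, and the sole place $2^{<\kappa}=\kappa$ is used, is the bound $|A|^{V}\leq\kappa$. Here I would show by induction on $\alpha<\kappa$ that $|S_{\alpha}|\leq 2^{|\alpha|}$: the stem is the unique order-$0$ splitting node, successor steps double the count, and at a limit $\delta$ condition \ref{enu:club} makes each order-$\delta$ splitting node the union of a cofinal branch through the binary-branching tree of splitting nodes, of which there are at most $2^{|\delta|}$. Since $\alpha<\kappa$ gives $2^{|\alpha|}\leq 2^{<\kappa}=\kappa$, the set $A$ is a union of $\kappa$ sets each of size $\leq\kappa$, hence $|A|^{V}\leq\kappa$, which is what was needed. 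The point I expect to require the most care is not this count but verifying that the $\kappa$-length fusion survives its limit stages --- that intersecting keeps us inside $\mathbb{P}$ and does not disturb the $\geq_{\alpha}$ relations below $\alpha$ --- and this is precisely what conditions \ref{enu:height}, \ref{enu:club}, Lemma~\ref{lem:-is--closed.} and Lemma~\ref{lem:Fusion:-Let-} were set up to guarantee, so it is bookkeeping rather than a new idea.
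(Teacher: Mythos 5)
Your proposal is correct and takes essentially the same route as the paper's proof: a fusion sequence of length $\kappa$ deciding $\dot F(\alpha)$ at stage $\alpha$ on the order-$\alpha$ splitting nodes, with $2^{<\kappa}=\kappa$ entering only to bound $\left|A\right|$ by $\kappa$ (your explicit induction $\left|S_{\alpha}\right|\leq2^{\left|\alpha\right|}$ just spells out what the paper asserts in one line). The only differences are cosmetic --- you close the argument by deriving $\left(\kappa^{+}\right)^{V}\subseteq A$ directly in $V$ rather than via the paper's density step, and your use of $q\leq p$ for ``$q$ stronger than $p$'' inverts the paper's stated Israeli convention.
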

\begin{proof}
This proof closely mirrors the proof of lemma \ref{lem:aleph_1-is-preserved}.

Assume $X$ is a set of ordinals in $V\left[G\right]$, such that
$\left(\left|X\right|=\kappa\right)^{V\left[G\right]}$. We show the
existence of a set $A\in V$ of cardinality $\kappa$ in $V$ such
that $X\subseteq\dot{A}$. Let $\dot{F}$ be a name and let $p$ be
a condition such that $p\Vdash\dot{F}:\kappa\rightarrow\dot{X}\text{ is surjective}$.

We now build a fusion sequence $\left\langle p_{\alpha}\mid\alpha<\kappa\right\rangle $
with $p_{0}=p$. Assume we defined $p_{\alpha}$. Let $S_{\alpha}$
be the set of all order $\alpha$ splitting nodes of $p_{\alpha}$.
For each $s\in S_{\alpha}$ let $q_{s^{\frown}0},q_{s^{\frown}1}$
and $a_{s^{\frown}0},a_{s^{\frown}1}$ be such that $q_{s^{\frown}i}\geq p_{\alpha}\upharpoonright s^{\frown}i$
and $q_{s^{\frown}i}\Vdash\dot{F}(\alpha)=a_{s^{\frown}i}$. Let $p_{\alpha+1}=\bigcup\limits _{s\in S_{n},i=0,1}q_{s^{\frown}i}$.

All splitting nodes of order $\leq\alpha$ are preserved: if $t$
is an order $\beta<\alpha$ splitting node in $p_{\alpha}$ then it
is also a splitting node in $q_{s^{\frown}0}$ for the $s\in S_{\alpha}$
that is $s\supsetneq t$, and so in $p_{\alpha+1}$; whereas if $t$
is an order $\alpha$ splitting node in $p_{\alpha}$ then $t$ is
a splitting node in $q_{t^{\frown}0}\cup q_{t^{\frown}1}$ and so
is in $p_{\alpha+1}$. Thus all splitting nodes of order $\alpha$
are preserved in $p_{\alpha+1}$, and so $p_{\alpha+1}\geq_{\alpha}p_{\alpha}$.

In the limit case we define $p_{\delta}=\bigcap\limits _{\alpha<\delta}p_{\alpha}$.
Thus we have a fusion sequence, and using lemma \ref{lem:Fusion:-Let-}
we get $q=\bigcap\limits _{\alpha<\kappa}p_{\alpha}\in\mathbb{P}$.

Now define $A=\bigcup\limits _{\alpha<\kappa}\left\{ a_{s^{\frown}i}\mid s\in S_{\alpha}\wedge i=0,1\right\} $.
Note that $A$ is a union of $\kappa$ sets of at most $2^{<\kappa}=\kappa$
cardinality, so $\left|A\right|=\kappa$. Now observe that $q\Vdash ran\left(\dot{F}\right)\subseteq A$.
As $\dot{F}$ is the name of the function that witnesses the cardinality
of $X$, this means $q\Vdash\dot{X}\subseteq A$.

Although we built a specific $q\geq p$, due to density there is \emph{some}
$r\geq p$ in $G$ such that $r\Vdash\dot{X}\subseteq A$. Therefore
$V\left[G\right]\vDash X\subseteq A$, where $A$ is of cardinality
$\kappa$ in $V$, which implies that $\kappa^{+}$ is preserved.
\end{proof}
\begin{thm}
\label{thm:-Sacks-forcing-produces}$\kappa$-Sacks forcing produces
a minimal extension of $V$, such that for every model $W$ of $\mathsf{ZFC}$
if $V\subseteq W\subseteq V\left[G\right],$ then either $W=V$ or
$W=V\left[G\right]$.
\end{thm}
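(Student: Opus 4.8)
The plan is to repeat the proof of Theorem~\ref{thm:OrgSacksMinimal} almost word for word, substituting the $\kappa$-fusion of Lemma~\ref{lem:Fusion:-Let-} for the countable fusion used there. First, by the intermediate model theorem (Theorem~15.43 of \citep{key-17}, which applies to any set forcing), every $W$ with $V\subseteq W\subseteq V\left[G\right]$ and $W\vDash\mathsf{ZFC}$ equals $V\left[A\right]$ for some set of ordinals $A\in V\left[G\right]$, so it suffices to show that for each such $A$ either $V\left[A\right]=V$ or $V\left[A\right]=V\left[G\right]$. Fix a name $\dot{A}$ and an ordinal $\alpha$ with $0\Vdash\dot{A}\subseteq\alpha$, and let $\dot{z}$ name the characteristic function $z\colon\alpha\to2$ of $A$. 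If $A\in V$ there is nothing to prove, so fix $p\in G$ forcing $\dot{A}\notin V$. For $q\geq p$ let $\dot{z}_{q}$ be the longest initial segment of $\dot{z}$ decided by $q$ and $\gamma_{q}<\alpha$ the least ordinal on which $q$ leaves $\dot{z}$ undecided; both are well defined, since a $q\geq p$ deciding all of $\dot{z}$ would force $\dot{A}\in V$.

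Next I would build a fusion sequence $\left\langle p_{\xi}\mid\xi<\kappa\right\rangle$ with $p_{0}=p$, setting $p_{\delta}=\bigcap_{\xi<\delta}p_{\xi}$ at limit stages (a condition by Lemma~\ref{lem:-is--closed.}), and at a successor stage $\xi$ treating each order-$\xi$ splitting node $s\in S_{\xi}$ of $p_{\xi}$ as follows: using $\gamma_{p_{\xi}\upharpoonright s}$ together with the two conditions $p_{\xi}\upharpoonright s^{\frown}i$, choose $q_{s^{\frown}0}\geq p_{\xi}\upharpoonright s^{\frown}0$ and $q_{s^{\frown}1}\geq p_{\xi}\upharpoonright s^{\frown}1$ deciding $\dot{z}(\gamma_{p_{\xi}\upharpoonright s})$ in conflicting ways. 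The sub-argument that this is possible is exactly the one in Theorem~\ref{thm:OrgSacksMinimal}: if both $p_{\xi}\upharpoonright s^{\frown}i$ forced the same value $j$ for $\dot{z}(\gamma_{p_{\xi}\upharpoonright s})$, then any $q\geq p_{\xi}\upharpoonright s$ forcing $1-j$ would be compatible with one of the $p_{\xi}\upharpoonright s^{\frown}i$, a contradiction; hence at least one side is free to go either way and we arrange a conflict. Putting $p_{\xi+1}=\bigcup_{s\in S_{\xi},\,i=0,1}q_{s^{\frown}i}$ preserves all splitting nodes of order $\le\xi$ (the same computation as in Lemma~\ref{lem:pres}), so $p_{\xi+1}\geq_{\xi}p_{\xi}$, and Lemma~\ref{lem:Fusion:-Let-} delivers $q=\bigcap_{\xi<\kappa}p_{\xi}\in\mathbb{P}$. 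A short check shows $\gamma_{q\upharpoonright s}=\gamma_{p_{\xi}\upharpoonright s}$ for $s\in S_{\xi}$, so at every splitting node $s$ of $q$ the two successors $q\upharpoonright s^{\frown}i$ still decide $\dot{z}(\gamma_{q\upharpoonright s})$ in conflicting ways.

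Finally, set $f=\{s\in q\mid\dot{z}_{q\upharpoonright s}\subseteq\dot{z}_{G}\}$, where $\dot{z}_{G}$ is the value of $\dot{z}$ in $V\left[G\right]$, i.e. the characteristic function of $A$. This set is downward closed, and by the conflicting decisions at each splitting node at most one of $s^{\frown}0,s^{\frown}1$ lies in it; hence $f$ is a branch of $q$, and since $q\in V$ and $f$ is definable from $q$ and $A$ we get $f\in V\left[A\right]$. Running the construction starting from an arbitrary $p'\geq p$ shows the conditions arising as such fusion limits are dense below $p$, so we may take $q\in G$; then the $\kappa$-Sacks generic function $g$ is a branch of $q$, and at each splitting node $s$ of $q$ it continues into the successor whose restriction lies in $G$, which forces the true value of $\dot{z}(\gamma_{q\upharpoonright s})$, so that successor is in $f$. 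Thus $g\subseteq f$, and both being branches of $q$ we get $g=f\in V\left[A\right]$; since $G$ is recoverable from $g$ this gives $G\in V\left[A\right]$, hence $V\left[G\right]\subseteq V\left[A\right]\subseteq V\left[G\right]$ and $V\left[A\right]=V\left[G\right]$.

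The only substantive difference from Theorem~\ref{thm:OrgSacksMinimal} is running the fusion through $\kappa$ many stages, and the main obstacle is exactly the bookkeeping this entails: checking that every perfect tree actually has order-$\xi$ splitting nodes for all $\xi<\kappa$ (so that $S_{\xi}$ is the right set to iterate over, which follows from perfectness together with condition~\ref{enu:club} of Definition~\ref{def:We-say-}), that the limit intersections remain perfect (Lemma~\ref{lem:-is--closed.} and Lemma~\ref{lem:Fusion:-Let-}, using conditions~\ref{enu:height} and~\ref{enu:club} and the fact that $p\geq_{\delta}q\Leftrightarrow\forall\alpha<\delta\,(p\geq_{\alpha}q)$ at limits), and that $\gamma_{q\upharpoonright s}$ is computed correctly along the fusion. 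No hypotheses beyond $2^{<\kappa}=\kappa$, already standing for $\kappa$-Sacks forcing in this section, are required.
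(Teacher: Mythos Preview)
Your proposal is correct and follows essentially the same approach as the paper's own proof: reduce to sets of ordinals via the intermediate model theorem, build a $\kappa$-length fusion sequence in which the two successors of every order-$\xi$ splitting node decide $\dot z(\gamma_{p_\xi\upharpoonright s})$ in conflicting ways, and then read off the generic branch from $A$ via $f=\{s\in q\mid \dot z_{q\upharpoonright s}\subseteq \dot z_G\}$. Your write-up is in fact slightly more careful than the paper's concise version, making explicit the checks that $\gamma_{q\upharpoonright s}=\gamma_{p_\xi\upharpoonright s}$ and that order-$\xi$ splitting nodes exist; note, however, that the minimality argument itself does not actually use $2^{<\kappa}=\kappa$, so that hypothesis is not needed here even though it is standing in the section.
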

\begin{proof}
This proof closely mirrors the proof of theorem \ref{thm:OrgSacksMinimal},
and so is given here in a more concise form.

We show that for any set of ordinals $A\in V\left[G\right]$ either
$V\left[A\right]=V$ or $V\left[A\right]=V\left[G\right]$. Let $\dot{A}$
be the name of a set of ordinals in $V\left[G\right]$, and let $\dot{z}$
be the name of its characteristic function. Assume $p\in G$ forces
$\dot{A}\notin V$.

For a condition $q\in\mathbb{P}$ let $\dot{z}_{q}$ be the longest
initial segment of $\dot{z}$ that is decided by $q$, and $\gamma_{q}$
be the first ordinal for which $\dot{z}$ is undecided.

Mark $p_{0}=p$. Assume we've already chosen $p_{\alpha}$. For every
splitting node $s\in S_{\alpha}$, where $S_{\alpha}$ is defined
as in lemma \ref{lem:pres}, let's look at $\gamma_{p_{\alpha}\upharpoonright s}$
and conditions $p_{\alpha}\upharpoonright s^{\frown}i$. If for a
certain $i$ there is a $j$ such that $p_{\alpha}\upharpoonright s^{\frown}i\Vdash\dot{z}\left(\gamma_{p_{\alpha}\upharpoonright s}\right)=j$
and we take $q_{s^{\frown}i}=p_{\alpha}\upharpoonright s^{\frown}i$,
there will be $q_{s^{\frown}\left(1-i\right)}\geq p_{\alpha}\upharpoonright s^{\frown}\left(1-i\right)$
such that $q_{s^{\frown}\left(1-i\right)}\Vdash\dot{z}\left(\gamma_{p_{\alpha}\upharpoonright s}\right)=1-j$.
If there is no such $i$, then we are free to take for both $i=0,1$
$q_{s^{\frown}i}\geq p_{\alpha}\upharpoonright s^{\frown}i$ such
that $q_{s^{\frown}i}\Vdash\dot{z}\left(\gamma_{p_{\alpha}\upharpoonright s}\right)=i$.
Either case we found $q_{s^{\frown}i}$ that decide $\dot{z}\left(\gamma_{p_{\alpha}\upharpoonright s}\right)$
in conflicting ways for $i=0,1$.

We now take $p_{\alpha+1}=\bigcup\limits _{s\in S_{n},i=0,1}q_{s^{\frown}i}$,
and for limit ordinals $p_{\delta}=\bigcap\limits _{\alpha<\delta}p_{\alpha}$.
Exactly as in lemma \ref{lem:pres}, $\left\langle p_{\alpha}\mid\alpha<\kappa\right\rangle $
is a fusion sequence and we can take $q=\bigcap\limits _{\alpha<\kappa}p_{\alpha}$.
Let $f=\left\{ s\in q\mid\dot{z}_{q\upharpoonright s}\subseteq\dot{z}_{G}\right\} $.
$f$ is a branch of $q$, and using $A$ is completely definable,
so $f\in V\left[A\right].$

Due to density we may assume $q\in G$, in which case $f$ is actually
our new function $\kappa\to2$, which we identify with $G$. Thus
we get $G\in V\left[A\right]$, and so $V\left[A\right]=V\left[G\right]$.
\end{proof}
\begin{cor}
\label{cor:.2^<k}$V\vDash2^{\kappa}=\kappa^{+}\wedge2^{<\kappa}=\kappa\Rightarrow V\left[G\right]\vDash2^{\kappa}=\kappa^{+}$.
\end{cor}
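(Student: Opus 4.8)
The plan is to mirror the proof of Corollary \ref{cor:CH}, replacing the cardinal-counting input (Lemma \ref{lem:antichain-1}) by its $\kappa$-analogue (Lemma \ref{lem:antichain}). First I would record the relevant cardinal-preservation facts: since $\mathbb{P}$ is $\kappa$-closed (Lemma \ref{lem:-is--closed.}) every cardinal $\leq\kappa$ is preserved; since, under the hypotheses $2^{<\kappa}=\kappa$ and $2^{\kappa}=\kappa^{+}$, $\mathbb{P}$ has the $\kappa^{++}$-antichain condition (Lemma \ref{lem:antichain}), every cardinal $\geq\kappa^{++}$ is preserved; and $\kappa^{+}$ is preserved by Lemma \ref{lem:pres}. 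Hence $\kappa$ and $\kappa^{+}$ denote the same ordinals in $V$ and in $V\left[G\right]$, so it suffices to show $\left(2^{\kappa}\leq\kappa^{+}\right)^{V\left[G\right]}$.

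Next I would split $\mathcal{P}\left(\kappa\right)^{V\left[G\right]}$ into the \emph{old} subsets, which lie in $V$, and the \emph{new} ones. The old subsets number $\left(2^{\kappa}\right)^{V}=\left(\kappa^{+}\right)^{V}=\left(\kappa^{+}\right)^{V\left[G\right]}$ by hypothesis and preservation. For the new ones, I would reuse the fusion construction from the proof of Theorem \ref{thm:-Sacks-forcing-produces}: given a name for a new $A\subseteq\kappa$ and a condition $p\in G$ forcing $\dot{A}\notin V$, that construction produces a ground-model condition $q\in\mathbb{P}$ together with the data of which immediate successor of each splitting node of $q$ decides $\dot{z}$ which way; packaged together, this turns $q$ into a continuous map $q\colon\left[q\right]\to\mathcal{P}\left(\kappa\right)$ in $V$ with $q\left(G\right)=A$, where $G$ is identified with the branch of $q$ it determines. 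Distinct new subsets of $\kappa$ then give distinct such data, so the assignment $A\mapsto q$ (with its decision pattern) is injective on new subsets. By Lemma \ref{lem:antichain} there are only $\kappa^{+}$ conditions in $\mathbb{P}$, and a perfect tree, being a subset of $2^{<\kappa}$, has at most $2^{<\kappa}=\kappa$ splitting nodes, hence at most $2^{\kappa}=\kappa^{+}$ possible decision patterns; so there are at most $\kappa^{+}\cdot\kappa^{+}=\kappa^{+}$ new subsets of $\kappa$. Adding the old ones gives $\left(2^{\kappa}\right)^{V\left[G\right]}\leq\kappa^{+}$, and since $2^{\kappa}\geq\kappa^{+}$ trivially, $V\left[G\right]\vDash2^{\kappa}=\kappa^{+}$.

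The main obstacle is the assertion that the fusion-produced tree $q$ genuinely encodes a \emph{total} continuous reading function on all of $\left[q\right]$, i.e. that following any branch of $q$, not just the generic one, decides the entire name $\dot{z}$ for the characteristic function of $A$ rather than merely an initial segment. This is exactly the point where, in the construction of Theorem \ref{thm:-Sacks-forcing-produces}, one must check that the first-undecided-ordinal markers $\gamma_{q\upharpoonright s}$ increase cofinally in $\kappa$ as $s$ runs along a branch; this follows because at fusion stage $\alpha$ every order-$\alpha$ splitting node has its two successors decide the then-first-undecided coordinate in conflicting ways, so no coordinate below $\kappa$ can remain undecided along a cofinal branch. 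Once this is in hand, the counting argument above goes through verbatim, with the hypotheses $2^{<\kappa}=\kappa$ and $2^{\kappa}=\kappa^{+}$ used precisely to invoke Lemma \ref{lem:antichain} and to count the old subsets.
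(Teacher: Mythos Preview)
Your proof is correct and takes essentially the same approach as the paper: reuse the fusion tree $q$ from Theorem \ref{thm:-Sacks-forcing-produces} as a continuous reading map $[q]\to\mathcal{P}(\kappa)$ sending the generic branch to $A$, observe this makes $A\mapsto q$ injective for fixed $G$, and bound the number of such objects by $\kappa^{+}$ via Lemma \ref{lem:antichain}. You are slightly more careful than the paper in explicitly separating the tree from its decision pattern when counting (the paper writes ``produce the same $q$'' while tacitly meaning $q$ together with its reading data) and in verifying that the $\gamma$'s are cofinal so the reading is total, but these are refinements of the same argument rather than a different route.
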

\begin{proof}
Again, in direct analogy to corollary \ref{cor:CH}. In the proof
of theorem \ref{thm:-Sacks-forcing-produces} let's assume $A$ is
a 'new' subset of $\kappa$, meaning we have $V\left[G\right]\vDash A\subsetneq\kappa\wedge A\notin V$.
Using fusion, we generate a perfect tree $q\in\mathbb{P}$ in the
ground model that is used to interpret $A$ according to $G$.

Taking $\left[q\right]$ to signify the branches of $q$, we can view
$q$ as a mapping $q:\left[q\right]\rightarrow\mathcal{P}\left(\kappa\right)$
between the branches of $q$ and subsets of $\kappa$. Our construction
method for $q$ implies that $V\left[G\right]\vDash q\left(G\right)=A$.
Therefore for a given $G$, there cannot be two subsets of $\kappa$
$A_{1}\neq A_{2}$ that produce the same $q$.

But according to lemma \ref{lem:antichain} there are at most $\kappa^{+}$
conditions in $\mathbb{P}$, so there can be at most $\kappa^{+}$
new subsets of $\kappa$ in $V\left[G\right]$. Hence, $\left(2^{\kappa}=\kappa^{+}\right)^{V\left[G\right]}$.
\end{proof}
After proving minimality and preservation of cardinals we conclude
this section with showing that $\kappa$-Sacks forcing preserves $\mathsf{GCH}$
above $\kappa$.
\begin{lem}
\label{lem:GCHpres}$V\vDash\forall\lambda\geq\kappa\left(2^{\lambda}=\lambda^{+}\right)\Rightarrow V\left[G\right]\vDash\forall\lambda>\kappa\left(2^{\lambda}=\lambda^{+}\right)$.
\end{lem}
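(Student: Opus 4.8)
The plan is to run the standard \emph{nice names} counting argument, after first disposing of the cardinal-preservation bookkeeping so that the symbol ``$\lambda^{+}$'' is unambiguous. Fix $\lambda>\kappa$ in $V$, so $\lambda\geq\kappa^{+}$; we must show $V[G]\vDash 2^{\lambda}=\lambda^{+}$. Since $2^{\lambda}\geq\lambda^{+}$ holds in any model of $\mathsf{ZFC}$, it suffices to bound $(2^{\lambda})^{V[G]}$ above by $(\lambda^{+})^{V}$, once we know the two readings of $\lambda^{+}$ coincide.

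First I would check that $\kappa$-Sacks forcing preserves all cardinals. Cardinals $\leq\kappa$ are preserved because $\mathbb{P}$ is $\kappa$-closed (Lemma \ref{lem:-is--closed.}); $\kappa^{+}$ is preserved by Lemma \ref{lem:pres}; and all cardinals $\geq\kappa^{++}$ are preserved by the $\kappa^{++}$-antichain condition of Lemma \ref{lem:antichain}. Since no cardinal lies strictly between $\kappa^{+}$ and $\kappa^{++}$, every cardinal of $V$ stays a cardinal of $V[G]$; in particular $\lambda$ and $\lambda^{+}$ remain cardinals and $(\lambda^{+})^{V[G]}=(\lambda^{+})^{V}$.

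Now the count. By the standard nice-name argument, every $B\subseteq\lambda$ in $V[G]$ is the interpretation of a nice $\mathbb{P}$-name $\bigcup_{\xi<\lambda}\{\check{\xi}\}\times A_{\xi}$ with each $A_{\xi}$ an antichain of $\mathbb{P}$, so $(2^{\lambda})^{V[G]}$ is at most the number of such names, which we evaluate in $V$. By Lemma \ref{lem:antichain}, $|\mathbb{P}|=\kappa^{+}$ (here I use the standing hypothesis $2^{<\kappa}=\kappa$ together with $2^{\kappa}=\kappa^{+}$, the $\lambda=\kappa$ instance of our hypothesis), hence there are at most $2^{\kappa^{+}}$ subsets of $\mathbb{P}$, and $2^{\kappa^{+}}=\kappa^{++}$ by $\mathsf{GCH}$ at $\kappa^{+}$. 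So there are at most $(\kappa^{++})^{\lambda}$ nice names, and since $\lambda\geq\kappa^{+}$ gives $\kappa^{++}\leq\lambda^{+}$,
\[
(\kappa^{++})^{\lambda}\leq(\lambda^{+})^{\lambda}\leq(2^{\lambda})^{\lambda}=2^{\lambda}=\lambda^{+},
\]
the final equality being $\mathsf{GCH}$ at $\lambda$ in $V$. Thus $(2^{\lambda})^{V[G]}\leq(\lambda^{+})^{V}=(\lambda^{+})^{V[G]}$, so $V[G]\vDash 2^{\lambda}=\lambda^{+}$; as $\lambda>\kappa$ was arbitrary, we are done.

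The argument is routine, and I do not anticipate a genuine obstacle beyond two points of care: (i) doing the cardinal-preservation step first, so that $\lambda^{+}$ is genuinely absolute between $V$ and $V[G]$ — this is the step that would quietly fail if something in the relevant range collapsed; and (ii) invoking the $\mathsf{GCH}$ of $V$ exactly where it is needed — at $\kappa$ for $|\mathbb{P}|=\kappa^{+}$, at $\kappa^{+}$ to bound the number of antichains by $\kappa^{++}$, and at $\lambda$ to collapse $(\lambda^{+})^{\lambda}$ to $\lambda^{+}$.
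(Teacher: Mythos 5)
Your proof is correct and follows essentially the same route as the paper: count nice names via the bound $|\mathbb{P}|=\kappa^{+}$ from Lemma \ref{lem:antichain}, bound the antichains by $\kappa^{++}$, and collapse $\left(\kappa^{++}\right)^{\lambda}$ to $\lambda^{+}$ using $\mathsf{GCH}$ in $V$. The only (harmless) differences are that you make the cardinal-preservation bookkeeping explicit up front, where the paper relies on the surrounding lemmas, and you treat $\lambda=\kappa^{+}$ uniformly with $\lambda\geq\kappa^{++}$ rather than as a separate case.
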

\begin{proof}
For the preservation of $\mathsf{GCH}$ above $\kappa$, we turn to
the notion of a \emph{nice name} (see ch. VII definition 5.11 in \citep{key-19})\emph{.}
A name for a subset of $\sigma\in V^{\mathbb{P}}$ is considered nice
if it is of the form $\bigcup\left\{ \left\{ \pi\right\} \times A_{\pi}\mid\pi\in\mathrm{dom}\left(\sigma\right)\right\} $,
where each $A_{\pi}$ is an antichain in $\mathbb{P}$. Every subset
has a nice name.

According to lemma \ref{lem:antichain} there are at most $\kappa^{+}$
conditions, and therefore at most $\kappa^{+}$ elements in an antichain.
Meaning, there are at most $\left(\kappa^{+}\right)^{\kappa^{+}}=\kappa^{++}$
different possible antichains.

Hence for a given cardinal $\lambda$ there are at most $\left(\kappa^{++}\right)^{\lambda}$
different nice names for subsets of $\dot{\lambda}$. Thus for $\lambda\geq\kappa^{++}$
we have $\left(\kappa^{++}\right)^{\lambda}=2^{\lambda}=\lambda^{+}$,
meaning $\left(2^{\lambda}\leq\lambda^{+}\right)^{V\left[G\right]}$.
But of course this means $V\left[G\right]\Vdash2^{\lambda}=\lambda^{+}$.
For the case $\lambda=\kappa^{+}$ we derive this instead from $\lambda^{+}=\kappa^{++}=\left(\kappa^{++}\right)^{\kappa^{+}}=\left(\kappa^{++}\right)^{\lambda}$.

Thus $V\left[G\right]\vDash\forall\lambda>\kappa\left(2^{\lambda}=\lambda^{+}\right)$,
as required.
\end{proof}
We now briefly summarize the attributes we demanded from $\kappa$
for the forcing notion and the above theorems to make sense:
\begin{enumerate}
\item $\kappa$ needed to be regular, for the closure to work (lemma \ref{lem:-is--closed.}).
\item \label{enu:sum2}$2^{<\kappa}=\kappa$ is necessary to preserve $\kappa^{+}$
(lemma \ref{lem:pres}).
\item \label{enu:sum3}$2^{<\kappa}=\kappa$ and $2^{\kappa}=\kappa^{+}$
are necessary for the antichain condition (lemma \ref{lem:antichain}).
\end{enumerate}
All of these conditions are necessary to prove the preservation of
cardinals by $\kappa$-Sacks forcing. All of them are automatically
true if $\kappa$ strongly inaccessible. However, for our construction
we don't want to rely on the existence of large cardinals. Notably,
both conditions \ref{enu:sum2} and \ref{enu:sum3} are also implied
by $\mathsf{GCH}$, so in the next section corollary \ref{cor:.2^<k}
and lemma \ref{lem:GCHpres} will serve us in maintaining enough of
$\mathsf{GCH}$ to make the forcing iteration work.

\section{$\kappa$-Sacks iteration}

After defining individual $\kappa$-Sacks forcing, it is time to stitch
everything together. We now define the forcing iteration that will
enable us to build a model tower through limit ordinals, and up to
arbitrary height.

For a general introduction to iterated forcing the reader can refer
to Shelah \citep{key-29}.

For the rest of this section, let $L$ be our base model, and let
$\zeta$ be the height of the model tower that we wish to build.
\begin{defn}
\label{def:iteration}For $\alpha\leq\zeta$, define the forcing iteration
$\mathbb{P}_{\alpha}$ as follows:
\begin{enumerate}
\item Let $\mathbb{\mathbb{\dot{Q}}_{\alpha}}$ be trivial if $\alpha$
is a limit ordinal, and the name of $\aleph_{\alpha}$-Sacks forcing
in $V^{\mathbb{P}_{\alpha}}$ otherwise.
\item $\mathbb{P}_{\alpha+1}=\mathbb{P}_{\alpha}\star\mathbb{\dot{Q}}_{\alpha}$.
\item At limit stages we use full support, i.e if $\delta$ is a limit ordinal
then $p\in\mathbb{P}_{\delta}\Leftrightarrow\forall\alpha<\delta\left(p\upharpoonright\alpha\in\mathbb{P}_{\alpha}\right)$.
\end{enumerate}
\end{defn}
\begin{defn}
Denote:
\begin{enumerate}
\item $M_{0}=L$.
\item $G_{\alpha}$ as the generic set in partial order $\mathbb{P}_{\alpha}$
over $M_{0}$.
\item $M_{\alpha}=M_{0}\left[G_{\alpha}\right]$.
\item $G_{\alpha,\beta}$ as the generic set in partial order $\nicefrac{\mathbb{P}_{\beta}}{G_{\alpha}}$
over $M_{0}\left[G_{\alpha}\right]$, so that $M_{0}\left[G_{\alpha}\right]\left[\dot{G}_{\alpha,\beta}\right]=M_{0}\left[G_{\beta}\right]$.
\end{enumerate}
\end{defn}
\begin{lem}
\label{lem:IterationIsClosed}For every $\alpha<\beta$ $\nicefrac{\mathbb{P}_{\beta}}{G_{\alpha}}$
is $\aleph_{\alpha}$-closed. If $\alpha$ is a limit ordinal, then
$\nicefrac{\mathbb{P}_{\beta}}{G_{\alpha}}$ is $\aleph_{\alpha+1}$-closed.
\end{lem}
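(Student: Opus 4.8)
The plan is to analyze the tail of the iteration $\nicefrac{\mathbb{P}_{\beta}}{G_{\alpha}}$ stage by stage and exhibit a witnessing lower bound for each descending $\aleph_\alpha$-sequence of conditions. The quotient forcing $\nicefrac{\mathbb{P}_{\beta}}{G_{\alpha}}$ is (equivalent to) the iteration $\langle \dot{\mathbb{Q}}_\gamma \mid \alpha \le \gamma < \beta\rangle$ with full support, where each $\dot{\mathbb{Q}}_\gamma$ is either trivial (at limits) or the name for $\aleph_\gamma$-Sacks forcing. The key point is that for every $\gamma \ge \alpha$ the factor $\dot{\mathbb{Q}}_\gamma$ is forced to be $\aleph_\gamma$-closed, hence a fortiori $\aleph_\alpha$-closed, by Lemma \ref{lem:-is--closed.} (applied with $\kappa = \aleph_\gamma$, which is regular). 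So I would first recall the standard fact that a full-support iteration of $\lambda$-closed forcings is itself $\lambda$-closed, and then apply it with $\lambda = \aleph_\alpha$.

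Concretely, given a sequence $\langle p_\xi \mid \xi < \delta \rangle$ with $\delta < \aleph_\alpha$ (or $\delta < \aleph_{\alpha+1}$ in the limit case) and $p_\xi \ge p_{\eta}$ for $\xi \ge \eta$, I build a lower bound $p$ by specifying $p(\gamma)$ for each coordinate $\gamma$ with $\alpha \le \gamma < \beta$ by induction on $\gamma$. At coordinate $\gamma$, assuming $p \upharpoonright \gamma$ has been defined and forces itself below every $p_\xi \upharpoonright \gamma$, the sequence $\langle p_\xi(\gamma) \mid \xi < \delta \rangle$ is forced by $p\upharpoonright\gamma$ to be a descending $\delta$-sequence in $\dot{\mathbb{Q}}_\gamma$; since $\delta < \aleph_\alpha \le \aleph_\gamma$ and $\dot{\mathbb{Q}}_\gamma$ is forced $\aleph_\gamma$-closed, we may pick (using a name and the maximal principle) a $\mathbb{P}_\gamma$-name $p(\gamma)$ for a lower bound. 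Full support is exactly what lets this inductive construction range over all coordinates $\gamma < \beta$ without truncation, and membership of $p$ in $\mathbb{P}_\beta$ is checked coordinatewise against the full-support clause of Definition \ref{def:iteration}.

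The one genuine subtlety — and the step I would flag as the main obstacle — is the behaviour at limit coordinates $\gamma$ of the iteration itself (as opposed to limit stages $\delta$ of the descending sequence): there I need to know that the partial lower bound $p \upharpoonright \gamma$ built so far is actually a condition in $\mathbb{P}_\gamma$, i.e. that it lies in the full-support limit. This is immediate from the definition of full support once each $p\upharpoonright\gamma'$ for $\gamma' < \gamma$ is a condition, so the induction goes through; but it is the place where full support (rather than, say, $<\aleph_\alpha$-support) is essential, and where one must be careful that the generic $G_\alpha$ has already been factored out so that coordinates below $\alpha$ are not in play. The improved bound $\aleph_{\alpha+1}$-closure when $\alpha$ is a limit ordinal comes for free from the same argument: when $\alpha$ is a limit, $\dot{\mathbb{Q}}_\alpha$ is trivial, so the \emph{first} nontrivial factor of $\nicefrac{\mathbb{P}_{\beta}}{G_{\alpha}}$ is $\dot{\mathbb{Q}}_{\alpha+1}$ (at worst), which is forced $\aleph_{\alpha+1}$-closed, and indeed every nontrivial factor $\dot{\mathbb{Q}}_\gamma$ with $\gamma > \alpha$ has $\aleph_\gamma \ge \aleph_{\alpha+1}$; so the same coordinatewise construction now absorbs descending sequences of length $< \aleph_{\alpha+1}$.
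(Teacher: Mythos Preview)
Your proposal is correct and follows essentially the same approach as the paper: observe that every coordinate $\dot{\mathbb{Q}}_\gamma$ with $\gamma\ge\alpha$ is (trivially or by Lemma~\ref{lem:-is--closed.}) at least $\aleph_\alpha$-closed, invoke the standard fact that a full-support iteration of $\aleph_\alpha$-closed forcings is $\aleph_\alpha$-closed, and for the limit case note that $\dot{\mathbb{Q}}_\alpha$ is trivial so the tail is effectively $\nicefrac{\mathbb{P}_\beta}{G_{\alpha+1}}$. The paper's proof is a three-line sketch that simply asserts these facts, whereas you spell out the coordinatewise construction of the bound and flag where full support is used; this extra detail is sound and adds nothing new strategically.
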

\begin{proof}
Every coordinate of the forcing $\nicefrac{\mathbb{P}_{\beta}}{G_{\alpha}}$
is either trivial or $\aleph_{\gamma}$-Sacks forcing for $\gamma\geq\alpha$.
According to lemma \ref{lem:-is--closed.} each coordinate is therefore
$\aleph_{\alpha}$-closed.

By definition \ref{def:iteration} we use full support, and therefore
the iteration $\nicefrac{\mathbb{P}_{\beta}}{G_{\alpha}}$ as a whole
is $\aleph_{\alpha}$-closed.

If $\alpha$ is limit ordinal then $\mathbb{Q}_{\alpha}$ is trivial,
and so $\nicefrac{\mathbb{P}_{\beta}}{G_{\alpha}}=\nicefrac{\mathbb{P}_{\beta}}{G_{\alpha+1}}$
which is $\aleph_{\alpha+1}$-closed.
\end{proof}
We now show that all cardinals are preserved throughout the entire
forcing iteration.
\begin{defn}
Let $OK\left(\alpha\right)$ denote that:
\begin{enumerate}
\item \label{enu:samecard}$M_{\alpha}$ has the same cardinals as $M_{0}$;
\item \label{enu:2^<}$M_{\alpha}\vDash2^{<\aleph_{\alpha}}=\aleph_{\alpha}$;
\item \label{enu:GCHabove}$M_{\alpha}\vDash\forall\lambda\geq\aleph_{\alpha}\left(2^{\lambda}=\lambda^{+}\right)$.
\end{enumerate}
\end{defn}
\begin{lem}
\label{lem:OK0}$OK\left(0\right)$.
\end{lem}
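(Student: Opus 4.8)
The plan is to recall that $M_{0}=L$ and that essentially everything in the definition of $OK\left(0\right)$ is either vacuous or a classical fact about the constructible universe. Condition \ref{enu:samecard} is immediate: it asks that $M_{0}$ have the same cardinals as $M_{0}$, which holds by reflexivity of equality, so there is nothing to prove there.

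For condition \ref{enu:2^<} I would simply compute $2^{<\aleph_{0}}$ directly in $\mathsf{ZFC}$, with no appeal to $\mathsf{GCH}$ at all. By definition $2^{<\aleph_{0}}=\left|{}^{<\omega}2\right|$, the cardinality of the set of all finite binary sequences; this set is a countable union of the finite sets ${}^{n}2$ for $n<\omega$, hence countable, and it is clearly infinite, so $2^{<\aleph_{0}}=\aleph_{0}$. Since this is a theorem of $\mathsf{ZFC}$ and $M_{0}=L\vDash\mathsf{ZFC}$, we get $M_{0}\vDash 2^{<\aleph_{0}}=\aleph_{0}$.

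For condition \ref{enu:GCHabove} I would invoke G\"odel's theorem from \citep{key-7} that $L\vDash\mathsf{GCH}$, i.e. $L\vDash\forall\lambda\left(2^{\lambda}=\lambda^{+}\right)$ for all infinite cardinals $\lambda$. In particular $M_{0}=L\vDash\forall\lambda\geq\aleph_{0}\left(2^{\lambda}=\lambda^{+}\right)$, which is exactly what is required. Combining the three parts gives $OK\left(0\right)$.

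There is no real obstacle here; the only point worth a moment's care is that condition \ref{enu:2^<} should be justified in plain $\mathsf{ZFC}$ rather than being folded into the invocation of $\mathsf{GCH}$, so that the base case is visibly as elementary as possible and the structure parallels the later inductive steps, where conditions \ref{enu:2^<} and \ref{enu:GCHabove} are the genuinely load-bearing hypotheses for applying lemmas \ref{lem:pres}, \ref{lem:antichain} and \ref{lem:GCHpres}.
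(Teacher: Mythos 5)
Your proposal is correct and takes essentially the same approach as the paper, which compresses the whole argument into the single observation that $M_{0}=L$ satisfies $\mathsf{GCH}$; you merely unpack the three clauses of $OK\left(0\right)$ explicitly (triviality of the cardinal-preservation clause, the elementary $\mathsf{ZFC}$ computation $2^{<\aleph_{0}}=\aleph_{0}$, and G\"odel's theorem for the clause about $\mathsf{GCH}$ above $\aleph_{0}$).
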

\begin{proof}
$M_{0}=L$ and so satisfies $\mathsf{GCH}$.
\end{proof}
\begin{lem}
\label{lem:OK-succ}If $OK\left(\alpha\right)$ then $OK\left(\alpha+1\right)$.
\end{lem}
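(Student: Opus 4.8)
The plan is to prove $OK(\alpha+1)$ from $OK(\alpha)$ by combining the cardinal-preservation results for single $\kappa$-Sacks forcing with the closure of the tail of the iteration. Recall $M_{\alpha+1} = M_\alpha \star \mathbb{Q}_\alpha$ where $\mathbb{Q}_\alpha$ is $\aleph_\alpha$-Sacks forcing as computed in $M_\alpha$, and since $\alpha+1$ is a successor the factor $\mathbb{Q}_\alpha$ is non-trivial. First I would verify that the hypotheses needed to invoke the lemmas of the previous section hold for $\kappa = \aleph_\alpha$ inside $M_\alpha$: we need $\aleph_\alpha$ regular (true since $\alpha$ is a successor ordinal here — if $\alpha$ were a limit ordinal $\mathbb{Q}_\alpha$ would be trivial, but in that case $OK(\alpha+1)$ follows trivially from $OK(\alpha)$ together with the observation that $M_{\alpha+1}=M_\alpha$; actually one must be slightly careful, since $\aleph_\alpha$ for $\alpha$ a successor is a successor cardinal hence regular), together with $2^{<\aleph_\alpha}=\aleph_\alpha$ and $2^{\aleph_\alpha}=\aleph_\alpha^{+}$ in $M_\alpha$, both of which are exactly conditions \ref{enu:2^<} and \ref{enu:GCHabove} of $OK(\alpha)$.

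Granting those hypotheses, I would establish the three clauses of $OK(\alpha+1)$ in turn. For clause \ref{enu:samecard}: by Lemma \ref{lem:-is--closed.} the forcing $\mathbb{Q}_\alpha$ is $\aleph_\alpha$-closed, so it preserves all cardinals $\leq \aleph_\alpha$; by Lemma \ref{lem:antichain} it has the $\aleph_\alpha^{++}$-chain condition, so it preserves all cardinals $\geq \aleph_\alpha^{++}$; and by Lemma \ref{lem:pres} (using $2^{<\aleph_\alpha}=\aleph_\alpha$) it preserves $\aleph_\alpha^{+}$. Hence $M_{\alpha+1}$ has the same cardinals as $M_\alpha$, which by $OK(\alpha)$ are the same as those of $M_0$. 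For clause \ref{enu:2^<}: since $\aleph_{\alpha+1} = \aleph_\alpha^{+}$ and all cardinals $\leq \aleph_\alpha$ are preserved, $2^{<\aleph_{\alpha+1}}$ is computed from $2^{\aleph_\alpha}$; by Corollary \ref{cor:.2^<k} we get $M_{\alpha+1}\vDash 2^{\aleph_\alpha}=\aleph_\alpha^{+}=\aleph_{\alpha+1}$, and combined with $\aleph_\alpha$-closure preserving $2^{\lambda}$ for $\lambda<\aleph_\alpha$ this gives $2^{<\aleph_{\alpha+1}}=\aleph_{\alpha+1}$. For clause \ref{enu:GCHabove}: Lemma \ref{lem:GCHpres} applied with $\kappa=\aleph_\alpha$ (whose hypothesis is clause \ref{enu:GCHabove} of $OK(\alpha)$) gives $M_{\alpha+1}\vDash\forall\lambda>\aleph_\alpha(2^\lambda=\lambda^+)$, i.e. $\forall\lambda\geq\aleph_{\alpha+1}(2^\lambda=\lambda^+)$.

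The one subtle point, and the main thing to get right, is the interaction between iterating and the "as computed where" issue: $\mathbb{Q}_\alpha$ is $\aleph_\alpha$-Sacks forcing \emph{as defined in} $M_\alpha = M_0[G_\alpha]$, so I must confirm that the arithmetic facts needed ($2^{<\aleph_\alpha}=\aleph_\alpha$ and $2^{\aleph_\alpha}=\aleph_\alpha^+$) are asserted about $M_\alpha$, not about $M_0$ — but that is precisely what $OK(\alpha)$ provides, so this is really just bookkeeping rather than a genuine obstacle. There is no iteration-length issue here because we are only passing from $\alpha$ to $\alpha+1$, a single forcing step over $M_\alpha$; the heavy lifting of controlling the iteration at limits is deferred to a separate lemma. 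I would also remark that the case distinction on whether $\alpha$ is a limit is vacuous for clause-checking since $\alpha+1$ being a successor forces $\mathbb{Q}_\alpha$ non-trivial exactly when $\alpha$ itself is a successor or $0$; when $\alpha$ is a limit, $M_{\alpha+1}=M_\alpha$ and $OK(\alpha+1)$ is literally $OK(\alpha)$.
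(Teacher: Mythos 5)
Your proof is correct and follows essentially the same route as the paper's: split on whether $\mathbb{Q}_{\alpha}$ is trivial, then invoke lemma \ref{lem:-is--closed.}, lemma \ref{lem:antichain} and lemma \ref{lem:pres} for cardinal preservation, corollary \ref{cor:.2^<k} for clause \ref{enu:2^<}, and lemma \ref{lem:GCHpres} for clause \ref{enu:GCHabove}, with the hypotheses supplied by $OK\left(\alpha\right)$. The only nitpick is that in the trivial case $OK\left(\alpha+1\right)$ is not \emph{literally} $OK\left(\alpha\right)$ --- clause \ref{enu:2^<} at $\alpha+1$ needs the one-line observation $2^{<\aleph_{\alpha+1}}=2^{\aleph_{\alpha}}=\aleph_{\alpha+1}$, which comes from clause \ref{enu:GCHabove} of $OK\left(\alpha\right)$, exactly as the paper notes.
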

\begin{proof}
If $\mathbb{Q}_{\alpha}$ is trivial then $M_{\alpha+1}=M_{\alpha}$
and conditions \ref{enu:samecard} and \ref{enu:GCHabove} are trivially
true for $\alpha+1$. As for condition \ref{enu:2^<}, we know $M_{\alpha}\vDash2^{\aleph_{\alpha}}=\aleph_{\alpha+1}$
and so $M_{\alpha+1}\vDash2^{<\aleph_{\alpha+1}}=2^{\aleph_{\alpha}}=\aleph_{\alpha+1}$
as required.

Otherwise, $OK\left(\alpha\right)$ implies $M_{\alpha}$ has the
same cardinals as $M_{0}$, $M_{\alpha}\vDash2^{<\aleph_{\alpha}}=\aleph_{\alpha}$
and $M_{\alpha}\vDash\forall\lambda\geq\aleph_{\alpha}\left(2^{\lambda}=\lambda^{+}\right)$.
According to lemmas \ref{lem:-is--closed.}, \ref{lem:antichain}
and \ref{lem:pres} $M_{\alpha+1}$ has the same cardinals as $M_{\alpha}$,
and therefore the same as $M_{0}$. According to corollary \ref{cor:.2^<k}
$M_{\alpha+1}\vDash2^{<\aleph_{\alpha+1}}=2^{\aleph_{\alpha}}=\aleph_{\alpha+1}$.
And according to lemma \ref{lem:GCHpres} $M_{\alpha+1}\vDash\forall\lambda\geq\aleph_{\alpha+1}\left(2^{\lambda}=\lambda^{+}\right)$.
So $OK\left(\alpha+1\right)$ is true.
\end{proof}
\begin{lem}
\label{lem:SameCardinalsInLimit}Let $\delta\leq\zeta$ be a limit
ordinal. If for all $\alpha<\delta$ $OK\left(\alpha\right)$, then
$M_{\delta}$ has the same cardinals as $M_{0}$.
\end{lem}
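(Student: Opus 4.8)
The plan is to divide the $M_0$-cardinals into three ranges --- those below $\aleph_\delta$, the single cardinal $\aleph_{\delta+1}$, and those at or above $\aleph_{\delta+2}$ --- and to treat each separately; the middle one is where the real work lies.

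For cardinals $\le\aleph_\delta$ I would use closure. Writing $M_\delta=M_\alpha[G_{\alpha,\delta}]$ for $\alpha<\delta$, Lemma \ref{lem:IterationIsClosed} says $\mathbb{P}_\delta/G_\alpha$ is $\aleph_\alpha$-closed over $M_\alpha$, hence adds no sequence of length $<\aleph_\alpha$ and preserves every cardinal of $M_\alpha$ that is $\le\aleph_\alpha$. Since $OK(\alpha)$ gives $M_\alpha$ the same cardinals as $M_0$, every $M_0$-cardinal below $\aleph_\alpha$ is still a cardinal in $M_\delta$; letting $\alpha$ range over $\delta$ handles all $M_0$-cardinals $<\aleph_\delta$ (and shows $\aleph_\gamma^{M_0}=\aleph_\gamma^{M_\delta}$ for $\gamma<\delta$), and then $\aleph_\delta^{M_0}=\sup_{\gamma<\delta}\aleph_\gamma^{M_0}$, being a supremum of $M_\delta$-cardinals, is itself an $M_\delta$-cardinal. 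The same observation shows that no $M_0$-cardinal $>\aleph_\delta$ can be collapsed to a cardinal $<\aleph_\delta$: a witnessing surjection would have domain of size $<\aleph_\alpha$ for some $\alpha<\delta$ and hence already lie in $M_\alpha$.

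For cardinals $\ge\aleph_{\delta+2}$ I would bound $|\mathbb{P}_\delta|^{M_0}$. Working in $M_0=L\vDash\mathsf{GCH}$, an induction on $\alpha\le\delta$ --- using nice names and the $\kappa$-Sacks antichain bound of Lemma \ref{lem:antichain} to count names for elements of $\dot{\mathbb{Q}}_\alpha$ --- gives $|\mathbb{P}_\alpha|^{M_0}\le\aleph_{\alpha+1}$; the only subtle point is the limit step, where full support makes the count the product $\prod_{\gamma<\delta}\aleph_{\gamma+1}=\aleph_\delta^{|\delta|}$, which equals $\aleph_{\delta+1}$ under $\mathsf{GCH}$ because $\operatorname{cf}(\aleph_\delta)=\operatorname{cf}(\delta)\le|\delta|$. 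Thus $\mathbb{P}_\delta$ has the $\aleph_{\delta+2}$-antichain condition and preserves all $M_0$-cardinals $\ge\aleph_{\delta+2}$.

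The cardinal $\aleph_{\delta+1}$ is the obstacle, and the difficulty is structural: $\mathbb{P}_\delta$ genuinely has size $\aleph_{\delta+1}$, so there is no $\aleph_{\delta+1}$-cc to exploit, while each tail $\mathbb{P}_\delta/G_\alpha$ is only $\aleph_\alpha$-closed for $\alpha<\delta$, never $\aleph_{\delta+1}$-closed. Suppose toward a contradiction that $\aleph_{\delta+1}^{M_0}$ is not a cardinal in $M_\delta$; by the previous paragraph the collapse must be to $\aleph_\delta^{M_0}$, so fix a surjection $g\colon\aleph_\delta^{M_0}\to\aleph_{\delta+1}^{M_0}$ in $M_\delta$. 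Since $\mathbb{P}_\delta/G_{\beta+1}$ is $\aleph_{\beta+1}$-closed over $M_{\beta+1}$, the restriction $g\upharpoonright\aleph_\beta$ --- a sequence of length $\aleph_\beta<\aleph_{\beta+1}$ --- lies in $M_{\beta+1}$, and as $\aleph_{\delta+1}^{M_0}$ is a regular cardinal of $M_{\beta+1}$ its range is bounded, say below $\eta_\beta<\aleph_{\delta+1}^{M_0}$. Then $\langle\eta_\beta\mid\beta<\delta\rangle\in M_\delta$ with $\sup_{\beta<\delta}\eta_\beta=\aleph_{\delta+1}^{M_0}$ since $g$ is onto, so $\operatorname{cf}^{M_\delta}(\aleph_{\delta+1}^{M_0})\le\operatorname{cf}^{M_\delta}(\delta)$. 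On the other hand $\aleph_{\delta+1}^{M_0}$ is regular in each $M_\alpha$, $\alpha<\delta$, and $\mathbb{P}_\delta/G_\alpha$ is $\aleph_\alpha$-closed over $M_\alpha$, so no cofinal map of length $<\aleph_\alpha$ into it can appear, giving $\operatorname{cf}^{M_\delta}(\aleph_{\delta+1}^{M_0})\ge\aleph_\alpha^{M_0}$ for every $\alpha<\delta$, i.e.\ $\ge\aleph_\delta^{M_0}$. Hence $\aleph_\delta^{M_0}\le\operatorname{cf}^{M_\delta}(\delta)\le|\delta|^{M_0}\le\delta\le\aleph_\delta^{M_0}$, which forces $\delta=\aleph_\delta^{M_0}$; so unless $\delta$ is an $\aleph$-fixed point of $M_0$ this is absurd and $\aleph_{\delta+1}^{M_0}$ is preserved. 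The remaining fixed-point case I would settle uniformly by a fusion argument for $\mathbb{P}_\delta$ itself, modelled on Lemma \ref{lem:pres}: run a fusion of length $\aleph_\delta$, intersecting coordinatewise at limits, with bookkeeping arranged so that each coordinate $\gamma<\delta$ is completely fused (via the coordinate-$\gamma$ fusion of Lemma \ref{lem:Fusion:-Let-}) by a bounded stage, while at stage $\xi$ the value $\dot g(\xi)$ is decided, along the boundedly many splitting-node approximations, down to at most $\aleph_\delta$ possibilities; the limit condition then forces the range of $\dot g$ into a ground-model set of size $\le\aleph_\delta$, so no condition forces $\dot g$ onto $\aleph_{\delta+1}$.
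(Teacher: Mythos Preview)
Your proposal is correct and follows essentially the same three-part decomposition as the paper: closure for cardinals $\le\aleph_\delta$, the size bound $|\mathbb{P}_\delta|\le\aleph_{\delta+1}$ for the $\aleph_{\delta+2}$-c.c., and a fusion argument over the whole iteration for $\aleph_{\delta+1}$ in the hard case. The one noteworthy difference is your reduction step for $\aleph_{\delta+1}$: the paper simply splits on whether $\aleph_\delta$ is singular or regular, disposing of the singular case directly by observing that a cofinal sequence of length $<\aleph_\delta$ would already lie in some $M_{\gamma+1}$; you instead run a uniform cofinality squeeze (via the bounds $\eta_\beta$) that forces $\delta=\aleph_\delta$ and $\operatorname{cf}(\delta)=\delta$, arriving at the same regular-fixed-point residue. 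Your fusion sketch for that residue---deciding $\dot g(\xi)$ at stage $\xi$ while completing the coordinate-$\gamma$ fusion by a bounded stage---is exactly the paper's construction of the diagonal condition $p(\delta)$, just phrased more tersely.
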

\begin{proof}
For any $\alpha<\delta$ we have $M_{\delta}=M_{\alpha}\left[G_{\alpha,\delta}\right]$.
But $\nicefrac{\mathbb{P}_{\delta}}{G_{\alpha}}$ is $\aleph_{\alpha}$-closed
per lemma \ref{lem:IterationIsClosed}. Thus, all cardinals less than
or equal $\aleph_{\alpha}$ in $M_{\alpha}$ are preserved in $M_{\delta}$.
But $OK\left(\alpha\right)$ implies $M_{\alpha}$ has the same cardinals
as $M_{0}$, so all cardinals less than or equal $\aleph_{\alpha}$
in $M_{0}$ are preserved in $M_{\delta}$.

As this is true for all cardinals less than $\aleph_{\delta}$, it
is true for $\aleph_{\delta}$ itself.

Also note $\left|\mathbb{P_{\delta}}\right|=\prod_{\alpha<\delta}2^{\aleph_{\alpha}}=\prod_{\alpha<\delta}\aleph_{\alpha+1}\leq\aleph_{\delta}^{\left|\delta\right|}=2^{\aleph_{\delta}}=\aleph_{\delta+1}$.
Therefore $\mathbb{P}_{\delta}$ is $\aleph_{\delta+2}$-c.c, and
$M_{\delta}$ preserves all cardinals greater or equal to $\aleph_{\delta+2}$.

It remains to be proven that $\aleph_{\delta+1}$ is preserved.

First, assume $\aleph_{\delta}$ is singular and suppose that the
iteration does collapse it. Denote $cf\left(\aleph_{\delta+1}^{M_{0}}\right)^{M_{\delta}}=\aleph_{\gamma}$.
The collapse of the cardinal implies $\aleph_{\gamma}\le\aleph_{\delta}$,
and due to the latter's singularity $\aleph_{\gamma}<\aleph_{\delta}$,
meaning there is a new set of ordinals $A\in M_{\delta}$, such that
$\left|A\right|=\aleph_{\gamma}$. However according to lemma \ref{lem:IterationIsClosed}
$\nicefrac{\mathbb{P}_{\delta}}{G_{\gamma+1}}$ is $\aleph_{\gamma+1}$-closed,
so no sets of ordinals of cardinality $\aleph_{\gamma}$ are added
when forcing $M_{\delta}=M_{\gamma+1}\left[G_{\gamma+1,\delta}\right]$.
Thus $A\in M_{\gamma+1}$, which implies $M_{\gamma+1}\vDash cf\left(\aleph_{\delta+1}^{M_{0}}\right)\le\aleph_{\gamma}<\aleph_{\delta+1}$,
in contradiction to $\aleph_{\delta+1}$ being preserved in $M_{\gamma+1}$.
Therefore $\aleph_{\delta+1}$ must be preserved as well.

For the case $\aleph_{\delta}$ is regular we proceed with a variation
of the argument used in lemma \ref{lem:pres}.

Assume $\aleph_{\delta}$ is regular, meaning $\aleph_{\delta}=\delta$,
and that $F:\delta\rightarrow\mathbf{Ord}$ is a function in $M_{\delta}$.
We show the existence of a set $A\in M_{0}$ of cardinality $\delta$,
such that in $M_{\delta}$ $ran\left(F\right)\subseteq\check{A}$.
Let $p\in\mathbb{P}_{\delta}$ be a condition such that $p\Vdash\dot{F}:\delta\rightarrow\mathbf{Ord}$.

For any condition $q\in\mathbb{P}_{\delta}$, let $\dot{q}^{\alpha}$
denote the $\alpha$ coordinate of $q$, and similarly let $q^{<\alpha}$
denote the first $\alpha$ coordinates, and $\dot{q}^{>\alpha}$ denote
the name of all higher coordinates. For consistency, if we discuss
a condition $q\in\nicefrac{\mathbb{P}_{\delta}}{H_{\beta}}$ where
$H_{\beta}$ is a generic set in $\mathbb{P}_{\beta}$, then we fix
the first coordinate to be $q^{\beta}$.

Inductively we are going to build an increasing sequence of conditions
$\left\langle p\left(\alpha\right)\in\mathbb{P}_{\delta}\mid\alpha<\delta\right\rangle $.
Each coordinate is also going to be built inductively.

Start with the first coordinate. Let $p_{0}=p$, and assume we've
defined $p_{n}$ for $n<\omega$. Let $S_{n}$ be the set of all order
$n$ splitting nodes of $p_{n}^{0}$. For each $s\in S_{n}$ let $q_{s^{\frown}0},q_{s^{\frown}1}$
and $a_{s^{\frown}0},a_{s^{\frown}1}$ be such that $q_{s^{\frown}i}\geq\left\langle p_{n}^{0}\upharpoonright s^{\frown}i,p_{n}^{>0}\right\rangle $
and $q_{s^{\frown}i}\Vdash\dot{F}(n)=a_{s^{\frown}i}$. Let $p_{n+1}^{0}=\bigcup\limits _{s\in S_{n},i=0,1}q_{s^{\frown}i}^{0}$
be the amalgamation of the $q_{s^{\frown}i}^{0}$'s, just like we
did in the proof of lemma \ref{lem:pres}. For the rest of the coordinates,
we define $p_{n+1}^{>0}$ with accordance to the path taken in the
first coordinate. Meaning that $q_{s^{\frown}i}^{0}$ forces $p_{n+1}^{>0}=q_{s^{\frown}i}^{>0}$.

From the way we defined the $q_{s^{\frown}i}$'s it is clear $p_{n+1}^{>0}\geq p_{n}^{>0}$,
and so $p_{n+1}\geq p_{n}$. Also, all splitting nodes of order $\leq n$
are preserved in the first coordinate, and therefore $p_{n+1}^{0}\geq_{n}p_{n}^{0}$.
Thus what we have in the first coordinate is a classical fusion sequence,
and in the rest of the coordinates an increasing sequence. So thanks
to lemmas \ref{lem:Fusion1} and \ref{lem:IterationIsClosed} we can
conclude $p_{\omega}=\bigcap\limits _{n<\omega}p_{n}\in\mathbb{P}_{\delta}$.

We define $A_{0}=\left\{ a_{s^{\frown}i}\mid s\in S_{n}\wedge i=0,1\right\} $.
Obviously $\left|A_{0}\right|=\aleph_{0}$.

In essence, we used a fusion argument on the first coordinate to create
$p_{\omega}$, which is a sort of 'decision tree' for the first $\omega$
values of $\dot{F}$. We set $p\left(0\right)=p_{\omega}$. Next,
we are going to repeat this construction using the higher coordinates.
In each step $p\left(\beta\right)$ will be such a decision tree for
the first $\omega_{\beta}$ values of $\dot{F}$.

So assume now that we've already defined $p\left(\beta\right)$, and
we shall show how to define $p\left(\beta+1\right)$. We use $\mathbb{Q}_{\beta+1}$
to decide the values of $\dot{F}$ up to $\omega_{\beta+1}$. If $\beta$
is a limit ordinal we also set $A_{\beta}=\emptyset$.

Let $H_{\beta+1}\subsetneq\mathbb{P}_{\beta+1}$ be any generic set
such that $\left\langle p\left(\beta\right)^{0},\dot{p}\left(\beta\right)^{1},...,\dot{p}\left(\beta\right)^{\alpha},...\mid\alpha\leq\beta\right\rangle \in H_{\beta+1}$.

Now, working in $M_{0}\left[H_{\beta+1}\right]$ we repeat the construction.
To start the induction, set $p_{\omega_{\beta}}=p\left(\beta\right)^{>\beta}\in\nicefrac{\mathbb{P}_{\delta}}{H_{\beta+1}}$.

Assume $p_{\alpha}$ is defined, we are going to define $p_{\alpha+1}$.
Remember $\mathbb{Q}_{\beta+1}$ is $\aleph_{\beta+1}$-Sacks forcing.
So let $S_{\alpha}$ be the set of all order $\alpha$ splitting nodes
of $p_{\alpha}^{\beta+1}$. For each $s\in S_{\alpha}$ let $q_{s^{\frown}0},q_{s^{\frown}1}$
and $a_{s^{\frown}0},a_{s^{\frown}1}$ be such that $q_{s^{\frown}i}\geq\left\langle p_{\alpha}^{\beta+1}\upharpoonright s^{\frown}i,\dot{p}_{\alpha}^{>\beta+1}\right\rangle $
and $q_{s^{\frown}i}\Vdash\dot{F}(\alpha)=a_{s^{\frown}i}$. Let $p_{\alpha+1}^{\beta+1}=\bigcup\limits _{s\in S_{\alpha},i=0,1}q_{s^{\frown}i}^{\beta+1}$
be the amalgamation of the $q_{s^{\frown}i}$'s first coordinate.
As for the rest of the coordinates define $\dot{p}_{\alpha+1}^{>\beta+1}$
with accordance to the path taken in the first coordinate. Meaning
that $q_{s^{\frown}i}^{\beta+1}$ forces $\dot{p}_{\alpha+1}^{>\beta+1}=\dot{q}_{s^{\frown}i}^{>\beta+1}$.

From the way we defined the $q_{s^{\frown}i}$'s it is clear $\dot{p}_{\alpha+1}^{>\beta+1}\geq\dot{p}_{\alpha}^{>\beta+1}$,
and so $p_{\alpha+1}^{>\beta}\geq p_{\alpha}^{>\beta}$. Also, all
splitting nodes of order $\leq\alpha$ are preserved in $p_{\alpha+1}^{\beta+1}$,
and therefore $p_{\alpha+1}^{\beta+1}\geq_{\alpha}p_{\alpha}^{\beta+1}$.

In limit stages $\tau$ we simply define $p_{\tau}=\bigcap\limits _{\omega_{\beta}\leq\alpha<\tau}p_{\alpha}$.
In each coordinate of $p_{\tau}$ we have $\aleph_{\beta+1}$-closure
according to lemma \ref{lem:IterationIsClosed}. So for $\tau<\omega_{\beta+1}$
$p_{\tau}\in\nicefrac{\mathbb{P}_{\delta}}{H_{\beta+1}}$. For the
case $\tau=\omega_{\beta+1}$ note that we have a fusion sequence
in the first coordinate. So thanks to lemma \ref{lem:Fusion:-Let-}
and the $\aleph_{\beta+2}$-closure of the higher coordinates we have
$p_{\omega_{\beta+1}}\in\nicefrac{\mathbb{P}_{\delta}}{H_{\beta+1}}$.

Now, for each $a_{s^{\frown}i}$ we used, we pick a $\mathbb{P}_{\beta+1}$-name
$\dot{a}_{s^{\frown}i}$ in $M_{0}$. Because $\left|\mathbb{P}_{\beta+1}\right|<\delta$
we can find in $M_{0}$ a set $A_{\beta+1}$ of cardinality $<\delta$
such that $p_{\omega_{\beta}}\Vdash\check{A}_{\beta+1}\supseteq\left\{ \dot{a}_{s^{\frown}i}\mid s\in S_{\alpha}\wedge i=0,1\wedge\omega_{\beta}\leq\alpha<\omega_{\beta+1}\right\} $.

We also pick a name for $p_{\omega_{\beta+1}}$ such that 
\[
p\left(\beta\right)^{\leq\beta}\Vdash_{\mathbb{P}_{\beta+1}}\left\langle \dot{p}_{\omega_{\beta+1}}^{\beta+1},\dot{p}_{\omega_{\beta+1}}^{\beta+2},...,\dot{p}_{\omega_{\beta+1}}^{\eta},...\mid\eta<\delta\right\rangle \text{ is as we constructed it}.
\]

We set $p\left(\beta+1\right)=\left\langle p\left(0\right)^{0},\dot{p}\left(1\right)^{1},...,\dot{p}\left(\beta\right)^{\beta},\dot{p}_{\omega_{\beta+1}}^{\beta+1},...,\dot{p}_{\omega_{\beta+1}}^{\eta},...\mid\eta<\delta\right\rangle \in\mathbb{P}_{\delta}$.
Obviously $p\left(\beta+1\right)\geq p\left(\beta\right)$.

For limit stages $\tau$ we define $p\left(\tau\right)=\bigcap\limits _{\beta<\tau}p\left(\beta\right)$.
We claim $p\left(\tau\right)\in\mathbb{P}_{\delta}$. For each coordinate
$\beta<\tau$ note that the condition stabilizes, and so $p\left(\tau\right)^{\beta}=p\left(\beta\right)^{\beta}$.
For coordinates $\geq\tau$ lemma \ref{lem:IterationIsClosed} provides
at least $\aleph_{\tau+1}$-closure, and because we're using full
support this shows $p\left(\tau\right)\in\mathbb{P}_{\delta}$.

Therefore by induction we can construct condition $p\left(\delta\right)=\left\langle p\left(0\right)^{0},\dot{p}\left(1\right)^{1},...,\dot{p}\left(\eta\right)^{\eta},...\mid\eta<\delta\right\rangle \in\mathbb{P}_{\delta}$
.

Now define $A=\bigcup\limits _{\alpha<\delta}A_{\alpha}$. Because
for all $\alpha<\delta$ $OK\left(\alpha\right)$, we have $\left|A\right|=\delta$.

Observe that $p\left(\delta\right)\Vdash ran\left(\dot{F}\right)\subseteq\check{A}$.
Now if $\aleph_{\delta+1}$ is not preserved then in $M_{\delta}$
$\left|\aleph_{\delta+1}^{M_{0}}\right|=\delta$, and we can take
$F$ to be the bijection between $\aleph_{\delta+1}^{M_{0}}$ and
$\delta$. Applying the construction to this $F$ we get $p\left(\delta\right)\Vdash\aleph_{\delta+1}^{M_{0}}\subseteq\check{A}$.
But then we get in $M_{0}$ a surjection from $\delta$ onto $\aleph_{\delta+1}$,
which is impossible. Therefore $p\left(\delta\right)\Vdash\aleph_{\delta+1}^{M_{0}}=\aleph_{\delta+1}$.

We know that $p\left(\delta\right)\geq p\left(0\right)\geq p$, and
so by density we can assume without loss of generality that $p\left(\delta\right)\in G_{\delta}$.
Therefore $\aleph_{\delta+1}$ is indeed preserved.

Thus all cardinals are preserved in all cases, and overall.
\end{proof}
\begin{lem}
\label{lem:2^kap-Cond2}Let $\delta\leq\zeta$ be a limit ordinal.
If for all $\alpha<\delta$ $OK\left(\alpha\right)$, then $M_{\delta}\vDash2^{<\aleph_{\delta}}=\aleph_{\delta}$.
\end{lem}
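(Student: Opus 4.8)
The plan is to prove that for every infinite cardinal $\mu<\aleph_\delta$ one has $(2^\mu)^{M_\delta}<\aleph_\delta$. Since the reverse inequality is automatic ($2^{<\aleph_\delta}\ge\sup_{\mu<\aleph_\delta}2^\mu\ge\sup_{\mu<\aleph_\delta}\mu^+=\aleph_\delta$, as $\aleph_\delta$ is a limit cardinal), this immediately gives $M_\delta\vDash 2^{<\aleph_\delta}=\aleph_\delta$.

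First I would fix $\mu<\aleph_\delta$ and, using that $\delta$ is a limit ordinal and hence $\aleph_\delta=\sup_{\alpha<\delta}\aleph_\alpha$, pick some $\alpha<\delta$ with $\mu<\aleph_\alpha$. The key observation is that $M_\delta=M_\alpha[G_{\alpha,\delta}]$ and that, by Lemma \ref{lem:IterationIsClosed}, the quotient forcing $\nicefrac{\mathbb{P}_\delta}{G_\alpha}$ is $\aleph_\alpha$-closed; since $\mu<\aleph_\alpha$ it therefore adds no new function from $\mu$ into the ground model, so in particular $\mathcal P(\mu)^{M_\delta}=\mathcal P(\mu)^{M_\alpha}$. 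Next I would transfer the cardinality computation: by $OK(\alpha)$ (clause \ref{enu:samecard}) $M_\alpha$ has the same cardinals as $M_0$, and by Lemma \ref{lem:SameCardinalsInLimit} so does $M_\delta$, hence $M_\alpha$ and $M_\delta$ have exactly the same cardinals; as $\mathcal P(\mu)$ is literally the same set in both models and a bijection computing its size in $M_\alpha$ already lies in $M_\alpha\subseteq M_\delta$, we conclude $(2^\mu)^{M_\delta}=(2^\mu)^{M_\alpha}$. Finally, clause \ref{enu:2^<} of $OK(\alpha)$ says $M_\alpha\vDash 2^{<\aleph_\alpha}=\aleph_\alpha$, so from $\mu<\aleph_\alpha$ we get $(2^\mu)^{M_\alpha}\le\aleph_\alpha<\aleph_\delta$, and combining, $(2^\mu)^{M_\delta}<\aleph_\delta$, as required.

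I do not expect a serious obstacle here: the argument is essentially bookkeeping, combining the closure of the tail of the iteration (Lemma \ref{lem:IterationIsClosed}) with the inductive hypothesis $OK(\alpha)$ for $\alpha<\delta$ and the cardinal preservation already secured in Lemma \ref{lem:SameCardinalsInLimit}. The only slightly delicate points are that one must use $\delta$ being a limit ordinal to locate an $\alpha<\delta$ with $\mu<\aleph_\alpha$ — this is precisely what makes the tail forcing closed enough to freeze $\mathcal P(\mu)$ — and that one must verify the value $(2^\mu)^{M_\alpha}$ is still read off correctly inside $M_\delta$, which is exactly why the agreement of cardinals between $M_\alpha$ and $M_\delta$ is invoked and not merely the fact that $\mathcal P(\mu)$ is unchanged.
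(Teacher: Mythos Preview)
Your argument is correct and follows essentially the same route as the paper: use the closure of the tail $\nicefrac{\mathbb{P}_\delta}{G_\alpha}$ (Lemma~\ref{lem:IterationIsClosed}) to freeze $\mathcal{P}(\mu)$ at an earlier stage, and then invoke $OK(\alpha)$ to bound it below $\aleph_\delta$. The only cosmetic differences are that the paper phrases it as a proof by contradiction and works with $\aleph_\alpha$ and stage $\alpha+1$ rather than an arbitrary $\mu<\aleph_\alpha$ and stage $\alpha$; your version is in fact slightly more explicit about why the cardinality computation transfers between $M_\alpha$ and $M_\delta$.
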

\begin{proof}
Note that obviously $M_{\delta}\vDash2^{<\aleph_{\delta}}\geq\aleph_{\delta}$.

Now assume $M_{\delta}\vDash2^{<\aleph_{\delta}}>\aleph_{\delta}$.
That means for some $\alpha<\delta$ $M_{\delta}\vDash2^{\aleph_{\alpha}}>\aleph_{\delta}$.
However by lemma \ref{lem:IterationIsClosed} $\nicefrac{\mathbb{P}_{\delta}}{G_{\alpha+1}}$
is $\aleph_{\alpha+1}$-closed, and so new subsets of $\aleph_{\alpha}$
are added when forcing $M_{\delta}=M_{\alpha+1}\left[G_{\alpha+1,\delta}\right]$.
Therefore $M_{\alpha+1}\vDash2^{\aleph_{\alpha}}>\aleph_{\delta}$
and so $M_{\alpha+1}\vDash2^{\aleph_{\alpha+1}}>\aleph_{\alpha+2}$,
in contradiction to $OK\left(\alpha+1\right)$.

Therefore $M_{\delta}\vDash2^{<\aleph_{\delta}}=\aleph_{\delta}$.
\end{proof}
\begin{lem}
\label{lem:GCH-aboveIter}Let $\delta\leq\zeta$ be a limit ordinal.
If for all $\alpha<\delta$ $OK\left(\alpha\right)$, then $M_{\alpha}\vDash\forall\lambda\geq\aleph_{\alpha}\left(2^{\lambda}=\lambda^{+}\right)$.
\end{lem}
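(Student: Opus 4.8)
The plan is to bound, inside $M_\delta$, the number of subsets of each cardinal $\lambda \geq \aleph_\delta$ by $\lambda^{+}$. By Lemma~\ref{lem:SameCardinalsInLimit} the models $M_\delta$ and $M_0 = L$ have the same cardinals, so the cardinals $\geq\aleph_\delta$ of $M_\delta$ are exactly $\aleph_\delta,\aleph_{\delta+1},\aleph_{\delta+2},\dots$, and any cardinal arithmetic done in $L$ (where $\mathsf{GCH}$ holds) transfers to $M_\delta$. Two facts proved inside Lemma~\ref{lem:SameCardinalsInLimit} will be used: $\left|\mathbb{P}_\delta\right|^{L}\leq\aleph_{\delta+1}$, and $\mathbb{P}_\delta$ satisfies the $\aleph_{\delta+2}$-antichain condition.

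For $\lambda>\aleph_\delta$ I would run the usual nice-name count (ch.~VII of \citep{key-19}). Since $\mathbb{P}_\delta$ has at most $\aleph_{\delta+1}$ conditions and all its antichains have size $\leq\aleph_{\delta+1}$, it has at most $\aleph_{\delta+1}^{\aleph_{\delta+1}}=2^{\aleph_{\delta+1}}=\aleph_{\delta+2}$ antichains in $L$, hence at most $\aleph_{\delta+2}^{\lambda}$ nice names for subsets of $\check\lambda$. In $L$ this equals $\aleph_{\delta+2}=\lambda^{+}$ when $\lambda=\aleph_{\delta+1}$, and equals $2^{\lambda}=\lambda^{+}$ when $\lambda\geq\aleph_{\delta+2}$. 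Thus $M_\delta\vDash 2^{\lambda}\leq\lambda^{+}$, and equality is automatic.

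The case $\lambda=\aleph_\delta$ is the heart of the matter: since $\mathbb{P}_\delta$ is only known to be $\aleph_{\delta+2}$-c.c., the crude count gives merely $2^{\aleph_\delta}\leq\aleph_{\delta+2}$, and one must instead disperse the name along the iteration. Fix a partition $\aleph_\delta=\bigcup_{\gamma<\delta}I_\gamma$ with each $I_\gamma\subseteq\aleph_\gamma$. By Lemma~\ref{lem:IterationIsClosed} the tail $\nicefrac{\mathbb{P}_\delta}{G_{\gamma+1}}$ is $\aleph_{\gamma+1}$-closed, hence adds no subset of $\aleph_\gamma$; so every $X\subseteq\aleph_\delta$ in $M_\delta$ has $X\cap I_\gamma\in M_{\gamma+1}=M_0[G_{\gamma+1}]$, and $X\cap I_\gamma$ is realised by some $\mathbb{P}_{\gamma+1}$-name $\dot Y_\gamma$ lying in $L$. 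As $\mathbb{P}_{\gamma+1}$ is a regular subforcing of $\mathbb{P}_\delta$ and $\langle G_{\gamma+1}\mid\gamma<\delta\rangle$ is definable from $G_\delta$, the assignment $\langle\dot Y_\gamma\mid\gamma<\delta\rangle\mapsto\bigcup_{\gamma<\delta}(\dot Y_\gamma)_{G_{\gamma+1}}$ is a surjection, definable in $M_\delta$, from $\prod_{\gamma<\delta}N_\gamma$ onto $\mathcal P(\aleph_\delta)^{M_\delta}$, where $N_\gamma\in L$ is the set of all $\mathbb{P}_{\gamma+1}$-nice names for subsets of $\aleph_\gamma$. A routine induction (cf.\ the counting in Lemma~\ref{lem:SameCardinalsInLimit}, using $\gamma+1<\delta$ since $\delta$ is a limit) shows $\left|\mathbb{P}_{\gamma+1}\right|^{L}<\aleph_\delta$, whence $\left|N_\gamma\right|^{L}<\aleph_\delta$ for each $\gamma$. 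As there are only $\left|\delta\right|$ factors, $\left|\prod_{\gamma<\delta}N_\gamma\right|^{L}\leq\aleph_\delta^{\left|\delta\right|}=\aleph_{\delta+1}$ in $L$ by $\mathsf{GCH}$ (using $\operatorname{cf}(\aleph_\delta)=\operatorname{cf}(\delta)\leq\left|\delta\right|\leq\aleph_\delta$). Transferring to $M_\delta$ gives $M_\delta\vDash 2^{\aleph_\delta}\leq\aleph_{\delta+1}$, and equality is automatic.

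The main obstacle is precisely this last case: $2^{\aleph_\delta}$ cannot be controlled by counting $\mathbb{P}_\delta$-names directly, so one must route a name for a subset of $\aleph_\delta$ through the initial segments of the iteration, exploiting their closure, and then carry out the cardinal-arithmetic estimate in $L$. The remaining cases are routine nice-name counts.
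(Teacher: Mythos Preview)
Your treatment of $\lambda>\aleph_\delta$ via nice-name counting is correct and matches the paper. For $\lambda=\aleph_\delta$ you take a genuinely different route---the paper builds, via a coordinatewise fusion argument spanning the whole iteration, a single condition $p(\delta)\in\mathbb{P}_\delta$ that decodes any new $A\subseteq\aleph_\delta$ from the generic branches, and then bounds the number of such $A$ by $\left|\mathbb{P}_\delta\right|=\aleph_{\delta+1}$---but your argument has a real gap.

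You show that each $X\cap I_\gamma$ is realised by some $\dot Y_\gamma\in N_\gamma\subseteq L$, then claim the assignment $\langle\dot Y_\gamma\rangle\mapsto\bigcup_\gamma(\dot Y_\gamma)_{G_{\gamma+1}}$ is a surjection from $\prod_\gamma N_\gamma$ onto $\mathcal P(\aleph_\delta)^{M_\delta}$, and finally bound $\left|\prod_\gamma N_\gamma\right|^L\leq\aleph_{\delta+1}$. For this cardinality bound to be useful the domain must be the product as computed in $L$; and for surjectivity from \emph{that} set you need, for each $X\in M_\delta$, a sequence $\langle\dot Y_\gamma\mid\gamma<\delta\rangle$ lying in $L$. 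You have only shown that each individual $\dot Y_\gamma$ lies in $L$; the choice of $\dot Y_\gamma$ depends on $X\cap I_\gamma$, hence on $G_\delta$, so the sequence is a priori only in $M_\delta$. If instead the domain is $\bigl(\prod_\gamma N_\gamma\bigr)^{M_\delta}$, its cardinality is bounded by $\bigl(\aleph_\delta^{\left|\delta\right|}\bigr)^{M_\delta}$, which is precisely the quantity you are trying to compute---the argument becomes circular.

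When $\aleph_\delta$ is singular this can be repaired: index the partition along a cofinal sequence of length $\mathrm{cf}(\delta)<\aleph_\delta$ and note, via the closure of Lemma~\ref{lem:IterationIsClosed}, that any $\mathrm{cf}(\delta)$-sequence in $M_\delta$ already lies in some $M_{\mu+1}$ with $\aleph_\mu\geq\mathrm{cf}(\delta)$, where $OK(\mu+1)$ supplies enough $\mathsf{GCH}$ to finish the count. But when $\aleph_\delta$ is regular---i.e.\ $\delta$ is inaccessible in $L$, a case the iteration must handle---no such reduction is available, and I do not see how to rescue the fragment-and-count approach without something like the paper's fusion construction.
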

\begin{proof}
Recall the proof of lemma \ref{lem:GCHpres} in the previous section.
As already shown in lemma \ref{lem:SameCardinalsInLimit} $\left|\mathbb{P}_{\delta}\right|=\aleph_{\delta+1}$.
Therefore there are at most $\aleph_{\delta+1}^{\aleph_{\delta+1}}=\aleph_{\delta+2}$
different antichains. With the rest of the proof identical, we get
$M_{\delta}\vDash\forall\lambda>\aleph_{\delta}\left(2^{\lambda}=\lambda^{+}\right)$.

We now show that $M_{\delta}\vDash2^{\aleph_{\delta}}=\aleph_{\delta+1}$.

Let $A\subseteq\aleph_{\delta}$, and assume $p\in\mathbb{P}_{\delta}$
such that $p\vDash\dot{A}\subseteq\aleph_{\delta}\wedge\forall\beta<\delta\left(\dot{A}\notin M_{\beta}\right)$.
We denote individual coordinates like so: $p=\left\langle p^{0},\dot{p}^{1},...,\dot{p}^{\alpha},...\mid\alpha<\delta\right\rangle $.

Let $\dot{z}$ be the name of the characteristic function of $A$.
For any condition $q\in\mathbb{P}_{\delta}$ stronger than $p$ let
$\dot{z}_{q}$ be the longest initial segment of $\dot{z}$ that is
decided by $q$, let $\gamma_{q}$ be the first ordinal for which
$\dot{z}$ is undecided, let $q^{0}$ denote the first coordinate
of $q$, i.e from $\aleph_{0}$-Sacks forcing over $M_{0}$, and let
$\vec{q}$ denote the name of the rest of the coordinates. $\dot{z_{q}}$
and $\gamma_{q}$ must be well-defined, because if $q$ decides all
of $\dot{z}$, it decides all of $\dot{A}$, and then $A\in M_{0}$,
in contradiction to $p$. Plainly $\gamma_{q}<\omega_{\delta}$.

We now build a fusion sequence. Mark $p_{0}=p$. For the successor
case, assume that we've already chosen $p_{\epsilon}$. Note that
$p_{\epsilon}^{0}$ is just a perfect tree in $\mathbb{Q}_{0}$, and
$\vec{p_{\epsilon}}$ is a name in $M_{0}$. Let $S_{\epsilon}$ denote
the order $\epsilon$ splitting nodes of $p_{\epsilon}^{0}$. For
every splitting node $s\in S_{\epsilon}$, let's look at $\gamma_{\left\langle p_{\epsilon}^{0}\upharpoonright s,\vec{p_{\epsilon}}\right\rangle }$
and conditions $\left\langle p_{\epsilon}^{0}\upharpoonright s^{\frown}i,\vec{p_{\epsilon}}\right\rangle $.

Suppose that for both $i=0,1$ we have $\left\langle p_{\epsilon}^{0}\upharpoonright s^{\frown}i,\vec{p_{\epsilon}}\right\rangle \Vdash\dot{z}\left(\gamma_{\left\langle p_{\epsilon}^{0}\upharpoonright s,\vec{p_{\epsilon}}\right\rangle }\right)=j$.
$\gamma_{\left\langle p_{\epsilon}^{0}\upharpoonright s,\vec{p_{\epsilon}}\right\rangle }$
is undecided, so take $q\geq\left\langle p_{\epsilon}^{0}\upharpoonright s,\vec{p_{\epsilon}}\right\rangle $
such that $q\Vdash\dot{z}\left(\gamma_{\left\langle p_{\epsilon}^{0}\upharpoonright s,\vec{p_{\epsilon}}\right\rangle }\right)=1-j$.
Either $q\cap\left\langle p_{\epsilon}^{0}\upharpoonright s^{\frown}0,\vec{p_{\epsilon}}\right\rangle \in\mathbb{P}_{\delta}$
or $q\cap\left\langle p_{\epsilon}^{0}\upharpoonright s^{\frown}1,\vec{p_{\epsilon}}\right\rangle \in\mathbb{P}_{\delta}$.
But $q$ and $\left\langle p_{\epsilon}^{0}\upharpoonright s^{\frown}i,\vec{p_{\epsilon}}\right\rangle $
are incompatible for $i=0,1$, therefore our supposition is impossible.
Thus, if for a certain $i$ there is a $j$ such that $\left\langle p_{\epsilon}^{0}\upharpoonright s^{\frown}i,\vec{p_{\epsilon}}\right\rangle \Vdash\dot{z}\left(\gamma_{\left\langle p_{\epsilon}^{0}\upharpoonright s,\vec{p_{\epsilon}}\right\rangle }\right)=j$,
we can define $q_{s^{\frown}i}=\left\langle p_{\epsilon}^{0}\upharpoonright s^{\frown}i,\vec{p_{\epsilon}}\right\rangle $
so that $q_{s^{\frown}i}\Vdash\dot{z}\left(\gamma_{\left\langle p_{\epsilon}^{0}\upharpoonright s,\vec{p_{\epsilon}}\right\rangle }\right)=j$,
and know there will be some $q_{s^{\frown}\left(1-i\right)}\geq\left\langle p_{\epsilon}^{0}\upharpoonright s^{\frown}\left(1-i\right),\vec{p_{\epsilon}}\right\rangle $
such that $q_{s^{\frown}\left(1-i\right)}\Vdash\dot{z}\left(\gamma_{\left\langle p_{\epsilon}^{0}\upharpoonright s,\vec{p_{\epsilon}}\right\rangle }\right)=1-j$.

Alternatively, there is no such $i$, and we are free to select for
both $i=0,1$ $q_{s^{\frown}i}\geq\left\langle p_{\epsilon}^{0}\upharpoonright s^{\frown}i,\vec{p_{\epsilon}}\right\rangle $
such that $q_{s^{\frown}i}\Vdash\dot{z}\left(\gamma_{\left\langle p_{\epsilon}^{0}\upharpoonright s,\vec{p_{\epsilon}}\right\rangle }\right)=i$.

We now define the first coordinate as an amalgamation of the $q_{s^{\frown}i}^{0}$'s:
$p_{\epsilon+1}^{0}=\bigcup\limits _{s\in S_{\epsilon},i=0,1}q_{s^{\frown}i}^{0}$,
just like we did in the proof of theorem \ref{thm:OrgSacksMinimal}
of the original Sacks forcing. For the rest of the coordinates, we
define $\vec{p_{\epsilon+1}}$ with accordance to the path taken in
the first coordinate. Meaning that $q_{s^{\frown}i}^{0}$ forces $\vec{p_{\epsilon+1}}=\vec{q_{s^{\frown}i}}$.
From the way we defined the $q_{s^{\frown}i}$'s it is clear $\vec{p_{\epsilon+1}}\geq\vec{p_{\epsilon}}$,
and so $p_{\epsilon+1}\geq p_{\epsilon}$.

In the first coordinate we get a classical fusion sequence $\left\langle p_{\epsilon}^{0}\mid\epsilon<\omega\right\rangle $.
Therefore we can define $p_{\omega}^{0}=\bigcap\limits _{\epsilon<\omega}p_{\epsilon}^{0}\in\mathbb{Q}_{0}$.
As for the higher coordinates, we can define $\vec{p}_{\omega}=\bigcap\limits _{\epsilon<\omega}\vec{p}_{\epsilon}^{0}$
because of the $\aleph_{1}$-closure, proven in lemma \ref{lem:IterationIsClosed}.
So $p_{\omega}\in\mathbb{P}_{\delta}$.

So we can now define $p\left(0\right)=p_{\omega}$. Next we are going
to repeat by induction the construction above, using the higher coordinates.

So for the successor case, assume that we've already defined $p\left(\beta\right)$.
For all coordinates $0<\alpha\leq\beta$ define $\dot{p}\left(\beta+1\right)^{\alpha}=\dot{p}\left(\alpha\right)^{\alpha}$,
and let $p\left(\beta+1\right)^{0}=p\left(0\right)^{0}$.

Let $H_{\beta+1}\subseteq\mathbb{P}_{\beta+1}$ be any generic set
such that $\left\langle p\left(\beta\right)^{0},\dot{p}\left(\beta\right)^{1},...,\dot{p}\left(\beta\right)^{\alpha},...\mid\alpha\leq\beta\right\rangle \in H_{\beta+1}$.
Note that $M_{0}\left[H_{\beta+1}\right]=\text{the interpretation of }\dot{M}_{\beta+1}\text{ according to }\dot{H}_{\beta+1}$.

Working in $M_{0}\left[H_{\beta+1}\right]$, we know that $\left\langle p\left(\beta\right)^{\beta+1},\dot{p}\left(\beta\right)^{\beta+2},...,p\left(\beta\right)^{\alpha},...\mid\alpha<\delta\right\rangle \Vdash\dot{A}\notin M_{0}\left[H_{\beta+1}\right]$.
Just as before, let $\dot{z}$ be the name of the characteristic function
of $A$. For any condition $q\in\nicefrac{\mathbb{P}_{\delta}}{H_{\beta+1}}$
let $\dot{z}_{q}$ be the longest initial segment of $\dot{z}$ that
is decided by $q$, let $\gamma_{q}$ be the first ordinal for which
$\dot{z}$ is undecided, let $q^{\beta+1}$ denote the \emph{first}
coordinate of $q$, i.e from $\aleph_{\beta+1}$-Sacks forcing over
$M_{0}\left[H_{\beta+1}\right]$, and let $\vec{q}$ denote the name
of the rest of the coordinates. $\dot{z_{q}}$ and $\gamma_{q}$ must
be well-defined, because if $q$ decides all of $\dot{z}$, it decides
all of $\dot{A}$, and then $A\in M_{0}\left[H_{\beta+1}\right]$,
in contradiction to $p\left(\beta\right)$. Plainly $\gamma_{q}<\omega_{\beta+1}$.

Just as before, we again build a fusion sequence. This time mark $p_{0}=p\left(\beta\right)$.
For the successor case, assume that we've already chosen $p_{\epsilon}$.
Note that $p_{\epsilon}^{\beta+1}$ is just a perfect tree in $\mathbb{Q}_{\beta+1}$,
and $\vec{p_{\epsilon}}$ is a name of a condition. Let $S_{\epsilon}$
denote the order $\epsilon$ splitting nodes of $p_{\epsilon}^{\beta+1}$.
For every splitting node $s\in S_{\epsilon}$, let's look at $\gamma_{\left\langle p_{\epsilon}^{\beta+1}\upharpoonright s,\vec{p_{\epsilon}}\right\rangle }$
and conditions $\left\langle p_{\epsilon}^{\beta+1}\upharpoonright s^{\frown}i,\vec{p_{\epsilon}}\right\rangle $.

Suppose that for both $i=0,1$ we have $\left\langle p_{\epsilon}^{\beta+1}\upharpoonright s^{\frown}i,\vec{p_{\epsilon}}\right\rangle \Vdash\dot{z}\left(\gamma_{\left\langle p_{\epsilon}^{\beta+1}\upharpoonright s,\vec{p_{\epsilon}}\right\rangle }\right)=j$.
$\gamma_{\left\langle p_{\epsilon}^{\beta+1}\upharpoonright s,\vec{p_{\epsilon}}\right\rangle }$
is undecided, so take $q\geq\left\langle p_{\epsilon}^{\beta+1}\upharpoonright s,\vec{p_{\epsilon}}\right\rangle $
such that $q\Vdash\dot{z}\left(\gamma_{\left\langle p_{\epsilon}^{\beta+1}\upharpoonright s,\vec{p_{\epsilon}}\right\rangle }\right)=1-j$.
Either $q\cap\left\langle p_{\epsilon}^{\beta+1}\upharpoonright s^{\frown}0,\vec{p_{\epsilon}}\right\rangle \in\nicefrac{\mathbb{P}_{\delta}}{H_{\beta+1}}$
or $q\cap\left\langle p_{\epsilon}^{\beta+1}\upharpoonright s^{\frown}1,\vec{p_{\epsilon}}\right\rangle \in\nicefrac{\mathbb{P}_{\delta}}{H_{\beta+1}}$.
But $q$ and $\left\langle p_{\epsilon}^{\beta+1}\upharpoonright s^{\frown}i,\vec{p_{\epsilon}}\right\rangle $
are incompatible for $i=0,1$, therefore our supposition is impossible.
Thus, if for a certain $i$ there is a $j$ such that $\left\langle p_{\epsilon}^{\beta+1}\upharpoonright s^{\frown}i,\vec{p_{\epsilon}}\right\rangle \Vdash\dot{z}\left(\gamma_{\left\langle p_{\epsilon}^{\beta+1}\upharpoonright s,\vec{p_{\epsilon}}\right\rangle }\right)=j$,
we can define $q_{s^{\frown}i}=\left\langle p_{\epsilon}^{\beta+1}\upharpoonright s^{\frown}i,\vec{p_{\epsilon}}\right\rangle $
so that $q_{s^{\frown}i}\Vdash\dot{z}\left(\gamma_{\left\langle p_{\epsilon}^{\beta+1}\upharpoonright s,\vec{p_{\epsilon}}\right\rangle }\right)=j$,
and know there will be some $q_{s^{\frown}\left(1-i\right)}\geq\left\langle p_{\epsilon}^{\beta+1}\upharpoonright s^{\frown}\left(1-i\right),\vec{p_{\epsilon}}\right\rangle $
such that $q_{s^{\frown}\left(1-i\right)}\Vdash\dot{z}\left(\gamma_{\left\langle p_{\epsilon}^{\beta+1}\upharpoonright s,\vec{p_{\epsilon}}\right\rangle }\right)=1-j$.

Alternatively, there is no such $i$, and we are free to select for
both $i=0,1$ $q_{s^{\frown}i}\geq\left\langle p_{\epsilon}^{\beta+1}\upharpoonright s^{\frown}i,\vec{p_{\epsilon}}\right\rangle $
such that $q_{s^{\frown}i}\Vdash\dot{z}\left(\gamma_{\left\langle p_{\epsilon}^{\beta+1}\upharpoonright s,\vec{p_{\epsilon}}\right\rangle }\right)=i$.

We now define the first coordinate as an amalgamation of the $q_{s^{\frown}i}^{\beta+1}$'s:
$p_{\epsilon+1}^{\beta+1}=\bigcup\limits _{s\in S_{\epsilon},i=0,1}q_{s^{\frown}i}^{\beta+1}$.
For the rest of the coordinates, we define $\vec{p_{\epsilon+1}}$
with accordance to the path taken in the first coordinate. Meaning
that $q_{s^{\frown}i}^{\beta+1}$ forces that $\vec{p_{\epsilon+1}}=\vec{q_{s^{\frown}i}}$.
From the way we defined the $q_{s^{\frown}i}$'s it is clear $\vec{p_{\epsilon+1}}\geq\vec{p_{\epsilon}}$,
and so $p_{\epsilon+1}\geq p_{\epsilon}$. In limit stages $\tau<\omega_{\beta+1}$
we just use the $\aleph_{\beta+1}$-closure to define $p_{\tau}=\bigcap\limits _{\alpha<\tau}p_{\alpha}$.

Now, in order to define $p_{\omega_{\beta+1}}$, note that in the
first coordinate we again get a fusion sequence $\left\langle p_{\epsilon}^{\beta+1}\mid\epsilon<\omega\right\rangle $.
Therefore $p_{\omega_{\beta+1}}^{\beta+1}=\bigcap\limits _{\epsilon<\omega_{\beta+1}}p_{\epsilon}^{\beta+1}\in\mathbb{Q}_{\beta+1}$.
As for the rest of the coordinates, we can use the $\aleph_{\beta+1}$-closure
proven in lemma \ref{lem:IterationIsClosed} to define $\vec{p_{\omega_{\beta+1}}}=\bigcap\limits _{\epsilon<\omega_{\beta+1}}\vec{p}_{\epsilon}$.

For all $\alpha>\beta$ we define $\dot{p}\left(\beta+1\right)^{\alpha}=p_{\omega_{\beta+1}}^{\alpha}$.

We now pick a name $\left\langle \dot{p}\left(\beta+1\right)^{\beta+1},\dot{p}\left(\beta+1\right)^{\beta+2},...,\dot{p}\left(\beta+1\right)^{\alpha},...\mid\alpha<\delta\right\rangle $
for the $\left\langle p\left(\beta+1\right)^{\beta+1},\dot{p}\left(\beta+1\right)^{\beta+2},...,\dot{p}\left(\beta+1\right)^{\alpha},...\mid\alpha<\delta\right\rangle $
that we constructed in $\nicefrac{\mathbb{P}_{\delta}}{H_{\beta+1}}$,
such that $\left\langle p\left(\beta\right)^{0},\dot{p}\left(\beta\right)^{1},...,\dot{p}\left(\beta\right)^{\beta}\right\rangle $
forces it to be the way it was defined.

Finally, set $p\left(\beta+1\right)=\left\langle p\left(0\right)^{0},\dot{p}\left(1\right)^{1},...,\dot{p}\left(\beta+1\right)^{\beta+1},\dot{p}\left(\beta+1\right)^{\beta+2},...,\dot{p}\left(\beta+1\right)^{\alpha}\mid\alpha<\delta\right\rangle $.

For limit stages $\delta$ we define $p\left(\delta\right)=\bigcap\limits _{\alpha<\delta}p\left(\alpha\right)$.
As each coordinate lesser than $\delta$ stabilizes, and by lemma
\ref{lem:IterationIsClosed} each coordinate $\geq\delta$ is at least
$\aleph_{\delta+1}$-closed, and because we're using full support,
$p\left(\delta\right)\in\mathbb{P}_{\delta}$.

As we've constructed $p\left(\delta\right)$ stronger than a general
$p\in\mathbb{P}_{\delta}$, then by density arguments, we may assume
without loss of generality $p\left(\delta\right)\in G_{\delta}$.

Just as in corollary \ref{cor:.2^<k}, we can view $p\left(\delta\right)$
as a mapping: $p\left(\delta\right)$ takes as input a sequence of
branches $\left\langle \dot{h}_{\alpha}\mid a<\delta\right\rangle $,
where each $\dot{h}_{\alpha}$ is a branch of $\dot{p}\left(\delta\right)^{\alpha}$,
and interprets $A$.

For each $G_{\alpha,\alpha+1}$, let $g_{\alpha,\alpha+1}$ be the
generic branch interdefinable with $G_{\alpha,\alpha+1}$. Note that
because of the way $p\left(\delta\right)$ was defined, $M_{\delta}\vDash p\left(\delta\right)\left(\left\langle g_{\alpha,\alpha+1}\mid\alpha<\delta\right\rangle \right)=A$.
Therefore, for a given $G_{\delta}$, there can't be two different
subsets of $\aleph_{\delta}$ that produce the same $p\left(\delta\right)\in\mathbb{P}_{\delta}$
in the construction above.

As $\left|\mathbb{P}_{\delta}\right|=\aleph_{\delta+1}$, there are
at most $\aleph_{\delta+1}$ new subsets of $\aleph_{\delta}$. Therefore,
$\left(2^{\aleph_{\delta}}=\aleph_{\delta+1}\right)^{M_{\delta}}$.
\end{proof}
\begin{lem}
\label{lem:cardinals_iteration}For all $\beta\leq\zeta$, $M_{\beta}$
has the same cardinals as $M_{0}$.
\end{lem}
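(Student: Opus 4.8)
The plan is to prove the stronger statement $OK\left(\beta\right)$ for every $\beta\le\zeta$ by transfinite induction on $\beta$, and then simply read off the conclusion from clause \ref{enu:samecard} of $OK\left(\beta\right)$.

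The base case $OK\left(0\right)$ is Lemma \ref{lem:OK0}. For the successor case, if $OK\left(\alpha\right)$ holds then $OK\left(\alpha+1\right)$ follows at once from Lemma \ref{lem:OK-succ}, which verifies all three clauses whether or not $\mathbb{Q}_{\alpha}$ is trivial. For the limit case, let $\delta\le\zeta$ be a limit ordinal and assume $OK\left(\alpha\right)$ for every $\alpha<\delta$. Then clause \ref{enu:samecard} of $OK\left(\delta\right)$ is exactly Lemma \ref{lem:SameCardinalsInLimit}, clause \ref{enu:2^<} is Lemma \ref{lem:2^kap-Cond2}, and clause \ref{enu:GCHabove} is Lemma \ref{lem:GCH-aboveIter}; hence $OK\left(\delta\right)$ holds. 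By transfinite induction $OK\left(\beta\right)$ holds for all $\beta\le\zeta$, and in particular clause \ref{enu:samecard} says precisely that $M_{\beta}$ has the same cardinals as $M_{0}$.

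The substance of the argument has already been discharged in the preceding lemmas — above all the preservation of $\aleph_{\delta+1}$ at a limit stage in Lemma \ref{lem:SameCardinalsInLimit}, where the singular and regular cases need genuinely different closure/fusion treatments. The present lemma is then just the bookkeeping that threads the three clauses of $OK$ through the induction. The one point that must be checked is that each of the three limit lemmas requires $OK\left(\alpha\right)$ for \emph{all} $\alpha<\delta$, not merely cofinally many; this is exactly what the induction hypothesis delivers, so no extra work is needed, and this is where I expect the only (very minor) care to be required.
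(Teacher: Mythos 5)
Your proof is correct and is essentially identical to the paper's: both establish $OK\left(\beta\right)$ for all $\beta\leq\zeta$ by transfinite induction, using Lemma \ref{lem:OK0} for the base, Lemma \ref{lem:OK-succ} for successors, and Lemmas \ref{lem:SameCardinalsInLimit}, \ref{lem:2^kap-Cond2} and \ref{lem:GCH-aboveIter} for limits, then read off clause \ref{enu:samecard}. No discrepancies to report.
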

\begin{proof}
By induction we prove $OK\left(\beta\right)$. By lemma \ref{lem:OK0}
$OK\left(0\right)$. The successor case is handled in lemma \ref{lem:OK-succ}.

For the limit case, lemmas \ref{lem:SameCardinalsInLimit}, \ref{lem:2^kap-Cond2}
and \ref{lem:GCH-aboveIter} show that if $OK\left(\alpha\right)$
is true for all $\alpha<\delta$, then $OK\left(\delta\right)$.

Therefore by induction $OK\left(\beta\right)$, and so $M_{\beta}$
has the same cardinals as $M_{0}$.
\end{proof}
Next, we want to verify that during the iteration we don't create
any inner model of $M_{\zeta}$ other than the $M_{\alpha}$'s for
$\alpha<\zeta$.

Note that while theorem \ref{thm:-Sacks-forcing-produces} shows that
applying $\aleph_{\alpha}$-Sacks forcing doesn't add any inner model
between $M_{\alpha}$ and $M_{\alpha+1}$, it says nothing about limit
stages. If $M_{\delta}$ is the limit model, then theoretically there
might be another inner model lurking \emph{between} $M_{\delta}$
and $\bigcup\limits _{\alpha<\delta}M_{\alpha}$.

A second type of problem could arise even in the successor stages.
Applying the forcing over $M_{\alpha}$ with $\alpha>0$, one might
inadvertantly create some new inner model between $M_{0}=L$ and $M_{\alpha+1}$
\emph{outside} the chain. Therefore we need to prove our construction
avoids creating both types of 'accidental' models.
\begin{lem}
\label{lem:no-max}If $N$ is an inner model of $M_{\zeta}$ such
that for all $\alpha\leq\zeta$ $N\neq M_{\alpha}$, and $\beta$
is the least ordinal such that $N\nsupseteq M_{\beta}$, then $\beta$
is a limit ordinal.
\end{lem}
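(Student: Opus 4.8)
The plan is to prove the contrapositive: assuming $\beta$ is a successor, say $\beta = \gamma+1$, I will derive a contradiction with the hypothesis that $N$ is not on the chain. The key observation is that $N \supseteq M_\gamma$ (by minimality of $\beta$) and $N \nsupseteq M_{\gamma+1} = M_\gamma[G_{\gamma,\gamma+1}]$, where $G_{\gamma,\gamma+1}$ is the generic for $\aleph_\gamma$-Sacks forcing over $M_\gamma$ (recall that for limit $\gamma$ the forcing $\dot{\mathbb{Q}}_\gamma$ is trivial, so $M_{\gamma+1}=M_\gamma$ and the situation collapses; hence we may assume $\gamma$ is such that $\dot{\mathbb{Q}}_\gamma$ is nontrivial). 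Since $N$ is an inner model of $M_\zeta$ sitting above $M_\gamma$, it is in particular an inner model of $M_\zeta$ that extends $M_\gamma$, and the goal is to locate it relative to $M_{\gamma+1}$.

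The main step is to analyze $W = N \cap M_{\gamma+1}$, or better, to work inside the tail forcing. Write $M_\zeta = M_{\gamma+1}[G_{\gamma+1,\zeta}]$. By Lemma \ref{lem:IterationIsClosed}, the tail forcing $\nicefrac{\mathbb{P}_\zeta}{G_{\gamma+1}}$ is $\aleph_{\gamma+1}$-closed over $M_{\gamma+1}$, so it adds no new subsets of $\aleph_\gamma$. Now $N$ is an inner model of $M_\zeta$ containing $M_\gamma$; consider $N \cap M_{\gamma+1}$. I claim this is an inner model of $M_{\gamma+1}$ lying between $M_\gamma$ and $M_{\gamma+1}$. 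The point is that any set of ordinals in $N \cap M_{\gamma+1}$ is in $N$, hence in $M_\zeta$; and since it is also in $M_{\gamma+1}$ it is "captured" at stage $\gamma+1$. The subtlety — and I expect this to be the real obstacle — is showing $N \cap M_{\gamma+1}$ is itself a model of $\mathsf{ZFC}$ and an inner model in the technical sense; this is not automatic for intersections of inner models. I would handle this by passing to sets of ordinals: by Theorem 15.43 of \citep{key-17} (as used in the proof of Theorem \ref{thm:OrgSacksMinimal}), $N = M_\gamma[B]$ for some set of ordinals $B$ — wait, that requires $N$ to be an extension of $M_\gamma$ by forcing, which we don't directly know. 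Instead, I would use that $N$, being an inner model of $M_\zeta = M_\gamma[G_{\gamma,\zeta}]$ extending $M_\gamma$, equals $M_\gamma[A]$ for a set of ordinals $A \in N$ (again by the intermediate-model theorem applied with ground model $M_\gamma$). Then $N \cap M_{\gamma+1} = M_\gamma[A] \cap M_\gamma[G_{\gamma,\gamma+1}]$, and I want to show this is either $M_\gamma$ or $M_{\gamma+1}$, invoking the minimality Theorem \ref{thm:-Sacks-forcing-produces}.

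Concretely: inside $M_\zeta$, consider the set of ordinals $A$ coding $N$ over $M_\gamma$. Decompose the iteration as $M_\gamma \subseteq M_{\gamma+1} \subseteq M_\zeta$, with the second step $\aleph_{\gamma+1}$-closed. Since $A \in N \subseteq M_\zeta$, write $A$ via a name; but actually the cleanest route is: let $A' = A \cap \text{(ordinals that could be affected by stage } \gamma\text{)}$ — no, simpler still: since $N \supseteq M_\gamma$ and $N \nsupseteq M_{\gamma+1}$, the Sacks real (generic branch) $g_{\gamma,\gamma+1}$ of stage $\gamma$ is \emph{not} in $N$. Consider the inner model $N[g_{\gamma,\gamma+1}] \cap M_{\gamma+1}$. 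Hmm — the truly clean argument: look at $M_\gamma \subseteq (N \cap M_{\gamma+1}) \subseteq M_{\gamma+1}$, apply Theorem \ref{thm:-Sacks-forcing-produces} to conclude $N \cap M_{\gamma+1} \in \{M_\gamma, M_{\gamma+1}\}$; if it equals $M_{\gamma+1}$ then $N \supseteq M_{\gamma+1}$ contradicting the choice of $\beta$; if it equals $M_\gamma$, then $N$ and $M_{\gamma+1}$ are two inner models of $M_\zeta$ whose intersection is exactly $M_\gamma$, with $N \supsetneq M_\gamma$ — and then I need a further argument (a mutual-genericity / "Sacks reals are minimal and the generic is the unique new real" style argument, as in the proof of Theorem \ref{thm:-Sacks-forcing-produces} and of Theorem \ref{thm:.0=000023}) to show $N \subseteq M_{\gamma}$ after all, i.e. that there is no inner model strictly above $M_\gamma$ inside $M_\zeta$ that misses $g_{\gamma,\gamma+1}$; this is exactly because any set of ordinals new over $M_\gamma$ in $M_\zeta$, once we quotient by the $\aleph_{\gamma+1}$-closed tail, lives in $M_{\gamma+1}$ and hence (by minimality at stage $\gamma$) computes $g_{\gamma,\gamma+1}$. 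So I would argue: $N = M_\gamma[A]$, $A$ a set of ordinals; the tail $\nicefrac{\mathbb{P}_\zeta}{G_{\gamma+1}}$ being $\aleph_{\gamma+1}$-closed means any \emph{sufficiently small} piece of $A$ is already in $M_{\gamma+1}$, but $A$ itself may be large — so instead I reflect: if $A \notin M_\gamma$ then some bounded piece $A \cap \xi \notin M_\gamma$, and being a new bounded subset it is (by closure of the tail) in $M_{\gamma+1}$, whence by Theorem \ref{thm:-Sacks-forcing-produces} $M_\gamma[A\cap\xi] = M_{\gamma+1}$, so $g_{\gamma,\gamma+1} \in M_\gamma[A\cap\xi] \subseteq N$, contradicting $N \nsupseteq M_{\gamma+1}$. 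Hence $A \in M_\gamma$, i.e. $N = M_\gamma$, contradicting that $N$ is not on the chain. Therefore $\beta$ cannot be a successor, and since $\beta = 0$ is impossible ($N \supseteq L = M_0$ always), $\beta$ is a limit ordinal. The delicate point to get right is the reflection step — justifying that a new subset of $M_\gamma$ appearing in $M_\zeta$ must already appear \emph{boundedly} and hence inside $M_{\gamma+1}$ — and carefully tracking that the tail forcing's closure degree ($\aleph_{\gamma+1}$, or $\aleph_{\gamma+2}$ when $\gamma$ is a limit, per Lemma \ref{lem:IterationIsClosed}) suffices to capture that bounded piece at stage $\gamma+1$.
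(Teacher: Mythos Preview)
Your final approach has a genuine gap at the ``reflection step.'' You argue: take a set of ordinals $A$ with $N = M_\gamma[A]$; since $A \notin M_\gamma$, some bounded piece $A \cap \xi \notin M_\gamma$; by $\aleph_{\gamma+1}$-closure of the tail $\mathbb{P}_\zeta/G_{\gamma+1}$ this piece lies in $M_{\gamma+1}$; then invoke the single-step minimality Theorem~\ref{thm:-Sacks-forcing-produces}. But $\aleph_{\gamma+1}$-closure only guarantees that no new \emph{sequences of length $<\aleph_{\gamma+1}$} are added over $M_{\gamma+1}$; it says nothing about new subsets of large ordinals. Your set $A$ could perfectly well be a subset of, say, $\aleph_{\gamma+5}$ every initial segment of which of length $<\aleph_{\gamma+1}$ already lies in $M_\gamma$, so that no piece of $A$ is captured by closure. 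In other words, your argument tacitly assumes that $N$ contains a new subset of $\aleph_\gamma$, but that is precisely (a reformulation of) what the lemma is asserting; you cannot get it for free from closure.

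The paper avoids this by \emph{not} reducing to a small set. It runs the fusion argument directly in the full tail iteration $\mathbb{P}_\zeta/G_\beta$: given any name $\dot A$ for a set of ordinals of arbitrary size forced not to lie in $M_\beta$, one builds a fusion sequence on the first coordinate $\mathbb{Q}_\beta$ while coherently strengthening the higher coordinates, obtaining a condition $q$ such that at every splitting node of $q^0$ the two immediate extensions force incompatible values of the characteristic function of $A$. From $A$ one then reads off the generic branch $g_{\beta,\beta+1}$ as $\{s\in q^0 : \dot z_{\langle q^0\upharpoonright s,\,\vec q\rangle}\subseteq z\}$, giving $G_{\beta,\beta+1}\in M_\beta[A]=N$ and hence $M_{\beta+1}\subseteq N$. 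The point is that minimality is being proved for the two-step product $\mathbb{Q}_\beta * (\text{tail})$ directly, not deduced from the one-step Theorem~\ref{thm:-Sacks-forcing-produces} plus a closure reduction; the size of $A$ is irrelevant to the fusion.
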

\begin{proof}
It is enough to show that there is no greatest $\beta$ such that
$M_{\beta}\subsetneq N$. Working to the contrary, assume $\beta<\zeta$
is such that $M_{\beta}\subsetneq N$ but $M_{\beta+1}\nsubseteq N$.

Obviously if $\mathbb{Q}_{\beta}$ is trivial then $M_{\beta+1}\subsetneq N$
in contradiction to the assumption. Therefore we may assume $\mathbb{Q}_{\beta}$
is $\aleph_{\beta}$-Sacks forcing.

$N$ is a model of $\mathsf{ZFC}$ between $M_{\beta}$ and $M_{\zeta}$.
Therefore according to lemma 15.43 in Jech \citep{key-17}, $N=M_{\beta}\left[A\right]$
for some set of ordinals $A\in M_{\zeta}$.

$N\neq M_{\beta}$ so obviously $A\in N\setminus M_{\beta}$. Let
$\dot{A}$ be its name in $M_{\beta}$ in the forcing $\nicefrac{\mathbb{P}_{\zeta}}{G_{\beta}}$.
There is an ordinal $\nu$ such that $0\Vdash\dot{A}\subseteq\nu$,
and let $\dot{z}$ be the name of the characteristic function of $A$,
$z\colon\nu\rightarrow2$. $A\notin M_{\beta}$, so there is a condition
$p\in G_{\beta,\zeta}$ that forces $\dot{A}\notin M_{\beta}$.

For any condition $q\in\nicefrac{\mathbb{P}_{\zeta}}{G_{\beta}}$
stronger than $p$, let $\dot{z}_{q}$ be the longest initial segment
of $\dot{z}$ that is decided by $q$, let $\gamma_{q}$ be the first
ordinal for which $\dot{z}$ is undecided, let $q^{0}$ denote the
first coordinate of $q$, i.e from $\aleph_{\beta}$-Sacks forcing
over $M_{\beta}$, and let $\vec{q}$ denote the name of the rest
of the coordinates. $\dot{z_{q}}$ and $\gamma_{q}$ must be well-defined,
because if $q$ decides all of $\dot{z}$, it decides all of $\dot{A}$,
and then $A\in M_{\beta}$, in contradiction to $p$. Plainly $\gamma_{q}<\nu$.

We're now going to build a fusion sequence. Mark $p_{0}=p$. For the
successor case, assume that we've already chosen $p_{\epsilon}$.
Note that $p_{\epsilon}^{0}$ is just a perfect tree in $\mathbb{Q}_{\beta}$,
and $\vec{p_{\epsilon}}$ is a name in $M_{\beta}$. Let $S_{\epsilon}$
denote the order $\epsilon$ splitting nodes of $p_{\epsilon}^{0}$.
For every splitting node $s\in S_{\epsilon}$, let's look at $\gamma_{\left\langle p_{\epsilon}^{0}\upharpoonright s,\vec{p_{\epsilon}}\right\rangle }$
and conditions $\left\langle p_{\epsilon}^{0}\upharpoonright s^{\frown}i,\vec{p_{\epsilon}}\right\rangle $.

Suppose that for both $i=0,1$ we have $\left\langle p_{\epsilon}^{0}\upharpoonright s^{\frown}i,\vec{p_{\epsilon}}\right\rangle \Vdash\dot{z}\left(\gamma_{\left\langle p_{\epsilon}^{0}\upharpoonright s,\vec{p_{\epsilon}}\right\rangle }\right)=j$.
$\gamma_{\left\langle p_{\epsilon}^{0}\upharpoonright s,\vec{p_{\epsilon}}\right\rangle }$
is undecided, so take $q\geq\left\langle p_{\epsilon}^{0}\upharpoonright s,\vec{p_{\epsilon}}\right\rangle $
such that $q\Vdash\dot{z}\left(\gamma_{\left\langle p_{\epsilon}^{0}\upharpoonright s,\vec{p_{\epsilon}}\right\rangle }\right)=1-j$.
Either $q\cap\left\langle p_{\epsilon}^{0}\upharpoonright s^{\frown}0,\vec{p_{\epsilon}}\right\rangle \in\nicefrac{\mathbb{P}_{\zeta}}{G_{\beta}}$
or $q\cap\left\langle p_{\epsilon}^{0}\upharpoonright s^{\frown}1,\vec{p_{\epsilon}}\right\rangle \in\nicefrac{\mathbb{P}_{\zeta}}{G_{\beta}}$.
But $q$ and $\left\langle p_{\epsilon}^{0}\upharpoonright s^{\frown}i,\vec{p_{\epsilon}}\right\rangle $
are incompatible for $i=0,1$, therefore our supposition is impossible.
Thus, if for a certain $i$ there is a $j$ such that $\left\langle p_{\epsilon}^{0}\upharpoonright s^{\frown}i,\vec{p_{\epsilon}}\right\rangle \Vdash\dot{z}\left(\gamma_{\left\langle p_{\epsilon}^{0}\upharpoonright s,\vec{p_{\epsilon}}\right\rangle }\right)=j$,
we can define $q_{s^{\frown}i}=\left\langle p_{\epsilon}^{0}\upharpoonright s^{\frown}i,\vec{p_{\epsilon}}\right\rangle $
so that $q_{s^{\frown}i}\Vdash\dot{z}\left(\gamma_{\left\langle p_{\epsilon}^{0}\upharpoonright s,\vec{p_{\epsilon}}\right\rangle }\right)=j$,
and know there will be some $q_{s^{\frown}\left(1-i\right)}\geq\left\langle p_{\epsilon}^{0}\upharpoonright s^{\frown}\left(1-i\right),\vec{p_{\epsilon}}\right\rangle $
such that $q_{s^{\frown}\left(1-i\right)}\Vdash\dot{z}\left(\gamma_{\left\langle p_{\epsilon}^{0}\upharpoonright s,\vec{p_{\epsilon}}\right\rangle }\right)=1-j$.

Alternatively, there is no such $i$, and we are free to select for
both $i=0,1$ $q_{s^{\frown}i}\geq\left\langle p_{\epsilon}^{0}\upharpoonright s^{\frown}i,\vec{p_{\epsilon}}\right\rangle $
such that $q_{s^{\frown}i}\Vdash\dot{z}\left(\gamma_{\left\langle p_{\epsilon}^{0}\upharpoonright s,\vec{p_{\epsilon}}\right\rangle }\right)=i$.

We now define the first coordinate as an amalgamation of the $q_{s^{\frown}i}^{0}$'s:
$p_{\epsilon+1}^{0}=\bigcup\limits _{s\in S_{\epsilon},i=0,1}q_{s^{\frown}i}^{0}$,
just like we did in the proof of theorem \ref{thm:OrgSacksMinimal}
of the original Sacks forcing. For the rest of the coordinates, we
define $\vec{p_{\epsilon+1}}$ with accordance to the path taken in
the first coordinate. Meaning that if $q_{s^{\frown}i}^{0}\in G_{\beta,\beta+1}$
then $\vec{p_{\epsilon+1}}=\vec{q_{s^{\frown}i}}$. From the way we
defined the $q_{s^{\frown}i}$'s it is clear $\vec{p_{\epsilon+1}}\geq\vec{p_{\epsilon}}$,
and so $p_{\epsilon+1}\geq p_{\epsilon}$.

In limit stages $\delta\leq\omega_{\beta}$ we take $p_{\delta}=\bigcap\limits _{\epsilon<\delta}p_{\epsilon}$,
so in the first coordinate we get a fusion sequence $\left\langle p_{\epsilon}^{0}\mid\epsilon<\delta\right\rangle $
just like in the proof of theorem \ref{thm:-Sacks-forcing-produces}.
Therefore $p_{\delta}^{0}\in\mathbb{Q}_{\beta}$. As for the rest
of the coordinates, $\vec{p_{\delta}}\in\dot{\nicefrac{\mathbb{P}_{\zeta}}{G_{\beta+1}}}$
because of the $\aleph_{\beta+1}$-closure, as shown in lemma \ref{lem:IterationIsClosed}.

So we can now define $q=p_{\omega_{\beta}}\in\nicefrac{\mathbb{P}_{\zeta}}{G_{\beta}}$.
Note that we constructed such a $q\geq p$ over \emph{any} $p\in G_{\beta,\zeta}$,
so due to density we may assume without loss of generality that $q\in G_{\beta,\zeta}$.

Now let $f=\left\{ s\in q^{0}\mid\dot{z}_{\left\langle q^{0}\upharpoonright s,\vec{q}\right\rangle }\subseteq\dot{z}_{G_{\beta,\zeta}}\right\} $.
We claim $f$ is a branch of $q^{0}$.

From density we know that for every $\alpha<\omega_{\beta}$ there
is an $r\in G_{\beta,\zeta}$ such that $r^{0}$ has a stem with length
at least $\alpha$. Because $G_{\beta,\zeta}$ is generic, there is
some condition $t\geq q\cap r$ in $G_{\beta,\zeta}$. This $t^{0}$
has a stem with length at least $\alpha$, and so there is some node
$s$ in level $\alpha$ of $q^{0}$ such that $t^{0}\geq q^{0}\upharpoonright s$.
Obviously $\dot{z}_{\left\langle q^{0}\upharpoonright s,\vec{q}\right\rangle }\subseteq\dot{z}_{\left\langle t^{0},\vec{q}\right\rangle }\subseteq\dot{z}_{G_{\beta,\zeta}}$,
and so for every $\alpha<\omega_{\beta}$ there is some $s$ in that
level of $q^{0}$ such that $s\in f$. Also, if $s\in f$, then it's
trivial that for all $\alpha$ $s\upharpoonright\alpha\in f$.

Next, we show that $f$ has no splitting nodes. Suppose $s$ is a
splitting node of $q$, then for $i=0,1$ $\dot{z}_{\left\langle q^{0}\upharpoonright s^{\frown}0,\vec{q}\right\rangle }\left(\gamma_{\left\langle q^{0}\upharpoonright s,\vec{q}\right\rangle }\right)\neq\dot{z}_{\left\langle q^{0}\upharpoonright s^{\frown}1,\vec{q}\right\rangle }\left(\gamma_{\left\langle q^{0}\upharpoonright s,\vec{q}\right\rangle }\right)$
and therefore either $\dot{z}_{G_{\beta,\zeta}}=\dot{z}_{\left\langle q^{0}\upharpoonright s^{\frown}0,\vec{q}\right\rangle }\left(\gamma_{\left\langle q^{0}\upharpoonright s,\vec{q}\right\rangle }\right)$
or $\dot{z}_{G_{\beta,\zeta}}=\dot{z}_{\left\langle q^{0}\upharpoonright s^{\frown}1,\vec{q}\right\rangle }\left(\gamma_{\left\langle q^{0}\upharpoonright s,\vec{q}\right\rangle }\right)$,
so either $s^{\frown}0$ or $s^{\frown}1$ is in $f$, but not both.
Therefore $s$ is not a splitting node in $f$, and so there are no
splitting nodes in $f$. We conclude that $f$ is indeed a branch
in $q^{0}$.

In fact, we claim that $f$ is equal to the generic branch $g_{\beta,\beta+1}$
derived from the generic set $G_{\beta,\zeta}$. Let $s\in g_{\beta,\beta+1}$.
Then due to density there is an $r\in G_{\beta,\zeta}$ such that
$s$ is part of the stem of $r^{0}$, and some condition $t\geq q\cap r$
in $G_{\beta,\zeta}$. As above, $t^{0}\geq q^{0}\upharpoonright s$,
and so $\dot{z}_{\left\langle q^{0}\upharpoonright s,\vec{q}\right\rangle }\subseteq\dot{z}_{\left\langle t^{0},\vec{q}\right\rangle }\subseteq\dot{z}_{G_{\beta,\zeta}}$.
Therefore $s\in f$.

Hence $f=g_{\beta,\beta+1}$ is a branch of $q^{0}$ that is definable
in $M_{\beta}\left[\dot{z}_{G_{\beta,\zeta}}\right]=M_{\beta}\left[\dot{A}_{G_{\beta,\zeta}}\right]$,
so $g_{\beta,\beta+1}\in M_{\beta}\left[\dot{A}_{G_{\beta,\zeta}}\right]$.
Meaning, using $\dot{A}_{G_{\beta,\zeta}}=A$, we managed to recover
the generic branch $g_{\beta,\beta+1}$. But remember, the generic
branch $g_{\beta,\beta+1}$ is in fact interdefinable with the generic
set $G_{\beta,\beta+1}$, and so $G_{\beta,\beta+1}\in M_{\beta}\left[A\right]$.
Therefore $M_{\beta+1}=M_{\beta}\left[G_{\beta,\beta+1}\right]=M_{\beta}\left[g_{\beta,\beta+1}\right]\subseteq M_{\beta}\left[A\right]\subseteq N$,
in contradiction to our assumption that $M_{\beta+1}\nsubseteq N$.

We conclude that if $N$ violates the theorem, there is no greatest
$\beta$ such that $M_{\beta}\subsetneq N$. Thus, the least inner
model of the tower that isn't included in $N$ must be $M_{\delta}$
for some limit ordinal $\delta$.
\end{proof}
\begin{lem}
\label{lem:lim_case}If $N$ is an inner model of $M_{\zeta}$, and
$\delta$ is a limit ordinal such that for all $\beta<\delta$ $N\supseteq M_{\beta}$,
then $N\supseteq M_{\delta}$.
\end{lem}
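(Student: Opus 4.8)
\emph{The plan.} It suffices to show $G_{\delta}\in N$ (coded as a set of ordinals), since $M_{\delta}=M_{0}\left[G_{\delta}\right]=L\left[G_{\delta}\right]$ and $L\subseteq N$; equivalently, it suffices to get the sequence $\left\langle g_{\gamma}\mid\gamma<\delta\right\rangle $ of the coordinate Sacks generics into $N$. I would run the argument by induction on the limit ordinal $\delta$.

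\emph{Pinning down the power sets and the tower inside $N$.} For every $\beta<\delta$ we have $M_{\beta+1}\subseteq N\subseteq M_{\zeta}$ and $M_{\beta+1}\subseteq M_{\delta}\subseteq M_{\zeta}$, while by Lemma \ref{lem:IterationIsClosed} the tails $\nicefrac{\mathbb{P}_{\zeta}}{G_{\beta+1}}$ and $\nicefrac{\mathbb{P}_{\delta}}{G_{\beta+1}}$ are $\aleph_{\beta+1}$-closed, hence add no subsets of $\aleph_{\beta}$. Thus $\mathcal{P}\left(\aleph_{\beta}\right)^{N}=\mathcal{P}\left(\aleph_{\beta}\right)^{M_{\beta+1}}=\mathcal{P}\left(\aleph_{\beta}\right)^{M_{\delta}}$, so $N$ and $M_{\delta}$ agree on all subsets of ordinals below $\aleph_{\delta}$; in particular every $M_{\beta}$ with $\beta<\delta$, and every initial segment of the generic sequence, already lies in $N$. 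Moreover $M_{\beta+1}=M_{\beta}\left[g_{\beta}\right]=L\left[\left\langle g_{\gamma}\mid\gamma\leq\beta\right\rangle \right]$ and $\left\langle g_{\gamma}\mid\gamma\leq\beta\right\rangle $ is coded by a subset of $\aleph_{\beta}$ lying in $M_{\beta+1}$, so $M_{\beta+1}=L\left[\mathcal{P}\left(\aleph_{\beta}\right)^{M_{\beta+1}}\right]=L\left[\mathcal{P}\left(\aleph_{\beta}\right)^{N}\right]$; hence $\beta\mapsto M_{\beta+1}$ is uniformly definable in $N$. To make the limit models $M_{\beta}$ ($\beta<\delta$) uniformly $N$-definable as well, I would strengthen the induction hypothesis to assert additionally that the $M_{\delta}$ produced by the lemma is always of the form $L\left[C\right]$ for the canonical, uniformly $N$-definable set $C$ coding $\bigcup_{\beta<\delta}\mathcal{P}\left(\aleph_{\beta}\right)^{N}$; the hypothesis then supplies each $M_{\beta}$ ($\beta<\delta$) as an $N$-definable class.

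\emph{Recovering the generics and assembling.} Working inside $N$, for each $\beta<\delta$ I would recover the generic $g_{\beta}$ (equivalently $G_{\beta,\beta+1}$) itself: fix a canonical $x_{\beta}\in\mathcal{P}\left(\aleph_{\beta}\right)^{M_{\beta+1}}\setminus M_{\beta}$ (say the $<$-least in the canonical well-order of $M_{\beta+1}$) and run the fusion/decoding from the proof of Theorem \ref{thm:-Sacks-forcing-produces} — equivalently the first-coordinate extraction in the proof of Lemma \ref{lem:no-max} — with ground model $M_{\beta}$, ambient model $M_{\beta+1}$ and forcing $\mathbb{Q}_{\beta}^{M_{\beta}}$, all now uniformly definable in $N$; this isolates $g_{\beta}$ as a distinguished branch, definably in $N$ and uniformly in $\beta$. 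By Replacement in $N$ the sequence $\left\langle g_{\beta}\mid\beta<\delta\right\rangle $ is then an element of $N$, so $G_{\delta}\in N$ and $M_{\delta}=L\left[G_{\delta}\right]\subseteq N$; the strengthened conclusion $M_{\delta}=L\left[C\right]$ follows because $C$ recovers each $\mathcal{P}\left(\aleph_{\beta}\right)^{N}$, hence each $M_{\beta+1}$, hence, by the same extraction, the whole sequence.

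\emph{The main obstacle.} The delicate step is the recovery of $g_{\beta}$. The $\aleph_{\beta}$-Sacks generic is \emph{not} the unique $\aleph_{\beta}$-Sacks generic present in $M_{\beta+1}$ — a ground-model bit-flip automorphism of $\mathbb{Q}_{\beta}$ carries it to another one — so $g_{\beta}$ cannot simply be read off as ``the generic'' and must be extracted by the fusion argument; one must check that this extraction, carried out with only the data $\left(M_{\beta},M_{\beta+1},\mathbb{Q}_{\beta}^{M_{\beta}}\right)$ visible in $N$, is well-defined and uniform in $\beta$. Equivalently — and this is the real content — one must verify that the sequence assembled from the recovered generators regenerates \emph{exactly} $M_{\delta}$, not some inner model lying strictly between $\bigcup_{\beta<\delta}M_{\beta}$ and $M_{\delta}$; this is precisely the phenomenon, special to the full-support iteration, that $M_{\delta}$ properly extends $\bigcup_{\beta<\delta}M_{\beta}$, and it is what forces the induction on $\delta$ and the use of the minimality of each Sacks step (Theorem \ref{thm:-Sacks-forcing-produces}) at the successor stages.
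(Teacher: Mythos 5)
Your setup is right as far as it goes: the reduction to ``get $\left\langle g_{\beta}\mid\beta<\delta\right\rangle \in N$'', the observation that $\mathcal{P}\left(\aleph_{\beta}\right)^{N}=\mathcal{P}\left(\aleph_{\beta}\right)^{M_{\beta+1}}=\mathcal{P}\left(\aleph_{\beta}\right)^{M_{\delta}}$ by tail-closure, and the uniform identification of the tower inside $N$ (modulo the slip that $L\left[\mathcal{P}\left(\aleph_{\beta}\right)^{N}\right]$ with the power set as a predicate need not contain its individual elements --- you want $L\left(\mathcal{P}\left(\aleph_{\beta}\right)^{N}\right)$ or $L\left[c\right]$ for a set $c$ coding it). But the step you yourself flag as the ``main obstacle'' is a genuine gap, not a checkable detail, and your proposed resolution does not work. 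The fusion construction of Theorem \ref{thm:-Sacks-forcing-produces} produces a decoding condition $q$ which is only guaranteed to lie in $G_{\beta,\beta+1}$ \emph{by a density argument}: some strengthening of $p$ with the decision-tree property is in the generic, but which one is information about $G_{\beta,\beta+1}$ that $N$ does not have. Running the construction canonically inside $N$ from the data $\left(M_{\beta},M_{\beta+1},\mathbb{Q}_{\beta},x_{\beta}\right)$ yields a specific $q$ that need not be in $G_{\beta,\beta+1}$, and the branch $f_{q}$ it isolates then generates at best \emph{some} generic $H_{\beta}$ with $M_{\beta}\left[H_{\beta}\right]=M_{\beta+1}$, not the true $G_{\beta,\beta+1}$. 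Your final inference ``by Replacement $\left\langle g_{\beta}\right\rangle \in N$, so $G_{\delta}\in N$'' therefore fails: from $\left\langle H_{\beta}\mid\beta<\delta\right\rangle \in N$ one only gets an inner model containing every $M_{\beta}$ for $\beta<\delta$, and since $M_{\delta}$ properly extends $\bigcup_{\beta<\delta}M_{\beta}$ (full support), this does not exclude $N$ sitting strictly between the union and $M_{\delta}$ --- which is exactly what the lemma must rule out.

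The missing idea, which is the heart of the paper's proof, is a single master condition $p\left(\delta\right)\in\mathbb{P}_{\delta}$, built by a transfinite fusion running through \emph{all} coordinates of the full-support iteration, whose $\beta$-th coordinate is a decision tree for the name $\dot{g}_{\beta}$ of the chosen new subset. This one condition lies in $G_{\delta}$ by density and belongs to the ground model, hence to $N$; given it, the actual values $g_{\beta}\in N$ pick out the true generic branch $h_{\beta,\beta+1}$ coordinate by coordinate (inductively in $\beta$, each stage working over the already-recovered $M_{\beta}$), which is what makes the recovery of the genuine $\left\langle G_{\beta,\beta+1}\mid\beta<\delta\right\rangle $ uniform and legitimate. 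Note also that once this device is in place, your coordinate-wise ``decoding'' of each $g_{\beta}$ is unnecessary: the paper starts from an \emph{arbitrary} sequence of new subsets chosen inside $N$ by Choice (which is the easy half, and essentially what your $x_{\beta}$'s already give), and the master condition does all the converting. So your proposal reproduces the first half of the argument and leaves the second half --- the part that actually proves the lemma --- unsupplied.
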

\begin{proof}
We show this inductively. So let $\delta$ be a limit ordinal, and
assume the lemma is true for every limit ordinal $\epsilon<\delta$.
Let $N$ be an inner model of $M_{\zeta}$ such that for all $\beta<\delta$
$M_{\beta}\subsetneq N$. We aim to show that $G_{\delta}\in N$ by
showing that $\left\langle G_{\beta,\beta+1}\mid\beta<\delta\right\rangle \in N$.

To start things off we first want to define a sequence $g=\left\langle g_{\beta}\mid\beta<\delta\right\rangle \in N$
such that for each $\beta<\delta$ if $\beta$ is not a limit ordinal
then $g_{\beta}\subsetneq\aleph_{\beta}$ and $g_{\beta}\notin M_{\beta}$.
Note that while each $M_{\beta}$ is by itself definable in $N$ using
set parameters, the sequence might not be, so we can't simply define
$A_{\beta}=\left\{ a\subsetneq\aleph_{\beta}\mid a\in N\setminus M_{\beta}\wedge\sup\left(a\right)=\aleph_{\beta}\right\} $
and then choose some $g_{\beta}\in A_{\beta}$ whenever $\beta$ is
not a limit.

Instead, we build this sequence inductively, working in $N$. Let
$N_{0}=L$. Next, for all $\beta<\delta$, assuming $N_{\beta}$ is
defined, let $A_{\beta}=\left\{ a\subsetneq\aleph_{\beta}\mid a\in N\setminus N_{\beta}\wedge\sup\left(a\right)=\aleph_{\beta}\right\} $,
and if $A_{\beta}\neq\emptyset$ choose some $g_{\beta}\in A_{\beta}$,
otherwise set $g_{\beta}=\emptyset$. For the successor step, assuming
$N_{\beta}$ is defined, we define $N_{\beta+1}=N_{\beta}\left[g_{\beta}\right]$.
In the limit step, assuming $N_{\beta}$ is defined for all $\beta<\epsilon<\delta$,
we define $N_{\epsilon}$ as the least inner model that includes every
$N_{\beta}$.

We claim that for all $\beta<\delta$ $N_{\beta}$ is definable and
equal to $M_{\beta}$, and that if $\beta$ is not a limit ordinal
then $g_{\beta}\subsetneq\aleph_{\beta}$ and $g_{\beta}\notin M_{\beta}$.
For the base case, note that $N_{0}=L=M_{0}$, which is of course
definable in $N$. Next, assuming $N_{\beta}=M_{\beta}$, then $A_{\beta}=\left\{ a\subsetneq\aleph_{\beta}\mid a\in N\setminus M_{\beta}\wedge\sup\left(a\right)=\aleph_{\beta}\right\} $.
If $\beta$ is a limit, then $\mathbb{Q}_{\beta}$ is trivial, and
so $M_{\beta}=M_{\beta+1}$. On the other hand, $\dot{\nicefrac{\mathbb{P}_{\zeta}}{G_{\beta}}}$
is $\aleph_{\beta+1}$-closed, as shown by lemma \ref{lem:IterationIsClosed}.
Therefore $M_{\beta}$ and $M_{\zeta}$ have the same subsets of $\aleph_{\beta}$.
Hence $A_{\beta}=\emptyset$, and therefore $g_{\beta}=\emptyset$.
We get $N_{\beta+1}=N_{\beta}\left[\emptyset\right]=N_{\beta}=M_{\beta}=M_{\beta+1}$
as required.

If $\beta$ is not a limit, then $\mathbb{Q}_{\beta}$ is $\aleph_{\beta}$-Sacks
forcing, and therefore there is a new subset of $\aleph_{\beta}$
in $M_{\beta+1}\setminus M_{\beta}$. Hence $A_{\beta}\neq\emptyset$.
On the other hand $\dot{\nicefrac{\mathbb{P}_{\zeta}}{G_{\beta+1}}}$
is $\aleph_{\beta+1}$-closed, so $M_{\beta+1}$ has the same subsets
of $\aleph_{\beta}$ as $M_{\zeta}$. Therefore $M_{\beta}\subsetneq M_{\beta}\left[g_{\beta}\right]\subseteq M_{\beta+1}$.
But $M_{\beta+1}$ is generated from $M_{\beta}$ using $\aleph_{\beta}$-Sacks
forcing, and so according to theorem \ref{thm:-Sacks-forcing-produces}
there is no intermediate model. Hence $N_{\beta+1}=N_{\beta}\left[g_{\beta}\right]=M_{\beta}\left[g_{\beta}\right]=M_{\beta+1}$.

In the limit step, assume that for $\epsilon$ a limit ordinal we've
already shown that $M_{\beta}=N_{\beta}$ for all $\beta<\epsilon$.
$G_{\epsilon}\in N$, and therefore $M_{\epsilon}$ is definable with
set parameters in $N$. Hence $N$ recognizes that $M_{\epsilon}$
is its inner model. Working towards a contradiction, assume $K$ is
an inner model of $N$ such that for all $\beta<\epsilon$ $M_{\beta}\subsetneq K$
but $M_{\epsilon}\nsubseteq K$. $K$ is definable with set parameters
in $N$, which is definable with set parameters in $M_{\zeta}$. Therefore
$K$ is definable with set parameters in $M_{\zeta}$, and therefore
$K$ is an inner model of $M_{\zeta}$ with said properties. But by
the induction hypothesis the lemma is true for every $\epsilon<\delta$,
so $K\supseteq M_{\epsilon}$ in contradiction to our assumption.
Therefore there is no such inner model $K$. So every inner model
of $N$ that includes all the $M_{\beta}$'s for $\beta<\epsilon$
must necessarily include $M_{\epsilon}$. Therefore $M_{\epsilon}$
is the least inner model that includes every $M_{\beta}$. But this
exactly coincides with our definition of $N_{\epsilon}$, and so $N_{\epsilon}=M_{\epsilon}$.

Thus the induction is now complete and we've managed to define $N_{\beta}$
and show that it is in fact equal to $M_{\beta}$ for all $\beta<\delta$.
We've also shown that if $\beta$ is not a limit then $A_{\beta}\neq\emptyset$
and so $g_{\beta}\subsetneq\aleph_{\beta}$ and $g_{\beta}\notin M_{\beta}$,
as required. Therefore the set $\left\langle g_{\beta}\mid\beta<\delta\right\rangle \in N$
is exactly the set which we set out to define.

The sequence $\left\langle g_{\beta}\mid\beta<\delta\right\rangle \in M_{\zeta}$.
However, by lemma \ref{lem:IterationIsClosed} $\nicefrac{\mathbb{P}_{\zeta}}{G_{\delta}}$
is $\aleph_{\delta+1}$-closed, and for all $\beta<\delta$ $g_{\beta}\in M_{\delta}$.
Therefore $\left\langle g_{\beta}\mid\beta<\delta\right\rangle \in M_{\delta}$.

Hence there exists a condition $u\in G_{\delta}$ that forces 
\[
\dot{g}=\left\langle \dot{g}_{\beta}\mid\beta<\delta\right\rangle \wedge\forall\beta<\delta\left(\beta\text{ is not a limit}\rightarrow\left(\dot{g}_{\beta}\subsetneq\aleph_{\beta}\wedge\dot{g}_{\beta}\in\dot{M}_{\beta+1}\setminus\dot{M}_{\beta}\right)\right)
\]

By the definition of the forcing, we may assume that for each $\beta<\delta$
$\dot{g}_{\beta}$ is a $\mathbb{P}_{\beta}\star\mathbb{Q}_{\beta}$-name
of $g_{\beta}$.

Assume now that we have a condition $p\in\mathbb{P}_{\delta}$ stronger
than $u$. We denote individual coordinates like so: $p=\left\langle p^{0},\dot{p}^{1},...,\dot{p}^{\alpha},...\mid\alpha<\delta\right\rangle $.

We know that $p\Vdash\dot{g}_{0}\subsetneq\aleph_{0}\wedge\dot{g}_{0}\notin M_{0}$.
Let $\dot{z}$ be the name of the characteristic function of $\dot{g}_{0}$.
For any condition $q\in\mathbb{P}_{\delta}$ let $\dot{z}_{q}$ be
the longest initial segment of $\dot{z}$ that is decided by $q$,
let $\gamma_{q}$ be the first ordinal for which $\dot{z}$ is undecided,
let $q^{0}$ denote the first coordinate of $q$, i.e from $\aleph_{0}$-Sacks
forcing over $M_{0}$, and let $\vec{q}$ denote the name of the rest
of the coordinates. $\dot{z_{q}}$ and $\gamma_{q}$ must be well-defined,
because if $q$ decides all of $\dot{z}$, it decides all of $\dot{g}_{0}$,
and then $g_{0}\in M_{0}$, in contradiction to $p$. Plainly $\gamma_{q}<\omega$.

We now build a fusion sequence. Mark $p_{0}=p$. For the successor
case, assume that we've already chosen $p_{\epsilon}$. Note that
$p_{\epsilon}^{0}$ is just a perfect tree in $\mathbb{Q}_{0}$, and
$\vec{p_{\epsilon}}$ is a name in $M_{0}$. Let $S_{\epsilon}$ denote
the order $\epsilon$ splitting nodes of $p_{\epsilon}^{0}$. For
every splitting node $s\in S_{\epsilon}$, let's look at $\gamma_{\left\langle p_{\epsilon}^{0}\upharpoonright s,\vec{p_{\epsilon}}\right\rangle }$
and conditions $\left\langle p_{\epsilon}^{0}\upharpoonright s^{\frown}i,\vec{p_{\epsilon}}\right\rangle $.

Suppose that for both $i=0,1$ we have $\left\langle p_{\epsilon}^{0}\upharpoonright s^{\frown}i,\vec{p_{\epsilon}}\right\rangle \Vdash\dot{z}\left(\gamma_{\left\langle p_{\epsilon}^{0}\upharpoonright s,\vec{p_{\epsilon}}\right\rangle }\right)=j$.
$\gamma_{\left\langle p_{\epsilon}^{0}\upharpoonright s,\vec{p_{\epsilon}}\right\rangle }$
is undecided, so take $q\geq\left\langle p_{\epsilon}^{0}\upharpoonright s,\vec{p_{\epsilon}}\right\rangle $
such that $q\Vdash\dot{z}\left(\gamma_{\left\langle p_{\epsilon}^{0}\upharpoonright s,\vec{p_{\epsilon}}\right\rangle }\right)=1-j$.
Either $q\cap\left\langle p_{\epsilon}^{0}\upharpoonright s^{\frown}0,\vec{p_{\epsilon}}\right\rangle \in\mathbb{P}_{\delta}$
or $q\cap\left\langle p_{\epsilon}^{0}\upharpoonright s^{\frown}1,\vec{p_{\epsilon}}\right\rangle \in\mathbb{P}_{\delta}$.
But $q$ and $\left\langle p_{\epsilon}^{0}\upharpoonright s^{\frown}i,\vec{p_{\epsilon}}\right\rangle $
are incompatible for $i=0,1$, therefore our supposition is impossible.
Thus, if for a certain $i$ there is a $j$ such that $\left\langle p_{\epsilon}^{0}\upharpoonright s^{\frown}i,\vec{p_{\epsilon}}\right\rangle \Vdash\dot{z}\left(\gamma_{\left\langle p_{\epsilon}^{0}\upharpoonright s,\vec{p_{\epsilon}}\right\rangle }\right)=j$,
we can define $q_{s^{\frown}i}=\left\langle p_{\epsilon}^{0}\upharpoonright s^{\frown}i,\vec{p_{\epsilon}}\right\rangle $
so that $q_{s^{\frown}i}\Vdash\dot{z}\left(\gamma_{\left\langle p_{\epsilon}^{0}\upharpoonright s,\vec{p_{\epsilon}}\right\rangle }\right)=j$,
and know there will be some $q_{s^{\frown}\left(1-i\right)}\geq\left\langle p_{\epsilon}^{0}\upharpoonright s^{\frown}\left(1-i\right),\vec{p_{\epsilon}}\right\rangle $
such that $q_{s^{\frown}\left(1-i\right)}\Vdash\dot{z}\left(\gamma_{\left\langle p_{\epsilon}^{0}\upharpoonright s,\vec{p_{\epsilon}}\right\rangle }\right)=1-j$.

Alternatively, there is no such $i$, and we are free to select for
both $i=0,1$ $q_{s^{\frown}i}\geq\left\langle p_{\epsilon}^{0}\upharpoonright s^{\frown}i,\vec{p_{\epsilon}}\right\rangle $
such that $q_{s^{\frown}i}\Vdash\dot{z}\left(\gamma_{\left\langle p_{\epsilon}^{0}\upharpoonright s,\vec{p_{\epsilon}}\right\rangle }\right)=i$.

We now define the first coordinate as an amalgamation of the $q_{s^{\frown}i}^{0}$'s:
$p_{\epsilon+1}^{0}=\bigcup\limits _{s\in S_{\epsilon},i=0,1}q_{s^{\frown}i}^{0}$,
just like we did in the proof of theorem \ref{thm:OrgSacksMinimal}
of the original Sacks forcing. For the rest of the coordinates, we
define $\vec{p_{\epsilon+1}}$ with accordance to the path taken in
the first coordinate. Meaning that $q_{s^{\frown}i}^{0}$ forces $\vec{p_{\epsilon+1}}=\vec{q_{s^{\frown}i}}$.
From the way we defined the $q_{s^{\frown}i}$'s it is clear $\vec{p_{\epsilon+1}}\geq\vec{p_{\epsilon}}$,
and so $p_{\epsilon+1}\geq p_{\epsilon}$.

In the first coordinate we get a classical fusion sequence $\left\langle p_{\epsilon}^{0}\mid\epsilon<\omega\right\rangle $.
Therefore we can define $p_{\omega}^{0}=\bigcap\limits _{\epsilon<\omega}p_{\epsilon}^{0}\in\mathbb{Q}_{0}$.
As for the higher coordinates, we can define $\vec{p}_{\omega}=\bigcap\limits _{\epsilon<\omega}\vec{p}_{\epsilon}^{0}$
because of the $\aleph_{1}$-closure, proven in lemma \ref{lem:IterationIsClosed}.
So $p_{\omega}\in\mathbb{P}_{\delta}$.

So we can now define $p\left(0\right)=p_{\omega}$. Next we are going
to repeat by induction the construction above, using the higher coordinates.

So for the successor case, assume that we've already defined $p\left(\beta\right)$.
For all coordinates $0<\alpha\leq\beta$ define $\dot{p}\left(\beta+1\right)^{\alpha}=\dot{p}\left(\alpha\right)^{\alpha}$,
and let $p\left(\beta+1\right)^{0}=p\left(0\right)^{0}$. We are now
going to deal with $\dot{g}_{\beta+1}$. So let $H_{\beta+1}\subseteq\mathbb{P}_{\beta+1}$
be any generic set such that $\left\langle p\left(\beta\right)^{0},\dot{p}\left(\beta\right)^{1},...,\dot{p}\left(\beta\right)^{\alpha},...\mid\alpha\leq\beta\right\rangle \in H_{\beta+1}$.

Note that $M_{0}\left[H_{\beta+1}\right]=\text{the interpretation of }\dot{M}_{\beta+1}\text{ according to }\dot{H}_{\beta+1}$.
So working in $M_{0}\left[H_{\beta+1}\right]$, we know that 
\begin{multline*}
\left\langle p\left(\beta\right)^{\beta+1},\dot{p}\left(\beta\right)^{\beta+2},...,p\left(\beta\right)^{\alpha},...\mid\alpha<\delta\right\rangle \Vdash\dot{g}_{\beta+1}\subsetneq\aleph_{\beta+1}\wedge\dot{g}_{\beta+1}\notin M_{0}\left[H_{\beta+1}\right]
\end{multline*}
Just as before, let $\dot{z}$ be the name of the characteristic function
of $\dot{g}_{\beta+1}$. For any condition $q\in\nicefrac{\mathbb{P}_{\delta}}{H_{\beta+1}}$
let $\dot{z}_{q}$ be the longest initial segment of $\dot{z}$ that
is decided by $q$, let $\gamma_{q}$ be the first ordinal for which
$\dot{z}$ is undecided, let $q^{\beta+1}$ denote the \emph{first}
coordinate of $q$, i.e from $\aleph_{\beta+1}$-Sacks forcing over
$M_{0}\left[H_{\beta+1}\right]$, and let $\vec{q}$ denote the name
of the rest of the coordinates. $\dot{z_{q}}$ and $\gamma_{q}$ must
be well-defined, because if $q$ decides all of $\dot{z}$, it decides
all of $\dot{g}_{\beta+1}$, and then $g_{\beta+1}\in M_{0}\left[H_{\beta+1}\right]$,
in contradiction to $p\left(\beta\right)$. Plainly $\gamma_{q}<\omega_{\beta+1}$.

Just as before, we again build a fusion sequence. This time mark $p_{0}=p\left(\beta\right)$.
For the successor case, assume that we've already chosen $p_{\epsilon}$.
Note that $p_{\epsilon}^{\beta+1}$ is just a perfect tree in $\mathbb{Q}_{\beta+1}$,
and $\vec{p_{\epsilon}}$ is a name of a condition. Let $S_{\epsilon}$
denote the order $\epsilon$ splitting nodes of $p_{\epsilon}^{\beta+1}$.
For every splitting node $s\in S_{\epsilon}$, let's look at $\gamma_{\left\langle p_{\epsilon}^{\beta+1}\upharpoonright s,\vec{p_{\epsilon}}\right\rangle }$
and conditions $\left\langle p_{\epsilon}^{\beta+1}\upharpoonright s^{\frown}i,\vec{p_{\epsilon}}\right\rangle $.

Suppose that for both $i=0,1$ we have $\left\langle p_{\epsilon}^{\beta+1}\upharpoonright s^{\frown}i,\vec{p_{\epsilon}}\right\rangle \Vdash\dot{z}\left(\gamma_{\left\langle p_{\epsilon}^{\beta+1}\upharpoonright s,\vec{p_{\epsilon}}\right\rangle }\right)=j$.
$\gamma_{\left\langle p_{\epsilon}^{\beta+1}\upharpoonright s,\vec{p_{\epsilon}}\right\rangle }$
is undecided, so take $q\geq\left\langle p_{\epsilon}^{\beta+1}\upharpoonright s,\vec{p_{\epsilon}}\right\rangle $
such that $q\Vdash\dot{z}\left(\gamma_{\left\langle p_{\epsilon}^{\beta+1}\upharpoonright s,\vec{p_{\epsilon}}\right\rangle }\right)=1-j$.
Either $q\cap\left\langle p_{\epsilon}^{\beta+1}\upharpoonright s^{\frown}0,\vec{p_{\epsilon}}\right\rangle \in\nicefrac{\mathbb{P}_{\delta}}{H_{\beta+1}}$
or $q\cap\left\langle p_{\epsilon}^{\beta+1}\upharpoonright s^{\frown}1,\vec{p_{\epsilon}}\right\rangle \in\nicefrac{\mathbb{P}_{\delta}}{H_{\beta+1}}$.
But $q$ and $\left\langle p_{\epsilon}^{\beta+1}\upharpoonright s^{\frown}i,\vec{p_{\epsilon}}\right\rangle $
are incompatible for $i=0,1$, therefore our supposition is impossible.
Thus, if for a certain $i$ there is a $j$ such that $\left\langle p_{\epsilon}^{\beta+1}\upharpoonright s^{\frown}i,\vec{p_{\epsilon}}\right\rangle \Vdash\dot{z}\left(\gamma_{\left\langle p_{\epsilon}^{\beta+1}\upharpoonright s,\vec{p_{\epsilon}}\right\rangle }\right)=j$,
we can define $q_{s^{\frown}i}=\left\langle p_{\epsilon}^{\beta+1}\upharpoonright s^{\frown}i,\vec{p_{\epsilon}}\right\rangle $
so that $q_{s^{\frown}i}\Vdash\dot{z}\left(\gamma_{\left\langle p_{\epsilon}^{\beta+1}\upharpoonright s,\vec{p_{\epsilon}}\right\rangle }\right)=j$,
and know there will be some $q_{s^{\frown}\left(1-i\right)}\geq\left\langle p_{\epsilon}^{\beta+1}\upharpoonright s^{\frown}\left(1-i\right),\vec{p_{\epsilon}}\right\rangle $
such that $q_{s^{\frown}\left(1-i\right)}\Vdash\dot{z}\left(\gamma_{\left\langle p_{\epsilon}^{\beta+1}\upharpoonright s,\vec{p_{\epsilon}}\right\rangle }\right)=1-j$.

Alternatively, there is no such $i$, and we are free to select for
both $i=0,1$ $q_{s^{\frown}i}\geq\left\langle p_{\epsilon}^{\beta+1}\upharpoonright s^{\frown}i,\vec{p_{\epsilon}}\right\rangle $
such that $q_{s^{\frown}i}\Vdash\dot{z}\left(\gamma_{\left\langle p_{\epsilon}^{\beta+1}\upharpoonright s,\vec{p_{\epsilon}}\right\rangle }\right)=i$.

We now define the first coordinate as an amalgamation of the $q_{s^{\frown}i}^{\beta+1}$'s:
$p_{\epsilon+1}^{\beta+1}=\bigcup\limits _{s\in S_{\epsilon},i=0,1}q_{s^{\frown}i}^{\beta+1}$.
For the rest of the coordinates, we define $\vec{p_{\epsilon+1}}$
with accordance to the path taken in the first coordinate. Meaning
that $q_{s^{\frown}i}^{\beta+1}$ forces that $\vec{p_{\epsilon+1}}=\vec{q_{s^{\frown}i}}$.
From the way we defined the $q_{s^{\frown}i}$'s it is clear $\vec{p_{\epsilon+1}}\geq\vec{p_{\epsilon}}$,
and so $p_{\epsilon+1}\geq p_{\epsilon}$. In limit stages $\tau<\omega_{\beta+1}$
we just use the $\aleph_{\beta+1}$-closure to define $p_{\tau}=\bigcap\limits _{\alpha<\tau}p_{\alpha}$.

Now, in order to define $p_{\omega_{\beta+1}}$, note that in the
first coordinate we again get a fusion sequence $\left\langle p_{\epsilon}^{\beta+1}\mid\epsilon<\omega\right\rangle $.
Therefore $p_{\omega_{\beta+1}}^{\beta+1}=\bigcap\limits _{\epsilon<\omega_{\beta+1}}p_{\epsilon}^{\beta+1}\in\mathbb{Q}_{\beta+1}$.
As for the rest of the coordinates, we can use the $\aleph_{\beta+1}$-closure
proven in lemma \ref{lem:IterationIsClosed} to define $\vec{p_{\omega_{\beta+1}}}=\bigcap\limits _{\epsilon<\omega_{\beta+1}}\vec{p}_{\epsilon}$.

For all $\alpha>\beta$ we define $\dot{p}\left(\beta+1\right)^{\alpha}=p_{\omega_{\beta+1}}^{\alpha}$.

We now pick a name $\left\langle \dot{p}\left(\beta+1\right)^{\beta+1},\dot{p}\left(\beta+1\right)^{\beta+2},...,\dot{p}\left(\beta+1\right)^{\alpha},...\mid\alpha<\delta\right\rangle $
for the $\left\langle p\left(\beta+1\right)^{\beta+1},\dot{p}\left(\beta+1\right)^{\beta+2},...,\dot{p}\left(\beta+1\right)^{\alpha},...\mid\alpha<\delta\right\rangle $
that we constructed in $\nicefrac{\mathbb{P}_{\delta}}{H_{\beta+1}}$,
such that $\left\langle p\left(\beta\right)^{0},\dot{p}\left(\beta\right)^{1},...,\dot{p}\left(\beta\right)^{\beta}\right\rangle $
forces it to be the way it was defined.

Finally, set $p\left(\beta+1\right)=\left\langle p\left(0\right)^{0},\dot{p}\left(1\right)^{1},...,\dot{p}\left(\beta+1\right)^{\beta+1},\dot{p}\left(\beta+1\right)^{\beta+2},...,\dot{p}\left(\beta+1\right)^{\alpha}\mid\alpha<\delta\right\rangle $.

For limit stages $\delta$ we define $p\left(\delta\right)=\bigcap\limits _{\alpha<\delta}p\left(\alpha\right)$.
As each coordinate lesser than $\delta$ stabilizes, and by lemma
\ref{lem:IterationIsClosed} each coordinate $\geq\delta$ is at least
$\aleph_{\delta+1}$-closed, and because we're using full support,
$p\left(\delta\right)\in\mathbb{P}_{\delta}$.

As we've constructed $p\left(\delta\right)$ stronger than a general
$p\in\mathbb{P}_{\delta}$, then by density arguments, we may assume
without loss of generality $p\left(\delta\right)\in G_{\delta}$.

Now we're going to use $p\left(\delta\right)$ and the sequence of
$\left\langle g_{\beta}\mid\beta<\delta\right\rangle $ to recover
$\left\langle G_{\beta,\beta+1}\mid\beta<\delta\right\rangle $.

By induction, assume that for some $\beta<\delta$ we already recovered
$\left\langle G_{\alpha,\alpha+1}\mid\alpha<\beta\right\rangle $,
and thus $M_{\beta}$. Therefore, working in $M_{\beta}$, let $\dot{z}$
be as before the name of the characteristic function of $\dot{g}_{\beta}$
and for each $r\in\nicefrac{\mathbb{P}_{\delta}}{G_{\beta}}$ let
$\dot{z}_{r}$ be the longest initial segment of $\dot{z}$ that is
decided by $r$.

Let $q=p\left(\delta\right)$, and define $f=\left\{ s\in q^{\beta}\mid\dot{z}_{\left\langle q^{\beta}\upharpoonright s,\vec{q}\right\rangle }\subseteq\dot{z}_{G_{\beta,\delta}}\right\} $.
We claim that $f$ is a branch of $q^{\beta}$.

From density we know that for every $\alpha<\omega_{\beta}$ there
is an $r\in G_{\beta,\delta}$ such that $r^{\beta}$ has a stem with
length at least $\alpha$. Because $G_{\beta,\delta}$ is generic,
there is some condition $t\geq q\cap r$ in $G_{\beta,\delta}$. This
$t^{\beta}$ has a stem with length at least $\alpha$, and so there
is some node $s$ in level $\alpha$ of $q^{\beta}$ such that $t^{\beta}\geq q^{\beta}\upharpoonright s$.
Obviously $\dot{z}_{\left\langle q^{\beta}\upharpoonright s,\vec{q}\right\rangle }\subseteq\dot{z}_{\left\langle t^{\beta},\vec{q}\right\rangle }\subseteq\dot{z}_{G_{\beta,\delta}}$,
and so for every $\alpha<\omega_{\beta}$ there is some $s$ in that
level of $q^{\beta}$ such that $s\in f$. Also, if $s\in f$, then
it's trivial that for all $\alpha$ $s\upharpoonright\alpha\in f$.

Next, we show that $f$ has no splitting nodes. Suppose that $s$
is a splitting node of $q^{\beta}$, then for $i=0,1$ $\dot{z}_{\left\langle q^{\beta}\upharpoonright s^{\frown}0,\vec{q}\right\rangle }\left(\gamma_{\left\langle q^{\beta}\upharpoonright s,\vec{q}\right\rangle }\right)\neq\dot{z}_{\left\langle q^{\beta}\upharpoonright s^{\frown}1,\vec{q}\right\rangle }\left(\gamma_{\left\langle q^{\beta}\upharpoonright s,\vec{q}\right\rangle }\right)$
and therefore either $\dot{z}_{G_{\beta,\delta}}=\dot{z}_{\left\langle q^{\beta}\upharpoonright s^{\frown}0,\vec{q}\right\rangle }\left(\gamma_{\left\langle q^{\beta}\upharpoonright s,\vec{q}\right\rangle }\right)$
or $\dot{z}_{G_{\beta,\delta}}=\dot{z}_{\left\langle q^{\beta}\upharpoonright s^{\frown}1,\vec{q}\right\rangle }\left(\gamma_{\left\langle q^{\beta}\upharpoonright s,\vec{q}\right\rangle }\right)$,
so either $s^{\frown}0$ or $s^{\frown}1$ is in $f$, but not both.
Therefore $s$ is not a splitting node in $f$, and so there are no
splitting nodes in $f$. We conclude that $f$ is in fact a branch
in $q^{\beta}$.

Moreover, we claim that $f$ is equal to the generic branch $h_{\beta,\beta+1}$
derived from the first coordinate of the generic set $G_{\beta,\delta}$.
Let $s\in h_{\beta,\beta+1}$. Then due to density there is an $r\in G_{\beta,\delta}$
such that $s$ is part of the stem of $r^{\beta}$, and some condition
$t\geq q\cap r$ in $G_{\beta,\delta}$. As above, $t^{\beta}\geq q^{\beta}\upharpoonright s$,
and so $\dot{z}_{\left\langle q^{\beta}\upharpoonright s,\vec{q}\right\rangle }\subseteq\dot{z}_{\left\langle t^{\beta},\vec{q}\right\rangle }\subseteq\dot{z}_{G_{\beta,\zeta}}$.
Therefore $s\in f$.

Hence $f=h_{\beta,\beta+1}$ is a branch of $q^{\beta}$ that is definable
in $M_{\beta}\left[\dot{z}_{G_{\beta,\delta}}\right]=M_{\beta}\left[\dot{g_{\beta}}_{G_{\beta,\delta}}\right]$,
and therefore in $M_{\beta}\left[\dot{g}_{G_{\beta,\delta}}\right]$.
Meaning we managed to recover the generic branch $h_{\beta,\beta+1}$.
But remember, the generic branch $h_{\beta,\beta+1}$ is in fact interdefinable
with the generic set $G_{\beta,\beta+1}$, and so $G_{\beta,\beta+1}\in M_{\beta}\left[\dot{g}_{G_{\beta,\delta}}\right]$.
But by our inductive assumption $\left\langle G_{\alpha,\alpha+1}\mid\alpha<\beta\right\rangle \in M_{0}\left[\dot{g}_{G_{\delta}}\right]$.
Therefore $G_{\beta,\beta+1}\in M_{0}\left[\dot{g}_{G_{\delta}}\right]$.

Completing the induction, $G_{\delta}\in M_{0}\left[\dot{g}_{G_{\delta}}\right]$,
and so $M_{\delta}\subseteq M_{0}\left[\dot{g}_{G_{\beta,\delta}}\right]$.

But $M_{0}\left[\dot{g}_{G_{\delta}}\right]\subseteq N$, and so we
conclude $M_{\delta}\subseteq N$ as required.
\end{proof}
Looking at the proof of lemma \ref{lem:lim_case}, it becomes evident
why we couldn't have used bounded support even in regular limits:
condition $p\left(\delta\right)$ that lies at the heart of the proof
satisfies $\dot{p}\left(\delta\right)^{\alpha}\neq0$ whenever $\mathbb{Q}_{\alpha}$
is non-trivial. Moreover, had we used bounded support, we would have
needed to construct a condition $p$ that is at once bounded, and
so has at most than $\aleph_{\beta}<\aleph_{\delta}$ splitting nodes,
yet is still somehow able to distinguish the value of the generic
branch in $\mathbb{Q}_{\beta+1}$, even though in the standard forcing
that task requires $\aleph_{\beta+1}$ splitting nodes.

At last we arrive at the central theorem for the model tower construction:
\begin{thm}
\label{thm:chainOfModels}$K$ is an inner model of $M_{\zeta}$ if
and only if for some $\alpha\leq\zeta$ $K=M_{\alpha}$.
\end{thm}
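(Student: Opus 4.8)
The plan is to deduce the theorem directly from the two structural lemmas just established, Lemma~\ref{lem:no-max} and Lemma~\ref{lem:lim_case}: together they rule out every conceivable way in which an inner model of $M_\zeta$ could fail to be one of the $M_\alpha$, so the proof should be a short assembly with no new forcing work.

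First I would dispatch the easy implication. For each $\alpha\le\zeta$ we have $M_\alpha=L[G_\alpha]$ with $G_\alpha$ a set lying in $M_\alpha\subseteq M_\zeta$; hence $M_\alpha$ is a transitive subclass of $M_\zeta$ definable from the set parameter $G_\alpha$, it contains every ordinal of $M_\zeta$ since forcing preserves ordinals (and cardinals, by Lemma~\ref{lem:cardinals_iteration}), and it satisfies $\mathsf{ZFC}$ as a forcing extension of $L$. This yields conditions~\ref{enu:cond_1}--\ref{enu:cond_2} together with the remaining clauses of Definition~\ref{def:Inner model}, so each $M_\alpha$ is indeed an inner model of $M_\zeta$.

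For the converse, let $K$ be an inner model of $M_\zeta$ and assume, towards a contradiction, that $K\neq M_\alpha$ for every $\alpha\le\zeta$. Since $K\subseteq M_\zeta$ and $K\neq M_\zeta$, we have $K\nsupseteq M_\zeta$; and because $M_\alpha\subseteq M_\beta$ whenever $\alpha<\beta$, the set $\{\alpha\le\zeta: M_\alpha\subseteq K\}$ is a downward closed proper subset of $\zeta+1$, hence equals some ordinal $\beta\le\zeta$, namely the least ordinal with $M_\beta\nsubseteq K$. Now apply Lemma~\ref{lem:no-max} with $N=K$ — whose hypothesis is precisely that $K$ is an inner model of $M_\zeta$ equal to no $M_\alpha$ — to conclude that $\beta$ is a limit ordinal. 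But by minimality of $\beta$ we have $M_\gamma\subseteq K$ for every $\gamma<\beta$, so Lemma~\ref{lem:lim_case} applied with $\delta=\beta$ forces $M_\beta\subseteq K$, contradicting the choice of $\beta$. Therefore $K=M_\alpha$ for some $\alpha\le\zeta$, which completes the argument.

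I do not anticipate a genuine obstacle here: all the difficulty has already been absorbed into Lemmas~\ref{lem:no-max} and~\ref{lem:lim_case}, and the only point needing care in the assembly is that ``inner model'' is read throughout in the sense of Definition~\ref{def:Inner model} (a transitive definable subclass with the same ordinals modelling $\mathsf{ZFC}$), which is exactly the reading used in those lemmas. The theorem is thus the capstone that glues the ``no maximal gap below $K$'' lemma to the ``limit closure'' lemma.
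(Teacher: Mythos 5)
Your proof is correct and follows essentially the same route as the paper: take the least $\beta$ with $M_\beta\nsubseteq K$, use Lemma~\ref{lem:no-max} to see it must be a limit, and then contradict this with Lemma~\ref{lem:lim_case}. The only difference is that you also spell out the easy direction (that each $M_\alpha$ is an inner model of $M_\zeta$), which the paper leaves implicit.
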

\begin{proof}
Suppose to the contrary that $K$ is an inner model of $M_{\zeta}$
such that $K\neq M_{\alpha}$ for all $\alpha\leq\zeta$. Then there
is a minimal ordinal $\beta$ such that $K\nsupseteq M_{\beta}$.
By lemma \ref{lem:no-max} $\beta$ must be a limit ordinal. So $K\supsetneq M_{\alpha}$
for all $\alpha<\beta$, but $K\nsupseteq M_{\beta}$. However, according
to lemma \ref{lem:lim_case} if $K\supsetneq M_{\alpha}$ for all
$\alpha<\beta$ then $K\supseteq M_{\beta}$. We arrived at a contradiction.
Meaning that there is no such inner model $K$.

Thus $K$ is an inner model of $M_{\zeta}$ if and only if $K=M_{\alpha}$
for some $\alpha<\zeta$.
\end{proof}
\begin{cor}
There exists a well-ordered model universe of arbitrary height.
\end{cor}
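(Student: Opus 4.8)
The plan is to let the desired universe be $V=M_{\zeta}$, the top model of the iteration of Definition \ref{def:iteration}, for a suitably chosen length $\zeta$. Given a target height $\eta$ (an ordinal, or $\mathbf{Ord}$), I would first fix $\zeta$ so that running the full-support iteration $\left\langle \mathbb{P}_{\alpha},\dot{\mathbb{Q}}_{\alpha}\mid\alpha\le\zeta\right\rangle$ over $L$ produces a chain of proper inner models of order type exactly $\eta$; then I would pass to a generic $G_{\zeta}$ and put $V=M_{\zeta}=L\left[G_{\zeta}\right]$ (for a set $\zeta$ this is ordinary iterated set forcing; for $\zeta=\mathbf{Ord}$ it is a class forcing over the ambient $\mathsf{BGC}$ model). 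The bulk of the section's work then pays off: by Theorem \ref{thm:chainOfModels} the inner models of $V$ are precisely $\left\{ M_{\alpha}\mid\alpha\le\zeta\right\}$, and since $\alpha\le\beta$ forces $G_{\alpha}=G_{\beta}\cap\mathbb{P}_{\alpha}$ and hence $M_{\alpha}\subseteq M_{\beta}$, this family is linearly ordered by inclusion.

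Next I would extract the well-order. The inclusion $M_{\alpha}\subsetneq M_{\alpha+1}$ is strict exactly when $\mathbb{Q}_{\alpha}$ is nontrivial, the generic branch of $\aleph_{\alpha}$-Sacks forcing being a fresh subset of $\aleph_{\alpha}$; the only coincidences are $M_{\delta}=M_{\delta+1}$ at each nonzero limit $\delta$, where $\mathbb{Q}_{\delta}$ is trivial. Collapsing those consecutive pairs, the distinct \emph{proper} inner models of $V$ form a set or class $I$, well-ordered by $\subseteq$ with the order inherited from the well-ordering of the indices; one then sets $\mathbb{M}=\left\{ \left(M,x\right)\mid M\in I,\ x\in M\right\} \subseteq I\times V$. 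Any nonempty subfamily has a $\subseteq$-least member, namely the one of least index, so $I$ is genuinely well-ordered. By Lemma \ref{lem:cardinals_iteration}, $V$ preserves all cardinals and is a model of $\mathsf{ZFC}$, and $\mathbb{M}$ plainly satisfies all four clauses of Definition \ref{def:Well-Ordered-Model}. To realize $\mathbb{M}$ inside a model of $\mathsf{BG}$: when $\zeta$ is a set, $G_{\zeta}\in L\left[G_{\zeta}\right]=V$, so each $M_{\alpha}=L\left[G_{\zeta}\cap\mathbb{P}_{\alpha}\right]$, hence $I$ and $\mathbb{M}$, are definable in $V$ from the parameter $G_{\zeta}$; taking $\mathcal{V}$ to be the classes definable over $V$ with set parameters (Fact \ref{fact:ZF->BG}) gives $\left\langle V,\mathcal{V},\in\right\rangle \vDash\mathsf{BG}$ with $\mathbb{M}\in\mathcal{V}$, exactly as anticipated in Remark \ref{rem:Class--belongs}. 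When $\zeta=\mathbf{Ord}$, I would instead work over a $\mathsf{BGC}$ model whose classes include the class $A\subseteq\mathbf{Ord}$ coding $G_{\mathbf{Ord}}$ (cf. Corollary \ref{cor:ord class}), so that $\mathbb{M}$ is again a class, and $V$ is a \emph{nice} well-ordered model universe of height $\infty$.

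For the height, one simply counts: the order type of $I$ equals the number of distinct $M_{\alpha}$ with $\alpha<\zeta$, and because the only merges are $M_{\delta}=M_{\delta+1}$ at limit $\delta$, a short ordinal-arithmetic check shows $ht\left(M_{\zeta}\right)=\zeta$ whenever $\zeta$ is finite or a limit, while $ht\left(M_{\zeta}\right)$ is the predecessor of $\zeta$ when $\zeta$ is the successor of an infinite ordinal. Hence $\zeta\mapsto ht\left(M_{\zeta}\right)$ is onto the class of ordinals (and sends $\mathbf{Ord}$ to $\infty$), so every prescribed $\eta$ is hit by an appropriate $\zeta$ — for instance $\zeta=\eta$ when $\eta$ is finite or a limit, and $\zeta=\eta+1$ for an infinite successor $\eta$. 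This yields a well-ordered model universe of arbitrary height.

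The genuinely hard step is the $\zeta=\mathbf{Ord}$ case: I would need to verify that the class-length iteration $\mathbb{P}_{\mathbf{Ord}}$ is pretame, so that $L\left[G_{\mathbf{Ord}}\right]$ still satisfies Replacement and Power Set, and that the class-forcing analogues of Lemmas \ref{lem:no-max}--\ref{lem:lim_case} (and hence of Theorem \ref{thm:chainOfModels}) still go through over the $\mathsf{BGC}$ base. This is precisely where the tail-closure of Lemma \ref{lem:IterationIsClosed} together with the uniform cardinal preservation of Lemma \ref{lem:cardinals_iteration} is indispensable: it forces the iteration to behave like a tame Easton-style product, so no set is collapsed and no subset of a cardinal appears ``too late''. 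The only genuinely fussy step is the order-type bookkeeping above; everything else is assembly of results already in hand.
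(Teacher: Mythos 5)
Your proposal is correct and takes essentially the same route as the paper: set $V=M_{\zeta}$ and invoke Theorem \ref{thm:chainOfModels} to see that the inner models are exactly the $M_{\alpha}$, which form a well-ordered chain. Your extra bookkeeping around the collapses $M_{\delta}=M_{\delta+1}$ at limit $\delta$ is in fact a genuine refinement of the paper's one-line argument, which simply asserts $ht\left(M_{\zeta}\right)=\zeta$ (off by one when $\zeta$ is an infinite successor), and your discussion of the $\zeta=\mathbf{Ord}$ case anticipates material the paper defers to the class-forcing section rather than being needed for this corollary.
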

\begin{proof}
$M_{\zeta}$ is a well-ordered model universe of height $\zeta$.
\end{proof}

\section{Class forcing}

In the previous section we defined the iterated forcing notion for
sets, and we used it to construct a well-ordered model universe of
arbitrary height. Because that iteration could successfully go through
strongly inaccessible cardinals, we proved that the existence of well-ordered
model universes with ordinal height is in fact consistent with $\mathsf{ZFC}$.
We could simply take $V\left[G_{\kappa}\right]_{\kappa}$, where $\kappa$
is a strongly inaccessible cardinal, and $G_{\kappa}$ is the generic
set of $\mathbb{P}_{\kappa}$ as defined in \ref{def:iteration}.

Now however we want to iterate our model tower 'all the way' by the
use of class forcing. And to make formal use of class forcing, we
return in this section to the axiomatic framework of $\mathsf{BGC}$,
as expounded upon in the introduction. So for the rest of this section
we shall assume to be working within $\left\langle V,\mathcal{V},\in\right\rangle $
a model of $\mathsf{BGC}$, and we shall use forcing to extend a base
model of $\mathsf{BGC}$ to another model thereof.

A basic introduction of class forcing the reader may be found in Friedman
\citep{key-32}. For a more thorough presentation of class forcing
within the context of $\mathsf{BGC}$ the reader may refer to Reitz
(appendix A of \citep{key-2}).

Before going on, it is important to note the main difficulty with
class forcing, which is that unlike set forcing, the generic extension
of class forcing might actually fail to be a model of $\mathsf{BGC}$
(and its sets a model of $\mathsf{ZFC}$). Specifically, the Power
Set Axiom and the Axiom of Replacement might fail (theorem 91 in \citep{key-2}).
For $\mathsf{BGC}$ and $\mathsf{ZFC}$ to be satisfied, we will need
to prove that our forcing iteration is \emph{progressively closed},
as will be defined later.

We fix the base of our forcing iteration to be $\left\langle L,\mathcal{L},\in\right\rangle $,
where of course $L$ is the constructible universe, and $\mathcal{L}$
is the collection of classes definable therein (remember fact \ref{fact:ZF->BG}).
\begin{defn}
\label{def:class-iter}Let $\mathbb{P}\in\mathcal{L}$ be a partially
ordered class defined as follows:
\begin{enumerate}
\item Let $\mathbb{\dot{Q}}_{\alpha}$ be trivial if $\alpha$ is a limit
ordinal, and let it be the name of $\aleph_{\alpha}$-Sacks forcing
in $V^{\mathbb{P}_{\alpha}}$ otherwise.
\item $\mathbb{P}_{\alpha+1}=\mathbb{P}_{\alpha}\star\mathbb{\dot{Q}}_{\alpha}$.
\item At limit stages we use full support, i.e if $\delta$ is a limit ordinal
then $p\in\mathbb{P}_{\delta}\Leftrightarrow\forall\alpha<\delta\left(p\upharpoonright\alpha\in\mathbb{P}_{\alpha}\right)$.
\item $\mathbb{P}=\bigcup\limits _{\alpha\in\mathbf{Ord}}\mathbb{P}_{\alpha}$.
That is, every condition in $\mathbb{P}$ is bounded in its coordinates.
\end{enumerate}
\end{defn}
It should be noted that unlike in the ordinal limit stages, where
we use the indirect limit (i.e full support) all the way through,
in the class limit we employ the direct limit instead.

As explained in the previous section, using indirect limits even for
regular cardinals would have spoiled the construction of the condition
used to simultaneously discover all the generic sets - which was necessary
to prove that no inner model 'squeezes in' between the ascending chain
of models and the limit model. But as will be shown later, unlike
the ordinal limit stages, if we use a direct limit in the class stage
the generic extension is simply the union of the ascending chain,
and therefore automatically minimal over it. So the entire construction
of theorem \ref{thm:chainOfModels} is unnecessary for the class limit
case.
\begin{defn}
\label{def:class-notation}Denote:
\begin{enumerate}
\item $M_{0}=L$.
\item $G_{\alpha}$ as the generic set in partial order $\mathbb{P}_{\alpha}$
over $M_{0}$.
\item $M_{\alpha}=M_{0}\left[G_{\alpha}\right]$.
\item $\mathbb{G}$ as the generic class in partial order $\mathbb{P}$
over $M_{0}$.
\item $M_{0}\left[\mathbb{G}\right]$ the generic extension of $\left\langle L,\mathcal{L},\in\right\rangle $
by $\mathbb{G}$.
\item $M_{\infty}$ the restriction of $M_{0}\left[\mathbb{G}\right]$ to
sets.
\end{enumerate}
\end{defn}
Note that we have yet to establish that $M_{0}\left[\mathbb{G}\right]$
is a model of $\mathsf{BGC}$, or that $M_{\infty}$ is a model of
$\mathsf{ZFC}$.
\begin{lem}
\label{lem:ClassRelativeClosure}The forcing $\nicefrac{\mathbb{P}}{G_{\alpha}}$
is $\aleph_{\alpha}$-closed.
\end{lem}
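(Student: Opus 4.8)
The plan is to reduce the statement to the set-forcing case already handled in Lemma \ref{lem:IterationIsClosed}. The crucial point is that, although $\mathbb{P}=\bigcup_{\alpha\in\mathbf{Ord}}\mathbb{P}_{\alpha}$ is a proper class, clause (4) of Definition \ref{def:class-iter} uses a \emph{direct} limit at the class stage, so every single condition of $\mathbb{P}$ — and hence of $\nicefrac{\mathbb{P}}{G_{\alpha}}$ — already lies in some $\mathbb{P}_{\eta}$, i.e.\ has support bounded below an ordinal. A set-sized descending chain of conditions can therefore only ``reach'' boundedly far up the iteration, so it is in fact a chain in a set-sized initial segment, where $\aleph_{\alpha}$-closure has already been established.

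Concretely, I would fix a limit ordinal $\delta<\aleph_{\alpha}$ and an increasing (in the Israeli convention) sequence $\left\langle q_{\gamma}\mid\gamma<\delta\right\rangle$ of conditions in $\nicefrac{\mathbb{P}}{G_{\alpha}}$, and seek a common extension. For each $\gamma<\delta$ choose an ordinal $\eta_{\gamma}>\alpha$ with $q_{\gamma}\in\nicefrac{\mathbb{P}_{\eta_{\gamma}}}{G_{\alpha}}$; this is legitimate precisely by the direct-limit clause. Since $\delta$ is a set, $\eta:=\sup_{\gamma<\delta}\eta_{\gamma}$ is an ordinal, and $\eta>\alpha$. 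Then the whole sequence lies inside the set forcing $\nicefrac{\mathbb{P}_{\eta}}{G_{\alpha}}$, and the order it carries there agrees with the one inherited from $\nicefrac{\mathbb{P}}{G_{\alpha}}$, because $\mathbb{P}_{\eta}$ sits inside $\mathbb{P}$ as exactly the conditions supported below $\eta$. By Lemma \ref{lem:IterationIsClosed}, since $\alpha<\eta$, the forcing $\nicefrac{\mathbb{P}_{\eta}}{G_{\alpha}}$ is $\aleph_{\alpha}$-closed, so there is $q\in\nicefrac{\mathbb{P}_{\eta}}{G_{\alpha}}$ with $q\geq q_{\gamma}$ for all $\gamma<\delta$. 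As $q$ has bounded support it lies in $\nicefrac{\mathbb{P}}{G_{\alpha}}$, where it is still a common extension of the $q_{\gamma}$. Since $\delta<\aleph_{\alpha}$ was arbitrary, $\nicefrac{\mathbb{P}}{G_{\alpha}}$ is $\aleph_{\alpha}$-closed.

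I do not anticipate a serious obstacle here; this lemma is essentially bookkeeping. The only point needing care is the observation that the direct-limit convention forces \emph{every} condition — not merely a dense subclass of conditions — to have bounded support, which is what lets $\sup_{\gamma<\delta}\eta_{\gamma}$ exist as a genuine ordinal; this is immediate from Definition \ref{def:class-iter}(4). One should also note in passing that $\nicefrac{\mathbb{P}}{G_{\alpha}}$ is a bona fide class of the ambient $\mathsf{BGC}$ model, being definable from the class $\mathbb{P}\in\mathcal{L}$ and the set $G_{\alpha}$, so the statement is meaningful. Note that, unlike Lemma \ref{lem:IterationIsClosed}, we do not bother with the sharper $\aleph_{\alpha+1}$-closure in the limit case, as $\aleph_{\alpha}$-closure is all that will be needed in the sequel.
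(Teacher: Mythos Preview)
Your proposal is correct and follows essentially the same approach as the paper: bound the supports of a set-sized chain by a single ordinal $\eta$ and then use the already-established $\aleph_{\alpha}$-closure of the set iteration at that stage. The paper's proof is terser (``each coordinate is $\aleph_{\alpha}$-closed, and the limit of a set of bounded conditions is itself bounded''), but your explicit reduction to Lemma~\ref{lem:IterationIsClosed} is the same argument unpacked.
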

\begin{proof}
Each coordinate is $\aleph_{\alpha}$-closed, and the limit of a set
of bounded conditions in $\nicefrac{\mathbb{P}}{G_{\alpha}}$ is itself
bounded.
\end{proof}
\begin{lem}
\label{lem:ClassSameCard}$M_{\infty}$ has the same cardinals as
$M_{0}$.
\end{lem}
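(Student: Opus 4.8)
The plan is to bootstrap cardinal preservation for the class forcing $\mathbb{P}$ from cardinal preservation for its set-sized initial segments $\mathbb{P}_{\alpha}$, which was already proved in Section 6, together with the relative closure of the tail forcing recorded in Lemma \ref{lem:ClassRelativeClosure}. The guiding heuristic is that although $\mathbb{P}$ is a proper class, any single set of $M_{\infty}$ --- in particular any hypothetical collapsing function --- is captured by a bounded initial segment of the iteration, and bounded initial segments are exactly the set forcings treated in the previous section.

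Concretely, I would proceed as follows. First, the inclusion ``every cardinal of $M_{\infty}$ is a cardinal of $M_{0}$'' is free: $M_{0}=L\subseteq M_{0}\left[\mathbb{G}\right]$, so $M_{0}\subseteq M_{\infty}$, and a surjection from a smaller ordinal onto $\lambda$ living in $M_{0}$ would already witness that $\lambda$ is not a cardinal in $M_{\infty}$. It therefore remains to rule out that some cardinal $\lambda$ of $M_{0}$ is collapsed in $M_{\infty}$; suppose it is, and fix $\mu<\lambda$ together with a surjection $f\colon\mu\to\lambda$ with $f\in M_{\infty}$. Next, choose an ordinal $\alpha$ with $\aleph_{\alpha}>\mu$ and factor the generic, $M_{0}\left[\mathbb{G}\right]=M_{\alpha}\left[\mathbb{G}_{\alpha}\right]$ for a suitable $\nicefrac{\mathbb{P}}{G_{\alpha}}$-generic $\mathbb{G}_{\alpha}$ over $M_{\alpha}$ (the same factoring already implicit in the notation $G_{\alpha,\beta}$ of the previous section). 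By Lemma \ref{lem:ClassRelativeClosure} the forcing $\nicefrac{\mathbb{P}}{G_{\alpha}}$ is $\aleph_{\alpha}$-closed; the standard argument --- build a descending chain of conditions deciding $\dot{f}(\xi)$ for $\xi<\mu$, taking lower bounds at limit stages and at the end, all of which exist since $\mu<\aleph_{\alpha}$ --- shows that an $\aleph_{\alpha}$-closed forcing adds no new function with domain an ordinal below $\aleph_{\alpha}$ and values in the ground model, and since every value of $f$ is an ordinal and hence already lies in $M_{\alpha}$, we conclude $f\in M_{\alpha}$. Finally, $\mathbb{P}_{\alpha}$ is precisely the set iteration of Definition \ref{def:iteration} (with the height parameter taken to be $\alpha$), so Lemma \ref{lem:cardinals_iteration} applies and $M_{\alpha}$ has the same cardinals as $M_{0}$; then $\lambda$ is a cardinal of $M_{\alpha}$, contradicting that $f\in M_{\alpha}$ is a surjection $\mu\to\lambda$ with $\mu<\lambda$.

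The one genuinely delicate point --- and the step I expect to need the most care --- is the legitimacy of the class-forcing manipulations: the factoring $M_{0}\left[\mathbb{G}\right]=M_{\alpha}\left[\mathbb{G}_{\alpha}\right]$, and the fact that the closure argument (which invokes the forcing relation of the tail forcing) goes through for $\nicefrac{\mathbb{P}}{G_{\alpha}}$ as a proper-class forcing over $M_{\alpha}$. This is covered by the general theory of well-behaved class forcing in Reitz's appendix~A of \citep{key-2}: $\nicefrac{\mathbb{P}}{G_{\alpha}}$ is again an iteration of the same shape, the forcing theorem holds for it, and we are only using it to capture the single set $f$ via a set-sized name. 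An essentially equivalent route, which sidesteps the appeal to closure entirely, is to observe that every set of $M_{\infty}$ has a set-sized $\mathbb{P}$-name whose hereditarily occurring conditions form a set and are hence bounded inside some $\mathbb{P}_{\alpha}$; this yields $M_{\infty}=\bigcup_{\alpha\in\mathbf{Ord}}M_{\alpha}$, after which the statement is immediate from Lemma \ref{lem:cardinals_iteration}, since any collapsing map would lie in one of the $M_{\alpha}$.
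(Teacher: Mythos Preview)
Your argument is correct and follows essentially the same route as the paper: factor the class forcing as $\mathbb{P}_{\alpha}$ followed by the tail $\nicefrac{\mathbb{P}}{G_{\alpha}}$, invoke Lemma~\ref{lem:cardinals_iteration} for the initial segment and Lemma~\ref{lem:ClassRelativeClosure} for the tail, and conclude that any potential collapsing map already lives in some $M_{\alpha}$, where it cannot exist. The paper's version is terser --- it fixes the cardinal $\aleph_{\alpha}$ directly, factors at stage $\alpha+1$, and notes that the $\aleph_{\alpha+1}$-closed tail adds no new subsets of $\aleph_{\alpha}$ --- but the content is the same; your alternative route via $M_{\infty}=\bigcup_{\alpha}M_{\alpha}$ is in fact proved separately as Lemma~\ref{lem:classIsUnion}.
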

\begin{proof}
Let $\aleph_{\alpha}$ be a cardinal in $M_{0}$. According to lemma
\ref{lem:cardinals_iteration} $M_{0}$ has the same cardinals as
$M_{\alpha+1}$.

But by lemma \ref{lem:ClassRelativeClosure} the forcing $\nicefrac{\mathbb{P}}{G_{\alpha+1}}$
is $\aleph_{\alpha+1}$-closed, and so adds no new subsets of $\aleph_{\alpha}$.
Therefore $\aleph_{\alpha}^{M_{0}}=\aleph_{\alpha}^{M_{\alpha+1}}=\aleph_{\alpha}^{M_{\infty}}$,
and so all cardinals are preserved.
\end{proof}
\begin{lem}
$M_{\infty}$ satisfies the Power Set Axiom.
\end{lem}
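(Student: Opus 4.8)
The plan is to show that for every set $x \in M_\infty$ there is an ordinal $\alpha$ such that $x \in M_\alpha$, and that $\mathcal{P}(x)^{M_\infty} = \mathcal{P}(x)^{M_{\alpha+1}}$ — so the power set of $x$ is already computed at a bounded stage of the iteration, where we know (via the set-forcing results of the previous sections) that the Power Set Axiom holds. Combining these gives $\mathcal{P}(x)^{M_\infty} \in M_{\alpha+1} \subseteq M_\infty$, which is what we need.

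First I would argue that every set $x \in M_\infty$ appears at a bounded stage. Since $\mathbb{P} = \bigcup_{\alpha \in \mathbf{Ord}} \mathbb{P}_\alpha$ and every condition is bounded in its coordinates, any $\mathbb{P}$-name $\dot{x}$ for a set — being itself a set, hence of bounded rank — mentions only conditions lying in some $\mathbb{P}_\alpha$; thus $\dot{x}$ is (equivalent to) a $\mathbb{P}_\alpha$-name and $x \in M_0[G_\alpha] = M_\alpha$. (One should be a little careful here: a name for a \emph{set} of rank $<\gamma$ only involves names of rank $<\gamma$, and by induction on rank each such sub-name lives in some $\mathbb{P}_{\beta}$; since there are set-many of them, their coordinate-bounds are bounded by a single $\alpha$.) So pick $\alpha$ with $x \in M_\alpha$.

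Next I would show $\mathcal{P}(x)^{M_\infty} \subseteq M_{\alpha+1}$. Any $y \subseteq x$ with $y \in M_\infty$ is, by the same argument as above, an element of $M_\beta$ for some $\beta \geq \alpha+1$; so $y$ is a set of ordinals-coded-object added by the tail forcing $\nicefrac{\mathbb{P}_\beta}{G_{\alpha+1}}$ over $M_{\alpha+1}$. But by Lemma~\ref{lem:IterationIsClosed} this tail forcing is $\aleph_{\alpha+1}$-closed, hence adds no new subsets of sets of size $\le\aleph_\alpha$; more generally it adds no new subset of any $M_{\alpha+1}$-set $x$ whose transitive closure has size $\le\aleph_\alpha$ in $M_{\alpha+1}$ — and after passing to a larger $\alpha$ if necessary (using Lemma~\ref{lem:cardinals_iteration}, so that cardinalities are computed consistently) we may assume $x$ is such a set, e.g. by enlarging $\alpha$ so that $x \in M_\alpha$ and $|\mathrm{TC}(\{x\})|^{M_\alpha} < \aleph_{\alpha}$. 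Hence $y \in M_{\alpha+1}$, giving $\mathcal{P}(x)^{M_\infty} \subseteq M_{\alpha+1}$; combined with $\mathcal{P}(x)^{M_{\alpha+1}} \subseteq \mathcal{P}(x)^{M_\infty}$ (transitivity of $M_\infty$ over $M_{\alpha+1}$) we get equality. Since $M_{\alpha+1}$ is a generic extension of $L$ by the \emph{set} forcing $\mathbb{P}_{\alpha+1}$, it satisfies $\mathsf{ZFC}$ and in particular Power Set, so $\mathcal{P}(x)^{M_{\alpha+1}} \in M_{\alpha+1} \subseteq M_\infty$.

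The main obstacle I expect is the bookkeeping in the step ``$x$ appears at a bounded stage and its power set is already complete at the next stage'' — specifically making precise that a $\mathbb{P}$-name for a \emph{set} only involves boundedly-many coordinates, and choosing $\alpha$ large enough that the relevant closure ($\aleph_{\alpha+1}$-closure of the tail) actually kills all potential new subsets of $x$. This is where one must marry the direct-limit structure of $\mathbb{P}$ (every condition bounded) with the $\aleph_{\alpha}$-closure of $\nicefrac{\mathbb{P}}{G_\alpha}$ from Lemma~\ref{lem:ClassRelativeClosure} and the cardinal-preservation of Lemma~\ref{lem:ClassSameCard}. Once the stage $\alpha$ is pinned down, the conclusion is immediate from the set-forcing theory already developed.
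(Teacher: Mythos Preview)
Your approach is correct and essentially the same as the paper's: both arguments use the $\aleph_{\alpha+1}$-closure of the tail forcing past stage $\alpha+1$ to conclude that $\mathcal{P}(x)^{M_\infty}$ is already computed in $M_{\alpha+1}$, hence is a set. The only difference is that the paper shortcuts all of your bookkeeping by first observing that it suffices to verify Power Set for cardinals $x=\aleph_\alpha$, so the choice of stage is automatic and no ``enlarge $\alpha$ so that $|\mathrm{TC}(\{x\})|<\aleph_\alpha$'' step is needed.
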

\begin{proof}
It is enough to prove the Power Set Axiom for cardinals. Let $\aleph_{\alpha}$
be a cardinal in $M_{\infty}$. By lemma \ref{lem:ClassSameCard}
it is also a cardinal in $M_{\alpha+1}$.

By lemma \ref{lem:ClassRelativeClosure} the forcing $\nicefrac{\mathbb{P}}{G_{\alpha+1}}$
is $\aleph_{\alpha+1}$-closed, and so adds no new subsets of $\aleph_{\alpha}$.
Therefore $\mathcal{P}\left(\aleph_{\alpha}\right)^{M_{\alpha+1}}=\mathcal{P}\left(\aleph_{\alpha}\right)^{M_{\infty}}$.

Thus the power set of $\aleph_{\alpha}$ is also a set in $M_{\infty}$.
\end{proof}
\begin{defn}
A partially order class $\mathbb{R}$ is a \emph{chain of complete
subposets} if $\mathbb{R}=\bigcup\limits _{\alpha\in\mathbf{Ord}}\mathbb{R}_{\alpha}$,
where each $\mathbb{R}_{\alpha}$ is a partially ordered set, such
that if $\alpha\leq\beta$ then $\mathbb{R}_{\alpha}$ is a complete
suborder of $\mathbb{R}_{\beta}$.
\end{defn}
\begin{lem}
\label{lem:completeSubposets}$\mathbb{P}$ is a chain of complete
subposets.
\end{lem}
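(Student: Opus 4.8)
The plan is to verify the two clauses in the definition of a chain of complete subposets: that each initial segment $\mathbb{P}_{\alpha}$ is a partially ordered \emph{set}, and that $\alpha\leq\beta$ implies $\mathbb{P}_{\alpha}$ is a complete suborder of $\mathbb{P}_{\beta}$. The first clause follows by transfinite recursion carried out inside $\left\langle L,\mathcal{L},\in\right\rangle$: each $\dot{\mathbb{Q}}_{\alpha}$ is (a name for) a set-sized forcing in $V^{\mathbb{P}_{\alpha}}$, so $\mathbb{P}_{\alpha+1}=\mathbb{P}_{\alpha}\star\dot{\mathbb{Q}}_{\alpha}$ is a set once $\mathbb{P}_{\alpha}$ is, and at a limit stage $\delta$ every $p\in\mathbb{P}_{\delta}$ lies in $\prod_{\gamma<\delta}(\text{names for conditions of }\dot{\mathbb{Q}}_{\gamma})$, which is a set by Replacement in $L$. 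Hence each $\mathbb{P}_{\alpha}$ is a set, and $\mathbb{P}=\bigcup_{\alpha\in\mathbf{Ord}}\mathbb{P}_{\alpha}$ is a definable class of $L$, i.e. a member of $\mathcal{L}$, as required by Definition~\ref{def:class-iter}.

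For the completeness claim I would identify $\mathbb{P}_{\alpha}$ with its image in $\mathbb{P}_{\beta}$ under the map $\hat{\ }$ that pads a condition $p\in\mathbb{P}_{\alpha}$ with the trivial top condition $\mathbf{1}$ at every coordinate in $[\alpha,\beta)$; because we use full support this padded object genuinely belongs to $\mathbb{P}_{\beta}$, and the embedding visibly preserves $\leq$. The accompanying projection is restriction, $\pi(p)=p\upharpoonright\alpha$, which is order-preserving and satisfies $\pi(\hat{p})=p$. The heart of the matter is to check that $\pi(p)$ is a \emph{reduction} of $p$: given any $q\leq p\upharpoonright\alpha$ in $\mathbb{P}_{\alpha}$, form the condition $r$ with $r\upharpoonright\alpha=q$ and $r\upharpoonright[\alpha,\beta)=p\upharpoonright[\alpha,\beta)$, copying the higher-coordinate names unchanged; since $q\leq p\upharpoonright\alpha$ forces each such coordinate of $p$ to remain a legitimate perfect-tree condition, and since full support imposes no extra constraint, $r\in\mathbb{P}_{\beta}$ with $r\leq q$ and $r\leq p$, so $q$ and $p$ are compatible in $\mathbb{P}_{\beta}$.

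From the existence of reductions, the standard argument yields that $\mathbb{P}_{\alpha}$ is a complete suborder of $\mathbb{P}_{\beta}$: incompatibility is preserved downward (a common extension in $\mathbb{P}_{\beta}$ of two padded conditions restricts to a common extension in $\mathbb{P}_{\alpha}$), and any maximal antichain $A\subseteq\mathbb{P}_{\alpha}$ stays maximal in $\mathbb{P}_{\beta}$, since for $p\in\mathbb{P}_{\beta}$ one picks $a\in A$ compatible with $p\upharpoonright\alpha$ and then the reduction property makes $a$ compatible with $p$. Equivalently, one may phrase the induction abstractly: the two-step map $\mathbb{P}_{\gamma}\hookrightarrow\mathbb{P}_{\gamma}\star\dot{\mathbb{Q}}_{\gamma}$ is a complete embedding by the usual fact about iterations, full inverse limits of chains of complete embeddings are again complete embeddings, and composing along the iteration gives completeness of $\mathbb{P}_{\alpha}$ in $\mathbb{P}_{\beta}$ for all $\alpha\leq\beta$. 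I expect the only mildly delicate point to be the bookkeeping for the glued condition $r$ at successor coordinates — verifying that a $\mathbb{P}_{\alpha}$-name for a perfect tree, transplanted into the $\mathbb{P}_{\beta}$-context, still names a $\kappa$-Sacks condition — but this is routine given full support and the closure facts of the previous section.
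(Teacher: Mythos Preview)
Your proposal is correct and follows essentially the same approach as the paper: the paper simply observes that $\mathbb{P}=\bigcup_{\alpha\in\mathbf{Ord}}\mathbb{P}_{\alpha}$ by definition and then cites Kunen (ch.~VIII, Lemma~5.11) for the fact that in an iterated forcing the canonical map $i_{\alpha,\beta}\colon\mathbb{P}_{\alpha}\to\mathbb{P}_{\beta}$ is a complete embedding. What you have written is precisely the content of that cited lemma spelled out in detail---the padding map, the restriction projection, and the reduction argument---so the two proofs are the same modulo the decision to quote versus unpack the standard reference.
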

\begin{proof}
By definition \ref{def:class-iter} $\mathbb{P}=\bigcup\limits _{\alpha\in\mathbf{Ord}}\mathbb{P}_{\alpha}$.This
is an iterated forcing, and so the identity map $i_{\alpha,\beta}:\mathbb{P}_{\alpha}\rightarrow\mathbb{P}_{\beta}$
is a complete embedding (see ch. VIII lemma 5.11 in \citep{key-19}).
Therefore $\mathbb{P}_{\alpha}$ is a complete suborder of $\mathbb{P}_{\beta}$.
\end{proof}
\begin{defn}
$\mathbb{P}=\bigcup\limits _{\alpha\in\mathbf{Ord}}\mathbb{P}_{\alpha}$
is a \emph{progressively closed iteration} if $\mathbb{P}$ is a chain
of complete subposets, and for arbitrarily large regular cardinals
$\delta$ there are arbitrarily large $\alpha$ such that there is
a $\mathbb{P}_{\alpha}$-name $\dot{\mathbb{P}}_{\left[\alpha,\infty\right)}=\left\{ \left\langle op\left(\check{\beta},\mathbb{\dot{P}}_{\left[\alpha,\beta\right)}\right),0\right\rangle \mid\beta>\alpha\right\} $
satisfying:
\begin{enumerate}
\item For every $\beta>\alpha$ the poset $\mathbb{P}_{\beta}$ is isomorphic
to the two-stage iteration $\mathbb{P}_{\beta}\cong\mathbb{P}_{\alpha}\star\mathbb{\dot{P}}_{\left[\alpha,\beta\right)}$;
\item $\mathbb{P}_{\alpha}\Vdash\check{\delta}\text{ is a regular cardinal and }\mathbb{\dot{P}}_{\left[\alpha,\beta\right)}\text{ is }<\check{\delta}\text{-closed}$;
\item For $\beta'>\beta>\alpha$ the isomorphisms at $\beta$ and $\beta'$
yield complete subposets $\mathbb{P}_{\alpha}\star\mathbb{\dot{P}}_{\left[\alpha,\beta\right)}\subseteq_{c}\mathbb{P}_{\alpha}\star\mathbb{\dot{P}}_{\left[\alpha,\beta'\right)}$
such that the complete embeddings commute with the isomorphisms.
\item $\mathbb{P}_{\alpha}\Vdash\dot{\mathbb{P}}_{\left[\alpha,\infty\right)}\text{ is a chain of complete subposets}$.
\end{enumerate}
\end{defn}
\begin{lem}
$\mathbb{P}$ is a progressively closed iteration.
\end{lem}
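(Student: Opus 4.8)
The plan is to observe that $\mathbb{P}$ is a chain of complete subposets already by Lemma~\ref{lem:completeSubposets}, so only the progressive-closure clause needs verification, and that clause reduces almost entirely to facts already established. Since the successor cardinals are cofinal among the regular cardinals and are themselves regular, it suffices to treat the $\delta$ of the form $\aleph_{\eta+1}$; and for such a $\delta$ I will show that \emph{every} ordinal $\alpha>\eta$ works --- in particular there are arbitrarily large such $\alpha$, as required.

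So fix $\delta=\aleph_{\eta+1}$ and any $\alpha>\eta$, noting that $\mathbb{P}_{\alpha}$ is a set. Let $\dot{\mathbb{P}}_{[\alpha,\infty)}$ be the canonical $\mathbb{P}_{\alpha}$-name for the tail iteration $\bigcup_{\beta>\alpha}\nicefrac{\mathbb{P}_{\beta}}{G_{\alpha}}$, built over $M_{\alpha}$ with the same support convention as $\mathbb{P}$ (full support at ordinal limits, direct limit at the class stage) and presented in the prescribed shape $\{\langle op(\check{\beta},\dot{\mathbb{P}}_{[\alpha,\beta)}),0\rangle\mid\beta>\alpha\}$, where $\dot{\mathbb{P}}_{[\alpha,\beta)}$ names $\nicefrac{\mathbb{P}_{\beta}}{G_{\alpha}}$; this is a legitimate $\mathbb{P}_{\alpha}$-name for a class because $\mathbb{P}_{\alpha}$ is a set forcing and the tail iteration is, uniformly, a definable class of each $M_{\alpha}$. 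Conditions~(1) and~(3) are then the standard two-step decomposition of an iteration together with the compatibility of the induced complete embeddings of initial segments with the identity embeddings $\mathbb{P}_{\beta}\subseteq_{c}\mathbb{P}_{\beta'}$, transported across the isomorphisms of~(1). Condition~(4) holds because in each $M_{\alpha}$ the interpretation of $\dot{\mathbb{P}}_{[\alpha,\infty)}$ is again an iteration with direct limit at its class stage, so the proof of Lemma~\ref{lem:completeSubposets} applies verbatim. For condition~(2): $\delta$ is a successor cardinal, hence regular, and by Lemma~\ref{lem:cardinals_iteration} it remains a cardinal --- so remains $\aleph_{\eta+1}$ and remains regular --- in every $\mathbb{P}_{\alpha}$-extension; and every $\dot{\mathbb{P}}_{[\alpha,\beta)}$ is forced to be $<\check{\delta}$-closed, since $\nicefrac{\mathbb{P}_{\beta}}{G_{\alpha}}$ is $\aleph_{\alpha}$-closed by Lemma~\ref{lem:IterationIsClosed} and $\aleph_{\alpha}\geq\aleph_{\eta+1}=\delta$ as $\alpha>\eta$.

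I expect the one genuinely delicate point to be bookkeeping rather than mathematics: verifying that the name $\dot{\mathbb{P}}_{[\alpha,\infty)}$, put into the exact syntactic form required by the definition of a progressively closed iteration, really does name the tail iteration in \emph{every} $\mathbb{P}_{\alpha}$-generic extension, and that the two complete embeddings obtained by factoring $\mathbb{P}_{\beta}$ and $\mathbb{P}_{\beta'}$ through $\mathbb{P}_{\alpha}$ agree on the common part $\mathbb{P}_{\alpha}\star\dot{\mathbb{P}}_{[\alpha,\beta)}$. Both follow from the uniformity of Definition~\ref{def:class-iter} and the absoluteness of the $\aleph_{\gamma}$-Sacks posets across the models $M_{\alpha}$, so there is no real obstacle beyond spelling these out carefully.
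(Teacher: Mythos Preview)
Your proposal is correct and follows essentially the same approach as the paper: both restrict to successor cardinals for $\delta$, invoke Lemma~\ref{lem:completeSubposets} for the chain-of-complete-subposets property, use the standard two-step factoring of the iteration for conditions~(1) and~(3), appeal to Lemma~\ref{lem:IterationIsClosed} (equivalently Lemma~\ref{lem:ClassRelativeClosure}) for the closure in~(2), and note that Lemma~\ref{lem:completeSubposets} transfers to the tail for~(4). You are in fact slightly more careful than the paper, which fixes the single value $\alpha=\delta+1$ rather than explicitly observing that every $\alpha>\eta$ works and hence that arbitrarily large $\alpha$ are available.
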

\begin{proof}
By lemma \ref{lem:completeSubposets} $\mathbb{P}$ is a chain of
complete subposets. Let $\aleph_{\delta}$ be a successor cardinal,
and $\alpha=\delta+1$.
\begin{enumerate}
\item By definition \ref{def:class-iter} $\mathbb{P}_{\beta}=\mathbb{P}_{\alpha}\star\mathbb{\dot{P}}_{\left[\alpha,\beta\right)}$.
\item By lemma \ref{lem:ClassSameCard} all the cardinals are preserved,
and by lemma \ref{lem:ClassRelativeClosure} $\mathbb{\dot{P}}_{\left[\alpha,\beta\right)}$
is $\aleph_{\delta+1}$-closed.
\item Let $p\in\mathbb{P}_{\beta}$. Then $p=\left\langle p^{<\alpha},\dot{p}^{\alpha},\dot{p}^{\alpha+1},...,\dot{p}^{\gamma},...\mid\gamma<\beta\right\rangle $.
Which embeds to $\left\langle p^{<\alpha},\dot{p}^{\alpha},\dot{p}^{\alpha+1},...,\dot{p}^{\gamma},...,0,...\mid\gamma<\beta\right\rangle \in\mathbb{P}_{\alpha}\star\mathbb{\dot{P}}_{\left[\alpha,\beta'\right)}$.
Similarly $p$ embeds to $\left\langle p^{<\beta},0,...\right\rangle \in\mathbb{P}_{\beta'}$,
which through the isomorphism is equal to $\left\langle p^{<\alpha},\dot{p}^{\alpha},\dot{p}^{\alpha+1},...,\dot{p}^{\gamma},...,0,...\mid\gamma<\beta\right\rangle \in\mathbb{P}_{\alpha}\star\mathbb{\dot{P}}_{\left[\alpha,\beta'\right)}$.
Hence the complete embeddings commute with the isomorphisms.
\item Lemma \ref{lem:completeSubposets} applies to the tail of the forcing
as well.
\end{enumerate}
\end{proof}
\begin{lem}
$M_{0}\left[\mathbb{G}\right]\vDash\mathsf{BGC}$ and $M_{\infty}\vDash\mathsf{ZFC}$.
\end{lem}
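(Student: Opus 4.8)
The plan is to deduce both claims from the fact, just established, that $\mathbb{P}$ is a progressively closed iteration, by appealing to the general preservation theorem for such iterations in the theory of class forcing over models of $\mathsf{BGC}$ (this is the point of Appendix~A of \citep{key-2}). A progressively closed iteration is in particular pretame, so the class forcing theorem holds for it; granting that, Extensionality, Foundation, Pairing, Union, Infinity, Separation and the class-comprehension schema pass to $M_0\left[\mathbb{G}\right]$ automatically. The Power Set Axiom we have already verified directly above. Hence the only substantive points remaining are Replacement and Global Choice, and once $M_0\left[\mathbb{G}\right] \vDash \mathsf{BGC}$ is in hand, Fact~\ref{fact:BG->ZF} immediately gives that its restriction to sets, $M_\infty$, models $\mathsf{ZFC}$.

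For Replacement I would reflect to a bounded stage of the iteration. Suppose $M_0\left[\mathbb{G}\right] \vDash \forall x \in a\,\exists! y\,\varphi(x,y,\vec p)$ for a set $a$ and set parameters $\vec p$; replacing each witness by its rank yields a class function $f \colon a \to \mathbf{Ord}$, and it is enough to show $f$ is a set, since then $\sup(\operatorname{ran} f)$ exists and the image of $a$ under $\varphi$ is a subclass of some $V_\eta^{M_0[\mathbb{G}]}$, a set by Power Set. Fix a regular $\delta > \lvert a\rvert$, and by the progressive-closure property together with lemma~\ref{lem:ClassRelativeClosure} choose $\alpha$ large enough that $a$, $\vec p$ and names for them lie in $M_\alpha$ while the tail $\nicefrac{\mathbb{P}}{G_\alpha}$ is $<\delta$-closed. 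Using that closure one builds, below any given condition, a descending chain of length at most $\lvert a\rvert$ deciding the value of $\dot f$ on every element of $a$; so $f$ is decided by a single condition of the tail and already belongs to $M_\alpha = M_0\left[G_\alpha\right]$. But $M_\alpha$ is a set-forcing extension of $L$, hence a model of $\mathsf{ZFC}$ by lemma~\ref{lem:cardinals_iteration} and the usual set-forcing preservation theorems, so $\operatorname{ran} f$ is a set there and a fortiori in $M_\infty$. This yields Replacement, hence $M_0\left[\mathbb{G}\right] \vDash \mathsf{BG}$. For Global Choice, note that $\mathbb{P} \in \mathcal{L}$ is a definable class of $L$ all of whose conditions are sets, that $L$ carries its canonical definable global well-order, and that the class of $\mathbb{P}$-names can be well-ordered in order type $\mathbf{Ord}$; transporting this order along $\mathbb{G}$ gives a definable global well-order of $M_0\left[\mathbb{G}\right]$, so Global Choice survives and $M_0\left[\mathbb{G}\right] \vDash \mathsf{BGC}$.

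The hard part is the Replacement argument, and within it the step that decides $\dot f$ by a single tail condition: this needs the forcing relation for the $\mathbf{Ord}$-length iteration $\mathbb{P}$ to be definable and well-behaved, which is precisely the property that fails for pathological class forcings and which the progressive-closure hypothesis --- via the framework of \citep{key-2} --- is designed to guarantee. All of the cardinal-arithmetic and closure bookkeeping built up in the previous sections (the predicate $OK(\alpha)$, lemma~\ref{lem:IterationIsClosed}, lemma~\ref{lem:ClassRelativeClosure}) is exactly what verifies the hypotheses under which that framework applies.
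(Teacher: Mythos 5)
Your proposal is correct and takes essentially the same route as the paper: both reduce the claim to the preceding lemma that $\mathbb{P}$ is a progressively closed iteration, invoke the preservation theorem for such iterations from Reitz's framework (theorem 98 of \citep{key-2}) to get $M_{0}\left[\mathbb{G}\right]\vDash\mathsf{BGC}$, and then apply fact \ref{fact:BG->ZF} to conclude $M_{\infty}\vDash\mathsf{ZFC}$. Your additional sketch of why Replacement and Global Choice survive (reflection of a set-sized class function to a bounded stage via tail closure, and transporting the global well-order of $L$ through the names) is a correct outline of the internals of the cited theorem, which the paper simply treats as a black box.
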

\begin{proof}
By theorem 98 of \citep{key-2} a progressively closed iteration generates
a generic extension that satisfies $\mathsf{BGC}$.

So $M_{0}\left[\mathbb{G}\right]\vDash\mathsf{BGC}$ and by fact \ref{fact:BG->ZF}
$M_{\infty}\vDash\mathsf{ZFC}$.
\end{proof}
\begin{lem}
\label{lem:classIsUnion}$M_{\infty}=\bigcup\limits _{\alpha\in\mathbf{Ord}}M_{\alpha}$.
\end{lem}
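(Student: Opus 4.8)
The plan is to prove the two inclusions $\bigcup_{\alpha\in\mathbf{Ord}}M_{\alpha}\subseteq M_{\infty}$ and $M_{\infty}\subseteq\bigcup_{\alpha\in\mathbf{Ord}}M_{\alpha}$ separately, throughout exploiting that Definition~\ref{def:class-iter} forms the \emph{direct} limit at the class stage, so that every condition of $\mathbb{P}$ — and hence every set-sized $\mathbb{P}$-name over $M_{0}$ — has bounded support.

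For $\bigcup_{\alpha}M_{\alpha}\subseteq M_{\infty}$ I would fix $\alpha$. By Lemma~\ref{lem:completeSubposets} $\mathbb{P}_{\alpha}$ is a complete suborder of $\mathbb{P}$, so $\mathbb{G}\cap\mathbb{P}_{\alpha}$ is $\mathbb{P}_{\alpha}$-generic over $M_{0}$; this is precisely the $G_{\alpha}$ of Definition~\ref{def:class-notation}. Since $\mathbb{P}_{\alpha}$ is a set of $M_{0}$, hence of $M_{0}[\mathbb{G}]$, and $\mathbb{G}$ is a class of $M_{0}[\mathbb{G}]$, class-Separation shows $G_{\alpha}=\mathbb{G}\cap\mathbb{P}_{\alpha}$ is a \emph{set} of $M_{0}[\mathbb{G}]$. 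Consequently every set of $M_{\alpha}=M_{0}[G_{\alpha}]$ is a set of $M_{0}[\mathbb{G}]$, i.e.\ $M_{\alpha}\subseteq M_{\infty}$, and taking the union over $\alpha$ gives one inclusion.

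For $M_{\infty}\subseteq\bigcup_{\alpha}M_{\alpha}$ I would take $x\in M_{\infty}$. By the forcing theorem for class forcing over $\langle L,\mathcal{L},\in\rangle$ (see the appendix of \citep{key-2}) there is a $\mathbb{P}$-name $\dot{x}$ that is a set of $L$ with $x=\dot{x}^{\mathbb{G}}$. Working in $L$, the collection of all conditions occurring hereditarily in $\dot{x}$ is a set, each such condition has support bounded by some ordinal, and by Replacement these ordinals are bounded by a single $\alpha$; hence, under the canonical padding identification, $\dot{x}$ is a $\mathbb{P}_{\alpha}$-name. The evaluation of a $\mathbb{P}_{\alpha}$-name consults only conditions in $\mathbb{P}_{\alpha}$, and $\mathbb{G}\cap\mathbb{P}_{\alpha}=G_{\alpha}$, so $x=\dot{x}^{\mathbb{G}}=\dot{x}^{G_{\alpha}}\in M_{0}[G_{\alpha}]=M_{\alpha}$. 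Together with the previous paragraph this gives $M_{\infty}=\bigcup_{\alpha\in\mathbf{Ord}}M_{\alpha}$.

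The step I expect to require the most care is the descent in the second inclusion: one must state precisely how a set-sized $\mathbb{P}$-name is re-read as a $\mathbb{P}_{\alpha}$-name and verify that $\mathbb{G}$ and $G_{\alpha}$ compute its value identically. This is exactly where the direct-limit choice in Definition~\ref{def:class-iter} is indispensable — under a full-support class limit a condition need not have bounded support and no such $\alpha$ need exist. An alternative route that sidesteps names altogether: given $x\in M_{\infty}$, Lemma~\ref{lem:ClassSameCard} makes $\aleph_{\gamma}=\left|\mathrm{TC}(\{x\})\right|$ a cardinal of $M_{\gamma+1}$, and since $\nicefrac{\mathbb{P}}{G_{\gamma+1}}$ is $\aleph_{\gamma+1}$-closed over $M_{\gamma+1}$ by Lemma~\ref{lem:ClassRelativeClosure} it adds no new elements of $H_{\aleph_{\gamma+1}}$, so $x\in H_{\aleph_{\gamma+1}}^{M_{\gamma+1}}\subseteq M_{\gamma+1}$.
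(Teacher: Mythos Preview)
Your argument is correct and is exactly the content of Lemma~88 of Reitz~\citep{key-2}, which the paper simply cites: since $\mathbb{P}$ is the direct limit of the chain of complete subposets $\mathbb{P}_{\alpha}$, every set $\mathbb{P}$-name has support bounded by some $\alpha$ and is thus already a $\mathbb{P}_{\alpha}$-name, yielding $M_{\infty}=\bigcup_{\alpha}M_{\alpha}$. You have unpacked what the paper leaves to the reference; your alternative route via $\aleph_{\gamma+1}$-closure is also sound and mirrors the paper's argument for the Power Set Axiom just above.
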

\begin{proof}
According to lemma \ref{lem:completeSubposets} $\mathbb{P}$ is a
chain of complete subposets. So applying lemma 88 of \citep{key-2}
to the sets, we get $M_{\infty}=\bigcup\limits _{\alpha\in\mathbf{Ord}}M_{0}\left[G_{\alpha}\right]$.
\end{proof}
\begin{thm}
\label{thm:classWellOrdered}$N$ is a proper inner model of $M_{\infty}$
if and only if for some $\alpha\in\mathbf{Ord}$ $N=M_{\alpha}$.
\end{thm}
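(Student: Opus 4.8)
The plan is to prove both directions, with the forward direction routine and the converse obtained by reducing, via the decomposition $M_{\infty}=\bigcup_{\alpha\in\mathbf{Ord}}M_{\alpha}$ of Lemma \ref{lem:classIsUnion}, to the set-sized results already established.

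For the ``if'' direction, I would check that for each $\alpha\in\mathbf{Ord}$ the class $M_{\alpha}=M_{0}\left[G_{\alpha}\right]$ is a proper inner model of $M_{\infty}$: it is a definable class of $M_{\infty}$ (definable there from the set parameter $G_{\alpha}$), it is transitive, it contains every ordinal since forcing adds no ordinals, and it models $\mathsf{ZFC}$. Properness is clear because the tail forcing $\nicefrac{\mathbb{P}}{G_{\alpha}}$ is non-trivial---for instance $\mathbb{Q}_{\alpha+1}$ is $\aleph_{\alpha+1}$-Sacks forcing---so $M_{\alpha}\subsetneq M_{\alpha+k}\subseteq M_{\infty}$ for a suitable $k$.

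For the ``only if'' direction, let $N$ be a proper inner model of $M_{\infty}$. Since $N\subsetneq M_{\infty}=\bigcup_{\alpha}M_{\alpha}$, not every $M_{\alpha}$ lies in $N$; let $\mu$ be least with $M_{\mu}\nsubseteq N$, and note $\mu>0$ because $M_{0}=L$ is contained in every inner model. I would first rule out $\mu$ being a limit ordinal: then $M_{\beta}\subseteq N$ for all $\beta<\mu$, and the proof of Lemma \ref{lem:lim_case} goes through with $M_{\infty}$ in place of $M_{\zeta}$. Its only ambient ingredients are the relative closure of the tail class forcing---now provided by Lemma \ref{lem:ClassRelativeClosure}, together with the usual slight boost in closure at limit coordinates---the minimality of a single $\aleph_{\beta}$-Sacks step (Theorem \ref{thm:-Sacks-forcing-produces}), and the definability of $N$ with set parameters in $M_{\infty}$, which holds since an inner model of $M_{\infty}$ is a definable class there; moreover $\mu$ is an ordinal, so the entire fusion construction of that lemma still happens inside the \emph{set} forcing $\mathbb{P}_{\mu}$. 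We conclude $M_{\mu}\subseteq N$, contradicting the choice of $\mu$. Hence $\mu=\beta+1$, so $M_{\beta}\subseteq N$ while $M_{\beta+1}\nsubseteq N$.

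It then remains to show $N=M_{\beta}$. If not, pick $x\in N\setminus M_{\beta}$ and a set of ordinals $A\in N$ coding $x$, so that $x\in L[A]$; then $A\notin M_{\beta}$, for otherwise $x\in L[A]\subseteq M_{\beta}$. Since $A\in N\subseteq M_{\infty}=\bigcup_{\alpha}M_{\alpha}$ there is $\zeta>\beta$ with $A\in M_{\zeta}$; writing $M_{\beta}=L[C]$ for a set of ordinals $C$ coding $G_{\beta}$, the model $L[C,A]$ is a transitive model of $\mathsf{ZFC}$ with all the ordinals, contained in $M_{\zeta}$ and properly containing $M_{\beta}=L[C]$, hence an inner model of the set-generic extension $M_{\zeta}$. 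By Theorem \ref{thm:chainOfModels} it equals $M_{\eta}$ for some $\eta\leq\zeta$, and $M_{\beta}\subsetneq M_{\eta}$ forces $\eta>\beta$; therefore $M_{\beta+1}\subseteq M_{\eta}=L[C,A]\subseteq N$, contradicting $M_{\beta+1}\nsubseteq N$. Thus $N=M_{\beta}$, finishing the proof. I expect the main obstacle to be the limit step: one must be sure that the long fusion argument of Lemma \ref{lem:lim_case}, written for a set iteration, genuinely transfers to the class context---everything there is local to $\mathbb{P}_{\mu}$ for the relevant set ordinal $\mu$, but one has to match each closure bound it invokes on the tail precisely with what Lemma \ref{lem:ClassRelativeClosure} supplies.
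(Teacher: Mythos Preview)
Your overall architecture matches the paper's: reduce everything to the set-sized Theorem~\ref{thm:chainOfModels} via the decomposition $M_{\infty}=\bigcup_{\alpha}M_{\alpha}$ of Lemma~\ref{lem:classIsUnion}. Your successor case (showing $N=M_{\beta}$ by producing a set of ordinals $A\in N\setminus M_{\beta}$, locating it in some $M_{\zeta}$, and invoking Theorem~\ref{thm:chainOfModels}) is essentially the paper's argument that there is no greatest $\beta$ with $M_{\beta}\subseteq N$.

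The limit case is where you diverge. You propose to transplant the entire proof of Lemma~\ref{lem:lim_case}, including the long fusion construction, with $M_{\infty}$ replacing $M_{\zeta}$. The paper does something more economical: it reuses only the \emph{first half} of that proof (the construction inside $N$ of the sequence $g=\langle g_{\alpha}\mid\alpha<\delta\rangle$), then codes $g$ into a single set of ordinals $g'\in N$, locates $g'$ in some $M_{\beta}$ via Lemma~\ref{lem:classIsUnion}, and applies Theorem~\ref{thm:chainOfModels} to conclude $M_{0}[g']=M_{\gamma}$ with $\gamma\geq\delta$, hence $M_{\delta}\subseteq M_{0}[g']\subseteq N$. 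This sidesteps the fusion entirely and keeps the whole argument inside the already-proved set-sized theorem.

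One genuine issue with your write-up: you assert that ``an inner model of $M_{\infty}$ is a definable class there,'' and this is not true in general. Arbitrary inner models need not be set-parameter definable; in the set-forcing context of Lemma~\ref{lem:lim_case} one gets definability of $N$ for free from the intermediate-model theorem, but that is unavailable here. The place this is used in Lemma~\ref{lem:lim_case} is the limit step of the inner induction (showing $N_{\epsilon}=M_{\epsilon}$), where one must know that an inner model $K$ of $N$ is an inner model of the ambient universe. In the $\mathsf{BG}$ framework this is immediate without definability---$K$ is already a class in the background model---so your argument can be repaired, but not for the reason you give. The paper's coding-and-reduce approach is less sensitive to this point, since once $g$ is built, everything that follows lives inside a fixed $M_{\zeta}$.
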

\begin{proof}
Working to the contrary, assume there exists $N$ an inner model of
$M_{\infty}$ such that $N\neq M_{\alpha}$ for all $\alpha\in\mathbf{Ord}$.

Suppose there exists a greatest ordinal $\beta$ such that $M_{\beta}\subseteq N$.
$M_{\beta}\vDash\mathsf{AC}$ so according to Vop\v{e}nka \citep{key-20}
there exists a set of ordinals $A\in N\setminus M_{\beta}$. By lemma
\ref{lem:classIsUnion} there exists an ordinal $\alpha$ such that
$A\in M_{\alpha}$. But that means $M_{\beta}\subsetneq M_{\beta}\left[A\right]\subseteq M_{\alpha}$,
so according to theorem \ref{thm:chainOfModels} $M_{\beta}\left[A\right]=M_{\gamma}$
for some $\beta<\gamma\leq\alpha$.

Therefore $M_{\beta}\left[A\right]=M_{\gamma}\subseteq N$, in contradiction
to $\beta$ being the greatest ordinal such that $M_{\beta}\subseteq N$.
So there is no such greatest $\beta$.

Next, suppose that for a limit ordinal $\delta$, $M_{\alpha}\subsetneq N$
for all $\alpha<\delta$.

As shown in the proof of lemma \ref{lem:lim_case}, there is a set
$g=\left\langle g_{\alpha}\mid\alpha<\delta\right\rangle \in N$ such
that if $\alpha$ is not a limit ordinal $g_{\alpha}\in M_{\alpha+1}\setminus M_{\alpha}$
and $g_{\alpha}\subseteq\aleph_{\alpha}$, and if $\alpha$ is a limit
ordinal then $g_{\alpha}=\emptyset$. Define $g'_{\alpha}=\left\{ \omega_{\alpha}+\beta\mid\beta\in g_{\alpha}\right\} $.
Obviously $g'_{\alpha}\subseteq\omega_{\alpha+1}\setminus\omega_{\alpha}$.
Define $g'=\bigcup\limits _{\alpha<\delta}g'_{\alpha}$.

We get $g'\in N$ a set of ordinals. If $g'\in M_{\alpha}$ for some
$\alpha<\delta$ then $g'_{\alpha}=\left(g'\cap\omega_{\alpha+1}\right)\setminus\omega_{\alpha}\in M_{\alpha}$
and then $g_{\alpha}\in M_{\alpha}$ in contradiction to its definition.
Therefore $g\notin M_{\alpha}$ for all $\alpha<\delta$.

By lemma \ref{lem:classIsUnion} there exists an ordinal $\beta$
such that $g'\in M_{\beta}$, so $M_{0}\left[g'\right]\subseteq M_{\beta}$.
According to theorem \ref{thm:chainOfModels} this means $M_{0}\left[g'\right]=M_{\gamma}$
for some $\delta\leq\gamma\leq\beta$, and so $M_{0}\left[g'\right]\supseteq M_{\delta}$.

On the other hand, because $g'$ is a set of ordinals and $M_{0}\vDash\mathsf{AC}$,
$M_{0}\left[g'\right]$ is the smallest model of $\mathsf{ZFC}$ such
that $g'\in M_{0}\left[g'\right]$, and so $M_{0}\left[g'\right]\subseteq N$.
Hence $M_{\delta}\subseteq N$.

As a result, by induction for all $\alpha\in\mathbf{Ord}$ $M_{\alpha}\subseteq N$,
and so $M_{\infty}=\bigcup\limits _{\alpha\in\mathbf{Ord}}M_{\alpha}\subseteq N$,
in contradiction of $N$ being a proper inner model of $M_{\infty}$.

We conclude that $N$ is a proper inner model of $M_{\infty}$ if
and only if $N=M_{\alpha}$ for some $\alpha\in\mathbf{Ord}$.
\end{proof}
At last, we arrive at what we set out to prove:
\begin{cor}
The existence of well-ordered model universes with the height of the
ordinals is consistent with $\mathsf{ZFC}$.
\end{cor}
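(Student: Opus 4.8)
The plan is to assemble Theorem~\ref{thm:classWellOrdered} together with the preservation lemmas of this section, the only genuine work being to repackage $\left\langle M_{\alpha}\mid\alpha\in\mathbf{Ord}\right\rangle$ as a \emph{sequence} in the precise sense of Definition~\ref{def:Well-Ordered-Model}, plus the routine relative-consistency bookkeeping. First I would pin down the framework: the entire construction of Definition~\ref{def:class-iter} through Theorem~\ref{thm:classWellOrdered} is carried out over the fixed base $\left\langle L,\mathcal{L},\in\right\rangle$, and this triple models $\mathsf{BGC}$ in \emph{any} model of $\mathsf{ZF}$, global choice being witnessed by the canonical definable well-order $<_{L}$. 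So, assuming $\mathrm{Con}(\mathsf{ZFC})$, hence $\mathrm{Con}(\mathsf{BGC})$ since $\mathsf{BGC}$ is conservative over $\mathsf{ZFC}$, I would take a countable model of $\mathsf{BGC}$, form inside it the triple $\left\langle L,\mathcal{L},\in\right\rangle$, and pick a $\mathbb{P}$-generic class $\mathbb{G}$ (generics over a countable model exist by diagonalising against its countably many dense subclasses of $\mathbb{P}$). Since $\mathbb{P}$ is a progressively closed iteration, the generic extension $M_{0}\left[\mathbb{G}\right]$ models $\mathsf{BGC}$; its universe of sets $M_{\infty}$ models $\mathsf{ZFC}$, has the same cardinals as $L$ (Lemma~\ref{lem:ClassSameCard}), and equals $\bigcup_{\alpha\in\mathbf{Ord}}M_{\alpha}$ (Lemma~\ref{lem:classIsUnion}).

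Next I would build the class $\mathbb{M}$. By Theorem~\ref{thm:classWellOrdered} the proper inner models of $M_{\infty}$ are exactly the $M_{\alpha}$, $\alpha\in\mathbf{Ord}$, so it remains to exhibit a \emph{strictly} $\subsetneq$-increasing enumeration of them indexed by an order of type $\mathbf{Ord}$. The map $\alpha\mapsto M_{\alpha}$ is weakly increasing but not injective: whenever $\delta$ is a limit ordinal $\mathbb{Q}_{\delta}$ is trivial, so $M_{\delta+1}=M_{\delta}$, whereas for every $\alpha$ with $\mathbb{Q}_{\alpha}$ non-trivial, $\aleph_{\alpha}$-Sacks forcing over $M_{\alpha}$ (a model in which $\aleph_{\alpha}$ is a regular cardinal, by Lemma~\ref{lem:ClassSameCard}) adds a new subset of $\aleph_{\alpha}$, whence $M_{\alpha}\subsetneq M_{\alpha+1}$; combining these one gets $M_{\alpha}\subsetneq M_{\beta}$ whenever $\alpha+2\le\beta$, so the redundant stages are precisely the successors of limit ordinals. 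Let $T\subseteq\mathbf{Ord}$ be the complement of $\left\{\delta+1\mid\delta\text{ a limit ordinal}\right\}$ (a definable class of order type $\mathbf{Ord}$), let $e\colon\mathbf{Ord}\to T$ be its increasing enumeration, and set $\mathbb{M}=\left\{\left(\gamma,x\right)\mid\gamma\in\mathbf{Ord}\wedge x\in M_{e\left(\gamma\right)}\right\}\subseteq\mathbf{Ord}\times M_{\infty}$. Then $\gamma\mapsto\mathbb{M}_{\gamma}=M_{e\left(\gamma\right)}$ is a strictly $\subsetneq$-increasing bijection onto the class of all proper inner models of $M_{\infty}$, so $\mathbb{M}$ — a class of $M_{0}\left[\mathbb{G}\right]$, and, as anticipated in Remark~\ref{rem:Class--belongs}, in fact definable over $M_{\infty}$ since each $M_{\alpha}$ has the form $L\left[C_{\alpha}\right]$ — witnesses Definition~\ref{def:Well-Ordered-Model} with underlying order $I=\mathbf{Ord}$. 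Hence $M_{\infty}$ is a \emph{nice} well-ordered model universe with $ht\left(M_{\infty}\right)=\infty$. Since this was all produced inside a model of $\mathsf{BGC}$, the existence of such an $M_{\infty}$ is consistent with $\mathsf{BGC}$, and therefore, via the $\mathsf{ZF}$/$\mathsf{BG}$ correspondence of Facts~\ref{fact:BG->ZF}--\ref{fact:ZF->BG}, consistent with $\mathsf{ZFC}$.

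I expect no serious obstacle: the substantive content is entirely in Theorem~\ref{thm:classWellOrdered} (itself resting on Theorem~\ref{thm:chainOfModels} and the limit-stage analysis) and in the cardinal-preservation lemmas. The one place calling for care is the middle step: verifying that after deleting the successor-of-limit stages the enumeration of inner models is genuinely injective and of order type \emph{exactly} $\mathbf{Ord}$, so that both the ``unique $a\in I$'' clause of Definition~\ref{def:Well-Ordered-Model} and the niceness of $M_{\infty}$ hold. A secondary point of hygiene is that ``consistent with $\mathsf{ZFC}$'' is a relative-consistency assertion, and that routing it through $\mathsf{BGC}$ (and through a countable model to get an actual generic) is legitimate precisely because of the two-way conservativity quoted in the introduction.
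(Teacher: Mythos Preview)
Your proposal is correct and follows the same route as the paper, which simply invokes Theorem~\ref{thm:classWellOrdered} to conclude that $M_{\infty}$ is a well-ordered model universe of height $\infty$. You are in fact more careful than the paper on one point: the paper glosses over the non-injectivity of $\alpha\mapsto M_{\alpha}$ (since $M_{\delta+1}=M_{\delta}$ for limit $\delta$), whereas you explicitly reindex along the non-redundant stages to obtain a genuine bijection with $\mathbf{Ord}$ as required by the uniqueness clause of Definition~\ref{def:Well-Ordered-Model}.
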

\begin{proof}
Theorem \ref{thm:classWellOrdered} shows that $M_{\infty}$ is a
well-ordered model universe with $ht\left(M_{\infty}\right)=\infty$.
\end{proof}
We conclude this section with the observation that in $M_{\infty}$
the class of inner models $\mathbb{M}\left(M_{\infty}\right)$ (as
defined in \ref{def:Well-Ordered-Model}) is in fact definable in
$M_{\infty}$: $M_{0}=L$, for all $\alpha$ $M_{\alpha+1}=L\left(\mathcal{P}\left(\aleph_{\alpha}\right)^{M_{\infty}}\right)$,
and for all limit $\delta$ $M_{\delta}=L\left(\mathcal{P}\left(\aleph_{\delta}\right)^{M_{\infty}}\right)$.
Thus $M_{\infty}$ in a sense 'knows' that it is a well-ordered model
universe.

\section{Open questions}

In the previous section we constructed an example of a nice well-ordered
model universe of height equal to $\mathbf{Ord}$. We did this by
an iteration of progressively increasing $\kappa$-Sacks forcing.
In this section we discuss some remaining open questions regarding
well-ordered model universes:
\begin{enumerate}
\item Can we construct a well-ordered model universe that isn't nice?
\item What can we say about models when the inner models are just totally-ordered,
not well-ordered by inclusion?
\item What if we consider all inner models of $\mathsf{ZF}$, not just inner
models of $\mathsf{ZFC}$?
\end{enumerate}

\subsection{Non-nice well-ordered model universes}

For the first question, recall definition \ref{def:niceness}. A well-ordered
model universe is considered nice if its underlying order is equivalent
to some ordinal or to $\mathbf{Ord}$. This is essentially a limit
on the length of the well-ordering. Any well-ordered set is order-isomorphic
to some ordinal, so if a well-ordered model universe isn't nice then
the underlying order must be a proper class, but one which is not
order-isomorphic to $\mathbf{Ord}$.

Can we define such a well-ordering? Of course - just take $A=\left\{ \alpha\mid\alpha\in\mathbf{Ord}\vee\alpha=\left\{ 1\right\} \right\} $,
and extend the natural ordering by defining $\left\{ 1\right\} >\alpha$
for all $\alpha\in\mathbf{Ord}$. It is easy to see that this is indeed
a well-ordering: if $B\subseteq A$ is a non-empty class, then if
it contains any ordinal, then the least ordinal it contains is its
least element according to our extended ordering, and if not then
$\left\{ 1\right\} $ is the least element. It is also obvious that
our extended ordering is not order-isomorphic to $\mathbf{Ord}$ -
our ordering has a greatest element, whereas $\mathbf{Ord}$ clearly
does not.

So such a well-ordering is very much definable. Could we extend our
construction further then we did in the previous section?

For the rest of the subsection, let $M_{\infty}$ be as defined in
\ref{def:class-notation}. In general, there is no obstacle to applying
$\aleph_{\beta}$-Sacks forcing to $M_{\infty}$. The normal properties
of Sacks-forcing would still hold, i.e there won't be any intermediate
model between $M_{\infty}$ and $M_{\infty}\left[G\right]$. Moreover,
there would be a chain of inner models of $M_{\infty}\left[G\right]$
that would be 'longer' than $\mathbf{Ord}$. However, regardless of
the $\aleph_{\beta}$-Sacks forcing we use, $M_{\infty}\left[G\right]$
would invariably contain some new inner model that is not on the chain.
\begin{lem}
\label{lem:NotInfty+1}Let $\aleph_{\beta}$ be a regular cardinal,
let $\mathbb{S}$ be the $\aleph_{\beta}$-Sacks forcing notion over
$M_{\infty}$, and let $H$ be a generic set in $\mathbb{S}$. Then
$M_{\infty}\left[H\right]$ is not a well-ordered model universe.
\end{lem}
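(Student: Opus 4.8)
The plan is to exhibit two $\subseteq$-incomparable inner models of $M_{\infty}[H]$; since in a well-ordered model universe all inner models are well-ordered by inclusion, hence in particular linearly ordered, this is enough. Write $h\subseteq\aleph_{\beta}$ for the generic subset of $\aleph_{\beta}$ interdefinable with $H$, so that $M_{\infty}[H]=M_{\infty}[h]$; as $\mathbb{S}$ is non-trivial, $h\notin M_{\infty}$. Put $\gamma=\beta+1$, a successor ordinal, so that $\mathbb{Q}_{\gamma}$ is $\aleph_{\gamma}$-Sacks forcing over $M_{\gamma}$ and $M_{\gamma+1}=M_{\gamma}[g_{\gamma}]$ for a Sacks subset $g_{\gamma}\in M_{\gamma+1}\setminus M_{\gamma}$ of $\aleph_{\gamma}$. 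The two incomparable inner models will be $M_{\gamma+1}$ and $M_{\gamma}[h]$.

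I would first check that $M_{\gamma}[h]$ is a genuine $\aleph_{\beta}$-Sacks extension of $M_{\gamma}$. By lemma \ref{lem:ClassRelativeClosure} the tail forcing $\nicefrac{\mathbb{P}}{G_{\gamma}}$ is $\aleph_{\gamma}$-closed, and $\aleph_{\beta}<\aleph_{\gamma}$, so $M_{\gamma}$ and $M_{\infty}$ have the same subsets of $\aleph_{\beta}$; since both also satisfy $2^{<\aleph_{\beta}}=\aleph_{\beta}$ they compute the same $Seq=2^{<\aleph_{\beta}}$, and, ``$p$ is a perfect tree'' being a statement about $(p,Seq,\in)$ whose quantifiers range only over $Seq$ and over ordinals below $\aleph_{\beta}$ (clauses \ref{enu:height} and \ref{enu:club} of definition \ref{def:We-say-}), they compute the same forcing poset $\mathbb{S}^{M_{\gamma}}=\mathbb{S}^{M_{\infty}}=\mathbb{S}$. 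Being $\mathbb{S}$-generic over $M_{\infty}$, $h$ meets every dense subset of $\mathbb{S}$ that lies in $M_{\gamma}$, hence is $\mathbb{S}$-generic over $M_{\gamma}$; the cardinal-arithmetic hypotheses required in Section 4 hold over $M_{\gamma}$ by the $OK$-properties established in Section 5, so theorem \ref{thm:-Sacks-forcing-produces} applies and the extension $M_{\gamma}\subseteq M_{\gamma}[h]$ is minimal. Moreover $M_{\gamma}[h]\subseteq M_{\infty}[h]=M_{\infty}[H]$ since $M_{\gamma}\subseteq M_{\infty}$ and $h\in M_{\infty}[H]$, so $M_{\gamma}[h]$ is an inner model of $M_{\infty}[H]$.

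Now the incomparability. If $M_{\gamma}[h]\subseteq M_{\gamma+1}$ then $h\in M_{\gamma+1}\subseteq M_{\infty}$, contradicting $h\notin M_{\infty}$; so $M_{\gamma}[h]\nsubseteq M_{\gamma+1}$. Conversely, if $M_{\gamma+1}\subseteq M_{\gamma}[h]$, then $M_{\gamma}\subseteq M_{\gamma+1}\subseteq M_{\gamma}[h]$ with $M_{\gamma+1}\vDash\mathsf{ZFC}$, so minimality forces $M_{\gamma+1}=M_{\gamma}$ or $M_{\gamma+1}=M_{\gamma}[h]$: the first fails because $g_{\gamma}\in M_{\gamma+1}\setminus M_{\gamma}$, and the second again yields $h\in M_{\gamma+1}\subseteq M_{\infty}$, a contradiction. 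Hence $M_{\gamma+1}\nsubseteq M_{\gamma}[h]$ as well, so $M_{\gamma+1}$ and $M_{\gamma}[h]$ are $\subseteq$-incomparable inner models of $M_{\infty}[H]$, and $M_{\infty}[H]$ is not a well-ordered model universe in the sense of definition \ref{def:Well-Ordered-Model}.

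The one step that I expect to require genuine care is the absoluteness identity $\mathbb{S}^{M_{\gamma}}=\mathbb{S}^{M_{\infty}}$ --- making precise that once $M_{\gamma}$ and $M_{\infty}$ agree on $\mathcal{P}(\aleph_{\beta})$ they agree on which subsets of $Seq$ are perfect trees in the sense of definition \ref{def:We-say-} --- since this is exactly what licenses treating $M_{\gamma}[h]$ as a bona fide $\aleph_{\beta}$-Sacks extension and invoking theorem \ref{thm:-Sacks-forcing-produces} with ground model $M_{\gamma}$. The remaining points (that $M_{\gamma}[h]$ and $M_{\gamma+1}$ are transitive, have the same ordinals as $M_{\infty}[H]$, and model $\mathsf{ZFC}$, and that $M_{\gamma+1}\subseteq M_{\infty}$ by lemma \ref{lem:classIsUnion}) are immediate.
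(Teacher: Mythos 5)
Your proof is correct and follows essentially the same route as the paper: both arguments realize $H$ as $\mathbb{S}$-generic over a set stage $M_{\alpha}$ of the iteration (you take $\alpha=\beta+1$ via tail closure; the paper takes $\alpha$ large enough that $\mathcal{P}\left(\mathbb{S}\right)\in M_{\alpha}$) and then invoke the minimality of $\kappa$-Sacks extensions (theorem \ref{thm:-Sacks-forcing-produces}) to exhibit an inner model of $M_{\infty}\left[H\right]$ that is $\subseteq$-incomparable with a model on the chain. The only difference is cosmetic: your incomparable pair is $M_{\beta+1}\left[h\right]$ versus $M_{\beta+2}$, whereas the paper's is in effect $M_{\alpha}\left[H\right]$ versus $M_{\infty}$.
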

\begin{proof}
By lemma \ref{lem:classIsUnion} $M_{\infty}=\bigcup\limits _{\alpha\in\mathbf{Ord}}$.
Therefore $\mathcal{P}\left(\mathbb{S}\right)\in M_{\alpha}$ for
some $\alpha\in\mathbf{Ord}$. As all the dense sets of $\mathbb{S}$
in $M_{\infty}$ are already in $M_{\alpha}$, $H$ is also a generic
set of $\mathbb{S}\in M_{\alpha}$, and so $M_{\alpha}\left[H\right]$
is a generic extension generated by $\aleph_{\beta}$-Sacks forcing
over $M_{\alpha}$.

Obviously $M_{\alpha}\left[H\right]\subseteq M_{\infty}\left[H\right]$,
but $M_{\alpha}\left[H\right]\nsubseteq M_{\infty}$. So for all $\gamma\in\mathbf{Ord}$
$M_{\alpha}\left[H\right]\neq M_{\gamma}$ and $M_{\alpha}\left[H\right]\neq M_{\infty}$.

By theorem \ref{thm:-Sacks-forcing-produces} there are no intermediate
models between $M_{\infty}$ and $M_{\infty}\left[H\right]$. So either
$M_{\alpha}\left[H\right]=M_{\infty}\left[H\right]$, or $M_{\alpha}\left[H\right]$
is 'off-chain'.

But $M_{\alpha}\left[H\right]$ is $\aleph_{\beta}$-Sacks forcing
over $M_{\alpha}$, and so has no intermediate model between $M_{\alpha}$
and $M_{\alpha}\left[H\right]$, whereas $M_{\alpha}\subsetneq M_{\infty}\subsetneq M_{\infty}\left[H\right]$.
Therefore $M_{\alpha}\left[H\right]\neq M_{\infty}\left[H\right]$,
so $M_{\alpha}\left[H\right]$ is 'off-chain', and $M_{\infty}\left[H\right]$
is not a well-ordered model universe. 
\end{proof}
Okay, but lemma \ref{lem:NotInfty+1} only shows that we can't use
$\kappa$-Sacks forcing to produce the next step of the construction.
Could some other set forcing notion do the trick for us?

Looking back at theorem \ref{thm:chainOfModels}, we proved that we
could take \emph{any} set created by the iteration and use it to completely
recover all the preceding generic sets. So essentially, each generic
set must code all the preceding generic sets. But because we used
class-many generic sets to construct $M_{\infty}$, we need our new
generic set to encode class-many previous generic sets, which is a
tall order. In fact, it is impossible:
\begin{lem}
\label{lem:setIsntWell}Let $\mathbb{S}$ be some minimal set forcing
notion over $M_{\infty}$, and let $H$ be a generic set in $\mathbb{S}$.
Then $M_{\infty}\left[H\right]$ is not a well-ordered model universe.
\end{lem}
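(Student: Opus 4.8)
The plan is to follow the strategy of Lemma~\ref{lem:NotInfty+1}, but now exploiting the minimality of $\mathbb{S}$ over $M_\infty$ to collapse the whole tower $\langle M_\gamma\mid\gamma\in\mathbf{Ord}\rangle$ into a single set-forcing extension of one of its members, and then deriving a contradiction from the fact that a set forcing admits only set-many intermediate $\mathsf{ZFC}$-models. We may assume $M_\infty[H]\supsetneq M_\infty$, for otherwise $M_\infty[H]=M_\infty$ and the statement is vacuous; fixing a code $\tilde H\subseteq\mathbf{Ord}$ of $H$, this means $\tilde H\notin M_\infty$. Suppose toward a contradiction that $M_\infty[H]$ is a well-ordered model universe, so that its proper inner models are linearly ordered by inclusion.

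First I would locate $\mathbb{S}$ in the tower. Since $\mathbb{S}$ is a set and $M_\infty=\bigcup_{\gamma\in\mathbf{Ord}}M_\gamma$ by Lemma~\ref{lem:classIsUnion}, choose $\alpha$ with $\mathbb{S},\mathcal P(\mathbb{S})^{M_\infty}\in M_\alpha$; by transitivity of $M_\alpha$, every dense subset of $\mathbb{S}$ that lies in $M_\infty$ already lies in $M_\alpha$, so $H$ is $\mathbb{S}$-generic over $M_\alpha$ and $M_\alpha[H]$ is the corresponding set-forcing extension. Writing $M_\gamma=L[G_\gamma]$ and letting $A_\gamma\subseteq\mathbf{Ord}$ be a code of $G_\gamma$ (so $M_\gamma=L[A_\gamma]$), the model $M_\alpha[H]=L[A_\alpha\oplus\tilde H]$ is an inner model of $M_\infty[H]$ definable there from the parameter $A_\alpha\oplus\tilde H$, and $H\in M_\alpha[H]\subseteq M_\infty[H]$; likewise each $M_\gamma$ (for $\alpha\le\gamma$) is an inner model of $M_\infty[H]$ via the parameter $A_\gamma$. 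If $M_\alpha[H]=M_\infty[H]$ we have already reached the key identity below; otherwise $M_\alpha[H]$ is a proper inner model, hence by linearity comparable with each $M_\gamma$. Since $M_\alpha[H]\subseteq M_\gamma$ would give $H\in M_\gamma\subseteq M_\infty$, contradicting $\tilde H\notin M_\infty$, we get $M_\gamma\subseteq M_\alpha[H]$ for all $\gamma\ge\alpha$, hence $M_\infty=\bigcup_{\gamma\ge\alpha}M_\gamma\subseteq M_\alpha[H]\subseteq M_\infty[H]$. Now minimality of $\mathbb{S}$ over $M_\infty$ forces $M_\alpha[H]\in\{M_\infty,M_\infty[H]\}$, and $M_\alpha[H]=M_\infty$ is impossible since $H\in M_\alpha[H]\setminus M_\infty$; therefore $M_\infty[H]=M_\alpha[H]$.

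It remains to contradict the identity $M_\infty[H]=M_\alpha[H]$. Working inside $M_\alpha[H]$, which is a set-forcing extension of its definable inner model $M_\alpha$ by the set forcing $\mathbb{S}\in M_\alpha$, the theory of intermediate models (Jech's Theorem~15.43 together with the standard characterization of the intermediate $\mathsf{ZFC}$-models of a set forcing as those recovered from the ground model and the trace of the generic on a complete subalgebra of $\mathbb{B}:=\mathrm{RO}(\mathbb{S})^{M_\alpha}$; see~\citep{key-17}) shows that the inner models $W$ of $M_\alpha[H]$ with $M_\alpha\subseteq W$ are parametrized by a \emph{set}: $W$ is determined by $\mathbb{B}_W:=\{b\in\mathbb{B}:b\in W\}$, so $W\mapsto\mathbb{B}_W$ injects this collection into $\mathcal P(\mathbb{B})^{M_\alpha[H]}$. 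On the other hand $\langle M_\gamma\mid\alpha\le\gamma\in\mathbf{Ord}\rangle$ is, in the ambient $\mathsf{BGC}$ universe, a definable sequence of pairwise distinct inner models of $M_\alpha[H]=M_\infty[H]$, each containing $M_\alpha$; composing with $W\mapsto\mathbb{B}_W$ produces a class injection of $\mathbf{Ord}$ into the set $\mathcal P(\mathbb{B})^{M_\alpha[H]}$, whence by Replacement $\mathbf{Ord}$ would be a set --- absurd. Hence $M_\infty[H]$ is not a well-ordered model universe.

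The step I expect to be delicate is the last one. It is crucial to use the \emph{uniform}, set-sized parametrization of intermediate models: knowing merely that each $M_\gamma$ equals $M_\alpha[B_\gamma]$ for \emph{some} set of ordinals $B_\gamma$ is not enough, since the $B_\gamma$ could have unbounded rank and then $\gamma\mapsto B_\gamma$ need not land in any set. This is exactly where the Boolean-algebra formulation --- intermediate models correspond to complete subalgebras of the fixed set $\mathrm{RO}(\mathbb{S})$ --- does the real work, and one must also check that $\gamma\mapsto M_\gamma$ is genuinely a class of the background model so that the resulting class-to-set injection is subject to Replacement.
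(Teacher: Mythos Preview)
Your reduction to the identity $M_\infty[H]=M_\alpha[H]$ is correct and nicely argued, but the injection you write down in the final paragraph does not work. Since $\mathbb{B}=\mathrm{RO}(\mathbb{S})^{M_\alpha}\in M_\alpha$ and $M_\alpha$ is transitive, every element $b\in\mathbb{B}$ already lies in $M_\alpha\subseteq W$; hence $\mathbb{B}_W=\{b\in\mathbb{B}:b\in W\}=\mathbb{B}$ for \emph{every} intermediate $W$, and your map is constant rather than injective. The fix is easy: use instead $W\mapsto W\cap\mathcal P(\mathbb{B})^{M_\alpha[H]}$. By Jech's Theorem~15.43 each intermediate $W$ is $M_\alpha[H\cap\mathbb{C}]$ for some complete subalgebra $\mathbb{C}\in M_\alpha$, and $H\cap\mathbb{C}\in W\cap\mathcal P(\mathbb{B})$; so from $W\cap\mathcal P(\mathbb{B})$ together with $M_\alpha$ one recovers $W$, and the map is injective into the set $\mathcal P\bigl(\mathcal P(\mathbb{B})^{M_\alpha[H]}\bigr)$. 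Composed with the class map $\gamma\mapsto M_\gamma$ (which is definable from the generic class $\mathbb{G}$), this gives the desired class injection of $\mathbf{Ord}$ into a set, and your contradiction goes through.

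With that repair your proof is correct, but it takes a genuinely different route from the paper's. The paper does not argue by contradiction and never needs to count intermediate models. Instead it picks $\gamma$ large enough that $\mathbb{S}\in M_\gamma$ and that the tail $\nicefrac{\mathbb{P}}{G_\gamma}$ is at least $|\mathbb{S}|^{+}$-closed, views $M_\infty[H]$ as the product $M_\gamma\bigl[\nicefrac{\mathbb{G}}{G_\gamma}\bigr][H]$, and invokes an Easton-type mutual-genericity lemma (closed versus c.c.; lemma~121 of \citep{key-2}) to conclude that $\nicefrac{\mathbb{G}}{G_\gamma}$ remains generic over $M_\gamma[H]$, hence $M_\gamma[H]\subsetneq M_\infty[H]$. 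Since $H\in M_\gamma[H]\setminus M_\infty$ and minimality of $\mathbb{S}$ leaves no room strictly between $M_\infty$ and $M_\infty[H]$, the model $M_\gamma[H]$ is outright incomparable with $M_\infty$, exhibiting the failure of linearity directly. The paper's argument is shorter and constructive; yours avoids the class-forcing product lemma and relies only on the standard set-forcing intermediate-model theorem, at the cost of the extra bookkeeping you flagged in your last paragraph.
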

\begin{proof}
Let $\aleph_{\beta}=\left|\mathbb{S}\right|$. By lemma \ref{lem:classIsUnion}
there exists an ordinal $\alpha$ such that $\mathcal{P}\left(\mathbb{S}\right)\in M_{\alpha}$.
Take $\gamma=\max\left(\alpha,\beta+1\right)$.

Obviously $\mathbb{S}$ has the $\aleph_{\beta+1}$-c.c property.
By lemma \ref{lem:ClassRelativeClosure} $\nicefrac{\mathbb{P}}{G_{\gamma}}$
is at least $\aleph_{\beta+1}$-closed, and by lemma \ref{lem:completeSubposets}
$\nicefrac{\mathbb{P}}{G_{\gamma}}$ is a chain of complete subposets.

Because $\mathbb{S}\in M_{\gamma}$, $M_{\infty}\left[H\right]$ is
actually the result of product forcing, where the first forcing is
the tail of class forcing $\mathbb{P}$ and the second set forcing
$\mathbb{S}$, so $M_{\infty}\left[H\right]=M_{\gamma}\left[\nicefrac{\mathbb{G}}{G_{\gamma}}\right]\left[H\right]$.

By lemma 121 in \citep{key-2} we have that $\nicefrac{\mathbb{G}}{G_{\gamma}}$
is $\nicefrac{\mathbb{P}}{G_{\gamma}}$-generic over $M_{\gamma}\left[H\right]$.
Therefore $M_{\gamma}\left[H\right]\neq M_{\infty}\left[H\right]$.

Also, because $H\in M_{\gamma}\left[H\right]$ we have $M_{\gamma}\left[H\right]\nsubseteq M_{\infty}$.
And because $\mathbb{S}$ is minimal, so there is no intermediate
model between $M_{\infty}$ and $M_{\infty}\left[H\right]$.

Therefore $M_{\gamma}\left[H\right]$ is a proper inner model of $M_{\infty}\left[H\right]$
that is off the chain, so $M_{\infty}\left[H\right]$ is not a well-ordered
model universe.
\end{proof}
And what about class forcing? Could that be used to somehow lengthen
our well-ordered model universe?

For class forcing of chain of complete subposets, this is again impossible.
\begin{lem}
Let $\mathbb{S}$ be a chain of complete subposets over $M_{\infty}$,
and let $\text{\ensuremath{\mathbb{H}}}$ be a generic class in $\mathbb{S}$
such that $M_{\infty}\left[\mathbb{H}\right]\vDash\mathsf{ZFC}$.
Then $M_{\infty}\left[\mathbb{H}\right]$ is not a well-ordered model
universe.
\end{lem}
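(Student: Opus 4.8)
The plan is to mimic the strategy of Lemma \ref{lem:setIsntWell}: localize the class generic $\mathbb{H}$ to one of the set-sized stages $\mathbb{S}_\alpha$ of the chain, observe that this localized generic is already generic over one of the $M_\gamma$, and derive a contradiction from the well-ordering by comparing $M_\gamma[\mathbb{H}_\alpha]$ with $M_\infty$.

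First I would dispose of the trivial case: since $M_\infty$ is itself a well-ordered model universe, I may assume $\mathbb{H}\notin M_\infty$, so that $M_\infty\subsetneq M_\infty[\mathbb{H}]$ and $M_\infty$ is a proper inner model of $M_\infty[\mathbb{H}]$. Writing $\mathbb{S}=\bigcup_{\alpha\in\mathbf{Ord}}\mathbb{S}_\alpha$ with each $\mathbb{S}_\alpha\subseteq_c\mathbb{S}$ a set, put $\mathbb{H}_\alpha=\mathbb{H}\cap\mathbb{S}_\alpha$; each $\mathbb{H}_\alpha$ is $\mathbb{S}_\alpha$-generic over $M_\infty$, and, exactly as in the proof of Lemma \ref{lem:classIsUnion} (lemma 88 of \citep{key-2}), $M_\infty[\mathbb{H}]=\bigcup_{\alpha\in\mathbf{Ord}}M_\infty[\mathbb{H}_\alpha]$. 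As $M_\infty[\mathbb{H}]\neq M_\infty$, I can fix $\alpha$ with $\mathbb{H}_\alpha\notin M_\infty$. By Lemma \ref{lem:classIsUnion} the set $\mathbb{S}_\alpha$ lies in some $M_\gamma$; then $\mathbb{H}_\alpha$ is $\mathbb{S}_\alpha$-generic over $M_\gamma\subseteq M_\infty$, so $M_\gamma[\mathbb{H}_\alpha]$ is a transitive model of $\mathsf{ZFC}$ containing all the ordinals, i.e.\ an inner model of $M_\infty[\mathbb{H}]$, and it is contained in $M_\infty[\mathbb{H}]$ because $M_\gamma\subseteq M_\infty$ and $\mathbb{H}_\alpha\in M_\infty[\mathbb{H}]$.

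Now suppose toward a contradiction that $M_\infty[\mathbb{H}]$ is a well-ordered model universe. Then its two inner models $M_\gamma[\mathbb{H}_\alpha]$ and $M_\infty$ are comparable under inclusion. We cannot have $M_\gamma[\mathbb{H}_\alpha]\subseteq M_\infty$, as that would put $\mathbb{H}_\alpha\in M_\infty$, against the choice of $\alpha$. Hence $M_\gamma\subseteq M_\infty\subseteq M_\gamma[\mathbb{H}_\alpha]$, and since $M_\gamma[\mathbb{H}_\alpha]$ is a set-generic extension of $M_\gamma$, Jech's intermediate-model theorem (15.43, as used in Theorem \ref{thm:OrgSacksMinimal}) yields $M_\infty=M_\gamma[A]$ for some set of ordinals $A\in M_\infty$. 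By Lemma \ref{lem:classIsUnion} there is $\delta\geq\gamma$ with $A\in M_\delta$; since $M_\gamma=L[C_\gamma]$ for a set of ordinals $C_\gamma$ by Theorem \ref{thm:L=00005BA=00005D}, the model $M_\gamma[A]$ is the least model of $\mathsf{ZFC}$ extending $M_\gamma$ and containing $A$, and $M_\delta$ is such a model, so $M_\infty=M_\gamma[A]\subseteq M_\delta$. But $M_\delta$ is a \emph{proper} inner model of $M_\infty$ by Theorem \ref{thm:classWellOrdered}, i.e.\ $M_\delta\subsetneq M_\infty$, a contradiction. Therefore $M_\infty[\mathbb{H}]$ is not a well-ordered model universe.

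The only delicate points I anticipate are the bookkeeping that $M_\gamma[\mathbb{H}_\alpha]$ is genuinely an inner model of $M_\infty[\mathbb{H}]$ lying below it, and the use of the absoluteness of the ``$M_\gamma[A]$'' operation for a set of ordinals $A$ (so that $A\in M_\delta$ forces $M_\gamma[A]\subseteq M_\delta$) — both of which are already implicit in the earlier sections, the latter in the phrasing ``$M_0[g']$ is the smallest model of $\mathsf{ZFC}$ such that $g'\in M_0[g']$'' in the proof of Theorem \ref{thm:classWellOrdered}. Everything else is a direct transcription of the argument of Lemma \ref{lem:setIsntWell}, with ``the tail of the class forcing'' replaced by ``$\mathbb{S}_\alpha$''.
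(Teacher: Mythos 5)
Your proof is correct, but it takes a genuinely different route from the paper's. The paper also localizes $\mathbb{H}$ to a set stage, writing $M_{\infty}\left[\mathbb{H}\right]=\bigcup_{\alpha}M_{\infty}\left[H_{\alpha}\right]$, but then simply invokes Lemma \ref{lem:setIsntWell} to conclude that $M_{\infty}\left[H_{\alpha}\right]$ is not a well-ordered model universe, and transfers the witnessing incomparable inner models upward to $M_{\infty}\left[\mathbb{H}\right]$ by definability. You instead exhibit the incomparable pair directly inside $M_{\infty}\left[\mathbb{H}\right]$, namely $M_{\gamma}\left[\mathbb{H}_{\alpha}\right]$ versus $M_{\infty}$. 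The substantive difference lies in how the case $M_{\infty}\subseteq M_{\gamma}\left[\mathbb{H}_{\alpha}\right]$ is excluded: Lemma \ref{lem:setIsntWell} does this via the \emph{minimality} of the set forcing, a hypothesis that the stages $\mathbb{S}_{\alpha}$ of an arbitrary chain of complete subposets need not satisfy, so the paper's reduction is slightly lossy at exactly this point. Your argument replaces minimality by Jech's intermediate model theorem: if $M_{\gamma}\subseteq M_{\infty}\subseteq M_{\gamma}\left[\mathbb{H}_{\alpha}\right]$ then $M_{\infty}=M_{\gamma}\left[A\right]$ for a set of ordinals $A$, which is absurd since $A$ already lies in some $M_{\delta}$ with $\delta\geq\gamma$, forcing $M_{\infty}\subseteq M_{\delta}\subsetneq M_{\infty}$. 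This is more robust and essentially repairs the gap in the reduction. One cosmetic point: your opening ``trivial case'' is phrased backwards --- if $\mathbb{H}\in M_{\infty}$ the conclusion of the lemma actually \emph{fails}, since the extension is $M_{\infty}$ itself; what you really mean is that a generic for a nontrivial (atomless) forcing never lies in the ground model, so $M_{\infty}\subsetneq M_{\infty}\left[\mathbb{H}\right]$ may be assumed.
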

\begin{proof}
According to lemma 88 of \citep{key-2} $M_{\infty}\left[\mathbb{H}\right]=\bigcup\limits _{\alpha\in\mathbf{Ord}}M_{\infty}\left[H_{\alpha}\right]$,
so $A\in M_{\infty}\left[H_{\alpha}\right]$ for some $\alpha\in\mathbf{Ord}$.

But by lemma \ref{lem:setIsntWell} $M_{\infty}\left[H_{\alpha}\right]$
is not a well-ordered model universe, so the inner models of $M_{\infty}\left[H_{\alpha}\right]$
are not well-ordered by inclusion.

As $M_{\infty}\left[H_{\alpha}\right]$ is definable with set parameters
in $M_{\infty}\left[\mathbb{H}\right]$, so are the inner models of
$M_{\infty}\left[H_{\alpha}\right]$ similarly definable, and so they
are inner models of $M_{\infty}\left[\mathbb{H}\right]$.

Therefore the inner models of $M_{\infty}\left[\mathbb{H}\right]$
are not well-ordered by inclusion.
\end{proof}
What about some more general form of class forcing?

At first thought this might also appear impossible, because even for
class forcing to minimally extend $M_{\infty}$, we would still need
\emph{every} new \emph{set} in $M_{\infty}\left[\mathbb{H}\right]$
to somehow encode the entire \emph{class} of generic sets! However,
the remarkable Jensen's Coding Theorem \citep{key-33} actually uses
class forcing to achieve something similar: the existence of class
forcing notion $\mathbb{P}$ such that if $\mathbb{G}$ is $\mathbb{P}$-generic
over $V$ then $V\left[\mathbb{G}\right]\vDash\mathsf{ZFC}+V=L\left[A\right]+A\subseteq\omega$.
This set $A$ in effect 'codes the universe'. Applying the theorem
to $M_{\infty}$, all we really need is for every new set in $M_{\infty}\left[\mathbb{H}\right]$
to code this $A$, which sounds far more reasonable. So we are left
with the following open question:
\begin{problem}
Is the existence of a well-ordered model universe with an underlying
order longer than $\mathbf{Ord}$ consistent with $\mathsf{ZFC}$?
\end{problem}

\subsection{Totally-ordered model universes}

So far in this article we focused exclusively on models where all
the inner models are well-ordered by inclusion. However, a natural
weakening of the definition is to demand the ordering to only be total,
i.e for any two inner models of $V$, either $M_{1}\subseteq M_{2}$
or $M_{2}\subseteq M_{1}$. We'll call this a \emph{totally-ordered
model universe}.

Prima facie, this concept is far weaker than a well-ordered model
universe. For one, our proof that $V$ has no measurable cardinals
in theorem \ref{thm:Measurable} immediately fails, because in theory
there could be an infinite descending sequence of inner models. However,
upon closer inspection we find that the proof of theorem \ref{thm:.0=000023}
actually still holds, because it hinges on the fact if both $G_{0}$
is generic over $L\left[G_{1}\right]$ and vice-versa, then the inner
models aren't totally-ordered. Therefore:
\begin{thm}
If $V$ is a totally-ordered model universe, $0^{\sharp}$ doesn't
exist.
\end{thm}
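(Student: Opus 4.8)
The plan is to adapt the proof of Theorem \ref{thm:.0=000023} essentially verbatim, checking that the only place well-ordering was used can be replaced by totality. Suppose for contradiction that $0^{\sharp}$ exists in $V$. As in the original argument, every uncountable cardinal of $V$ is inaccessible in $L$ (corollary 18.3 in \citep{key-17}), so $\mathcal{P}(\mathbb{P})^{L}$ has size $\aleph_{1}^{L}$, which is countable in $V$. Hence by Rasiowa--Sikorski there is a Cohen-generic $G \in V \setminus L$ over $L$, and $L[G]$ is a genuine inner model of $V$.

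Next I would invoke the product decomposition $\mathsf{Fin}(\omega,2) \cong \mathsf{Fin}(I_{0},2) \times \mathsf{Fin}(\omega \setminus I_{0},2)$ with $I_{0}$ the even numbers, obtaining $G_{0} = G \cap \mathsf{Fin}(I_{0},2)$ generic over $L$ and $G_{1} = G \cap \mathsf{Fin}(\omega\setminus I_{0},2)$ generic over $L[G_{0}]$, with $L[G] = L[G_{0}][G_{1}]$; by symmetry of the product, $G_{0}$ is also generic over $L[G_{1}]$. Both $L[G_{0}]$ and $L[G_{1}]$ are inner models of $L[G]$, hence of $V$. The key observation: $G_{1} \notin L[G_{0}]$ (genericity means the Cohen real is not in the ground model), so $L[G_{1}] \not\subseteq L[G_{0}]$, and symmetrically $L[G_{0}] \not\subseteq L[G_{1}]$. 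Thus $L[G_{0}]$ and $L[G_{1}]$ are two inner models of $V$ that are incomparable under inclusion, directly contradicting the assumption that $V$ is a totally-ordered model universe.

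The main thing to verify — and the only real obstacle — is that the argument genuinely never needed the \emph{well}-ordering, only comparability of any two inner models; indeed the contradiction derived was always ``$L[G_0]$ and $L[G_1]$ are incomparable,'' which is exactly the negation of totality. So no descending-chain argument (as in Theorem \ref{thm:Measurable}) is invoked, and the proof survives the weakening. I would therefore keep the proof short, essentially pointing back to the body of the proof of Theorem \ref{thm:.0=000023} and remarking that its final step already produces an incomparable pair rather than an infinite descending chain.

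\begin{proof}
Suppose to the contrary that $0^{\sharp}$ exists. We repeat the construction in the proof of theorem \ref{thm:.0=000023}. Since every uncountable cardinal of $V$ is inaccessible in $L$, we have $\left(\left|\mathcal{P}\left(\mathbb{P}\right)\right|=\aleph_{0}\right)^{V}$ for $\mathbb{P}=\mathsf{Fin}\left(\omega,2\right)^{L}$, so by Rasiowa--Sikorski there is a generic $G\in V\setminus L$, and $L\left[G\right]$ is an inner model of $V$. Taking $I_{0}$ the even naturals, write $G_{0}=G\cap\mathsf{Fin}\left(I_{0},2\right)$ and $G_{1}=G\cap\mathsf{Fin}\left(\omega\setminus I_{0},2\right)$, so that $L\left[G\right]=L\left[G_{0}\right]\left[G_{1}\right]$ with $G_{1}$ generic over $L\left[G_{0}\right]$ and, by symmetry, $G_{0}$ generic over $L\left[G_{1}\right]$. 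Hence $G_{1}\notin L\left[G_{0}\right]$ and $G_{0}\notin L\left[G_{1}\right]$, so the inner models $L\left[G_{0}\right]$ and $L\left[G_{1}\right]$ of $V$ satisfy $L\left[G_{0}\right]\nsubseteq L\left[G_{1}\right]$ and $L\left[G_{1}\right]\nsubseteq L\left[G_{0}\right]$. This contradicts the inner models of $V$ being totally ordered by inclusion. Therefore $0^{\sharp}$ does not exist.
\end{proof}
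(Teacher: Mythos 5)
Your proposal is correct and is essentially the paper's own proof: the paper simply declares the argument ``identical'' to the earlier $0^{\sharp}$ theorem, having already observed that the contradiction there is the incomparability of $L\left[G_{0}\right]$ and $L\left[G_{1}\right]$, which negates totality and not merely well-foundedness. Your explicit check that no descending-chain argument is ever invoked is exactly the point the paper makes in the surrounding discussion.
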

\begin{proof}
Identical to theorem \ref{thm:.0=000023}.
\end{proof}
\begin{cor}
If $V$ is a totally-ordered model universe, then $V$ has no measurable
cardinal.
\end{cor}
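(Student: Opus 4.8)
The plan is to derive this immediately from the preceding theorem together with the classical implication ``measurable cardinal $\Rightarrow 0^{\sharp}$ exists''. First I would suppose, toward a contradiction, that $V$ is a totally-ordered model universe and that $V \vDash$ ``there is a measurable cardinal $\kappa$''. The key external input is Solovay's theorem (see e.g. \citep{key-17}, or \citep{key-16}): if there is a measurable cardinal then $0^{\sharp}$ exists; this is a statement internal to $V$, since measurability of $\kappa$ in $V$ gives an elementary embedding $j\colon V \to M$ with critical point $\kappa$, from which the existence of $0^{\sharp}$ is extracted inside $V$ by the usual argument on the constructibility of the sharp.

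Next I would invoke the theorem proved just above, which asserts that a totally-ordered model universe satisfies ``$0^{\sharp}$ does not exist''. Combining this with the previous paragraph, $V$ both satisfies and fails to satisfy ``$0^{\sharp}$ exists'', which is the desired contradiction. Hence no totally-ordered model universe can contain a measurable cardinal.

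I do not expect any genuine obstacle here: the content is entirely carried by the earlier theorem on $0^{\sharp}$ and by the standard large-cardinal fact that a measurable cardinal yields $0^{\sharp}$. The only point worth a sentence of care is that the argument takes place uniformly \emph{inside} $V$ — the elementary embedding witnessing measurability, and the resulting construction of $0^{\sharp}$, are all available to $V$ — so that the conclusion ``$V \vDash 0^{\sharp}$ exists'' is legitimately in conflict with ``$V \vDash 0^{\sharp}$ does not exist''. Everything else is a direct citation, so the proof is genuinely a one-line corollary.
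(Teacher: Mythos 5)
Your proof is correct and matches the paper's argument exactly: the paper likewise derives the corollary by citing the classical fact that a measurable cardinal implies $0^{\sharp}$ exists (attributed there to Gaifman rather than Solovay) and combining it with the preceding theorem that a totally-ordered model universe satisfies ``$0^{\sharp}$ does not exist.'' No gaps; the extra remark about the argument being internal to $V$ is a reasonable point of care but not needed beyond what the paper already does.
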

\begin{proof}
By Gaifman \citep{key-31}, the existence of a measurable cardinal
implies the existence of $0^{\sharp}$.
\end{proof}
So some of the basic properties of well-ordered model universes extend
to totally-ordered model universes, and the total-ordering property
by itself is sufficient to prove that $V$ is inherently small and
quite 'close' to $L$.

Therefore for each result proven about well-ordered model universes,
we should ask ourselves whether it extends to totally-ordered model
universes as well.

\subsection{Inner models of $\mathsf{ZF}$}

Another natural extension of the definition of a well-ordered model
universe is the consideration of general inner models of $\mathsf{ZF}$,
not just inner models that satisfy Choice.

Returning to the framework we introduced at the beginning, we give
the following expanded definition, which is almost verbatim definition
\ref{def:Well-Ordered-Model}:
\begin{defn}
\label{def:Well-Ordered-Model-1}Let $\left\langle V,\mathcal{V},\in\right\rangle $
be a model of $\mathsf{BG}$. We call a model $N\subseteq V$ of $\mathsf{ZF}$
where all its inner $\mathsf{ZF}$ models are well-ordered with respect
to inclusion a \emph{well-ordered $\mathsf{ZF}$ model universe}.
Formally, we postulate the existence of a class $\mathbb{M}$ in $\left\langle V,\mathcal{V},\in\right\rangle $,
which is the sequence of all proper inner $\mathsf{ZF}$ models of
$N$ ordered by inclusion. This means:
\begin{enumerate}
\item $\mathbb{M}\subseteq I\times N$;
\item \label{enu:Every-Inner-1}$M$ is a proper inner $\mathsf{ZF}$ model
of $N$ if and only if there exists a unique $a\in I$ such that $M=M_{a}=\left\{ x\mid\left(a,x\right)\in\mathbb{M}\right\} $;
\item $I$ is a well-ordered class;
\item If $a<_{I}b$ then $\left(a,x\right)\in\mathbb{M}\rightarrow\left(b,x\right)\in\mathbb{M}$.
\end{enumerate}
\end{defn}
In summary, applying the convention that lower-case letters indicate
sets and upper-case letters indicates classes, we demand the following
be true: 
\begin{multline*}
\exists\mathbb{M}\exists I\text{ (}\mathbb{M}\subseteq I\times N\wedge\left(M\subsetneq N\text{ is an inner \ensuremath{\mathsf{ZF}} model }\leftrightarrow\exists!a\in I\left(M=\left\{ x\mid\left(a,x\right)\in\mathbb{M}\right\} \right)\right)\wedge\\
I\text{ is well-ordered }\wedge a<_{I}b\rightarrow\left(\left(a,x\right)\in\mathbb{M}\rightarrow\left(b,x\right)\in\mathbb{M}\right)\text{)}
\end{multline*}

We define the height of $V$ in exactly the same way we did for the
original definition, using the order type of $I$.

Obviously, if $N\vDash\mathsf{ZFC}$ is a well-ordered $\mathsf{ZF}$
model universe then it is also a well-ordered model universe. Given
the sequence of proper $\mathsf{ZF}$ inner models we can directly
define the sequence of proper $\mathsf{ZFC}$ inner models as defined
in \ref{def:Well-Ordered-Model}. However, by our definition well-ordered
$\mathsf{ZF}$ model universes are not required themselves to satisfy
$\mathsf{AC}$, and therefore not every well-ordered $\mathsf{ZF}$
model universe is necessarily a well-ordered model universe.

Next we outline a few of the basic properties of well-ordered $\mathsf{ZF}$
model universes. For the rest of the subsection, assume $\left\langle V,\mathcal{V},\in\right\rangle \vDash\mathsf{BG}$,
$V$ is a well-ordered $\mathsf{ZF}$ model universe, and $\mathbb{M}\subseteq I\times V$
is its sequence of proper inner $\mathsf{ZF}$ models ordered by inclusion.
\begin{lem}
$M_{0}=L$
\end{lem}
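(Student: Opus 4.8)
The plan is to reproduce the argument of Lemma~\ref{lem:M0=00003DL} essentially verbatim, the only extra ingredient being the trivial remark that a model of $\mathsf{ZFC}$ is \emph{a fortiori} a model of $\mathsf{ZF}$. First I would recall Gödel's theorem from \citep{key-7}: every model of $\mathsf{ZF}$ possesses a least inner model, the constructible universe $L$, and this $L$ satisfies $\mathsf{ZFC}$. In particular $L$ is an inner $\mathsf{ZF}$ model of our $V$, and $L\subseteq M$ for every inner $\mathsf{ZF}$ model $M$ of $V$.

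Next I would observe that we are implicitly in the case where $V$ admits proper inner $\mathsf{ZF}$ models at all (otherwise $\mathbb{M}$ is empty and there is nothing to prove), so $L\subsetneq V$, and hence $L$ is itself a \emph{proper} inner $\mathsf{ZF}$ model of $V$. By condition~\ref{enu:Every-Inner-1} of Definition~\ref{def:Well-Ordered-Model-1} there is then a unique $a\in I$ with $L=M_a$.

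Finally I would combine minimality of $L$ with the monotonicity clause of Definition~\ref{def:Well-Ordered-Model-1} (the requirement that $a<_I b$ implies $M_a\subseteq M_b$): since $L=M_a\subseteq M_b$ for all $b\in I$ while the $M_b$ are $\subseteq$-increasing along $I$, the index $a$ must be the $<_I$-least element of $I$, which by our notational convention is $0$. Hence $M_0=L$.

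I do not expect any genuine obstacle here; the entire content is carried by Gödel's classical minimality of $L$, and the only point requiring a moment's care is noting that ``least inner $\mathsf{ZF}$ model'' and ``least \emph{proper} inner $\mathsf{ZF}$ model'' coincide once $L\neq V$, so that $L$ really does occupy the bottom slot of the sequence $\mathbb{M}$.
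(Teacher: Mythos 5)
Your proposal is correct and follows the paper's own argument exactly: the paper's proof is simply ``identical to Lemma~\ref{lem:M0=00003DL},'' i.e.\ it invokes G\"odel's theorem that $L$ is the least inner model and concludes it must sit at the bottom of the well-ordered sequence. The extra remarks you add (that $L\vDash\mathsf{ZFC}$ implies $L\vDash\mathsf{ZF}$, and that the statement is vacuous when $V=L$) are sound and merely make explicit what the paper leaves implicit.
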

\begin{proof}
Identical to lemma \ref{lem:M0=00003DL}.
\end{proof}
\begin{thm}
$V\models\textrm{There is no measurable cardinal}$.
\end{thm}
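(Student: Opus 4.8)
The plan is to run the argument of Theorem~\ref{thm:Measurable} essentially verbatim, the one genuine change being that the ultrapowers we manufacture are inner models of $\mathsf{ZF}$ rather than of $\mathsf{ZFC}$ --- which is exactly why the hypothesis we must invoke is that $V$ is a well-ordered $\mathsf{ZF}$ model universe. So I would begin by supposing toward a contradiction that $V\models\text{``there is a measurable cardinal''}$, fix a measurable $\kappa$ together with a normal ultrafilter $U$, and form the ultrapower embedding $j\colon V\to M:=\mathrm{Ult}(V,U)$ (using Scott's trick for the equivalence classes; the ultrapower is well-founded by the $\kappa$-completeness of $U$, exactly as in the cited treatment). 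Since $j$ is elementary, $M$ is a transitive class of $V$ with $M\models\mathsf{ZF}$ and the same ordinals as $V$, and since $U\notin M$ we have $M\subsetneq V$, so $M$ is a \emph{proper} inner $\mathsf{ZF}$ model of $V$. By clause~\ref{enu:Every-Inner-1} of Definition~\ref{def:Well-Ordered-Model-1} there is therefore an $\alpha_0\in I$ with $M=M_{\alpha_0}$.

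Next I would iterate the measure $\omega$ times: put $U_0=U$, $M^{(0)}=V$, and recursively $M^{(n+1)}:=\mathrm{Ult}(M^{(n)},U_n)$ with ultrapower map $j_n\colon M^{(n)}\to M^{(n+1)}$ and $U_{n+1}:=j_n(U_n)$. By elementarity each $U_n$ is (from the point of view of $M^{(n)}$) a normal ultrafilter on the image of $\kappa$, so each step is well posed; composing the elementary embeddings from $V$ shows that every $M^{(n)}$ is a transitive class of $V$ modelling $\mathsf{ZF}$ and containing all the ordinals, hence a proper inner $\mathsf{ZF}$ model of $V$, so $M^{(n)}=M_{\alpha_n}$ for some $\alpha_n\in I$; and $U_n\in M^{(n)}\setminus M^{(n+1)}$ gives $M^{(n+1)}\subsetneq M^{(n)}$, which by the monotonicity clause of Definition~\ref{def:Well-Ordered-Model-1} forces $\alpha_{n+1}<_I\alpha_n$. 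The whole construction is definable in $V$ from the parameters $U$ and $\mathbb{M}$, so $\langle\alpha_n\mid n<\omega\rangle$ is a genuine set of $V$; it is an infinite $<_I$-descending sequence, contradicting that $I$ is well-ordered. Hence $V\models\text{``there is no measurable cardinal''}$.

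The main obstacle is not a single hard lemma but the bookkeeping: one must check carefully that each $M^{(n)}$ really is an inner $\mathsf{ZF}$ model of $V$ (transitive, closed under $\mathsf{ZF}$, and containing exactly the ordinals of $V$), and that the descent is \emph{strict}, $M^{(n+1)}\neq M^{(n)}$ --- the latter being the familiar fact that the ultrafilter used to build an ultrapower never lies in that ultrapower. It is also worth pausing to note that the ultrapower theory used here (well-foundedness of $\mathrm{Ult}(V,U)$ and full elementarity of $j$) is available in the $\mathsf{ZF}$ setting, since that is precisely what licenses treating the $M^{(n)}$ as inner models of $\mathsf{ZF}$; granting this, the remainder of the argument is identical to the $\mathsf{ZFC}$ case of Theorem~\ref{thm:Measurable}.
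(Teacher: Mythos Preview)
Your proposal is correct and follows essentially the same approach as the paper: the paper's proof is declared ``Identical to Theorem~\ref{thm:Measurable}'', which likewise iterates the elementary embedding coming from a measurable to produce an infinite strictly descending chain of inner models, contradicting the well-ordering of $I$. Your version is simply more explicit (working with iterated ultrapowers and spelling out why each $M^{(n)}$ is a proper inner $\mathsf{ZF}$ model); the one point you flag but do not fully justify---that \L o\'s's theorem and hence full elementarity of the ultrapower map hold in the $\mathsf{ZF}$ setting---is a genuine subtlety, but the paper glosses over it as well, and it disappears if one reads ``measurable'' in $\mathsf{ZF}$ as the existence of a nontrivial elementary embedding (which is what the cited reference to Scott effectively provides).
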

\begin{proof}
Identical to theorem \ref{thm:Measurable}.
\end{proof}
The proof of theorem \ref{thm:L=00005BA=00005D} doesn't work for
well-ordered $\mathsf{ZF}$ model universes, as it involves heavy
use of the Axiom of Choice. In general, it is very much possible to
have an infinite chain of inner models that satisfy $\mathsf{AC}$,
but that the least inner model to include them all does not. So even
though we can carry out the successor stage of the proof, we cannot
prove $M_{\omega}\vDash\mathsf{AC}$, meaning it is very possible
$M_{\omega}\neq L\left[A\right]$ for all $A\in V$.

Note that because we can still carry out the successor stages of the
induction, we have the following corollary:
\begin{cor}
\label{cor:height}If $V\vDash\neg\mathsf{AC}$ then $ht(V)\geq\omega$.
\end{cor}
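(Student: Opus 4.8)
The plan is to run, inside $V$, the successor-stage argument of Theorem~\ref{thm:L=00005BA=00005D} exactly $\omega$ times, the only genuinely new ingredient being that each stage is guaranteed to possess a successor. By (the proof of) Lemma~\ref{lem:M0=00003DL} we have $M_{0}=L$, and since $L\vDash\mathsf{AC}$ while $V\vDash\neg\mathsf{AC}$ we get $L\subsetneq V$, so $L$ is a proper inner $\mathsf{ZF}$ model and $0\in I$. It therefore suffices to prove by induction on $n<\omega$ that $n\in I$ and $M_{n}=L\left[A_{n}\right]$ for some set of ordinals $A_{n}$ (in particular $M_{n}\vDash\mathsf{AC}$); once this is done, $\left\{ M_{n}\mid n<\omega\right\} $ is a strictly $\subseteq$-increasing chain of proper inner $\mathsf{ZF}$ models, so $I$ is infinite and hence $ht(V)=otp(I)\geq\omega$.

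The base case is $M_{0}=L=L\left[\emptyset\right]$. For the inductive step, assume $n\in I$ and $M_{n}=L\left[A_{n}\right]\vDash\mathsf{AC}$. First I would show $n+1\in I$, i.e.\ that $M_{n}$ is not $\subseteq$-maximal among the proper inner $\mathsf{ZF}$ models of $V$. Since $M_{n}\vDash\mathsf{AC}$ and $V\vDash\neg\mathsf{AC}$ we have $M_{n}\subsetneq V$, so by Vop\v{e}nka's theorem there is a set of ordinals $B\in V\setminus M_{n}$. Then $L\left[B\right]$ is an inner model of $\mathsf{ZFC}$, hence $L\left[B\right]\neq V$, so $L\left[B\right]$ is a \emph{proper} inner $\mathsf{ZF}$ model; and $B\in L\left[B\right]\setminus M_{n}$, so $L\left[B\right]\nsubseteq M_{n}$. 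As the proper inner $\mathsf{ZF}$ models of $V$ are linearly ordered by inclusion, $M_{n}$ cannot be their greatest element, so it has an immediate successor $M_{n+1}$ in the sequence, with $M_{n}\subsetneq M_{n+1}$.

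It remains to see $M_{n+1}=L\left[A_{n+1}\right]$ for a set of ordinals $A_{n+1}$, which is just the successor step of Theorem~\ref{thm:L=00005BA=00005D}, now valid because it uses only $M_{n}\vDash\mathsf{AC}$ and not $V\vDash\mathsf{AC}$: by Vop\v{e}nka's theorem applied to $M_{n}\subsetneq M_{n+1}$ there is a set of ordinals $D\in M_{n+1}\setminus M_{n}$; coding $A_{n}$ and $D$ into a single set of ordinals $E$ (by the same shifting device used in the limit stage of Theorem~\ref{thm:L=00005BA=00005D}) gives $M_{n}=L\left[A_{n}\right]\subseteq L\left[E\right]$ and $D\in L\left[E\right]$, so $M_{n}\subsetneq L(E)=L\left[E\right]\subseteq M_{n+1}$; since no inner $\mathsf{ZF}$ model lies strictly between the consecutive terms $M_{n}$ and $M_{n+1}$, we conclude $M_{n+1}=L\left[E\right]$ and set $A_{n+1}=E$. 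This completes the induction, and with it the proof.

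The only step with real content is the non-maximality claim, that every $M_{n}$ admits a successor; this is precisely where the hypothesis $V\vDash\neg\mathsf{AC}$ is indispensable, since if $V\vDash\mathsf{AC}$ then $V$ could itself be of the form $L\left[A\right]$ and the chain of inner models could terminate after finitely many steps (indeed at $V$ after a single step if $V=L\left[a\right]$ with $L\subsetneq V$). Everything else is a verbatim transcription of the successor stage of Theorem~\ref{thm:L=00005BA=00005D}, whose argument nowhere appeals to choice in the ambient model.
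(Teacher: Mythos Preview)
Your proof is correct and follows essentially the same approach as the paper's one-line argument, which simply reads ``Applying the successor steps in the proof of theorem~\ref{thm:L=00005BA=00005D}, we get that for all $n<\omega$ $M_{n}=L[A]$ for some $A\in V$.'' You have merely unpacked what the paper leaves implicit: the non-maximality argument (that each $M_n$, being a model of $\mathsf{AC}$, admits a strictly larger proper inner $\mathsf{ZF}$ model $L[B]\neq V$ and hence a successor in the well-ordering) is precisely the missing link the paper expects the reader to supply, and your coding of $A_n$ and $D$ into $E$ is a harmless variant of the paper's successor step, which instead uses the linear ordering directly to conclude $M_n\subsetneq L[D]$.
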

\begin{proof}
Applying the successor steps in the proof of theorem \ref{thm:L=00005BA=00005D},
we get that for all $n<\omega$ $M_{n}=L\left[A\right]$ for some
$A\in V$.
\end{proof}
This result can actually be strengthened, even without the well-ordering
property:
\begin{thm}
\label{thm:InfiniteModels}If $V$ is not of the form $L\left[A\right]$
for some $A\in V$, then $V$ has an infinite number of inner models.
\end{thm}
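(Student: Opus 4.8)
The plan is to show directly that for every $n\in\omega$ the model $V$ has at least $n$ distinct inner models; equivalently, to contradict the assumption that $V$ has only finitely many inner models. The engine is the same one already used for the successor step of Theorem \ref{thm:L=00005BA=00005D}: for any set of ordinals $X$ we have $L(X)=L[X]\vDash\mathsf{ZFC}$; for two sets of ordinals $X,Y$ there is a single set of ordinals $X\oplus Y$ with $L[X\oplus Y]=L[X,Y]$ (see ex. 13.27 in \citep{key-17}); and by Vop\v{e}nka's theorem \citep{key-20}, whenever $W$ is an inner model of $V$ with $W\vDash\mathsf{ZFC}$ and $W\neq V$, the class $V\setminus W$ contains a set of ordinals.

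Granting these, I would argue by a short induction. Put $A_0=\emptyset$, so $M_0=L[A_0]=L$ is an inner model of $\mathsf{ZF}$. Suppose $M_k=L[A_k]$ has been defined with $A_k$ a set of ordinals. Since $M_k$ has the form $L[A]$ for a set $A\in V$, our hypothesis forces $M_k\neq V$, hence $M_k\subsetneq V$; as $M_k\vDash\mathsf{ZFC}$, Vop\v{e}nka's theorem supplies a set of ordinals $B_k\in V\setminus M_k$. Set $A_{k+1}=A_k\oplus B_k$ and $M_{k+1}=L[A_{k+1}]=L[A_k,B_k]$. Then $M_{k+1}$ is again an inner model of $\mathsf{ZF}$ (a transitive class with the ordinals of $V$, satisfying $\mathsf{ZFC}$ and hence $\mathsf{ZF}$), it satisfies $M_k\subseteq M_{k+1}\subseteq V$, and the inclusion $M_k\subseteq M_{k+1}$ is proper because $B_k\in M_{k+1}\setminus M_k$. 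Iterating $n$ steps yields a strictly increasing chain $M_0\subsetneq M_1\subsetneq\cdots\subsetneq M_n$ of $n+1$ pairwise distinct inner models of $V$. As $n$ was arbitrary, $V$ cannot have only finitely many inner models.

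The only substantive ingredient is the relative version of Vop\v{e}nka's theorem, and since it is exactly the instance already invoked in the successor case of Theorem \ref{thm:L=00005BA=00005D} it may be cited rather than reproved; everything else is routine coding of sets of ordinals. One point deserves care: I would not claim that the whole sequence $\langle M_n\mid n\in\omega\rangle$ is a single class of $V$, since choosing the witnesses $B_k$ uniformly would require a well-ordering of the sets of ordinals of $V$, which need not exist when $\mathsf{AC}$ fails in $V$. Phrasing the conclusion as ``for each $n$ there are at least $n$ inner models'' sidesteps this, and it makes transparent why the argument is indifferent to whether $\mathsf{AC}$ holds in $V$ — which is precisely how it strengthens Corollary \ref{cor:height}: the hypothesis ``$V$ is not of the form $L[A]$'' holds automatically whenever $V\vDash\neg\mathsf{AC}$, but it also applies to $\mathsf{ZFC}$ models such as $M_{\infty}$ from the previous section.
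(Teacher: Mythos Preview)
Your proof is correct, but it follows a different route from the paper's. The paper argues the contrapositive by induction on the number $n$ of proper inner models: assuming every model with $\leq n$ proper inner models is of the form $L[A]$, it takes a model $K$ with $n+1$ proper inner models, observes that each proper inner model of $K$ is then some $L[A_m]$, and splits into cases according to whether $K$ has a greatest proper inner model $R$. If so, Vop\v{e}nka gives a set of ordinals $A\in K\setminus R$ and $L[A]=K$; if not, the paper codes the finitely many $A_m$'s into a single set of ordinals (as in the limit step of Theorem~\ref{thm:L=00005BA=00005D}) and observes that the resulting $L[\,\cdot\,]$ must be $K$ itself.

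Your argument is more direct and more economical: you simply iterate the successor step of Theorem~\ref{thm:L=00005BA=00005D}, producing a strictly increasing finite chain $L=M_0\subsetneq M_1\subsetneq\cdots\subsetneq M_n$ of models of the form $L[A_k]$, with Vop\v{e}nka applied at each stage to the inclusion $M_k\subsetneq V$. This avoids the case split and the need to amalgamate more than two sets of ordinals at once. Your care about not claiming the full $\omega$-sequence as a single class, and phrasing the conclusion as ``for each $n$ at least $n$ inner models'', is exactly right in the $\mathsf{ZF}$ setting of this subsection. The paper's approach, by contrast, yields as a byproduct the uniform statement that \emph{every} model with finitely many inner models is of the form $L[A]$, which is marginally more information but not required for the theorem as stated.
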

\begin{proof}
By induction we prove every model with a finite number of inner models
is of the form $L\left[A\right]$. The case for $0$ proper inner
models is trivially true because $L=L\left[\emptyset\right]$.

Now assume that we've proven the induction for models with $n$ proper
inner models. Assume $K$ is a model that has $n+1$ proper inner
models. Every inner model of an inner model of $K$ is an inner model
of $K$ itself, so all inner models of $K$ have at most $n$ proper
inner models, and so they are all of the form $L\left[A_{m}\right]$
for some $m\leq n$.

Let's consider two possibilities: either $K$ has a greatest proper
inner model $R\subsetneq K$, or it doesn't. If $R$ exists, then
by Vop\v{e}nka \citep{key-20} there is a set of ordinals $A\in K\setminus R$,
and so $K=L\left[A\right]$ as required.

Otherwise, just as we did in the proof of theorem \ref{thm:L=00005BA=00005D},
we can arrange a family of mutually disjoint sets of ordinals $B_{m}$
such that for all $m$ $L\left[A_{m}\right]=L\left[B_{m}\right]$.
Take $L\left[\bigcup\limits _{m\leq n}B_{m}\right]$. For all $m$
$L\left[\bigcup\limits _{m\leq n}B_{m}\right]\supseteq L\left[B_{m}\right]$.
But the only model that includes all proper inner models of $K$ is
$K$ itself. Therefore $L\left[\bigcup\limits _{\alpha}B_{\alpha}\right]=K$.

Therefore if $V$ is not a model of the form $L\left[A\right]$, $V$
must have an infinite number of inner models.
\end{proof}
\begin{cor}
\label{cor:finiteHeight}If $V$ is a well-ordered $\mathsf{ZF}$
model universe and $ht\left(V\right)<\omega$ then $V$ is a well-ordered
model universe.
\end{cor}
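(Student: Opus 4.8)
The plan is to exploit how strong the hypothesis $ht(V)<\omega$ is. By definition $ht(V)$ is the order type of the underlying order $I$ of $\mathbb{M}$, and $I$ enumerates \emph{all} proper inner $\mathsf{ZF}$ models of $V$ bijectively (clause \ref{enu:Every-Inner-1} of Definition \ref{def:Well-Ordered-Model-1}); so $ht(V)<\omega$ says precisely that $V$ has only finitely many proper inner $\mathsf{ZF}$ models. I would then invoke Theorem \ref{thm:InfiniteModels}, which (as the surrounding discussion stresses) holds without any well-ordering assumption: in contrapositive form it states that a model with only finitely many proper inner models is of the form $L\left[A\right]$ for some set of ordinals $A$. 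Applying this to $V$ gives $V=L\left[A\right]$ for some $A\in V$, hence in particular $V\vDash\mathsf{ZFC}$.

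The remaining step is to upgrade ``$V$ is a well-ordered $\mathsf{ZF}$ model universe and $V\vDash\mathsf{ZFC}$'' to ``$V$ is a well-ordered model universe'', which is exactly the downward compatibility already remarked upon just after Definition \ref{def:Well-Ordered-Model-1}, and which I would spell out as follows. Put $I'=\left\{a\in I\mid M_{a}\vDash\mathsf{AC}\right\}$ and $\mathbb{M}'=\mathbb{M}\cap\left(I'\times V\right)$. Since an inner model of $\mathsf{ZF}$ of $V$ is an inner model of $\mathsf{ZFC}$ of $V$ exactly when it satisfies $\mathsf{AC}$, clause \ref{enu:Every-Inner} of Definition \ref{def:Well-Ordered-Model} holds for $\mathbb{M}'$ with index class $I'$; the order $I'$ is a subclass of the well-ordered class $I$ and hence well-ordered; the monotonicity clause is inherited verbatim from $\mathbb{M}$; and $\mathbb{M}'\subseteq I'\times V$. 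Thus $\mathbb{M}'$ witnesses that $V$ is a well-ordered model universe.

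There is essentially no obstacle to overcome here: the corollary is a two-line consequence of Theorem \ref{thm:InfiniteModels} together with the trivial fact that a subclass of a well-ordered class is well-ordered. The only point that warrants a moment's care is matching up the notions of ``inner model'': in the $\mathsf{ZF}$ setting ``inner model'' means inner model of $\mathsf{ZF}$, and it is the number of \emph{those} that $ht(V)<\omega$ bounds, which is precisely what Theorem \ref{thm:InfiniteModels} consumes. (The degenerate case $ht(V)=0$, where $V$ has no proper inner $\mathsf{ZF}$ model and is therefore forced to equal $L$, is already covered by the base case of that theorem.)
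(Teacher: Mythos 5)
Your proposal is correct and follows essentially the same route as the paper: both reduce the corollary to Theorem \ref{thm:InfiniteModels}, which forces $V$ (and, since each inner model of $V$ again has only finitely many inner models, every inner model of $V$) to be of the form $L\left[A\right]$ and hence to satisfy $\mathsf{AC}$. Your explicit restriction of $\mathbb{M}$ to the indices of $\mathsf{AC}$-satisfying models is a slightly more careful spelling-out of the final step than the paper's one-line appeal to Definition \ref{def:Well-Ordered-Model-1}, but it is the same argument.
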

\begin{proof}
According to theorem \ref{thm:InfiniteModels} $V$ and all of its
inner models must satisfy the Axiom of Choice. Therefore by definition
\ref{def:Well-Ordered-Model-1} $V$ is a well-ordered model universe.
\end{proof}
Despite the failure of theorem \ref{thm:L=00005BA=00005D} for well-ordered
$\mathsf{ZF}$ model universes, we have a small consolation prize:
\begin{lem}
\label{lem:L(A) successors}Let $\alpha+1\in I$ denote the successor
of $\alpha$ in the well-ordering of $I$. Then $M_{\alpha+1}=L\left(A\right)$
for some $A\in V$.
\end{lem}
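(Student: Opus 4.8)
The plan is to mimic the structure of the successor stage in the proof of Theorem \ref{thm:L=00005BA=00005D}, but this time settling for the weaker model $L(A)$ instead of $L[A]$, since we no longer know that $M_{\alpha+1}$ satisfies the Axiom of Choice. First I would invoke Vop\v{e}nka's theorem \citep{key-20}, exactly as in Theorem \ref{thm:L=00005BA=00005D}: since $M_{\alpha}$ is an inner $\mathsf{ZF}$ model with a set-forcing-definable structure and $M_{\alpha+1}\supsetneq M_{\alpha}$, there is a set of ordinals $A\in M_{\alpha+1}\setminus M_{\alpha}$. Because $A\in M_{\alpha+1}$ and $M_{\alpha+1}$ is an inner model of $\mathsf{ZF}$, it contains the least inner $\mathsf{ZF}$ model containing $A$, namely $L(A)$; hence $M_{\alpha}\subsetneq L(A)\subseteq M_{\alpha+1}$, where the first inclusion is strict because $A\notin M_{\alpha}$.

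The key point is then that there is no inner $\mathsf{ZF}$ model strictly between $M_{\alpha}$ and $M_{\alpha+1}$ — this is forced by the definition of the sequence $\mathbb{M}$: $\alpha+1$ is the $I$-successor of $\alpha$, so any inner $\mathsf{ZF}$ model $K$ with $M_{\alpha}\subseteq K\subseteq M_{\alpha+1}$ must equal $M_{\beta}$ for some $\beta\in I$ with $\alpha\leq_I\beta\leq_I\alpha+1$, and the only such $\beta$ are $\alpha$ and $\alpha+1$. Applying this to $K=L(A)$, and using that $L(A)\neq M_{\alpha}$, we conclude $L(A)=M_{\alpha+1}$, as required.

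One subtlety to check is that $L(A)$ genuinely is an inner model in the technical sense of Definition \ref{def:Inner model} — it is a transitive class of $M_{\alpha+1}$ (hence of $V$) with the same ordinals, and by the remarks preceding Theorem \ref{thm:L=00005BA=00005D} it is a model of $\mathsf{ZF}$ — so it is a legitimate candidate to appear on the tower of proper inner $\mathsf{ZF}$ models. I expect this to be entirely routine. The one place where genuine care is needed, and which I regard as the main obstacle, is the invocation of Vop\v{e}nka's theorem: it requires that $M_{\alpha}$ satisfies $\mathsf{AC}$ (so that the Vop\v{e}nka algebra argument goes through internally to $M_{\alpha}$), and this is available here only because $M_{\alpha}$ precedes $M_{\alpha+1}$ on the tower — but in a well-ordered $\mathsf{ZF}$ model universe we do \emph{not} in general know that the models below a given one satisfy Choice. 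However, for the \emph{successor} step this is harmless: if $M_{\alpha}\not\vDash\mathsf{AC}$ one can instead observe directly that $M_{\alpha+1}$ contains \emph{some} set $x\notin M_{\alpha}$, pass to its transitive closure, and note that $L(x)$ is again an inner $\mathsf{ZF}$ model strictly between $M_{\alpha}$ and $M_{\alpha+1}$, forcing $M_{\alpha+1}=L(x)$; so the conclusion $M_{\alpha+1}=L(A)$ holds regardless, with $A$ taken to be $x$ rather than a set of ordinals. I would phrase the final proof so that it covers both cases, noting that no appeal to Choice in $M_{\alpha}$ is actually needed for the successor stage.
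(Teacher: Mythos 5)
Your proposal is correct and, in its final form, is essentially the paper's own proof: the paper simply takes an arbitrary $A\in M_{\alpha+1}\setminus M_{\alpha}$ (not a set of ordinals), notes $L\left(A\right)$ is the least inner $\mathsf{ZF}$ model containing $A$, hence $M_{\alpha}\subsetneq L\left(A\right)\subseteq M_{\alpha+1}$, and concludes by the absence of intermediate models. You are right that the Vop\v{e}nka detour is unnecessary here (and would be problematic since $M_{\alpha}\vDash\mathsf{AC}$ is not guaranteed), so the "fallback" argument you describe at the end is exactly what the paper does.
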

\begin{proof}
Take $A\in M_{\alpha+1}\setminus M_{\alpha}$. $L\left(A\right)$
is the smallest inner $\mathsf{ZF}$ model containing $A$, so obviously
$L\left(A\right)\subseteq M_{\alpha+1}$. But because $V$ is a well-ordered
universe $M_{\alpha}\subsetneq L\left(A\right)$, and there are no
intermediate inner models between $M_{\alpha}$ and $M_{\alpha+1}$.
Therefore $L\left(A\right)=M_{\alpha+1}$.
\end{proof}
\begin{cor}
If $V$ has a greatest proper inner $\mathsf{ZF}$ model $K$, there
exists $A\in V$ such that $V=L\left(A\right)$.
\end{cor}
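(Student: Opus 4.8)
The plan is to run the argument of Lemma~\ref{lem:L(A) successors} one more time, now with $V$ itself playing the role of the ``successor'' of $K$. Since $K$ is a \emph{proper} inner $\mathsf{ZF}$ model we have $K\subsetneq V$, so I can fix some $A\in V\setminus K$. By the basic properties of relative constructibility recalled just before Theorem~\ref{thm:L=00005BA=00005D}, $L\left(A\right)$ is an inner $\mathsf{ZF}$ model of $V$ in the sense of Definition~\ref{def:Inner model} (transitive, a class of $V$, with the same ordinals as $V$), it contains $A$, and it is the \emph{smallest} inner $\mathsf{ZF}$ model with this property.

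Next I would show that $L\left(A\right)$ cannot be a proper inner $\mathsf{ZF}$ model of $V$. The inner $\mathsf{ZF}$ models of $V$ are well-ordered by inclusion, and $K$ is by hypothesis the greatest \emph{proper} one; hence every proper inner $\mathsf{ZF}$ model $M$ of $V$ satisfies $M\subseteq K$. But $A\in L\left(A\right)\setminus K$, so $L\left(A\right)\nsubseteq K$, and therefore $L\left(A\right)$ is not a proper inner $\mathsf{ZF}$ model of $V$. Since $L\left(A\right)$ \emph{is} an inner $\mathsf{ZF}$ model of $V$ and it is not proper, we conclude $L\left(A\right)=V$, which is exactly the claim.

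There is essentially no real obstacle here: unlike in Theorem~\ref{thm:L=00005BA=00005D}, no amalgamation of coded sequences and no appeal to the Axiom of Choice is needed, precisely because we only have to handle a single ``successor'' step at the very top of the tower rather than a limit, and because $L\left(A\right)$ (as opposed to $L\left[A\right]$) is already known to be an inner $\mathsf{ZF}$ model for an arbitrary set $A$. The one point to state carefully is that $L\left(A\right)$ genuinely qualifies as an inner model, which is guaranteed by the already-cited fact that $L\left(A\right)$ is the smallest inner $\mathsf{ZF}$ model containing $A$. It is worth remarking, in passing, that this is the $\mathsf{ZF}$-flavoured analogue of the corollary following Theorem~\ref{thm:L=00005BA=00005D}: there a top element of the tower is of the form $L\left[A\right]$, whereas here, working only with $\mathsf{ZF}$ inner models, the best one can say about the top of a tower possessing a largest proper step is $V=L\left(A\right)$.
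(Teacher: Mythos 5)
Your argument is correct and is essentially the paper's own proof: both fix $A\in V\setminus K$, observe that $L\left(A\right)$ is an inner $\mathsf{ZF}$ model containing $A$ and hence not contained in $K$, and conclude from the maximality of $K$ among proper inner $\mathsf{ZF}$ models that $L\left(A\right)=V$. The only cosmetic difference is that the paper phrases the last step as ``$K\subsetneq L\left(A\right)\subseteq V$ with no intermediate model, as in Lemma~\ref{lem:L(A) successors},'' while you argue directly that $L\left(A\right)$ cannot be proper; these are the same observation.
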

\begin{proof}
Take $A\in V\setminus K$. We get $K\subsetneq L\left(A\right)\subseteq V$,
and so for the same reasons as lemma \ref{lem:L(A) successors} $L\left(A\right)=V$.
\end{proof}
In conclusion, there isn't much we know about well-ordered $\mathsf{ZF}$
model universes.

As for actually constructing a well-ordered $\mathsf{ZF}$ model universe,
corollary \ref{cor:finiteHeight} shows that using the iteration defined
in \ref{def:iteration} up to finite height, would generate a well-ordered
$\mathsf{ZF}$ model universe.

A well-ordered $\mathsf{ZF}$ model universe of height $\omega$ is
achievable by iterating the forcing up to $M_{\omega}$, and then
taking $N=\mathsf{HOD}\left(\left\langle G_{n}\mid n<\omega\right\rangle \right)\subsetneq M_{\omega}$.
This $N$ will be the minimal inner model of $\mathsf{ZF}$ that includes
$M_{n}$ for all $n<\omega$.

However, we can't use the same construction to build well-ordered
$\mathsf{ZF}$ model universes of arbitrary height, because we can't
tell what's going on between $N$ and $M_{\omega}$. The intermediate
inner models there might not even be totally-ordered.

So we are left with one glaring open question:
\begin{problem}
Is the existence of a well-ordered $\mathsf{ZF}$ model universe the
height of the ordinals consistent with $\mathsf{ZF}$?
\end{problem}

\end{document}